\def\Z{\mathbb{Z}}
\def\R{\mathbb{R}}
\def\N{\mathbb{N}}
\def\C{\mathbb{C}}
\def\T{\mathbb{T}}
\def\epsilon{\varepsilon}
\def\hat{\widehat}
\def\tilde{\widetilde}
\newcommand{\me}{\mathrm{e}}
\newcommand{\SE}{\setcounter{equation}{0} \section}
\newcommand{\be}{\begin{equation}}
\newcommand{\ee}{\end{equation}}
\newcommand{\baa}{\begin{array}}
\newcommand{\eaa}{\end{array}}
\newcommand{\ba}{\begin{eqnarray}}
\newcommand{\ea}{\end{eqnarray}}
\newtheorem{theo}{\bf Theorem}[section]
\newtheorem{lem}[theo]{\bf Lemma}
\newtheorem{pro}[theo]{\bf Proposition}
\newtheorem{defi}[theo]{\bf Definition}
\newtheorem{rem}[theo]{\bf Remark}
\begin{document}
\date{}
\title{\bf{Bistable pulsating fronts for reaction-diffusion equations in a periodic habitat}}
\author{Weiwei Ding$^{\hbox{\small{ a,b}}}$, Fran{\c{c}}ois Hamel$^{\hbox{\small{ a,c}}}$ and Xiao-Qiang Zhao$^{\hbox{\small{ d}}}$\thanks{This work has been carried out in the framework of the Labex Archim\`ede (ANR-11-LABX-0033) and of the A*MIDEX project (ANR-11-IDEX-0001-02), funded by the ``Investissements d'Avenir" French Government program managed by the French National Research Agency (ANR). The research leading to these results has received funding from the European Research Council under the European Union's Seventh Framework Programme (FP/2007-2013) / ERC Grant Agreement n.321186 - ReaDi - Reaction-Diffusion Equations, Propagation and Modelling. Part of this work was carried out during visits by F.~Hamel to the Department of Mathematics and Statistics of Memorial University Newfoundland, whose hospitality is thankfully acknowledged.
X.-Q. Zhao's research is supported in part by the NSERC of Canada. W.~Ding is supported by the China Scholarship Council for 2 years of study at Aix Marseille Universit\'e.}\\
\\
\footnotesize{$^{\hbox{a }}$Aix Marseille Universit\'e, CNRS, Centrale Marseille, I2M, UMR 7373, 13453 Marseille, France}\\
\footnotesize{$^{\hbox{b }}$ School of Mathematical Sciences, University of Science and Technology of China,}\\
\footnotesize{Hefei, Anhui, 230026, P.R. China}\\
\footnotesize{$^{\hbox{c }}$ Institut Universitaire de France}\\
\footnotesize{$^{\hbox{d }}$ Department of Mathematics and Statistics, Memorial University of Newfoundland,}\\
\footnotesize{St. John's, NL A1C 5S7, Canada}}
\maketitle

\begin{abstract}
This paper is concerned with the existence and qualitative properties of pulsating fronts for spatially periodic reaction-diffusion equations with bistable  nonlinearities. We focus especially on the influence of the spatial period and, under various assumptions on the reaction terms and by using different types of arguments, we show several existence results when the spatial period is small or large. We also establish some properties of the set of periods for which there exist non-stationary fronts. Furthermore, we prove the existence of stationary fronts or non-stationary partial fronts at any period which is on the boundary of this set. Lastly, we characterize the sign of the front speeds and we show the global exponential stability of the non-stationary fronts for various classes of initial conditions.
\end{abstract}


\SE{Introduction and main results}

In this paper, we study the existence and qualitative properties of periodically propagating solutions of periodic reaction-diffusion equations of the type
\be\label{eqL}
u_t=(a_L(x)u_x)_x+f_L(x,u),\ \ t\in\R,\ x\in\R
\ee
with $L>0$, where $u_t$ stands for $u_t(t,x)=\partial_tu(t,x)=\partial u/\partial t(t,x)$, $u_x$ stands for $u_x(t,x)=\partial_xu(t,x)=\partial u/\partial x(t,x)$ and $(a_L(x)u_x)_x$ stands for $(a_L(x)u_x)_x(t,x)=\partial_x(a_Lu_x)(t,x)=\partial(a_Lu_x)/\partial x(t,x)$. The diffusion and reaction coefficients $a_L$ and $f_L$ are given by
$$a_L(x)=a(x/L)\ \hbox{ and }\ \ f_L(x,u)=f(x/L,u),$$
where the function $a:\R\to\R$ is positive, of class $C^{1,\alpha}(\R)$ (with $0<\alpha<1$) and $1$-periodic, that is $a(x+1)=a(x)$ for all $x\in\R$. Throughout the paper, the function $f:\R\times[0,1]\to\R,\ (x,u)\mapsto f(x,u)$ is continuous, $1$-periodic in $x$, of class $C^{0,\alpha}$ in $x$ uniformly in~$u\in[0,1]$, and of class $C^{1,1}$ in~$u$ uniformly in $x\in\R$ with $\partial_uf(\cdot,0)$ and $\partial_uf(\cdot,1)$ being continuous in $\R$. One assumes that, for every~$x\in\R$, the profile $f(x,\cdot)$ is bistable in~$[0,1]$, that is, there is $\theta_x\in(0,1)$ such that
\begin{equation}\label{bistable}
f(x,0)=f(x,1)=f(x,\theta_x)=0,\ f(x,\cdot)<0\hbox{ on }(0,\theta_x),\ f(x,\cdot)>0\hbox{ on }(\theta_x,1).
\end{equation}
One also assumes that $0$ and $1$ are uniformly (in $x$) stable zeroes of $f(x,\cdot)$, in the sense that there exist $\gamma>0$ and $\delta\in(0,1/2)$ such that
\begin{equation}\label{asspars}\left\{\baa{ll}
f(x,u)\le-\gamma u & \hbox{for all }(x,u)\in\R\times[0,\delta],\vspace{3pt}\\
f(x,u)\ge\gamma(1-u) & \hbox{for all }(x,u)\in\R\times[1-\delta,1].\eaa\right.
\end{equation}
Notice that this implies in particular that $\delta<\theta_x<1-\delta$ and $\max\big(\partial_uf(x,0),\partial_uf(x,1)\big)\le-\gamma$ for all $x\in\R$.

A particular case of such a function $f$ satisfying~\eqref{bistable} and~\eqref{asspars} is the cubic nonlinearity
\be\label{fcubic}
f(x,u)=u(1-u)(u-\theta_x),
\ee
where $x\mapsto\theta_x$ is a $1$-periodic $C^{0,\alpha}(\R)$ function ranging in $(0,1)$. Notice that in~\eqref{fcubic} or more generally in~\eqref{bistable}, the intermediate zero $\theta_x$ of $f(x,\cdot)$ is not assumed to be constant in general.

For mathematical purposes, the function $f$ is extended in $\R\times(\R\backslash[0,1])$ as follows: $f(x,u)=\partial_uf(x,0)u$ for $(x,u)\in\R\times(-\infty,0)$ and $f(x,u)=\partial_uf(x,1)(u-1)$ for $(x,u)\in\R\times(1,+\infty)$. Thus, $f$ is continuous in $\R\times\R$, $1$-periodic in $x$, $\min_{x\in\R}f(x,u)>0$ for all $u<0$ and $\max_{x\in\R}f(x,u)<0$ for all $u>1$, and $f(x,u)$, $\partial_uf(x,u)$ are globally Lipschitz-continuous in $u$ uniformly in $x\in\R$.


\subsubsection*{Pulsating fronts}

For each $L>0$, the functions $a_L$ and $f_L(\cdot,u)$ (for all $u\in[0,1]$) are $L$-periodic (in $x$). One is especially interested in the paper in understanding the role of the spatial period $L$ on the existence of solutions connecting the two stable steady states $0$ and $1$ and propagating with a constant average speed, the so-called pulsating, or periodic, fronts. Namely, a pulsating front connecting~$0$ and~$1$ for~\eqref{eqL} is a solution $u:\R\times\R\to[0,1]$ such that there exist a real number~$c_L$ (the average speed) and a function $\phi:\R\times\R\to[0,1]$ such that
\begin{equation}\label{depulf}\left\{\baa{l}
u(t,x)=\phi(x-c_Lt,x/L)\hbox{ for all }(t,x)\in\R\times\R,\vspace{3pt}\\
\phi(\xi,y)\hbox{ is }1\hbox{-periodic in }y,\vspace{3pt}\\
\phi(-\infty,y)=1,\ \phi(+\infty,y)=0\hbox{ uniformly in }y\in\R.\eaa\right.
\end{equation}
If $c_L\neq0$, then the map $(t,x)\mapsto(x-c_Lt,x/L)$ is a bijection from $\R\times\R$ to $\R\times\R$ and~$\phi$ is uniquely determined by $u$. Furthermore, in this case, for every $x\in\R$, the function~$t\mapsto u(t,x+c_Lt)$ is~$L/c_L$-periodic (in time). The notion of pulsating front with nonzero speed was first given in~\cite{skt}. On the other hand, a pulsating front with a speed $c_L=0$ simply means a stationary solution $u(t,x)=\phi_0(x)$ of~\eqref{eqL} such that $\phi_0:\R\to[0,1]$, $\phi_0(-\infty)=1$ and~$\phi_0(+\infty)=0$. In this case, the function $u$ can be written as in~\eqref{depulf} where the function $\phi$, which is then not uniquely determined, can be defined as $\phi(\xi,y)=\phi_0(\xi)$ for all $(\xi,y)\in\R\times\R$.

In this paper, we will show the uniqueness of the speed $c_L$ of pulsating fronts for any $L>0$, and we will then give some conditions which guarantee the existence of pulsating fronts with nonzero or zero speed. In particular, we will focus on the dependence with respect to the spatial period~$L$ of the underlying medium and we will consider the limits as $L\to0^+$ (the homogenization limit) and $L\to+\infty$ (slowly varying media). We will finally discuss the global and exponential stability of pulsating fronts with nonzero speed.


\subsubsection*{Some known existence results}

Before going further on, let us mention a very important case where the existence of fronts is known, that is the case of a homogeneous medium in the sense that the function $a$ is equal to a positive constant $d$ and the function $f$ does not depend on $x$. In this case, the function $f~:[0,1]\to\R$ is such that $f(0)=f(\theta)=f(1)=0$ for some $\theta\in(0,1)$, $f<0$ on $(0,\theta)$, $f>0$ on~$(\theta,1)$ and~$f'(0)<0$, $f'(1)<0$. It is well known~\cite{aw,fm} that for this homogeneous equation
\begin{equation}\label{homoin}
u_t=d\,u_{xx}+f(u),
\end{equation}
there exists a unique speed $c\in\R$ and a unique (up to shifts in $x$) front
\begin{equation}\label{twhomo}
u(t,x)=\phi(x-ct)\ \hbox{ such that }0<\phi<1\hbox{ in }\R,\ \ \phi(-\infty)=1\ \hbox{ and }\ \phi(+\infty)=0.
\end{equation}
Such a front is decreasing in $x$ and, for the equation~\eqref{eqL} with an arbitrary $L>0$, with $a_L=d$ and with $f_L(x,\cdot)=f$, it can also be viewed as a pulsating front with speed $c_L=c$. Furthermore, the speed $c$ has the same sign as the integral $\int_0^1f$ and the front is globally and exponentially stable~\cite{fm} (we will come back later to the precise notions of stability, directly in the framework of the periodic equation~\eqref{eqL}). In particular, if we fix $y\in\R$ and if in~\eqref{eqL} we freeze the coefficients~$a_L$ and~$f_L$ around the position~$Ly$, that is if we set $a^y(x)=a_L(Ly)=a(y)$ for all~$x\in\R$ and $f^y(u)=f_L(Ly,u)=f(y,u)$ for all~$u\in[0,1]$, then the homogeneous equation~$(u^y)_t=a^y(u^y)_{xx}+f^y(u^y)$ (with $(t,x)\in\R\times\R$) admits a unique front $u^y(t,x)=\phi^y(x-c^yt)$ and a unique speed $c^y\in\R$, with $0<\phi^y<1$ in $\R$, $\phi^y(-\infty)=1$ and~$\phi^y(+\infty)=0$. Furthermore, the speed $c^y$ has the sign of~$\int_0^1f^y(u)\,du=\int_0^1f(y,u)\,du$.

When the coefficients $a_L$ and $f_L$ of~\eqref{eqL} truly depend on $x$, the question of the existence of pulsating fronts is much more delicate. Actually, no explicit condition guaranteeing the existence or the non-existence has been known so far in general. Nevertheless, some particular cases have been dealt with and some more abstract conditions have been given. On the one hand, for a given $L>0$, the pulsating fronts are known to exist for~\eqref{eqL} if $f=f(u)$ does not depend on $x$ and if the coefficient~$a_L$ is close to a constant in some norms, see~\cite{fz,x2,x4,x5}. On the other hand, again for a given $L>0$, if the equation~\eqref{eqL} has no stable~$L$-periodic steady state $0<u(x)<1$ (see Definition~\ref{semistable} below for the precise meaning), then it admits pulsating fronts~\cite{fz}. Moreover,~\eqref{eqL} admits pulsating fronts with a positive speed under the additional assumption that at least some compactly supported initial conditions give rise to solutions converging to $1$ at $t\to+\infty$ locally uniformly in $x\in\R$, see~\cite{dgm}. However, pulsating fronts with nonzero speed~$c_L$ do not exist in general for some diffusions $a_L$ (not too close to their average) and some $x$-independent reactions~$f=f(u)=u(1-u)(u-\theta)$ with $\theta\simeq1/2$, see~\cite{x4,xz} and the comments following equation~\eqref{examc} below.

The bistability assumption~\eqref{bistable}, that is, the change of sign of $f(x,\cdot)$ in $(0,1)$ and the fact that~$\theta_x$ depends on $x$ in general, makes the questions of the existence of possible intermediate $L$-periodic steady states of~\eqref{eqL} and of the existence of pulsating fronts connecting~$0$ and~$1$ quite subtle. Many more existence results have actually been established for other types of nonlinearities~$f$ such as combustion, monostable or specific Fisher-KPP nonlinearities: for instance, for $1$-periodic (in~$x$) combustion nonlinearities $f$ for which $f\ge0$ in~$\R\times[0,1]$, $f=0$ in $\R\times\big([0,\theta]\cup\{1\}\big)$ for some $\theta\in(0,1)$ and $f(x,\cdot)$ is nonincreasing in~$[1-\delta,1]$ for some~$\delta>0$ independent of $x$, it is known that for every $L>0$, there is a unique (up to shifts in $t$) pulsating front~$u(t,x)=\phi(x-c_Lt,x/L)$ connecting~$0$ and~$1$, with $c_L>0$, see~\cite{bh2,x3}. This existence result holds whatever the size of~$a$ and~$f$ may be. On the other hand, when $f$ is positive on $\R\times(0,1)$, then for each~$L>0$, pulsating fronts $u(t,x)=\phi(x-ct,x/L)$ exist if and only if $c\ge c^*_L$, for some positive minimal speed~$c^*_L$, see~\cite{bh2,hz,lz1,lz2,w}. Furthermore, some variational formulas for~$c^*_L$ in the positive case and in the more specific KPP case have been derived in~\cite{bhn,bhr2,e} and some further qualitative and uniqueness properties for each speed $c\in[c^*_L,+\infty)$ have been given in~\cite{h,hr1,hr2}.


\subsubsection*{Uniqueness of the speed and further qualitative properties of pulsating fronts}

In this paper, we first discuss the question of the uniqueness of the speed of pulsating fronts for the bistable equation~\eqref{eqL}, under assumptions~\eqref{bistable} and~\eqref{asspars}, as well as the monotonicity and the uniqueness of pulsating fronts with nonzero speed. In~\cite{bh3}, qualitative properties of transition fronts, which are more general than pulsating fronts, were investigated in unstructured heterogeneous media. Some results of~\cite{bh3} can be applied to the pulsating fronts of the periodic equation~\eqref{eqL}, provided that the propagation speeds are not zero. More precisely,  \cite[Theorems~1.11 and~1.14]{bh3} (see also~\cite{x1} for $x$-independent reactions $f=f(u)$) lead to the uniqueness of the speed and of the fronts (up to shifts in time) in the class of pulsating fronts with nonzero speed, as well as the negativity of $\partial_{\xi}\phi$ for a pulsating front $\phi(x-ct,x/L)$ with $c\neq0$ and $\xi=x-ct$. In the present paper, we prove the uniqueness of the speed in a more general class of pulsating fronts with zero or nonzero speed. Furthermore, we show that if the speed is not equal to $0$, then it has a well determined sign, as in the homogeneous case. Throughout the paper, we denote $\overline{f}$ the arithmetic mean of the function $f$ with respect to the variable $x$, defined by
$$\overline{f}(u)=\int_0^1f(x,u)\,dx\ \hbox{ for }u\in[0,1].$$

\begin{theo}\label{thqual}
For any fixed $L>0$, the speed of pulsating fronts for~\eqref{eqL} is unique in the sense that if  $u_L(t,x)=\phi_L(x-c_Lt,x/L)$ and $\tilde{u}_L(t,x)=\tilde{\phi}_L(x-\tilde{c}_Lt,x/L)$ are two pulsating fronts, then~$\tilde{c}_L=c_L$. Furthermore, if $c_L\neq0$, then it has the sign of~$\int_0^1\overline{f}(u)\,du$, the functions $u_L$ and $\tilde{u}_L$ are increasing in $t$ if $c_L>0$, resp. decreasing in $t$ if $c_L<0$, and the front is unique up to shifts in $t$, that is, there is~$\tau\in\R$ such that $\tilde{u}_L(t,x)=u_L(t+\tau,x)$ for all~$(t,x)\in\R^2$.
\end{theo}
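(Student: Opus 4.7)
My plan is to derive an integral identity linking $c_L$ to $\int_0^1\overline{f}(u)\,du$ whenever $c_L\neq0$, use it to obtain the sign statement and to rule out the coexistence of stationary and non-stationary fronts via a complementary sign constraint for stationary fronts, and then invoke the transition-front uniqueness theorems of~\cite{bh3} to conclude.

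Setting $\xi=x-c_Lt$ and $y=x/L$, the ansatz $u_L(t,x)=\phi_L(\xi,y)$ transforms~\eqref{eqL} into
\begin{equation*}
-c_L\phi_{L,\xi}=a(y)\phi_{L,\xi\xi}+\frac{2a(y)}{L}\phi_{L,\xi y}+\frac{a'(y)}{L}\phi_{L,\xi}+\frac{a'(y)}{L^{2}}\phi_{L,y}+\frac{a(y)}{L^{2}}\phi_{L,yy}+f(y,\phi_L),
\end{equation*}
which for $c_L\neq0$ holds on all of $\R\times\R$ with $\phi_L$ being $1$-periodic in $y$ and $\phi_L(-\infty,\cdot)=1$, $\phi_L(+\infty,\cdot)=0$ uniformly in $y$. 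I then multiply by $\phi_{L,\xi}$ and integrate over $\R\times[0,1]$. Each of the five diffusive contributions integrates to zero: the $a\phi_{L,\xi\xi}$ term by antiderivative in $\xi$; the cross term $\frac{2a}{L}\phi_{L,\xi y}$, rewritten as $\frac{a}{L}(\phi_{L,\xi}^{2})_y$, cancels $\frac{a'}{L}\phi_{L,\xi}^{2}$ after integrating by parts in $y$ (boundary terms vanishing by periodicity); and $\frac{a'}{L^{2}}\phi_{L,y}\phi_{L,\xi}+\frac{a}{L^{2}}\phi_{L,yy}\phi_{L,\xi}$ collapses, after integration by parts in $y$ on the second summand, to $-L^{-2}\int_0^1\!\int_\R a(y)\phi_{L,y}\phi_{L,\xi y}\,d\xi\,dy$, whose inner $\xi$-integral $\tfrac12\int_\R(\phi_{L,y}^{2})_\xi\,d\xi$ vanishes. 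The nonlinear term yields $-\int_0^1\overline{f}(u)\,du$, and altogether
\begin{equation*}
c_L\int_0^1\!\int_\R(\phi_{L,\xi})^{2}\,d\xi\,dy=\int_0^1\overline{f}(u)\,du.
\end{equation*}
Since $\phi_L$ has distinct limits at $\xi=\pm\infty$, the left factor is strictly positive, so $c_L$ has the sign of $\int_0^1\overline{f}(u)\,du$.

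The uniqueness step rests on the complementary claim that the existence of a stationary pulsating front forces $\int_0^1\overline{f}(u)\,du=0$; the calculation above does not apply verbatim here, since the natural representative $\phi_L(\xi,y)=\phi_0(\xi)$ satisfies the profile equation only along the diagonal $\xi=Ly$, so a separate sliding or comparison argument is required, exploiting the uniform stability~\eqref{asspars} of $0$ and $1$ to show that a nonzero average reaction acts as a drift that a stationary interface cannot sustain. Once both sign constraints are in hand, either both $c_L$ and $\tilde c_L$ vanish, in which case nothing more is to be shown, or both are nonzero of the common sign of $\int_0^1\overline{f}(u)\,du$; in the latter case $u_L$ and $\tilde u_L$ are transition fronts in the sense of Berestycki--Hamel and \cite[Theorems~1.11 and~1.14]{bh3} deliver $c_L=\tilde c_L$, the strict monotonicity of $u_L,\tilde u_L$ in $t$ in accordance with the sign of $c_L$, and uniqueness up to a shift in time. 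The main technical hurdles are the decay and regularity estimates on $\phi_{L,\xi},\phi_{L,y},\phi_{L,\xi y}$ needed to legitimize the integrations by parts above (they follow from the exponential approach of $\phi_L$ to its stable endpoints, a consequence of~\eqref{asspars} together with standard elliptic regularity on the cylinder) and, more importantly, the complementary sign constraint in the stationary case, where the clean multiplier calculation breaks down.
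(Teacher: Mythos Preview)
Your derivation of the integral identity
\[
c_L\int_0^1\!\int_\R(\phi_{L,\xi})^{2}\,d\xi\,dy=\int_0^1\overline{f}(u)\,du
\]
for $c_L\neq0$ is correct and is exactly what the paper does in Lemma~\ref{sign}, with the required decay and regularity supplied by Lemma~\ref{exponential}. Likewise, once both speeds are known to be nonzero, your appeal to \cite[Theorems~1.11 and~1.14]{bh3} for monotonicity and uniqueness up to time shift is legitimate and is acknowledged as a valid route in the paper's discussion following the statement of Theorem~\ref{thqual}.

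The genuine gap is your ``complementary claim'' that the existence of a stationary pulsating front forces $\int_0^1\overline{f}(u)\,du=0$. This is \emph{false}, and the paper itself gives the counterexample: Xin's equation~\eqref{examc} admits a stationary front while $\int_0^1 g(u)\,du\neq0$ (see the paragraph immediately after Theorem~\ref{thqual}). So the ``separate sliding or comparison argument'' you propose cannot exist, and your scheme does not rule out the coexistence of a stationary and a non-stationary front. The multiplier calculation genuinely breaks down in the stationary case because, with $a_L$ and $f_L$ depending on $x$, multiplying $(a_Lu')'+f_L(x,u)=0$ by $u'$ and integrating leaves the cross term $\tfrac12\int_\R a_L'(u')^2$ and an $x$-dependent reaction integral that do not combine to yield $\int_0^1\overline{f}$.

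The paper closes this gap by an entirely different mechanism: it first proves the global stability Theorem~\ref{gstability} (via Proposition~\ref{nonlocals}), which says that if a pulsating front $U$ with speed $c_L\neq0$ exists, then \emph{every} solution with front-like initial data converges uniformly to a time-shift of $U$. Applying this with initial datum $\tilde{U}(0,\cdot)$ forces $\tilde{U}$ itself to converge to a shift of $U$, and the limit~\eqref{unin} then yields $\tilde{c}_L=c_L$ directly (see the end of Section~\ref{sec41}). In particular, the coexistence of a stationary and a non-stationary front is excluded not by any sign identity for the stationary front, but because the non-stationary front attracts all front-like data.
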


When $c_L\neq0$, given the uniqueness of the speed stated in the first part of Theorem~\ref{thqual}, the monotonicity and uniqueness of $u_L$ up to shifts in $t$ can then be viewed as a consequence of~\cite{bh3}. Here, we will see that the uniqueness follows from the global stability of the pulsating fronts with nonzero speed (see Theorem~\ref{gstability} below and the end of Section~\ref{sec41} for the proof of the uniqueness in Theorem~\ref{thqual}). Theorem~\ref{thqual} also shows that the sign of the front speed, if not zero, is that of~$\int_0^1\overline{f}(u)du$ and therefore, it does not depend on $L$. But we point out that this property is only valid under the condition $c_L\neq 0$. In other words, one can not say that if $\int_0^1\overline{f}(u)du\neq 0$ then~$c_L\neq 0$. Consider for instance the equation
\begin{equation}\label{examc}
u_t=(d(x)u_x)_x+\mu^2 g(u),\ \ t\in\R,\ x\in\R,
\end{equation}
where $d(x)=1+\delta \lambda \sin(2\pi x)$, $\mu\in\R$, $\delta\in\R$, $\lambda\in\R$, and $g(u)=u(1-u)(u-1/2+\delta)$. Xin \cite{x4} proved that there are $\mu>0$, $\delta\in(0,1/2)$ and $\lambda\neq 0$ such that equation~\eqref{examc} admits a stationary front, that is, a pulsating front with speed $c=0$, whereas $\int_0^1g(u)du\neq 0$. In this case, for which pulsating fronts with nonzero speed do not exist by Theorem~\ref{thqual}, it follows from~\cite{x2,x4} that the variation of $d$ with respect to its mean value can actually not be too small.


\subsubsection*{Existence of pulsating fronts for small $L>0$}

In this section and the following two ones, we present new results on the existence of pulsating fronts for the bistable equation~\eqref{eqL}. We first begin with the case of rapidly oscillating environments. For combustion-type and $x$-independent nonlinearities $f=f(u)$, Heinze proved in~\cite{he1} that pulsating fronts for equation~\eqref{eqL}, which exist in this case for every $L>0$, can be homogenized as $L\to~0^+$. He also showed in~\cite{he2} that the homogenization process still held for semilinear higher-dimensional reaction-diffusion equations of the type $u_t=\Delta u+f(u)$ with~$x$-independent bistable nonlinearities~$f=f(u)$ in periodically perforated domains (see the beginning of Section~\ref{smalllhomo}). By using variational principles for the speeds, the asymptotic expansions of the speeds for both models were established in~\cite{hps}. In the case of periodic Fisher-KPP type of nonlinearity, the convergence of the minimal speeds~$c^*_L$ to the homogenization limit was proved in~\cite{ehr}.

Inspired by the aforementioned homogenization results and the methods which were used, mostly in~\cite{he2}, we will show here that, under some assumptions guaranteeing their existence, pulsating fronts $u_L(t,x)=\phi_L(x-c_Lt,x/L)$ of~\eqref{eqL} converge as $L\to 0^+$, in a sense which will be made clear in Lemma~\ref{continuem}, to the following limit:
\begin{equation}\label{homogenized}
\left\{\baa{l}
a_{H}\phi_0''+c_0\phi_0'+\overline{f}(\phi_0)=0 \hbox{ in } \R,\vspace{3pt}\\
\phi_0(-\infty)=1,\ \ \phi_0(+\infty)=0,\eaa\right.
\end{equation}
where $a_H>0$ denotes the harmonic mean of the function $a$, defined by
\be\label{defaH}
a_H=\Big(\int_0^1a(x)^{-1}dx\Big)^{-1}.
\ee

\begin{theo}\label{thhomo}
Assume that there is a unique $($up to shifts$)$ front $\phi_0$, with speed $c_0\neq 0$, for the homogenized equation~\eqref{homogenized}. Then there is $L_*>0$ such that for any $0<L<L_*$, equation~\eqref{eqL} admits a unique $($up to shifts in time$)$ pulsating front $u_L(t,x)=\phi_L(x-c_Lt,x/L)$, with speed~$c_L\neq 0$, and $c_L\to c_0$ as $L\to0^+$. Lastly, up to translation of $\phi_0$, there holds $\phi_L(\xi,y)-\phi_0(\xi)\to0$ in~$H^1(\R\times(0,1))$ as~$L\to0^+$.
\end{theo}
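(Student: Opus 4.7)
The plan is to build the pulsating front for small $L$ by a two-scale perturbation of the homogenized front $(\phi_0,c_0)$, and to obtain uniqueness and convergence simultaneously from an a priori compactness argument combined with the hypothesis that $(\phi_0,c_0)$ is unique.

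Step 1 (Setting up the two-scale equation). In moving coordinates $\xi=x-c_Lt$, $y=x/L$, the pulsating front profile $\phi_L(\xi,y)$, $1$-periodic in $y$, satisfies
\begin{equation*}
a(y)\partial_{\xi\xi}\phi_L+\frac{2}{L}a(y)\partial_{\xi y}\phi_L+\frac{a'(y)}{L}\partial_\xi\phi_L+\frac{1}{L^2}\bigl(a(y)\partial_y\phi_L\bigr)_y+c_L\partial_\xi\phi_L+f(y,\phi_L)=0,
\end{equation*}
with $\phi_L(-\infty,\cdot)=1$ and $\phi_L(+\infty,\cdot)=0$ uniformly in $y$. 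Writing the formal expansion $\phi_L(\xi,y)=\phi_0(\xi)+Lw_1(\xi,y)+L^2 v_L(\xi,y)$ and collecting powers of $L$, the $O(L^{-1})$ equation forces $w_1(\xi,y)=-\phi_0'(\xi)\chi(y)$, where $\chi$ is the $1$-periodic solution of the cell problem $(a(y)(\chi'(y)-1))'=0$, which is explicitly given by $\chi'(y)=1-a_H/a(y)$ (with the normalization $\int_0^1\chi=0$). The $O(1)$ solvability condition, after averaging in $y$, is exactly the homogenized equation~\eqref{homogenized} satisfied by $\phi_0$.

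Step 2 (Perturbation around $(\phi_0,c_0)$). Substituting the ansatz into the full equation yields a system for the unknowns $(v_L,c_L)$ of the form $\mathcal{L}_L(v_L,c_L-c_0)=\mathcal{R}_L(v_L,L)$, where $\mathcal{L}_0$ is, after taking the mean in $y$, the linearized operator $a_H\partial_{\xi\xi}+c_0\partial_\xi+\overline{f}'(\phi_0)$ at the homogenized front. Because $c_0\neq 0$, this linearization is a Fredholm operator of index zero on a standard exponentially weighted space on $\R$, with one-dimensional kernel spanned by $\phi_0'$ (the translation direction) and one-dimensional cokernel. I would work in the quotient space transverse to translations (for instance by imposing $\int_{\R\times(0,1)}v_L\,\phi_0'\,d\xi dy=0$), and let the speed correction $c_L-c_0$ absorb the solvability constraint. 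The implicit function theorem, applied in the appropriate two-scale Sobolev space on $\R\times\T$ (after elliptic regularization inverting $(a(y)\partial_y\cdot)_y$ on the space of mean-zero $y$-periodic functions), then yields a unique $(v_L,c_L)$ for $L<L_*$, with $c_L\to c_0$ and $\|Lw_1+L^2v_L\|_{H^1(\R\times(0,1))}=O(L)$; in particular $\phi_L-\phi_0\to 0$ in $H^1(\R\times(0,1))$.

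Step 3 (A priori compactness forces uniqueness). To rule out other pulsating fronts for small $L$, I would argue by contradiction: suppose there exist $L_n\to 0^+$ and pulsating fronts $(\tilde\phi_{L_n},\tilde c_{L_n})$ not obtained from Step 2 (after normalizing the $\theta$-level set to lie in a fixed compact set of $\xi$). Standard parabolic interior estimates give uniform bounds on $\tilde\phi_{L_n}$. Two-scale convergence (or equivalently passing to the limit in the weak formulation after multiplying by a test function $\psi(\xi,x/L_n)$) then extracts a limit $(\tilde\phi_0,\tilde c_0)$ solving~\eqref{homogenized}, with $\tilde\phi_{L_n}(\xi,y)\to\tilde\phi_0(\xi)$ in $H^1_{\mathrm{loc}}$. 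The uniqueness hypothesis on $(\phi_0,c_0)$ forces $\tilde c_0=c_0$ and $\tilde\phi_0=\phi_0$ up to shift, so $(\tilde\phi_{L_n},\tilde c_{L_n})$ coincides for $n$ large with the family built in Step 2 (by uniqueness in that step), a contradiction. The tails are controlled via the exponential decay coming from the uniform stability assumption~\eqref{asspars}, which upgrades $H^1_{\mathrm{loc}}$ convergence to convergence in $H^1(\R\times(0,1))$. Uniqueness of the speed in the class of all pulsating fronts is then part of Theorem~\ref{thqual}, and uniqueness up to shift in time follows from that theorem since $c_L\neq 0$.

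The main obstacle is the singular nature of the elliptic operator as $L\to 0^+$: the leading term $L^{-2}(a(y)\partial_y\cdot)_y$ dominates but has a large kernel (functions independent of $y$), so neither the implicit function theorem nor the compactness argument can be run in a naive Sobolev space. The key technical point is to set up the function space so that this kernel is precisely the space of homogenized profiles $\phi_0(\xi)$, and to estimate the $y$-fluctuations uniformly in $L$ via the corrector $\chi$; once this is done, the limit problem decouples from the fast variable and the bistable structure of $\overline{f}$ (inherited from~\eqref{bistable}–\eqref{asspars}) makes the homogenized fixed-point problem well-posed.
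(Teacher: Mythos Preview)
Your overall strategy---two-scale ansatz with corrector $\chi$, implicit function theorem around $(\phi_0,c_0)$, and uniqueness via Theorem~\ref{thqual}---is exactly the paper's. The paper follows Heinze's perforated-domain argument and sets up a map $G(v,c,L)$ on $H^1(\R\times\T)\times(0,+\infty)\times\R$ whose zeros correspond to pulsating fronts, with $G(0,c_0,0)=(0,0)$; the normalization $\int_{\R^+\times\T}\phi_L^2=\int_{\R^+}\phi_0^2$ plays the role of your transversality condition. So at the structural level you are on the right track.

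The gap is in Step~2, precisely where you yourself flag the ``main obstacle.'' You propose to apply the implicit function theorem ``in the appropriate two-scale Sobolev space after elliptic regularization inverting $(a(y)\partial_y\cdot)_y$ on mean-zero functions,'' but this does not produce a map that is even continuous at $L=0$: the operator $\tilde\partial_L(a\tilde\partial_L\,\cdot)$ is degenerate (parabolic in $(t,x)$, not elliptic in $(\xi,y)$), and separately inverting the $y$-part gives no control on $\partial_\xi v$ uniformly in $L$. The paper's resolution is concrete and different from what you sketch: it introduces the shifted operators $M_{c,L}=\tilde\partial_L(a\tilde\partial_L\,\cdot)+c\partial_\xi-\beta$ (with $\beta>0$ a fixed constant), proves directly that each $M_{c,L}$ is invertible from its domain $\mathcal D_L\subset H^1(\R\times\T)$ onto $L^2(\R\times\T)$ with $L$-uniform bounds (crucially using a difference-quotient argument to estimate $\partial_\xi$ separately), and then shows the nontrivial limit $M_{c,L}^{-1}(g)\to M_{c,0}^{-1}(\overline g)$ in $H^1(\R\times\T)$ as $L\to0$, where $M_{c,0}=a_H\partial_{\xi\xi}+c\partial_\xi-\beta$ acts on $H^2(\R)$. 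This continuity (Lemmas~\ref{invertible}--\ref{continuem}) is the heart of the proof and is what makes $G$ continuous and $(v,c)$-differentiable \emph{across} $L=0$; your sketch does not supply a substitute.

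A smaller point: your Step~3 compactness argument is unnecessary. Once you know $c_L\neq0$ for $L<L_*$, Theorem~\ref{thqual} already gives uniqueness of the speed and uniqueness of the front up to time shifts, so the contradiction/two-scale-convergence loop can be dropped entirely (and as written it has its own gap: the local uniqueness from the implicit function theorem is in the full $H^1(\R\times\T)$ norm, whereas your compactness only delivers $H^1_{\mathrm{loc}}$ before you invoke tail control). The paper simply shifts an arbitrary front to match the normalization and reads off equality from the local uniqueness in the implicit function theorem, after first quoting Theorem~\ref{thqual} for the speed.
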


This theorem does not only give the existence and uniqueness of pulsating fronts at small~$L>~\!\!0$, but it also provides the convergence of the speeds $c_L$ to $c_0$ as $L\to0^+$. In particular, for small~$L~\!\!>~\!\!0$, the speeds $c_L$ have the same sign as the speed~$c_0$ of the homogenized equation~\eqref{homogenized}, that is, the same sign as the integral~$\int_0^1\overline{f}(u)\,du$ (notice that this sign property could also be viewed as a consequence of Theorem~\ref{thqual}).

\begin{rem}{\rm
By the assumptions~\eqref{bistable} and~\eqref{asspars}, the function $\overline{f}$ is a~$C^1([0,1])$ function such that $\overline{f}(0)=\overline{f}(1)=0$, $\overline{f}'(0)<0$ and $\overline{f}'(1)<0$. In addition, if one assumes that there is a {\it unique} real number $\overline{\theta}\in(0,1)$ such that $\overline{f}(\overline{\theta})=0$, then, as already mentioned, equation~\eqref{homogenized} admits a unique solution $(\phi_0,c_0)$ with $0<\phi_0<1$ in $\R$, and $c_0$ has the sign of~$\int_0^1\overline{f}(u)\,du$. However, it is easy to see that there are  examples of nonlinearities $f(x,u)$ which satisfy~\eqref{bistable} and~\eqref{asspars}, but for which $\overline{f}$ has more than one zero in $(0,1)$. The possible multiple oscillations of~$\overline{f}$ on $(0,1)$ do not exclude the existence of fronts for the homogenized equation~\eqref{homogenized}, although the existence does not hold in general, see~\cite{fm}.}
\end{rem}

Recently, Fang and Zhao~\cite{fz} considered the propagation property for the following reaction-diffusion equation
\begin{equation}\label{eqfz}
u_t=(d(x)\,u_x)_x+f(u),\ \ t\in\R,\ x\in\R,
\end{equation}
where $d(x)$ is a positive $C^1$-continuous periodic function and $f(u)=u(1-u)(u-\theta)$ with~$\theta\in(0,1)$. Under an abstract setting, they first established the existence of bistable traveling waves for monotone spatially periodic semiflows, and then applied the abstract results to the semiflow generated by the solution operator associated with equation~\eqref{eqfz}. By studying the stability of the intermediate (i.e., ranging in $(0,1)$) periodic steady states, they proved the existence of pulsating fronts provided that $d(x)$ is sufficiently close to a positive constant in $C^0$-norm. Actually, we will show in the present paper that some results of~\cite{fz} can be used to prove the existence of pulsating fronts for~\eqref{eqL} when $L$ is small enough, under some assumptions on $\overline{f}$ but for general diffusion coefficients $a$ (we will consider later the case of large periods~$L$). More precisely, we have the following result:

\begin{theo}\label{thhomobis}
Assume that there is $\overline{\theta}\in(0,1)$ such that
\be\label{hypoverf}
\overline{f}<0\hbox{ on }(0,\overline{\theta}),\ \overline{f}>0\hbox{ on }(\overline{\theta},1),\hbox{ and }\overline{f}'(\overline{\theta})>0.
\ee
Then there is $\tilde{L}_*>0$ such that for all $0<L<\tilde{L}_*$, equation~\eqref{eqL} admits a pulsating front~$u_L(t,x)=\phi_L(x-c_Lt,x/L)$ with speed $c_L$, and $c_L\to c_0$ as $L\to0^+$, where $c_0$ is the unique speed for the homogenized equation~\eqref{homogenized}. Furthermore, $c_L=c_0=0$ for all $0<L<\tilde{L}_*$ if $c_0=0$.
\end{theo}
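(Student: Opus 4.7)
The plan is to invoke the abstract existence theorem of Fang and Zhao~\cite{fz}: a pulsating front for~\eqref{eqL} exists as soon as~\eqref{eqL} admits no stable intermediate $L$-periodic steady state. The core of the argument is to rule out such stable intermediate steady states when $L$ is small; the convergence of the speeds will then follow from Theorem~\ref{thqual} and Theorem~\ref{thhomo}.

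Assume, by contradiction, that there exist $L_n\to0^+$ and $L_n$-periodic steady states $u_n$ of~\eqref{eqL} with $0<u_n<1$ whose linearized operator has non-positive principal eigenvalue $\lambda_n$. Set $v_n(y):=u_n(L_ny)$; the function $v_n$ is $1$-periodic and solves
$$(a(y)v_n'(y))'+L_n^2f(y,v_n(y))=0.$$
Since $f$ is bounded, integrating once from $0$ to $y$ and using the periodicity of $av_n'$ yields $v_n'=O(L_n^2)$ in $L^\infty(\R)$, so $v_n\to\theta^*$ uniformly along a subsequence for some $\theta^*\in[0,1]$. Integration of the equation over a period gives $\int_0^1 f(y,v_n(y))\,dy=0$, which passes to the limit as $\overline{f}(\theta^*)=0$; by~\eqref{hypoverf}, $\theta^*\in\{0,\overline{\theta},1\}$. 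Assumption~\eqref{asspars} rules out $\theta^*\in\{0,1\}$: if eventually $v_n\le\delta$, then $f(y,v_n)\le-\gamma v_n<0$, so $(av_n')'>0$ on $(0,1)$, contradicting the $1$-periodicity of $av_n'$; the case $v_n\ge1-\delta$ is symmetric. Hence $\theta^*=\overline{\theta}$.

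The next step is the asymptotic analysis of $\lambda_n$. Rescaling the positive principal eigenfunction $\varphi_n$ via $\psi_n(y)=\varphi_n(L_ny)$ converts the $L_n$-periodic eigenvalue problem into the $1$-periodic problem
$$(a(y)\psi_n'(y))'+L_n^2\,\partial_u f(y,v_n(y))\,\psi_n(y)=L_n^2\lambda_n\psi_n(y).$$
The unperturbed operator $\psi\mapsto(a\psi')'$ is self-adjoint on $L^2(0,1)$ with simple principal eigenvalue $0$ and constant eigenfunction, so standard first-order perturbation theory in the small parameter $L_n^2$ yields
$$\lambda_n=\int_0^1\partial_u f(y,v_n(y))\,dy+O(L_n^2)\ \longrightarrow\ \overline{f}'(\overline{\theta})>0$$
as $n\to+\infty$, using $v_n\to\overline{\theta}$ uniformly together with $\overline{f}'(u)=\int_0^1\partial_u f(y,u)\,dy$. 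This contradicts $\lambda_n\le0$. Consequently, there exists $\tilde{L}_*>0$ such that for every $0<L<\tilde{L}_*$ equation~\eqref{eqL} has no stable intermediate $L$-periodic steady state, and~\cite{fz} produces a pulsating front $u_L(t,x)=\phi_L(x-c_Lt,x/L)$ with speed $c_L$.

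It remains to treat the speed. When $c_0\ne0$, Theorem~\ref{thhomo} directly delivers $c_L\to c_0$ as $L\to0^+$ (after shrinking $\tilde{L}_*$ if necessary). When $c_0=0$, multiplying~\eqref{homogenized} by $\phi_0'$ and integrating over $\R$ gives $c_0\,a_H\int_\R(\phi_0')^2\,d\xi=\int_0^1\overline{f}(u)\,du$, hence $\int_0^1\overline{f}(u)\,du=0$; Theorem~\ref{thqual} then forbids $c_L\ne0$, yielding $c_L=c_0=0$ for every $0<L<\tilde{L}_*$ and in particular $c_L\to c_0$. The chief difficulty of the proof lies in the singular-limit perturbation analysis of $\lambda_n$, whose leading order is exactly $\overline{f}'(\overline{\theta})$ and thus requires crucially the strict inequality $\overline{f}'(\overline{\theta})>0$ to produce a contradiction.
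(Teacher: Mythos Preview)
Your proposal is correct and follows essentially the same route as the paper: establish instability of every intermediate $L$-periodic steady state for small $L$ (the paper's Lemma~\ref{unstable}), apply the Fang--Zhao theory, and then handle the speed via Theorem~\ref{thhomo} when $c_0\neq0$ and Theorem~\ref{thqual} (Lemma~\ref{sign}) when $c_0=0$.

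Two points of execution differ and are worth noting. First, for the eigenvalue estimate you invoke first-order perturbation theory to get $\lambda_n=\int_0^1\partial_uf(y,v_n)\,dy+O(L_n^2)$; the paper instead multiplies the rescaled eigenvalue equation by $\psi_n^{-1}$ and integrates over a period, which yields directly the lower bound $\lambda_n\ge\int_0^1\partial_uf(y,v_n(y))\,dy$ without any need to justify uniformity of the perturbative expansion in $n$. Second, you treat~\cite{fz} as a black box (``no semistable intermediate state $\Rightarrow$ front''), whereas the paper spells out the verification of the abstract hypotheses (A1)--(A6): in particular it derives the counter-propagation condition (A6) from the instability through Lemmas~\ref{noexstate} and~\ref{spreadingspeed}. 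Your shortcut is legitimate given the statement in the introduction of the paper, but the paper's proof makes the dependence on~\cite{fz} fully explicit.
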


The results stated in Theorems~\ref{thhomo} and~\ref{thhomobis} and the methods used to prove them are different. On the one hand, the proof of Theorem~\ref{thhomo} relies on a perturbation argument and on the application of the implicit function theorem in some suitable Banach spaces. It provides the local uniqueness (up to shifts in time) of the pulsating fronts~$u_L(t,x)=\phi_L(x-c_Lt,x/L)$, as well as the uniqueness and the nonzero sign of the speeds~$c_L$. The proof uses as an essential ingredient the fact that the front speed of the homogenized equation~\eqref{homogenized} is not zero. On the other hand, the main tool in the proof of Theorem~\ref{thhomobis} is to show that equation~\eqref{eqL} does not admit any semistable $L$-periodic steady state ranging in $(0,1)$, and this strategy may well give rise to pulsating fronts~$u_L(t,x)=\phi_L(x-c_Lt,x/L)$ with speed $c_L=0$. Consider for instance the equation~\eqref{examc} again, which admits some stationary fronts for some parameters $d(x)$, $\mu>0$, $g(u)=u(1-u)(u-1/2+\delta)$ and $\delta\in(0,1/2)$. For the equation~\eqref{eqL} with $a_L(x)=d(x/L)$ and~$f_L(x,u)=f(u)=\mu^2g(u)$, Theorem~\ref{thhomo} can be applied since~$\overline{f}(u)=\mu^2u(1-u)(u-1/2+\delta)$ and the speed $c_0$ associated with~\eqref{homogenized} is positive (it has the sign of~$\int_0^1\overline{f}$). As a consequence, the period $L_*$ given in Theorem~\ref{thhomo} satisfies $L_*\le1$, since the interval $(0,L_*)$ is an interval of existence (and uniqueness) of pulsating fronts with {\it nonzero} speeds. Theorem~\ref{thhomobis} can also be applied in this case since $\overline{f}$ satisfies~\eqref{hypoverf} with~$\overline{\theta}=1/2-\delta$. We believe that in this case, for every~$L>0$, equation~\eqref{eqL} has no semistable~$L$-periodic steady state ranging in $(0,1)$. If that were true, then the method used in the proof of Theorem~\ref{thhomobis} would imply that the threshold~$\tilde{L}_*$ would actually be infinite. More generally speaking, Theorem~\ref{thhomo} provides an interval of existence and uniqueness of pulsating fronts with nonzero speeds, while the fronts given in Theorem~\ref{thhomobis} may be stationary in general.

Finally, we point out that even if Theorem~\ref{thhomo} can cover the case of functions $\overline{f}$ with multiple oscillations on the interval $[0,1]$, it does not hold if $\int_0^1\overline{f}(u)du=0$, while Theorem~\ref{thhomobis} does, under the additional assumption~\eqref{hypoverf}. As an example, fix an $x$-independent function $f=f(u)$ satis\-fying~\eqref{bistable} and~\eqref{asspars} and fix a positive constant $d>0$. It follows from~\cite{fz} that there is $\eta>0$ such that for every $L>0$ and for every $a$ satisfying the general assumptions of the present paper together with~$\|a-d\|_{L^{\infty}(\R)}\le\eta$, equation~\eqref{eqL} admits a pulsating front. From Theorem~\ref{thqual}, this pulsating front has to be stationary since $\int_0^1\overline{f}(u)\,du=\int_0^1f(u)\,du=0$. The existence of these stationary fronts cannot be covered by Theorem~\ref{thhomo}, but it can by Theorem~\ref{thhomobis}, for small~$L>0$ (and actually, for all $L>0$ since all $L$-periodic stationary states $0<\bar{u}(x)<1$ are unstable if~$\|a-d\|_{L^{\infty}(\R)}\le\eta$, and the proof of Theorem~\ref{thhomobis} is based on this property).


\subsubsection*{Existence of pulsating fronts for large $L>0$}

The method used in the proof of Theorem~\ref{thhomobis} can also be applied to prove the existence of pulsating fronts of equation~\eqref{eqL} with large $L>0$.

\begin{theo}\label{thlarge}
Assume that
\begin{equation}\label{conlarge}
\int_0^1\!\!f(x,u)\,du>0\ \hbox{ and }\ \frac{\partial f}{\partial u}(x,\theta_x)>0\ \hbox{ for all }x\in\R.
\end{equation}
Then there is $L^*>0$ such that for all $L>L^*$, equation~\eqref{eqL} admits a pulsating front $u_L(t,x)=\phi_L(x-c_Lt,x/L)$ with speed $c_L>0$.
\end{theo}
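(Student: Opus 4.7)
The strategy mirrors the proof of Theorem~\ref{thhomobis}: I would show that for all sufficiently large $L$, equation~\eqref{eqL} admits no semistable $L$-periodic steady state $\bar u_L$ with $0<\bar u_L<1$, so that the abstract theorem of Fang and Zhao~\cite{fz} produces a pulsating front for~\eqref{eqL}. The speed is then identified as positive by combining Theorem~\ref{thqual} (which forces $c_L$, whenever nonzero, to have the sign of $\int_0^1\overline{f}(u)\,du>0$, positive by the first half of~\eqref{conlarge}) with the spreading criterion of~\cite{dgm}: for $L$ large, truncating and translating a front of the frozen-coefficient equation $u_t=a(y_0)u_{xx}+f(y_0,u)$---whose speed is positive, by the $x$-independent case of Theorem~\ref{thqual} applied to any single $y_0\in\R$---yields a compactly supported sub-solution of~\eqref{eqL} that drives some solution to $1$ locally uniformly, excluding $c_L=0$.

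The core step is the non-existence of intermediate semistable periodic steady states for $L$ large. Arguing by contradiction, suppose $L_n\to+\infty$ and $L_n$-periodic semistable steady states $\bar u_n$ with $0<\bar u_n<1$ exist. Set $v_n(y):=\bar u_n(L_n y)$; this function is $1$-periodic and solves
\begin{equation*}
L_n^{-2}\bigl(a(y)\,v_n'(y)\bigr)'+f\bigl(y,v_n(y)\bigr)=0 \quad\text{in }\R.
\end{equation*}
Multiplying by $v_n'$ and integrating over one period (the primitive $F(y,u)=\int_0^u f(y,s)\,ds$ is uniformly bounded) gives $L_n^{-2}\!\int_0^1 a(v_n')^2\,dy\le C$. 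Combined with $0<v_n<1$, a subsequence converges in $L^1(0,1)$ to some $v_\infty$ with $f(y,v_\infty(y))=0$ a.e., so $v_\infty(y)\in\{0,\theta_y,1\}$. An inner-layer blow-up at any candidate jump point $y_0$---rescaling $\eta=L_n(y-y_0)$ makes the limit profile a bounded solution of $a(y_0)w''+f(y_0,w)=0$, with first integral $\tfrac{a(y_0)}{2}(w')^2+F(y_0,w)=\mathrm{const}$---rules out heteroclinics between any two of $\{0,\theta_{y_0},1\}$: by~\eqref{bistable} and the first half of~\eqref{conlarge}, the primitive $F(y_0,\cdot)$ takes three distinct values at those three zeros. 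Hence $v_\infty$ lies on a single branch; integrating the steady-state equation over a period and using~\eqref{asspars} excludes $v_\infty\equiv 0$ and $v_\infty\equiv 1$, so $v_n\to\theta_\cdot$ uniformly.

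Semistability gives that the principal eigenvalue $\mu_n$ of the linearization $\phi\mapsto(a_{L_n}\phi')'+\partial_u f(x/L_n,\bar u_n(x))\,\phi$ on $L_n$-periodic $\phi$ satisfies $\mu_n\le 0$. After the change of variables $y=x/L_n$, $\mu_n$ coincides with the principal eigenvalue of
\begin{equation*}
\psi\mapsto L_n^{-2}\bigl(a(y)\psi'(y)\bigr)'+\partial_u f\bigl(y,v_n(y)\bigr)\,\psi(y)
\end{equation*}
on $1$-periodic $\psi$. A standard singular-perturbation test, using a bump $\psi_n$ concentrated on an intermediate scale $\ell_n$ with $L_n^{-1}\ll\ell_n\ll 1$ centred at a maximizer $y^\star$ of $y\mapsto\partial_u f(y,\theta_y)$, makes both the drift penalty $L_n^{-2}\int a(\psi_n')^2$ negligible relative to $\int \psi_n^2$ while $(\int \partial_u f(y,v_n)\psi_n^2)/(\int\psi_n^2)\to\partial_u f(y^\star,\theta_{y^\star})$, so that
\begin{equation*}
\liminf_{n\to\infty}\mu_n\ \ge\ \max_{y\in\R}\partial_u f(y,\theta_y)>0
\end{equation*}
by the second half of~\eqref{conlarge}; this contradicts $\mu_n\le 0$ and finishes the proof.

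The main obstacle is the inner-layer step: the natural convergence $v_n\to v_\infty$ is only in $L^1$, and cleanly forbidding jump discontinuities of $v_\infty$ requires combining the energy bound above with a blow-up compactness argument on the layer scale $1/L_n$, or, alternatively, a uniform H\"older estimate on that scale via De~Giorgi--Nash--Moser-type bounds applied to the rescaled equation.
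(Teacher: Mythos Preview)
Your route to the instability of intermediate periodic steady states is genuinely different from the paper's, and the obstacle you flag is a real gap, not a technicality. The paper (Lemma~\ref{unstablelar}) avoids the singular-perturbation analysis of the rescaled profile $v_n(y)=\bar u_n(L_ny)$ altogether. Instead it observes that for each $n$ there must be a point $x_n\in[0,L_n]$ with $\bar u_n(x_n)=\theta_{x_n/L_n}$, since otherwise $f_{L_n}(x,\bar u_n(x))$ would keep a fixed sign and the periodic function $a_{L_n}\bar u_n'$ could not have zero increment. Translating by $x_n$ and passing to the limit in the \emph{original} spatial variable (so no small parameter appears) gives a solution $p_\infty$ of the frozen equation $a(x_\infty)p_\infty''+f(x_\infty,p_\infty)=0$ with $p_\infty(0)=\theta_{x_\infty}$, together with a positive limit eigenfunction $\psi_\infty$ satisfying $a(x_\infty)\psi_\infty''+\partial_uf(x_\infty,p_\infty)\psi_\infty=\tilde\lambda_1\psi_\infty$, $\tilde\lambda_1\le 0$. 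A short phase-plane trichotomy ($p_\infty$ constant, periodic, or a ground state) then yields a contradiction: concavity of $\psi_\infty$ in the constant case, Sturm comparison with the sign-changing $p_\infty'$ in the periodic case, and a Wronskian computation in the ground-state case. No energy bound, $L^1$ compactness, or inner-layer argument is needed.

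Your alternative breaks down precisely where you say it does. The identity $L_n^{-2}\int_0^1 a(v_n')^2=\int_0^1 f(y,v_n)v_n\le C$ only gives $\|v_n'\|_{L^2}\le CL_n$, which provides no compactness beyond weak-$*$ in $L^\infty$; since $u\mapsto f(y,u)$ is nonlinear, you cannot conclude $f(y,v_\infty)=0$ a.e.\ from weak convergence, so the claimed $L^1$ limit with $v_\infty\in\{0,\theta_\cdot,1\}$ is unjustified. Your Rayleigh-quotient lower bound with bumps of width $\ell_n$ also requires $v_n\to\theta_\cdot$ uniformly near the concentration point, so it, too, rests on the missing step. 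The paper's single-point blow-up at the $\theta$-crossing sidesteps the whole issue.

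For the positivity of $c_L$, the paper again uses a direct blow-up rather than spreading or~\cite{dgm}. It fixes $\tau\in(1-\delta,1)$ with $\int_0^s f(x,u)\,du>0$ for all $x$ and all $s\in[\tau,1]$, shifts a hypothetical stationary front so that it takes the value $\tau$ at the origin, and passes to the limit to get a bounded solution $v_\infty$ of the frozen equation with $v_\infty(0)=\tau$; the first integral then forces $\int_0^\tau f(y_\infty,u)\,du\le 0$, a contradiction. Your idea of producing a compactly supported subsolution for~\eqref{eqL} from a truncated frozen-coefficient front is plausible in spirit, but the coefficients of~\eqref{eqL} vary on scale $L$ across the support of any such subsolution, and you have not explained how the truncation survives that variation; as written, this step is also incomplete.
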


Notice that, similarly, equation~\eqref{eqL} admits a pulsating front with negative speed $c_L<0$ for large~$L$ large if~\eqref{conlarge} is replaced by~$\int_0^1f(x,u)\,du<0$ for all $x\in\R$. On the other hand, if $\min_{x\in\R}\int_0^1f(x,u)du<0<\max_{x\in\R}\int_0^1f(x,u)du$, then there is in general no pulsating front with nonzero speed for large $L$, but there are stationary fronts (see our upcoming paper~\cite{dhz2}).

Theorem~\ref{thlarge} shows the existence of pulsating fronts with speed $c_L>0$ for large $L$. Actually, in the proof of Theorem~\ref{thlarge}, we first prove the instability of all intermediate $L$-periodic steady states to get the existence of pulsating fronts with $c_L\ge0$ for $L$ large under the assumption~\eqref{conlarge} and we exclude the case $c_L=0$ for large $L$ by using again~\eqref{conlarge}. We also point out that, as in Theorems~\ref{thhomo} and~\ref{thhomobis}, the existence of pulsating fronts stated in Theorem~\ref{thlarge} does not require that the coefficients of~\eqref{eqL} be close to their spatial average. Thus, one can say that Theorems~\ref{thhomo},~\ref{thhomobis} and~\ref{thlarge} are of a different spirit from the aforementioned existence results of~\cite{fz,x2,x4}.

\begin{rem}{\rm From the proofs of Theorems~\ref{thhomobis} and~\ref{thlarge} in Sections~\ref{smalllhomobis} and~\ref{seclarge}, one will see that the conditions~\eqref{hypoverf} and~\eqref{conlarge} are only used to show the instability of the $L$-periodic steady states of equation~\eqref{eqL}, and this stability property is invariant by changing $x$ into $-x$. Thus, these conditions do not only imply the existence of fronts $u(t,x)=\phi(x-c_Lt,x/L)$ satisfying~\eqref{depulf}, but they also provide the existence of pulsating fronts of the type $\tilde{u}(t,x)=\tilde{\phi}(x-\tilde{c}_Lt,x/L)$ where~$\tilde{\phi}$ is $1$-periodic in its second variable and satisfies reversed limiting conditions:
$$\tilde{\phi}(-\infty,y)=0\,\,\,\hbox{and}\,\,\, \tilde{\phi}(+\infty,y)=1,\ \ \hbox{uniformly for }y\in\R.$$
Moreover, if both speeds $c_L$ and $\tilde{c}_L$ are nonzero, then they must have the same sign, which is that of $\int_0^1\overline{f}(u)du$, as a consequence of Theorem~\ref{thqual}. But whether $\tilde{c}_L=c_L$ for general coefficients $a$ and $f$ is not clear in general. We thank Dr.~X.~Liang for mentioning this question.}
\end{rem}

\begin{rem}{\rm Let $a$ and $f$ satisfy the general assumptions of the paper, in particular, ~\eqref{bistable},~\eqref{asspars} as well as the $1$-periodicity in $x$, and consider now the equation
\be\label{eqM}
u_t=(a(x)u_x)_x+M\,f(x,u),\ \ t\in\R,\ x\in\R,
\ee
where the positive parameter $M$ measures the amplitude of the reaction. A solution $u$ of this $1$-periodic equation is a pulsating front with speed $\sigma$ if and only if the function $v(t,x)~\!\!=~\!\!u(t/M,x/\sqrt{M})$ is a pulsating front for the equation~\eqref{eqL} with $L=\sqrt{M}$ and speed~$\sigma/\sqrt{M}$. Therefore, under the assumption of Theorem~\ref{thhomo}, there is $M_*>0$ such that for all~$0<M<M_*$, equation~\eqref{eqM} admits a unique (up to shifts in time) pulsating front, with speed~$\sigma_M\neq0$, and~$\sigma_M\sim\sqrt{M}\,c_0$ as $M\to0^+$. Similarly, under the assumption~\eqref{hypoverf} of Theorem~\ref{thhomobis}, there is~$\tilde{M}_*>0$ such that for all $0<M<\tilde{M}_*$, equation~\eqref{eqM} admits a pulsating front with speed~$\sigma_M$, and $\sigma_M\sim\sqrt{M}\,c_0$ as $M\to0^+$ if the unique speed $c_0$ of the homogenized equation~\eqref{homogenized} is not zero, while $\sigma_M=0$ for all $0<M<\tilde{M}_*$ if $c_0=0$. Lastly, under the assumption~\eqref{conlarge} of Theorem~\ref{thlarge}, there is $M^*>0$ such that for all $M>M^*$, equation~\eqref{eqM} admits a pulsating front with speed~$\sigma_M>0$.}
\end{rem}


\subsubsection*{Set of periods $L$ for which pulsating fronts with nonzero speed exist}

After establishing some conditions for the existence of pulsating fronts for small or large periods, we derive some properties of the set of periods $L$ for which~\eqref{eqL} admits pulsating fronts with nonzero speeds. We had already emphasized the particular role played by the stationary fronts and we focus in this section on the fronts with nonzero speeds. We define
\be\label{defE}
E=\Big\{L>0,\ \ (\ref{eqL})\hbox{ admits a pulsating front with speed }c_L\neq 0\Big\}
\ee
and we investigate the properties of $E$ under the assumption $\int_0^1\overline{f}(u)\,du\neq0$. Indeed, if~$\int_0^1\overline{f}(u)\,du=0$, then Theorem~\ref{thqual} excludes the existence of pulsating fronts with nonzero speeds for any period $L$: in other words, $E$ is empty in this case. It will also follow from~\cite{dhz2} that there is a constant~$C$ which only depends on $a$ and $f$ such that
\be\label{bounded}
 |c_L|\le C,\quad \forall\,L\in E,
\ee
that is, the front speeds are globally bounded independently of the period~$L$. As we will see in~\cite{dhz2}, the same property actually holds for the broader notion of global mean speeds of generalized transition fronts. In the present paper, we do not prove this global property~\eqref{bounded} and we deal with local properties of the set $E$. In particular, it will follow from Theorem~\ref{thE1} below that the speeds $c_L$ with $L\in E$ are locally bounded around any point $L_0\in E$ and we will also prove in Lemma~\ref{lemspeeds} below that the speeds $c_L$ with $L\in E$ are locally bounded around any boundary point~$L_0\in\partial E\cap(0,+\infty)$. Motivated by the implicit function theorem used in the homogenization process in Theorem~\ref{thhomo}, we will first prove the following result.

\begin{theo}\label{thE1}
The set $E$ is open and for any $L_0\in E$, one has $c_L\to c_{L_0}$ as $L\to L_0$ and $L\in E$.
\end{theo}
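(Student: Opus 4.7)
The strategy is a perturbation-by-implicit-function-theorem argument centered at $L_0$, in the same spirit as the proof of Theorem~\ref{thhomo} but with the reference point shifted from $L=0$ to $L=L_0$. Fix $L_0\in E$; by Theorem~\ref{thqual} there is, up to a time shift, a unique pulsating front $u_{L_0}(t,x)=\phi_{L_0}(x-c_{L_0}t,x/L_0)$ with $c_{L_0}\neq0$, and since $u_{L_0}$ is strictly monotone in $t$ the profile $\phi_{L_0}(\xi,y)$ is strictly monotone in $\xi$. In the moving-frame variables $(\xi,y)=(x-ct,x/L)$ the front equation becomes the degenerate elliptic problem
\begin{equation*}
N_L[\phi,c](\xi,y):=a(y)\phi_{\xi\xi}+\frac{2a(y)\phi_{\xi y}+a'(y)\phi_\xi}{L}+\frac{(a(y)\phi_y)_y}{L^2}+c\,\phi_\xi+f(y,\phi)=0
\end{equation*}
on $\R\times\T$, with $\phi(-\infty,\cdot)=1$, $\phi(+\infty,\cdot)=0$ uniformly in $y$, these limits being attained exponentially thanks to~\eqref{asspars}.

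I would fix a smooth reference profile $\Phi$ with the correct limits, set $\phi=\Phi+\psi$, and work in weighted Sobolev spaces $X,Y$ over $\R\times\T$ that encode $1$-periodicity in $y$ and exponential decay in $\xi$. Consider the $C^1$ map
\begin{equation*}
G:X\times\R\times(0,+\infty)\longrightarrow Y,\qquad (\psi,c,L)\longmapsto N_L[\Phi+\psi,c],
\end{equation*}
with the known zero $G(0,c_{L_0},L_0)=0$. To break the translation symmetry $(\phi,c)\mapsto(\phi(\cdot+s,\cdot),c)$, restrict $\psi$ to the hyperplane $X_0:=\{\psi\in X:\int_{\R\times\T}\psi\,\partial_\xi\phi_{L_0}\,d\xi\,dy=0\}$. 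The proof then reduces to showing that the partial differential
\begin{equation*}
D_{(\psi,c)}G(0,c_{L_0},L_0):(\eta,\sigma)\longmapsto \mathcal{A}\eta+\sigma\,\partial_\xi\phi_{L_0}
\end{equation*}
is an isomorphism from $X_0\times\R$ onto $Y$, where $\mathcal{A}$ denotes the linearization of $N_{L_0}$ at $(\phi_{L_0},c_{L_0})$.

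The heart of the argument lies in this linear analysis. By standard spectral theory on the cylinder $\R\times\T$, the asymptotic operators of $\mathcal{A}$ at $\xi=\pm\infty$ have zeroth-order coefficient $\partial_uf(y,0)$ or $\partial_uf(y,1)$, both uniformly $\le-\gamma<0$ by~\eqref{asspars}; on a suitably chosen weighted scale they are invertible, so $\mathcal{A}$ is Fredholm of index $0$. Differentiating $N_{L_0}[\phi_{L_0},c_{L_0}]=0$ in $\xi$ yields $\partial_\xi\phi_{L_0}\in\ker\mathcal{A}$, and the strict monotonicity of $\phi_{L_0}$ combined with a sliding/strong-maximum-principle argument in the spirit of the uniqueness part of Theorem~\ref{thqual} shows that this element spans the whole kernel. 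Being of index $0$, $\mathcal{A}$ has a one-dimensional $\ker\mathcal{A}^*$, spanned by some $\psi^*$ which can be taken of constant sign by the strong maximum principle applied to $\mathcal{A}^*$. By the Fredholm alternative the full map $(\eta,\sigma)\mapsto\mathcal{A}\eta+\sigma\,\partial_\xi\phi_{L_0}$ is then an isomorphism onto $Y$, since the pairing $\langle\partial_\xi\phi_{L_0},\psi^*\rangle\neq0$: both factors have a constant nonvanishing sign.

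Once this isomorphism property is established, the implicit function theorem yields a $C^1$ curve $L\mapsto(\psi(L),c(L))$ defined on some open interval $(L_0-\epsilon,L_0+\epsilon)$, with $(\psi(L_0),c(L_0))=(0,c_{L_0})$ and $G(\psi(L),c(L),L)=0$. The function $\phi_L:=\Phi+\psi(L)$ is then a pulsating profile at period $L$ with speed $c_L:=c(L)$; shrinking $\epsilon$ so that $c(L)\neq0$ throughout, we obtain $(L_0-\epsilon,L_0+\epsilon)\subset E$, which proves openness of $E$, while $c_L\to c_{L_0}$ at $L_0$ is immediate from the $C^1$ regularity of the curve. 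The main obstacle is the linear analysis of $\mathcal{A}$: one must choose function spaces adapted to the genuinely degenerate principal symbol $a(y)(\partial_\xi+L_0^{-1}\partial_y)^2$, prove Fredholmness with index zero, establish uniqueness of the translation mode in $\ker\mathcal{A}$, and verify non-degeneracy of the pairing with the adjoint mode; once these ingredients are in place, the rest is a direct implicit function theorem argument.
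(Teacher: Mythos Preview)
Your proposal is correct and follows essentially the same route as the paper: implicit function theorem centered at $L_0$, with the crux being the linear analysis of the operator $H_{L_0}=\mathcal{A}$ (simple kernel $\R\partial_\xi\phi_{L_0}$, closed range, one-dimensional sign-definite cokernel, nondegenerate pairing). The paper's execution differs only in packaging: it works in the unweighted spaces $H^1(\R\times\T)$ and $L^2(\R\times\T)$ rather than weighted ones (the bistable assumption~\eqref{asspars} makes weights unnecessary), it normalizes via $\int_{\R^+\times\T}\phi_L^2=\int_{\R^+\times\T}\phi_{L_0}^2$ rather than orthogonality to $\partial_\xi\phi_{L_0}$, and---most importantly for the degeneracy you flag---it avoids treating $\mathcal{A}$ as a degenerate elliptic operator by precomposing with the explicit inverse $M_{c,L}^{-1}$ of the constant-coefficient part, reducing the nonlinear map to identity plus compact, and by passing to the parabolic $(t,x)$ variables (Lemma~\ref{smp}) for all maximum-principle and regularity arguments. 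Your ``standard spectral theory on the cylinder'' step is the one place where you would need to do this extra work, since the principal symbol $a(y)(\partial_\xi+L_0^{-1}\partial_y)^2$ is genuinely characteristic; you acknowledge this, and the paper's Lemma~\ref{degenerate} is precisely the missing ingredient.
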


Under the assumptions of Theorems~\ref{thhomobis} and~\ref{thlarge}, that is, under conditions~\eqref{hypoverf} and~\eqref{conlarge}, it is natural to wonder whether $E=(0,+\infty)$, namely, whether there exist pulsating fronts with nonzero speed for all $L>0$.  As a matter of fact, the answer is no in general, since quenching may occur even for some $x$-independent nonlinearities $f=f(u)$ (see again the example given with~\eqref{examc}), that is, stationary fronts connecting $0$ and $1$ may exist. In view of Theorem~\ref{thqual}, the periods $L$ for which~\eqref{eqL} admits stationary fronts cannot belong to $E$, but they may appear at the boundary of $E$. Hence, it is of interest to investigate the question of the solutions of equation~\eqref{eqL} when $L\in(0,+\infty)$ is a boundary point of $E$.

To do so, we first need to define precisely the notion of stability of periodic steady states and, actually, that of general steady states. Namely, let $L>0$ and let~$\bar{u}:\R\to[0,1]$ denote a steady state of~\eqref{eqL}. For any $R>0$, let~$\lambda_{1,R}(L,\bar{u})$ be the unique real number $\lambda$ such that there exists a~$C^2([-R,R])$ function $\psi$ satisfying
\begin{equation}\label{truncted}\left\{\baa{l}
(a_L(x)\psi')'+\partial_uf_L(x,\bar{u}(x))\psi=\lambda\psi\hbox{ in }[-R,R],\vspace{3pt}\\
\psi>0\hbox{ in }(-R,R),\ \ \psi(-R)=\psi(R)=0.\eaa\right.
\end{equation}
The real number $\lambda_{1,R}(L,\bar{u})$ is the principal eigenvalue of equation~\eqref{truncted}, and $\psi$ is the (unique up to multiplication) corresponding eigenfunction. It is well known that $\lambda_{1,R}(L,\bar{u})$ exists  uniquely, and that $\lambda_{1,R}(L,\bar{u})$ is increasing in $R$.

\begin{defi}\label{semistable}
Let $\lambda_1(L,\bar{u})=\lim_{R\to+\infty}\lambda_{1,R}(L,\bar{u})$. One says that $\bar{u}$ is  unstable if $\lambda_1(L,\bar{u})>0$, stable if $\lambda_1(L,\bar{u})<0$, and semistable if $\lambda_1(L,\bar{u})\leq0$.
\end{defi}

By comparison, it is immediate to see that $\lambda_1(L,\bar{u})\le\|\partial_uf_L(\cdot,\bar{u}(\cdot))\|_{L^{\infty}(\R)}$. It also follows from~\cite{bhr1} that if $\bar{u}$ is $L$-periodic, then $\lambda_1(L,\bar{u})$ is the principal eigenvalue of the periodic eigenvalue problem
\begin{equation}\label{prineigen}\left\{\baa{l}
(a_L(x)\varphi')'+\partial_uf_L(x,\bar{u}(x))\varphi=\lambda\varphi\hbox{ in }\R,\vspace{3pt}\\
\varphi>0\hbox{ in }\R,\ \ \varphi\hbox{ is }L\hbox{-periodic}.\eaa\right.
\end{equation}

The following theorem gives some information about the existence of steady states or other pulsating fronts of~\eqref{eqL} at a positive boundary point of the set~$E$. We first point out that if $L\in\partial E\cap(0,+\infty)$, then~\eqref{eqL} cannot admit a pulsating front with a nonzero speed, as a consequence of Theorem~\ref{thE1}.

\begin{theo}\label{thE2}
If $\int_0^1\overline{f}(u)du\neq0$ and $L\in\partial E\cap(0,+\infty)$, then one of the following cases occurs:
\begin{itemize}
\item [(i)] either there is $c>0$ such that equation~\eqref{eqL} admits some $L$-periodic steady states $0<\bar{u}(x),\,\bar{v}(x)<1$, with $\bar{u}$ being semistable, and some pulsating fronts
\be\label{upm}\left\{\baa{l}
\displaystyle0<\phi(x-ct,x/L)=u(t,x)<v(t,x)=\psi(x-ct,x/L)<1,\vspace{3pt}\\
\phi(\xi,y)\hbox{ and }\psi(\xi,y)\hbox{ are }1\hbox{-periodic in }y\eaa\right.
\ee
with speed $c$ and with limiting values
\be\label{upm2}\left\{\baa{ll}
\phi(-\infty,y)=\bar{u}(Ly), & \phi(+\infty,y)=0,\vspace{3pt}\\
\psi(-\infty,y)=1, & \psi(+\infty,y)=\bar{v}(Ly),\eaa\hbox{ uniformly in }y\in\R;\right.
\ee
\item [(ii)] or there is $c<0$ such that equation~\eqref{eqL} admits some $L$-periodic steady states $0<\bar{u}(x),\,\bar{v}(x)<1$, with $\bar{v}$ being semistable, and some pulsating fronts $0<u(t,x)<v(t,x)<1$, with speed $c$, satisfying~\eqref{upm} and~\eqref{upm2};
\item [(iii)] or equation~\eqref{eqL} admits a semistable $L$-periodic steady state $0<\bar{u}\le1$ and a semistable steady state $u$ such that $0<u<\bar{u}$,~$u(\cdot+L)<u$ with the limiting values
$$u(x)-\bar{u}(x)\to0\hbox{ as }x\to-\infty\ \hbox{ and }u(x)\to0\hbox{ as }x\to+\infty;$$
\item [(iv)] or equation~\eqref{eqL} admits a semistable $L$-periodic steady state $0\le\bar{v}<1$ and a semistable steady state $v$ such that $\bar{v}<v<1$,~$v(\cdot+L)<v$ with the limiting values $v(x)\to1$ as~$x\to-\infty$ and $v(x)-\bar{v}(x)\to0$ as $x\to+\infty$.
\end{itemize}
Furthermore, if $\int_0^1\overline{f}(u)du>0$, then only cases~(i) and~(iii) can occur, while only cases~(ii) and~(iv) can occur if $\int_0^1\overline{f}(u)du<0$.
\end{theo}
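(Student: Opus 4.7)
The plan is to approximate $L_0$ by periods $L_n\in E$ with $L_n\to L_0$ (such a sequence exists since $E$ is open by Theorem~\ref{thE1} and $L_0\in\partial E$) and to extract suitable limits of the associated pulsating fronts $u_n(t,x)=\phi_n(x-c_{L_n}t,x/L_n)$. Assume without loss of generality that $\int_0^1\overline{f}(u)\,du>0$, so that by Theorem~\ref{thqual} all $c_{L_n}>0$ and each $u_n$ is strictly increasing in $t$; the goal is then to derive case~(i) or~(iii). By the local boundedness of front speeds around boundary points of $E$ (the forthcoming Lemma~\ref{lemspeeds}), pass to a subsequence with $c_{L_n}\to c_\infty\in[0,+\infty)$. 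After normalizing each $u_n$ by a time shift so that $\max_{x\in[0,L_n]}u_n(0,x)=\theta^*$ for some fixed $\theta^*\in(\delta,1-\delta)$, standard parabolic interior estimates produce, along a subsequence, a locally uniform limit $u_\infty$ that is an entire classical solution of~\eqref{eqL} with $L=L_0$, takes values in $[0,1]$, is nondecreasing in $t$, and is neither identically $0$ nor identically $1$.

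If $c_\infty>0$, the pulsating-front structure passes to the limit: $u_\infty(t,x)=\phi_\infty(x-c_\infty t,x/L_0)$ with $\phi_\infty$ $1$-periodic in its second variable and non-increasing in its first. Set $\bar{u}(x)=\phi_\infty(-\infty,x/L_0)$ and $\bar{v}(x)=\phi_\infty(+\infty,x/L_0)$; these are $L_0$-periodic steady states with $0\le\bar{v}\le\bar{u}\le1$. Since $L_0\notin E$, the pair $(\bar{u},\bar{v})\equiv(1,0)$ is excluded. To produce \emph{both} fronts of case~(i) with the common speed $c=c_\infty$, I apply a second family of time shifts: choose $\tau_n\to-\infty$ so that $u_n(\tau_n,\cdot)$ first reaches a value slightly below $1$, and extract a second limit $v_\infty(t,x)=\psi_\infty(x-c_\infty t,x/L_0)$ with $\psi_\infty(-\infty,y)=1$ and $\psi_\infty(+\infty,y)=\bar{v}(L_0y)$; symmetrically, a shift $\tau_n'\to+\infty$ isolates the lower front with profile satisfying $\phi(-\infty,y)=\bar{u}(L_0y)$ and $\phi(+\infty,y)=0$. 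The ordering $u<v$ is inherited from the pointwise ordering of the two shifted families. The semistability of $\bar{u}$ (and of $\bar{v}$) is the critical property: if $\lambda_1(L_0,\bar{u})>0$, the principal eigenfunction of~\eqref{prineigen} would furnish a subsolution strictly above $\bar{u}$ that could be slid past $\bar{u}$, contradicting $\phi_\infty(-\infty,y)=\bar{u}(L_0y)$.

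If $c_\infty=0$, the limit $u_\infty$ remains nondecreasing in $t$, so the pointwise limits $\bar{u}_\pm(x)=\lim_{t\to\pm\infty}u_\infty(t,x)$ exist and solve the stationary version of~\eqref{eqL}. Passing the pulsating relation $u_n(t+L_n/c_{L_n},x)=u_n(t,x-L_n)$ to the limit as $c_{L_n}\to 0$ forces $\bar{u}_\pm$ to be $L_0$-periodic. Setting $u:=u_\infty(0,\cdot)$ (or, after a monotone reduction using the stationary iteration between $\bar{u}_-$ and $\bar{u}_+$, at a suitable time slice) yields the semistable stationary front of case~(iii): its asymptotic profile at $-\infty$ is identified with the periodic state $\bar{u}=\bar{u}_-$ (possibly equal to $1$) and at $+\infty$ with $0$, both inherited from the fact that each $u_n$ connects $1$ to $0$. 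The strict inequality $u(\cdot+L_0)<u$ is obtained by passing the strict $t$-monotonicity of $u_n$ to the limit through the pulsating relation above. The semistability of $\bar{u}$ is shown as in the previous case. Cases~(ii) and~(iv) are automatically excluded by $\int_0^1\overline{f}>0$, which forbids $c_\infty<0$.

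The hardest points will be: (a) producing the two pulsating fronts of case~(i) with a \emph{common} speed $c=c_\infty$, which requires coupling the two time-shift sequences so that a single parabolic compactness argument yields identical limiting speeds; and (b) establishing the semistability of the limiting $L_0$-periodic steady states, which requires a delicate sub- and supersolution construction based on the principal eigenfunction of~\eqref{prineigen} together with a sliding argument ruling out $\lambda_1(L_0,\bar{u})>0$. A secondary subtlety in the degenerate case $c_\infty=0$ is deciding which of $\bar{u}\equiv 1$ or $\bar{u}<1$ (and symmetrically at $+\infty$) actually occurs, which amounts to inspecting whether the normalized $u_n$ accumulate near $1$ uniformly on scale $L_n$ as $t\to-\infty$.
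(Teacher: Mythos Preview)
Your overall strategy matches the paper's, but the case $c_\infty=0$ has a structural error. You claim that passing the pulsating relation $u_n(t+L_n/c_{L_n},x+L_n)=u_n(t,x)$ to the limit forces the $t\to\pm\infty$ limits $\bar u_\pm$ of $u_\infty$ to be $L_0$-periodic. This is false: since $L_n/c_{L_n}\to+\infty$ and each $u_n$ is increasing in $t$, the relation only yields the one-sided inequality $u_\infty(t,x+L_0)\le u_\infty(t,x)$. The steady state $u$ of case~(iii) is the \emph{non-periodic} limit $u(x)=\lim_{t\to+\infty}u_\infty(t,x)$ (not $u_\infty(0,\cdot)$, which is not stationary), and the $L_0$-periodic $\bar u$ arises only as a secondary limit $\bar u(x)=\lim_{k\to+\infty}u(x-kL_0)$. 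Moreover, the paper's specific normalization $\max_{[0,L_n]}u_n(t_n,\cdot)=\delta$ (rather than a generic $\theta^*$) is essential here: combined with the pulsating relation and monotonicity in $t$, it forces $u_\infty(t,x)\le\delta$ for all $t\in\R$ and $x\ge L_0$, whence $u(+\infty)=0$ via Lemma~\ref{lemliouville}. Your $\theta^*\in(\delta,1-\delta)$ does not deliver this.

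In the case $c_\infty>0$, your semistability argument for $\bar u$ is reversed. A subsolution strictly above $\bar u$ cannot contradict $u_\infty(t,\cdot)\nearrow\bar u$ from below; what is needed is a \emph{supersolution} $v_\epsilon=\bar u-\epsilon\psi_R$ on $(-R,R)$ (extended by $\bar u$ outside), which, if $\lambda_{1,R}(L_0,\bar u)>0$, traps $u_\infty(t,\cdot)\le v_\epsilon<\bar u$ for all $t\ge0$, contradicting $u_\infty(t,\cdot)\to\bar u$. Two smaller points: the paper uses \emph{two} normalizations (at level $\delta$ and at level $1-\delta$) to produce the two fronts $u$ and $v$ of case~(i) directly, with no need for the intermediate limit $\phi_\infty$ you introduce (and there is no reason the $\bar v$ coming from your $\phi_\infty$ should coincide with $\psi_\infty(+\infty,\cdot)$, as you assert); and the theorem does not claim semistability of $\bar v$ in case~(i), which the paper explicitly notes may fail.
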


\begin{rem}\label{remfn}{\rm It follows in particular that if $L\in\partial E\cap(0,+\infty)$, then equation~\eqref{eqL} admits either at least one semistable $L$-periodic steady state $0<\bar{u}<1$ or a stationary front connecting~$0$ and~$1$. Cases~(i),~(ii),~(iii) and~(iv) have some similarities to the limiting behavior of homogeneous equations of the type
\be\label{eqfn}
u_t=u_{xx}+f_n(u),
\ee
where $(f_n)_{n\in\N}$ is a family of $C^1([0,1])$ functions satisfying~\eqref{asspars} uniformly in $n$ with~$f_n(0)=f_n(1)=0$, and converging uniformly in $[0,1]$ to a $C^1([0,1])$ function~$f$. On the one hand, if, for instance, each function $f_n$ has a unique zero $\theta_n$ in $(0,1)$, $\theta_n\to\theta\in(0,1)$, $\int_0^1f_n\neq0$, $f<0$ in~$(0,\theta)$, $f>0$ in $(\theta,1)$ and $\int_0^1f=0$, then each equation~\eqref{eqfn} admits a traveling front~$\phi_n(x-c_nt)$ connecting $0$ and $1$, with $c_n\neq0$, while the limiting equation $u_t=u_{xx}+f(u)$ admits a stationary front but does not admit any non-stationary front connecting~$0$ and $1$. The conclusion would then be in some sense similar to that of cases~(iii) and~(iv) in Theorem~\ref{thE2}. On the other hand, assume now that there are $0<\theta_{1,n}<\theta_{2,n}<\theta_{3,n}<1$ such that~$f_n(\theta_{1,n})=f_n(\theta_{2,n})=f_n(\theta_{3,n})=0$, $f_n<0$ in $(0,\theta_{1,n})\cup(\theta_{2,n},\theta_{3,n})$, $f_n>0$ in $(\theta_{1,n},\theta_{2,n})\cup(\theta_{3,n},1)$ and let~$c'_n$ and~$c''_n$ be the speeds of the traveling fronts of~\eqref{eqfn} connecting $0$ and $\theta_{2,n}$, and $\theta_{2,n}$ and $1$, respectively. If $c'_n<c''_n$, then~\eqref{eqfn} admits a traveling front connecting $0$ and $1$, with a speed~$c_n$ such that $c'_n<c_n<c''_n$, see~\cite{fm}. Now, if the real numbers $0<\theta_{1,n}<\theta_{2,n}<\theta_{3,n}<1$ converge to $0<\theta_1<\theta_2<\theta_3<1$, if $f<0$ in $(0,\theta_1)\cup(\theta_2,\theta_3)$, $f>0$ in $(\theta_1,\theta_2)\cup(\theta_3,1)$, and if~$c'_n$ and~$c''_n$ converge to the same real number $c$, then the limiting equation $u_t=u_{xx}+f(u)$ does not admit any traveling front connecting $0$ and $1$~\cite{fm}, but it admits some traveling fronts with speed $c$ connecting~$0$ and $\theta_2$, and $\theta_2$ and $1$, respectively. Furthermore, $\theta_2\in(0,1)$ is necessarily a semistable steady state of the limiting equation in the sense that $f'(\theta_2)\le0$. If $c>0$ or $c<0$, then the conclusion for this limiting equation is similar to that of cases~(i) or~(ii) in Theorem~\ref{thE2}.}
\end{rem}

As already emphasized, $E=(0,+\infty)$ for equation~\eqref{eqL} under some additional assumptions on the coefficients~$a$ and $f$, see~\cite{fz,fm,x2,x4,x5} and the comments after equation~\eqref{eqfz}. When~$E=(0,+\infty)$, it is interesting to investigate the effect of environmental fragmentation on the speed of pulsating fronts: from Theorem~\ref{thE1}, the map~$L\mapsto c_L$ is continuous, but can one say that it is monotone? As known in~\cite{n} for equations with periodic Fisher-KPP type nonlinearities $f$, the minimal wave speed~$c^*_L$ of pulsating fronts, which is well defined for all $L>0$, is nondecreasing with respect to the period $L>0$. The limits of $c^*_L$ as $L\to0^+$ and $L\to+\infty$ have been determined in~\cite{ehr,hfr,hnr}, and the proofs use as an essential tool a variational formula for~$c^*_L$, which only involves the derivative~$\partial_uf(\cdot,0)$ of $f$ at $u=0$. For the bistable equation~\eqref{eqL} under assumptions~\eqref{bistable} and~\eqref{asspars}, Theorems~\ref{thhomo} and~\ref{thhomobis} give the limit of $c_L$ as $L\to0^+$, but the determination of the limit, in any, of $c_L$ as $L\to+\infty$ under the assumptions of Theorem~\ref{thlarge} is still open, as is the question of the monotonicity of $c_L$ with respect to $L$ on the connected components of the set $E$.

More generally speaking, for general diffusion and reaction coefficients $a(x)$ and $f(x,u)$ satis\-fying~\eqref{bistable} and~\eqref{asspars}, the question of the existence of pulsating fronts with zero or nonzero speed~$c_L$ for the $L$-periodic equation~\eqref{eqL} is very challenging. We conjecture that the existence does not hold in general, but we leave this open question for further investigations. The possible presence of multiple ordered steady states could prevent the existence of fronts with zero or nonzero speed in general, as in the homogeneous case $f=f(u)$ (see~\cite{fm} and Remark~\ref{remfn} above), but no example has been known for functions $f$ satisfying~\eqref{bistable} and~\eqref{asspars} (apart from a non-existence result of fronts in straight inifinite cylinders with non-convex sections, see~\cite{bh1}). So far, the related ``non-existence" results have been concerned with the non-existence of pulsating fronts with nonzero speed, for some specific equations such as~\eqref{examc} (see~\cite{x4,xz}) or in the case of large periods (see~\cite{dhz2}). We mention that the existence of stationary solutions (preventing the existence of truly propagating solutions) has also been investigated for spatially discrete models~\cite{agn,bc,cmv,cs,k,mp}, for some non-periodic equations~\cite{amo,lk,n2,p} or in some higher-dimensional situations~\cite{bbc,cg}.


\subsubsection*{Exponential stability of pulsating fronts}

The last main result of the paper is concerned with the global and exponential stability of the pulsating fronts with nonzero speed. In this section, we fix $L\in(0,\infty)$ and we assume that equation~\eqref{eqL} admits a pulsating front $\phi_L(x-c_Lt,x/L)$ with nonzero speed $c_L\neq0$. We study the asymptotic behavior as $t\to+\infty$ of the solutions of
\begin{equation}\label{inieqL}\left\{\baa{ll}
u_t=(a_L(x)u_x)_x+f_L(x,u), & t>0,\ x\in\R,\vspace{3pt}\\
u(0,x)=g(x), & x\in\R,\eaa\right.
\end{equation}
for the class of ``front-like" initial conditions $g\in L^{\infty}(\R,[0,1])$ (the initial condition $u(0,x)=g(x)$ is understood for a.e. $x\in\R$). From~\cite{fm}, it is well known that for the homogeneous equation~\eqref{homoin}, if the initial value $g$ is above $1-\delta$ at $-\infty$ and below $\delta$ at $+\infty$, where $\delta>0$ is as in~\eqref{asspars}, then the solution of associated initial value problem converges at large time to a translate of the traveling wave solution~\eqref{twhomo}, and this convergence is exponential in time. For scalar reaction-diffusion equations with bistable time-periodic nonlinearities, such exponential stability of time-periodic traveling waves was first proved in~\cite{abc}, and then a dynamical systems approach to these results was presented in \cite[Section~10]{zh}. For a special class of equations in periodic habitat with $x$-independent bistable reaction $f=f(u)$, only the local stability of pulsating fronts had been shown, see~\cite{x2} (see also~\cite{s} for some results on the local stability of fronts for time almost-periodic bistable equations). In the current paper, we show the global and exponential stability of  pulsating fronts for the more general equation~\eqref{eqL}.

\begin{theo}\label{gstability}
Assume that equation~\eqref{eqL} admits a pulsating front $u_L(t,x)=\phi_L(x-c_Lt,x/L)$ with speed $c_L\neq 0$. Then there exists a positive constant $\mu>0$ such that for every $g\in L^{\infty}(\R,[0,1])$ satisfying
\begin{equation}\label{initialv}
\liminf_{x\to-\infty}g(x)>1-\delta\ \hbox{ and }\ \limsup_{x\to+\infty} g(x)<\delta,
\end{equation}
where $\delta$ is the constant given in~\eqref{asspars}, the solution $u(t,x)$ of~\eqref{inieqL} satisfies
\be\label{convergence}
|u(t,x)-u_L(t+\tau_g,x)|=|u(t,x)-\phi_L(x-c_Lt-c_L\tau_g,x/L)  |\leq C_g\,\me^{-\mu t}\hbox{ for all }t\ge0,\ x\in\R,
\ee
for some constants $\tau_g\in\R$ and $C_g>0$.
\end{theo}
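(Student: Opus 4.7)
The plan is to adapt the classical Fife--McLeod method of exponentially decaying sub-- and supersolutions to the spatially periodic framework, proceeding in three stages: an initial ``attraction'' phase that brings the solution into a neighborhood of the family of translates of the pulsating front, a squeezing phase using sub-- and supersolutions built from shifts of $\phi_L$, and a final identification of the asymptotic translate $\tau_g$. Throughout, I will rely on the strict monotonicity $\partial_\xi\phi_L<0$ and the uniqueness up to time-shifts of the front $u_L$, both granted by Theorem~\ref{thqual}, together with the uniform exponential stability of $0$ and $1$ encoded in~\eqref{asspars}.

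For the first stage, I would exploit~\eqref{initialv} and~\eqref{asspars} to build linear comparison functions near the endpoint states: to the right of some $A>0$, the solution is bounded by $\delta\,\me^{-\gamma t}+\text{small}$ through comparison with the equation linearized at $0$, and symmetrically near $-\infty$. Combined with the uniform limits $\phi_L(\pm\infty,y)=0,1$ and the compactness on a bounded $x$--window, this yields $T_0>0$, $\tau_1\le\tau_2$ and a small $q_0>0$ such that
\[
\phi_L\bigl(x-c_LT_0-c_L\tau_1,\,x/L\bigr)-q_0\;\le\;u(T_0,x)\;\le\;\phi_L\bigl(x-c_LT_0-c_L\tau_2,\,x/L\bigr)+q_0
\]
for all $x\in\R$. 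For the squeezing stage, I would then define, for suitable constants $0<\beta<\gamma/2$, small $q_0>0$ and large $\sigma_0>0$, the functions
\[
w^\pm(t,x)\;=\;\phi_L\bigl(x-c_Lt-c_L\tau_\mp\mp\sigma_0(1-\me^{-\beta t}),\,x/L\bigr)\;\pm\;q_0\,\me^{-\beta t},
\]
and verify that $w^+$ (resp.\ $w^-$) is a supersolution (resp.\ subsolution) of~\eqref{eqL} for $t\ge0$. The computation splits into two regimes: where $\phi_L$ is close to $0$ or $1$, the dominant term is $q_0\,\me^{-\beta t}(\beta+\partial_uf_L)\le -\tfrac{\gamma}{2}q_0\me^{-\beta t}<0$; where $\phi_L$ is bounded away from $0$ and $1$, the negative contribution $\sigma_0\beta\,\me^{-\beta t}\partial_\xi\phi_L$ (recall $\partial_\xi\phi_L<0$ and uniformly bounded above by a negative constant on $\{\delta\le\phi_L\le1-\delta\}$, by parabolic regularity and uniqueness) absorbs the possibly positive term $q_0\me^{-\beta t}\partial_uf_L$ once $\sigma_0/q_0$ is large enough. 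The comparison principle then squeezes $u(t,\cdot)$ between $w^\pm$ for all $t\ge T_0$, giving trapping between two translates of $u_L$ up to an $O(\me^{-\beta(t-T_0)})$ amplitude error.

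The final step, and the main obstacle, is to convert this trapping into convergence toward a \emph{single} translate $u_L(\cdot+\tau_g,\cdot)$ with an exponential rate. I would do this by iteration: once the supremum of the pointwise gap between $u(t,\cdot)$ and some fixed translate of $u_L$ is small, one can reapply the sub--/supersolution construction with a much smaller $q_0$, hence a much smaller phase uncertainty $\sigma_0q_0$, shrinking the admissible interval $[\tau_1,\tau_2]$ by a definite geometric factor on each iterate of length comparable to $\beta^{-1}$. Passing to the limit gives a Cauchy sequence of candidate translates converging to $\tau_g$ at the geometric rate $\me^{-\mu t}$ for some $\mu\in(0,\beta]$. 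The delicate point here is to quantify ``smallness of the pointwise gap implies phase shrinkage by a fixed factor''; this rests on a uniform (in $t$) non-degeneracy estimate for $\phi_L$ of the form $|\partial_\xi\phi_L(\xi,y)|\ge\kappa>0$ on a fixed phase window containing the steep part of the front, which follows from Theorem~\ref{thqual} together with standard Harnack and Schauder bounds applied to $\phi_L$ in the moving frame. Granting~\eqref{convergence}, the exponential stability is established and the constant $\mu$ depends only on $a$, $f$ and $L$ (not on $g$).
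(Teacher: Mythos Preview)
Your overall strategy is a genuinely different route from the paper's, and the first two stages are sound: the initial attraction phase and the sub/supersolutions $w^\pm$ you build are essentially the content of the paper's Lemmas~\ref{vagres} and~\ref{resemwave}. The paper, however, does \emph{not} attempt the Fife--McLeod iteration you describe. Instead it decouples the problem: global convergence (no rate) to a single translate is obtained via an abstract $\omega$-limit set argument for the Poincar\'e map $P(g)=v(T,\cdot;g)$, $T=L/c_L$, using a monotone-semiflow convergence theorem (Lemma~\ref{convlem}); then the exponential rate comes from a spectral analysis showing that $1$ is an algebraically simple eigenvalue of $P'(V_0)$ with the rest of the spectrum in a disk of radius $<1$ (Lemmas~\ref{spectral} and~\ref{essspectral}). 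Thus the paper trades your iteration for linearized stability plus soft dynamical systems.

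The reason the paper avoids your third stage is that, as written, it contains a real gap. From your trapping $V^{\tau_0+k_0\epsilon}-\epsilon\me^{-\beta t}\le v\le V^{\tau_0-k_0\epsilon}+\epsilon\me^{-\beta t}$ you get, for any fixed $\tau$, only $\|v(t,\cdot)-V^{\tau_0}(t,\cdot)\|\le \|\partial_\xi\phi_L\|_\infty\,k_0\epsilon+\epsilon\me^{-\beta t}$, and the constant $\|\partial_\xi\phi_L\|_\infty\,k_0$ has no reason to be $<1$; so reapplying the construction does not shrink the admissible phase interval. The non-degeneracy $|\partial_\xi\phi_L|\ge\kappa$ on the steep window lets you convert amplitude gaps into phase gaps, but it does not by itself produce a contraction. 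What is missing is the Fife--McLeod ``improvement lemma'': if $v$ lies strictly between $V^{\tau^-}$ and $V^{\tau^+}$ with $\tau^-<\tau^+$, then a strong maximum principle (Harnack-type) argument in the moving frame shows that after a fixed time one of the bounding phases improves by a definite amount depending on the touching behaviour at a reference point in the steep zone. This step is where the actual work lies in the direct approach (cf.\ Fife--McLeod and Xin's adaptation to periodic media), and it is not covered by the ingredients you list. If you supply such an improvement lemma, your route goes through; otherwise the iteration stalls at a bounded but non-vanishing phase uncertainty.
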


This theorem implies in particular that for the large class of initial values satisfying~\eqref{initialv}, the solutions of~\eqref{inieqL} have the same profile and the same wave speed at large time. Furthermore, Theorem~\ref{gstability} also immediately provides the uniqueness of the speed of pulsating fronts as well as the uniqueness of the pulsating fronts up to shifts in time in the case where the speed is not zero (as consequences of Theorem~\ref{gstability}, these uniqueness properties stated in Theorem~\ref{thqual} are proved at the end of Section~\ref{sec41}). More generally speaking, the global stability of pulsating fronts will be used in~\cite{dhz2} to prove a uniqueness result in the larger class of generalized transition fronts.

In Theorem~\ref{gstability}, the assumption $c_L\neq0$ is essential. Namely, there are equations of the type~\eqref{eqL} which admit stationary fronts (with speed $c_L=0$) that are not stable: in~\cite{dhz2} we construct generalized transition fronts which connect unstable stationary fronts to stable ones.

In Theorem~\ref{gstability}, the initial conditions are assumed to be close enough to $0$ and $1$ at $\pm\infty$. Actually, the convergence holds under other types of assumptions for the initial conditions, as the following result shows.

\begin{theo}\label{gstability2}
Assume that equation~\eqref{eqL} admits a pulsating front $u_L(t,x)=\phi_L(x-c_Lt,x/L)$ with speed $c_L\neq 0$ and assume that the $L$-periodic stationary states $0<\bar{u}(x)<1$ of~\eqref{eqL} are all unstable. Then for any $L$-periodic stationary states $0<\bar{u}_{\pm}(x)<1$ of~\eqref{eqL} and for any $g\in L^{\infty}(\R,[0,1])$ satisfying
\begin{equation}\label{initialv2}
\liminf_{x\to-\infty}\big(g(x)-\bar{u}_-(x)\big)>0\ \hbox{ and }\ \limsup_{x\to+\infty}\big(g(x)-\bar{u}_+(x)\big)<0,
\end{equation}
the solution $u(t,x)$ of~\eqref{inieqL} satisfies~\eqref{convergence}.
\end{theo}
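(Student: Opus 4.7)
The plan is to reduce Theorem~\ref{gstability2} to Theorem~\ref{gstability}. Specifically, I would show that under the hypotheses of Theorem~\ref{gstability2} there exists $T>0$ such that $u(T,\cdot)$ satisfies the front-like condition~\eqref{initialv}, i.e., $\liminf_{x\to-\infty}u(T,x)>1-\delta$ and $\limsup_{x\to+\infty}u(T,x)<\delta$. Restarting the Cauchy problem at time $T$ and applying Theorem~\ref{gstability} then immediately gives the exponential convergence~\eqref{convergence}, up to a shift of $\tau_g$ by $T$ and an inflation of $C_g$ by the factor $\me^{\mu T}$.

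I focus on the $-\infty$ side (the $+\infty$ side being handled symmetrically, by replacing the subsolution built below with the stationary strict supersolution $\bar{u}_+-\eta\varphi_+$ and using that, by the instability hypothesis, any $L$-periodic steady state strictly below $\bar{u}_+$ inside $(0,1)$ is excluded as a limit of the monotone decreasing flow, forcing convergence to $0$). Let $\varphi_-$ denote the positive $L$-periodic principal eigenfunction of~\eqref{prineigen} at $\bar{u}=\bar{u}_-$, with eigenvalue $\lambda_-:=\lambda_1(L,\bar{u}_-)>0$. A Taylor expansion shows that, for all $\eta>0$ small enough, the function $w_\eta(x):=\bar{u}_-(x)+\eta\varphi_-(x)$ is a stationary strict subsolution of~\eqref{eqL}:
$$(a_L w_\eta')'+f_L(x,w_\eta)=\eta\,\lambda_-\,\varphi_-(x)+O(\eta^2)>0\quad\hbox{in }\R.$$
Choosing $\eta$ small enough so that also $\eta\,\|\varphi_-\|_{L^\infty(\R)}<\epsilon_0/2$ with $\epsilon_0:=\liminf_{x\to-\infty}(g(x)-\bar{u}_-(x))>0$ ensures in addition that $g(x)\ge w_\eta(x)$ for all $x\le x_0$ and some $x_0\in\R$.

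I would next establish the auxiliary claim: any solution $\tilde v$ of~\eqref{eqL} with $\tilde v(0,\cdot)\in L^\infty(\R,[0,1])$ and $\tilde v(0,\cdot)\ge w_\eta$ on $\R$ converges uniformly to $1$ on $\R$ as $t\to+\infty$. Indeed, the solution $v_\star$ issued from $w_\eta$ itself is strictly increasing in $t$ by the strict-subsolution property and bounded above by $1$, hence converges to an $L$-periodic steady state $v_\infty^\star\in[w_\eta,1]$; the instability of all intermediate $L$-periodic steady states then forces $v_\infty^\star\equiv 1$, and parabolic compactness together with $L$-periodicity of the limit upgrade the convergence to uniform on $\R$. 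The comparison principle yields $\tilde v\ge v_\star$, proving the claim. To apply the claim to $u$, I exploit the $L$-periodicity of the coefficients of~\eqref{eqL}: for each integer $n\ge 1$, the function $u^n(t,x):=u(t,x-nL)$ solves~\eqref{eqL} with initial datum $g^n(x):=g(x-nL)$, and by $L$-periodicity of $w_\eta$ one has $g^n(x)\ge w_\eta(x)$ for every fixed $x\in\R$ provided $n$ is large enough that $x-nL\le x_0$. By standard parabolic regularity, $(u^n)_{n\ge 1}$ is precompact in $C^{1,2}_{\mathrm{loc}}((0,+\infty)\times\R)$, and any subsequential limit $u^\infty$ is a classical bounded solution of~\eqref{eqL} on $\R\times\R$; a comparison argument on growing bounded intervals (combining the pointwise inequality $g^n\ge w_\eta$ with a finite-propagation-type estimate for the parabolic flow) yields $u^\infty\ge v_\star$ on $(0,+\infty)\times\R$. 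The auxiliary claim then provides $T>0$ with $u^\infty(T,\cdot)>1-\delta$ on $\R$, and the local uniform convergence $u^{n_k}\to u^\infty$ at time $T$ delivers $u(T,y)>1-\delta$ for all $y$ sufficiently negative.

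The main obstacle is the upgrade from the \emph{local} lower bound $g\ge w_\eta$ on $(-\infty,x_0]$ to the \emph{global} comparison $u^\infty\ge v_\star$ on all of $(0,+\infty)\times\R$; this relies essentially on the $L$-periodicity of $w_\eta$ together with the translation-compactness scheme for the sequence $u^n$. A secondary delicate point is the uniform-on-$\R$ convergence of $v_\star(t,\cdot)$ to $1$, which combines the $L$-periodicity of both $w_\eta$ and $v_\infty^\star$ with standard parabolic compactness and monotonicity. Once both the left and right asymptotic controls on $u(T,\cdot)$ are in place, Theorem~\ref{gstability} concludes.
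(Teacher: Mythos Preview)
Your overall strategy coincides with the paper's: reduce to Theorem~\ref{gstability} by showing that $u(T,\cdot)$ satisfies~\eqref{initialv} for some large $T$, using the instability of $\bar{u}_\pm$ to push the tails of $u$ toward $1$ and $0$ respectively. The eigenfunction subsolution $w_\eta=\bar{u}_-+\eta\varphi_-$ and the convergence $v_\star(t,\cdot)\to1$ uniformly are exactly the ingredients the paper uses (it phrases the latter as ``as in Step~2 of Lemma~\ref{noexstate}'', starting instead from the constant perturbation $\bar{u}_-+\sigma$ and then sandwiching with the eigenfunction perturbation; your direct use of $w_\eta$ is equivalent).

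The one point where your route diverges is the step you yourself flag as ``the main obstacle'': passing from the one-sided inequality $g\ge w_\eta$ on $(-\infty,x_0]$ to the global bound $u^\infty\ge v_\star$. The phrase ``finite-propagation-type estimate'' is not the right tool here (the parabolic flow has infinite speed of propagation), although your idea can be salvaged by comparing $u^n$ with the solution issued from the truncated datum $w_\eta\mathbf{1}_{(-\infty,x_0+nL]}$ and letting the truncation point go to $+\infty$. The paper bypasses this subtlety more cleanly: it introduces a fixed $C^{2,\alpha}$ function $\underline{u}_0\in[0,1]$ with $\underline{u}_0\le g$ \emph{everywhere} on~$\R$, $\underline{u}_0=\bar{u}_-+3\sigma/2$ for $x\ll-1$ and $\underline{u}_0=0$ for $x\gg1$. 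Then comparison is global from time $0$, and translating the solution $\underline{u}(t,\cdot+x_n')$ by $x_n'\in L\Z$, $x_n'\to-\infty$, yields a limit solution with initial datum identically $\bar{u}_-+3\sigma/2>u^-(0,\cdot)$; the strict comparison $\underline{u}_\infty(t_0,\cdot)>u^-(t_0,\cdot)$ then gives the contradiction directly. This fixed-subsolution device is worth adopting: it removes both the truncation argument and any need to control the (possibly non-convergent) translates $g(\cdot-nL)$ at $t=0$.
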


Under the assumptions of Theorem~\ref{gstability2} (we thank Dr.~J.~Fang for mentioning initial conditions of the type~\eqref{initialv2}), it follows that any two $L$-periodic stationary states $0<\bar{u}_1(x)<1$ and $0<\bar{u}_2(x)<1$ of~\eqref{eqL} are either identically equal, or unordered in the sense that $\min_{\R}(\bar{u}_1-\bar{u}_2)<0<\max_{\R}(\bar{u}_1-\bar{u}_2)$. Given this fact, the proof of Theorem~\ref{gstability2} is then based on the following argument. On the one hand, if a function $v$ solves the Cauchy problem~\eqref{inieqL} with an~$L$-periodic initial condition $g\in L^{\infty}(\R,[0,1])$ such that $g>\bar{u}_1$ in $\R$ (resp. $g<\bar{u}_2$ in $\R$), then $v(t,x)\to1$ (resp.~$v(t,x)\to0$) as $t\to+\infty$ uniformly in $x\in\R$. On the other hand, the solution $u$ of~\eqref{inieqL} with an initial condition $g$ satisfying~\eqref{initialv2} can be estimated from below or above as $x\to\pm\infty$ by solutions~$v$ of the above type. Therefore, $u(T,\cdot)$ satisfies the limiting conditions~\eqref{initialv} for some time $T>0$ large enough and Theorem~\ref{gstability} can be applied to $u(T+t,x)$ and leads to the conclusion.

 However, we point out that even if Theorem~\ref{gstability} is used in the proof of Theorem~\ref{gstability2}, the assumption~\eqref{initialv2} does not imply~\eqref{initialv} in general, so Theorem~\ref{gstability2} cannot be viewed as a direct corollary of Theorem~\ref{gstability}. To see it, let us consider the homogeneous equation~\eqref{homoin} with an $x$-independent function $f=f(u)$ satisfying~\eqref{bistable}, with $\theta_x=\theta$, together with~\eqref{asspars} and~$\int_0^1f(u)\,du>0$. The  assumptions of Theorems~\ref{gstability} and~\ref{gstability2} are fulfilled, and any initial condition $g\in L^{\infty}(\R,[0,1])$ such that $\liminf_{x\to-\infty}g(x)>\theta$ and $\limsup_{x\to+\infty}g(x)<\theta$ satisfies~\eqref{initialv2} but not~\eqref{initialv} in general.

In Theorems~\ref{gstability} and~\ref{gstability2}, the initial conditions $g$ are front-like in the sense that $g$ is not too small at $-\infty$ and not too large at $+\infty$. We mention that the Cauchy problem with initial conditions which are compactly supported or at least not too large at $\pm\infty$ has been extensively studied in the homogeneous and periodic cases, see, e.g., \cite{aw,dm,fm,g,x4,zl}.

\hfill\break
\noindent{\bf{Outline of the paper.}} Section~\ref{sec2} is devoted to the proof of the existence results for small and large periods $L$, that is, Theorems~\ref{thhomo},~\ref{thhomobis} and~\ref{thlarge}. We also show part of Theorem~\ref{thqual} on the sign property of the speed of non-stationary pulsating fronts. In Section~\ref{sec3}, we prove Theorems~\ref{thE1} and~\ref{thE2} on the properties of the set of periods of non-stationary fronts. Lastly, Section~\ref{sec4} is devoted to the proof of the stability results, Theorems~\ref{gstability} and~\ref{gstability2}, while the Appendix (Section~\ref{sec5}) is concerned with the proof of some auxiliary lemmas used in the proofs of Theorems~\ref{thhomo} and~\ref{thE1}.


\SE{Existence of pulsating fronts}\label{sec2}

As already emphasized, the proofs of Theorems~\ref{thhomo} and~\ref{thhomobis} on the existence of pulsating fronts for small periods~$L$ use different methods. They are carried out in Sections~\ref{smalllhomo} and~\ref{smalllhomobis}, whereas Section~\ref{seclarge} is devoted to the proof of Theorem~\ref{thlarge} on the existence of pulsating fronts for large periods~$L$. In Section~\ref{secsign}, we show that the sign of non-stationary pulsating fronts is that of the integral $\int_0^1\overline{f}(u)\,du$.


\subsection{Small periods $L$: the implicit function theorem}\label{smalllhomo}

This section is devoted to the proof of Theorem~\ref{thhomo}. The strategy is similar to that used by Heinze in~\cite{he2}. There, the homogeneous process for the following equation as $\varepsilon\to 0^+$ was considered
\begin{equation}\label{heihomo}
\left\{\baa{rcll}
u_t & = & d\,\Delta u+f(u), & t\in\R,\,\, x\in\Omega_{\varepsilon},\vspace{3pt}\\
u_{\nu} & = & 0, & t\in\R,\,\, x\in\partial \Omega_{\varepsilon},\eaa\right.
\end{equation}
where $d$ is a positive constant, $f:\R\to\R$ is a bistable function of class $C^2$, $u_{\nu}=\frac{\partial u}{\partial\nu}$, $\nu$ denotes the exterior normal vector to $\Omega_{\epsilon}$ and $\Omega_{\varepsilon}=\varepsilon\,\Omega$, with $\Omega$ being a smooth open connected set of~$\R^n$ which is 1-periodic in all directions $x_i$ ($1\le i\le n$). Assuming the existence of a traveling wave for a homogenized problem and then using the implicit function theorem in an appropriate function space, Heinze obtained a unique (up to shift in time) pulsating front for the equation~\eqref{heihomo} at small~$\varepsilon$. Although a portion of the arguments of Theorem~\ref{thhomo} follows the same lines as those used in~\cite{he2} for problem~\eqref{heihomo}, the oscillations in the diffusion coefficient $a_L$ and the nonlinearity~$f_L$ in equation~\eqref{eqL} make the analysis different and more complicated. In addition, we provide a different approach in the convergence to the homogeneous process (see Lemma~\ref{continuem} below). As a matter of fact, the approach we use here allows us to prove some continuity results at any $L>0$ and not only at $L=0^+$ (see Lemma~\ref{continueg} below). This strategy can then later be applied to the proof of Theorem~\ref{thE1} in Section~\ref{sec3}, where a pulsating front for equation~\eqref{eqL} is assumed to exist at a fixed $L_0>0$. Thus, for the sake of completeness of the proof here and also for convenience of that of Theorem~\ref{thE1}, we include the details as follows.

We assume that the homogenized equation~\eqref{homogenized} has a (unique up to shifts) front $\phi_0$, with speed~$c_0\neq0$. Without loss of generality, one may assume that the speed satisfies $c_0>0$ throughout this subsection. Indeed, if $c_0<0$, then the function $\psi_0(x)=1-\phi_0(-x)$ solves~\eqref{homogenized} with speed~$-c_0\,(>0)$ instead of $c_0$ and with $\overline{g}(u)=-\overline{f}(1-u)$ instead of $\overline{f}$, where $g(x,u)=-f(-x,1-u)$. Furthermore, if $u(t,x)=\phi(x-c_Lt,x/L)$ is a pulsating front for~\eqref{eqL} with speed~$c_L$, then $v(t,x)=1-\phi(-x-c_Lt,-x/L)$ is a pulsating front with speed $-c_L$ for the equation $v_t=(\tilde{a}_Lv_x)_x+g_L(x,v)$ with $\tilde{a}(x)=a(-x)$ (and $\tilde{a}_H=a_H$). Therefore, even if it means changing~$a$ into~$\tilde{a}$ and $f$ into $g$, one can assume here that $c_0>0$.

Define the new variables
\begin{equation}\label{changev}
\xi=x-c_Lt\quad\hbox{and}\quad y=\frac{x}{L}.
\end{equation}
For a given $L>0$, finding pulsating fronts $u(t,x)=\phi_L(x-c_Lt,x/L)$ of~\eqref{eqL} with a speed $c_L\neq0$ amounts to finding solutions~$\phi_{L}$ of the following problem:
\begin{equation}\label{pulsolu}\left\{\baa{l}
\tilde{\partial}(a(y)\tilde{\partial}_L\phi_{L})+c_L\partial_{\xi}\phi_{L}+f(y,\phi_{L})=0 \hbox{ for all }(\xi,y)\in\R\times\R,\vspace{3pt}\\
\phi_{L}(\xi,y)\hbox{ is } 1\hbox{-periodic in }y,\vspace{3pt}\\
\phi_{L}(-\infty,y)=1,\ \phi_{L}(+\infty,y)=0\hbox{ uniformly in }y\in\R,\eaa\right.
\end{equation}
where
$$\tilde{\partial}_L=\partial_{\xi}+\frac{1}{L}\partial_y.$$
By the periodicity condition, equation~\eqref{pulsolu} can be restricted in $y\in\mathbb{T}:=\R/\Z$. Let $L^2(\R\times \T)$ and $H^1(\R\times \T)$ be the Banach spaces defined by
$$\left\{\baa{rcl}
L^2(\R\times \T) & = & \big\{v\in L^2_{loc}(\R\times \R)\,\big|\, v\in L^2(\R\times (0,1)) \,\,\hbox{and}\,\, v(\xi,y+1)=v(\xi,y)\,\,\hbox{a.e.\,\,in}\,\, \R^2 \big\},\vspace{3pt}\\
H^1(\R\times \T) & = & \big\{v\in H^1_{loc}(\R\times \R)\,\big|\, v\in H^1(\R\times (0,1)) \,\,\hbox{and}\,\, v(\xi,y+1)=v(\xi,y)\,\,\hbox{a.e.\,\,in}\,\, \R^2 \big\},\eaa\right.$$
embedded with the norms $\| v\|_{L^2(\R\times \T)}=\| v\|_{L^2(\R\times (0,1))}$ and $\| v\|_{H^1(\R\times \T)}=\| v\|_{H^1(\R\times (0,1))}=\big(\|v\|_{L^2(\R\times\T)}+\|\partial_{\xi}v\|_{L^2(\R\times\T)}+\|\partial_yv\|_{L^2(\R\times\T)}\big)^{1/2}$, respectively.

For the homogenization limit as $L\to0^+$, we introduce some auxiliary operators. Namely, fix a real $\beta>0$ and for any $c>0$, define
\begin{equation}\label{linhomo}\baa{rcll}
\!\!\!\!M_{c,L}(v) & \!\!\!\!=\!\!\!\! & \tilde{\partial}_L(a\tilde{\partial}_Lv)\!+\!c\partial_{\xi}v\!-\!\beta v, & \!\!v\in\mathcal{D}_L\!=\!\{v\in H^1(\R\!\times\!\T) \, |\, \tilde{\partial}_L(a\tilde{\partial}_Lv)\!\in\!L^2(\R\!\times\!\T)\},\,L\!\in\!\R^*,\vspace{3pt}\\
\!\!\!\!M_{c,0}(v) & \!\!\!\!=\!\!\!\! & a_Hv''+cv'-\beta v, & \!\!v\in D(M_{c,0})=H^2(\R),\eaa
\end{equation}
where $a_H>0$ is the harmonic mean of $a$ defined in~\eqref{defaH}. The fact that $v\in\mathcal{D}_L$ means that $v\in H^1(\R\times\T)$ and there is a constant $C>0$ such that~$\big|\int_{\R\times\T}a\tilde{\partial}_Lv\tilde{\partial}_L\varphi\big|\!\le\!C\|\varphi\|_{L^2(\R\times\T)}$ and $\int_{\R\times\T}a\tilde{\partial}_Lv\tilde{\partial}_L\varphi\!=\!-\int_{\R\times\T}\tilde{\partial}_L(a\tilde{\partial}_Lv)\varphi$ for all $\varphi\in H^1(\R\times\T)$. Notice that the operators $M_{c,L}$ are also defined for negative values of $L$, that the domain $\mathcal{D}_L$ is dense in~$H^1(\R\times \T)$ and that $M_{c,L}(v)\in L^2(\R\times \T)$ for all $v\in \mathcal{D}_L$ and $L\in\R^*$. Furthermore, the domain~$D(M_{c,0})=H^2(\R)$ of~$M_{c,0}$ is dense in $H^1(\R)$ and $M_{c,0}(v)\in L^2(\R)$ for all $v\in D(M_{c,0})$. We first state in the following three lemmas some of the basic properties of the operators~$M_{c,L}$,~$M_{c,0}$ and their inverses.

\begin{lem}\label{invertible}
The operators $M_{c,0}: H^2(\R)\, \rightarrow \, L^2(\R)$ and $M_{c,L}: \mathcal{D}_L\, \rightarrow \, L^2(\R\times \T)$ for $L\neq0$ are invertible for every $c>0$. Furthermore, for every $r_1> 0$ and $r_2>0$, there is a constant $C=C(r_1,r_2,\beta,a)$ such that for all $c\geq r_1$, $|L|\leq r_2$, $g\in L^2(\R\times\T)$ and $\varphi\in L^2(\R)$,
$$\left\{\baa{lcll}
\|M_{c,L}^{-1}(g)\|_{H^1(\R\times\T)} & \leq & C\|g\|_{L^2(\R\times\T)} & \hbox{if }L\neq 0,\vspace{3pt}\\
\|M_{c,0}^{-1}(\varphi)\|_{H^1(\R)} & \leq & C\|\varphi\|_{L^2(\R)}.\eaa\right.$$
\end{lem}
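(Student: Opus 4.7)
My plan splits the proof into the cases $L=0$ and $L\neq 0$, and for $L\neq 0$ it rests on two energy identities.

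For $L=0$ the operator $M_{c,0}=a_H\partial^2+c\partial-\beta$ is a constant-coefficient second-order ODE on $\R$. Its characteristic polynomial $a_H\lambda^2+c\lambda-\beta$ has two real roots of opposite signs (since $-\beta/a_H<0$), so the associated Green's function decays exponentially on both half-lines and lies in $L^1(\R)\cap L^2(\R)$; equivalently, the Fourier symbol $-a_Hk^2+cik-\beta$ satisfies $|-a_Hk^2+cik-\beta|\geq\beta+a_Hk^2$. Either way, $M_{c,0}^{-1}$ is bounded from $L^2(\R)$ into $H^2(\R)$ and the estimate $\|v\|_{H^1(\R)}\leq C(r_1,\beta,a_H)\|\varphi\|_{L^2(\R)}$ is immediate and uniform in $c\geq r_1$.

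For $L\neq 0$ I would first derive the a priori estimate by applying the following two energy identities to a $\xi$-mollification of $v$ (licit since $a=a(y)$ is $\xi$-independent) and then passing to the limit. Pairing $M_{c,L}v=g$ with $v$ in $L^2(\R\times\T)$, the transport term gives $c\int v\,\partial_\xi v=\tfrac{c}{2}\int\partial_\xi(v^2)=0$ by the $\xi$-decay of $v(\cdot,y)\in H^1(\R)$, and integration by parts on the principal term yields
$$\int_{\R\times\T}a(y)\,|\tilde\partial_L v|^2\,d\xi\,dy\ +\ \beta\,\|v\|_{L^2}^2\ =\ -\int_{\R\times\T} g\,v,$$
whence $\|v\|_{L^2}\leq\beta^{-1}\|g\|_{L^2}$ and $\|\tilde\partial_L v\|_{L^2}\leq(a_{\min}\beta)^{-1/2}\|g\|_{L^2}$. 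Pairing instead with $\partial_\xi v$: since $a=a(y)$ and $[\tilde\partial_L,\partial_\xi]=0$, the principal term becomes $-\tfrac{1}{2}\int a(y)\,\partial_\xi(|\tilde\partial_L v|^2)=0$, and $-\beta\int v\,\partial_\xi v=0$ by the same decay, so what remains is
$$c\,\|\partial_\xi v\|_{L^2}^2\ =\ \langle g,\partial_\xi v\rangle_{L^2},$$
which gives $\|\partial_\xi v\|_{L^2}\leq c^{-1}\|g\|_{L^2}\leq r_1^{-1}\|g\|_{L^2}$. Finally $\partial_y v=L(\tilde\partial_L v-\partial_\xi v)$ yields $\|\partial_y v\|_{L^2}\leq r_2(\|\tilde\partial_L v\|_{L^2}+\|\partial_\xi v\|_{L^2})$, and the three contributions assemble into the required uniform bound $\|v\|_{H^1(\R\times\T)}\leq C(r_1,r_2,\beta,a)\|g\|_{L^2(\R\times\T)}$.

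Uniqueness is then immediate from the first energy identity applied to the homogeneous equation. For existence I would solve the Dirichlet problem for $M_{c,L}$ on the truncated cylinders $(-R,R)\times\T$ (if needed, with a vanishing regularization $-\epsilon(\partial_\xi^2+\partial_y^2)$ to restore uniform ellipticity on the bounded domain), where the same two pairings give $R$- and $\epsilon$-uniform $H^1$-bounds; a weak-compactness argument in $H^1(\R\times\T)$ as $R\to+\infty$ and $\epsilon\to 0$ produces the desired $v\in\mathcal D_L$. The principal obstacle is that on $\R\times\T$ the operator $\tilde\partial_L(a\tilde\partial_L\cdot)$ is \emph{genuinely degenerate}: its symbol $a(y)(\eta+\zeta/L)^2$ vanishes along the characteristic line $\eta+\zeta/L=0$, so there is no uniform ellipticity and direct Lax--Milgram coercivity in the full $H^1$-norm is unavailable. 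The $\partial_\xi v$-pairing bypasses this degeneracy, and it crucially uses both that $a$ depends only on $y$ (so the principal term is an exact $\xi$-divergence that integrates to zero) and the hypothesis $c\geq r_1>0$ (so $\|\partial_\xi v\|$ can actually be extracted from the transport term). This is the one step in the whole argument that needs the lower bound $c\geq r_1>0$ rather than merely $c\neq 0$, and it explains why the constant $C$ is allowed to depend on $r_1$.
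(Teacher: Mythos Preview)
Your a priori estimate is exactly the paper's: the pairing with $v$ (their~\eqref{hnorm}) and the pairing with $\partial_\xi v$ (their~\eqref{quetients}, justified via symmetric difference quotients rather than $\xi$-mollification, which amounts to the same thing) give precisely the bounds you write, and the recovery of $\partial_y v$ via $\partial_y v=L(\tilde\partial_L v-\partial_\xi v)$ is identical. The case $L=0$ is handled more explicitly by you (Fourier/Green's function) than in the paper, which simply remarks that it ``follows the same lines and is actually simpler''.

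Where you diverge is in the existence argument. The paper does \emph{not} truncate or regularize: it shows that the kernel of $M_{c,L}$ and of its adjoint $M_{c,L}^*$ are both trivial (same pairing with $v$), and that the range is closed (take $M_{c,L}(v_n)=g_n\to g$, the uniform $H^1$-bounds give a weakly convergent subsequence whose limit lies in $\mathcal D_L$ and solves $M_{c,L}(v)=g$). Surjectivity then follows from the closed range theorem. This functional-analytic route is short and stays entirely in the whole-space setting, so no boundary terms ever appear.

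Your truncation-plus-regularization route can be made to work, but the claim that ``the same two pairings give $R$- and $\epsilon$-uniform $H^1$-bounds'' on $(-R,R)\times\T$ is not quite right as stated: pairing with $\partial_\xi v$ on the truncated domain produces the boundary contribution $\tfrac12\big[(a(y)-\epsilon)(\partial_\xi v)^2\big]_{\xi=-R}^{\xi=R}$, which has no sign. The fix is to take limits in the order $R\to\infty$ first (using only the first identity, which does give $R$-uniform bounds on $v$, $\tilde\partial_L v$, and $\sqrt{\epsilon}\,\nabla v$), obtain a solution of the $\epsilon$-regularized problem on the full cylinder, and only then invoke the second identity---now without boundary terms---to get the $\epsilon$-uniform control on $\partial_\xi v$. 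With that correction your approach goes through; the paper's closed-range argument simply sidesteps the issue.
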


The following lemma deals with the convergence of $M_{c_n,L_n}^{-1}$ to $M_{c,0}^{-1}$ when $L_n\to0$ with $L_n\in\R^*$, $c_n\to c>0$ and the operators $M_{c_n,L}^{-1}$ are applied to $g_n$ with $g_n\to g$ in $L^2(\R\times\T)$.

\begin{lem}\label{continuem}
Fix $\beta>0$ and $c>0$. Then for every $g\in L^2(\R\times\T)$,
\be\label{convinverse}\left\{\baa{lcll}
M_{c_n,L_n}^{-1}(g_n) & \to & M_{c,0}^{-1}(\overline{g}) & \hbox{in }H^1(\R\times\T),\vspace{3pt}\\
M_{c_n,0}^{-1}(\varphi_n) & \to & M_{c,0}^{-1}(\overline{g}) & \hbox{in }H^1(\R)\eaa\right.
\ee
as $n\to+\infty$ for every sequences $(g_n)_{n\in\N}$ in $L^2(\R\times\T)$, $(\varphi_n)_{n\in\N}$ in $L^2(\R)$, $(c_n)_{n\in\N}$ in $(0,+\infty)$ and~$(L_n)_{n\in\N}$ in $\R^*$ such that $\|g_n-g\|_{L^2(\R\times\T)}\to0$, $\|\varphi_n-\overline{g}\|_{L^2(\R)}\to0$, $c_n\to c$ and $L_n\to0$ as~$n\to+\infty$, where $\overline{g}\in L^2(\R)$ is defined as
\begin{equation}\label{ovlineg}
\overline{g}(\xi)=\int_{\T}g(\xi,y)\, dy\,\,\,\,\, \hbox{for } \xi\in \R,
\end{equation}
and $M_{c,0}^{-1}(\overline{g})\in H^2(\R)$ is viewed as an $H^2(\R\times\T)$ function by extending it trivially in the $y$-variable. Furthermore, the limits~\eqref{convinverse} are uniform in the ball $B_A=\big\{g\in L^2(\R\times\T)\,|\,\|g\|_{L^2(\R\times\T)}\le A\big\}$ for every $A>0$.
\end{lem}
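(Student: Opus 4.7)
The proof proceeds in three stages: a priori bounds, weak identification of the limit via oscillating test functions, and upgrade to strong $H^1$ convergence. For the first, set $v_n:=M_{c_n,L_n}^{-1}(g_n)$. Lemma~\ref{invertible} gives $\|v_n\|_{H^1(\R\times\T)}\le C\|g_n\|_{L^2}$ uniformly. Multiplying the equation by $v_n$ and integrating, the transport term $c_n\int v_n\partial_\xi v_n$ vanishes by decay of $v_n$, yielding the energy identity $\int a(\tilde\partial_{L_n}v_n)^2+\beta\|v_n\|_{L^2}^2=-\int g_n v_n$. Hence $\|\tilde\partial_{L_n}v_n\|_{L^2}$ is uniformly bounded, and since $\tilde\partial_{L_n}=\partial_\xi+L_n^{-1}\partial_y$, this forces $\|\partial_y v_n\|_{L^2}=O(L_n)\to 0$, so every weak-$H^1$ subsequential limit $v_\infty$ is $y$-independent.

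For the second stage, introduce the first-order corrector $\chi\in H^1_{\mathrm{per}}(\T)$ defined by $\chi'(y)=a_H/a(y)-1$ and $\int_\T\chi=0$; its $1$-periodicity comes from the compatibility $\int_\T(a_H/a-1)=0$, and the algebraic heart is the identity $a(1+\chi')\equiv a_H$. For arbitrary $\eta\in C_c^\infty(\R)$, test the weak formulation of $M_{c_n,L_n}(v_n)=g_n$ against the oscillating function $\varphi_n(\xi,y):=\eta(\xi)+L_n\chi(y)\eta'(\xi)$. A direct computation gives $a\tilde\partial_{L_n}\varphi_n=a_H\eta'(\xi)+L_n a(y)\chi(y)\eta''(\xi)$, so the diffusion pairing reduces to $a_H\int\partial_\xi v_n\cdot\eta'$ modulo $O(L_n)$, the term with $L_n^{-1}\partial_y v_n$ vanishing against $\eta'(\xi)$ by periodicity in $y$. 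Passing to the limit with the weak $L^2$ convergence of $v_n,\partial_\xi v_n$ and the strong convergence $g_n\to g$, $c_n\to c$ yields the weak form of $M_{c,0}(v_\infty)=\overline g$, hence $v_\infty=M_{c,0}^{-1}(\overline g)$ by Lemma~\ref{invertible}. Uniqueness of the limit forces the entire sequence to converge weakly.

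The main obstacle is the third stage: strong $H^1$ convergence. My plan is a quantitative two-scale ansatz together with density in $L^2$. For a smooth compactly supported approximation $g_\varepsilon$ of $g$ in $L^2(\R\times\T)$ with $\overline g_\varepsilon:=\int_\T g_\varepsilon\,dy$, set $v_{\infty,\varepsilon}:=M_{c,0}^{-1}(\overline g_\varepsilon)$, which is $C^\infty$ with $H^k$ derivatives for every $k$, $\chi_2(y):=-\int_0^y\chi(s)\,ds$ ($1$-periodic because $\int_\T\chi=0$), and let $\psi_\varepsilon$ solve the $y$-cell problem $\partial_y(a\partial_y\psi_\varepsilon)=g_\varepsilon-\overline g_\varepsilon$ with $\int_\T\psi_\varepsilon\,dy=0$, solvable since the right-hand side has mean zero in $y$. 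Define
\[\tilde v_n^\varepsilon:=v_{\infty,\varepsilon}(\xi)+L_n\chi(y)v_{\infty,\varepsilon}'(\xi)+L_n^2\chi_2(y)v_{\infty,\varepsilon}''(\xi)+L_n^2\psi_\varepsilon(\xi,y).\]
A careful computation using $a(1+\chi')\equiv a_H$ and $\chi+\chi_2'\equiv 0$ gives $M_{c_n,L_n}(\tilde v_n^\varepsilon)=g_\varepsilon+(c_n-c)v_{\infty,\varepsilon}'+L_n R_n^\varepsilon$ with $\|R_n^\varepsilon\|_{L^2}$ bounded in $n$. Lemma~\ref{invertible} then yields $\|v_n-\tilde v_n^\varepsilon\|_{H^1}\le C(\|g_n-g_\varepsilon\|_{L^2}+|c_n-c|+L_n\|R_n^\varepsilon\|_{L^2})$, while the ansatz gives $\|\tilde v_n^\varepsilon-v_{\infty,\varepsilon}\|_{H^1}=O(L_n)$ and Lemma~\ref{invertible} gives $\|v_{\infty,\varepsilon}-v_\infty\|_{H^1}\le C\|g_\varepsilon-g\|_{L^2}$. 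Sending $n\to\infty$ (for fixed $\varepsilon$) and then $\varepsilon\to 0$ delivers strong $H^1$ convergence; since all estimates depend only on $\|g\|_{L^2}$, uniformity over $B_A$ follows. The simpler ODE statement is a direct consequence of the resolvent identity $M_{c_n,0}^{-1}(\varphi_n)-M_{c,0}^{-1}(\overline g)=M_{c,0}^{-1}(\varphi_n-\overline g)-(c_n-c)M_{c,0}^{-1}\partial_\xi M_{c_n,0}^{-1}(\varphi_n)$ combined with Lemma~\ref{invertible}.
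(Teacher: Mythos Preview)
Your argument is correct and the first two stages (a priori bounds via the energy identity, weak identification of the limit via corrector test functions $\eta+L_n\chi\eta'$) match the paper's Step~1 closely. The genuine difference is in the upgrade to strong $H^1$ convergence.

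The paper's route is more direct: it sets $\zeta_n=v_n-v_0-L_n\chi v_0'$ (only the first-order corrector, no smooth approximation, no $\chi_2$ or $\psi_\varepsilon$), tests $M_{c,L_n}(v_n)=g$ against $\zeta_n$, and obtains
\[
\int_{\R\times\T} a(\tilde\partial_{L_n}\zeta_n)^2+\beta\zeta_n^2=\int_{\R\times\T}(\overline g-g)(v_n-v_0)+(a\chi)'v_0''(v_n-v_0)+O(L_n),
\]
which tends to $0$ by \emph{weak} convergence of $v_n$; this gives strong $L^2$. For $\partial_\xi$, the paper uses the algebraic identity $c\|\partial_\xi v_n\|_{L^2}^2=\int g\,\partial_\xi v_n$ (from the symmetric difference-quotient trick), whose right side converges to $\int g\,v_0'=c\|v_0'\|_{L^2}^2$; weak convergence plus norm convergence yields strong convergence of $\partial_\xi v_n$, and $\partial_y v_n\to 0$ follows. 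Your route instead builds the full two-scale ansatz $V+L_n\chi V'+L_n^2\chi_2 V''+L_n^2\psi_\varepsilon$ and invokes density of smooth $g_\varepsilon$. What this buys you is an explicit $H^1$ rate for smooth data; what the paper's approach buys is that no higher correctors, no regularity beyond $v_0\in H^2$, and no density argument are needed.

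One point to tighten: your final sentence (``since all estimates depend only on $\|g\|_{L^2}$, uniformity over $B_A$ follows'') does not follow from your argument as written, because the rate at which $\|g-g_\varepsilon\|_{L^2}\to 0$ is \emph{not} uniform over $g\in B_A$. You would need either to quantify the approximation (e.g.\ mollification combined with an equicontinuity-type argument, which is not available on a mere $L^2$ ball) or to replace the density step by a direct estimate. The paper's proof also asserts this uniformity somewhat tersely, but its structure makes it recoverable: once strong convergence is known, one can bootstrap the energy identity for $\zeta_n$ into an inequality of the form $\beta\|\zeta_n\|_{L^2}^2\le C\|g\|_{L^2}\|\zeta_n\|_{L^2}+C L_n\|g\|_{L^2}^2$, which does yield a rate uniform in $B_A$.
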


The following lemma is the analogue of Lemma~\ref{continuem}, when $L_n\to L\neq0$ as $n\to+\infty$.

\begin{lem}\label{continuem2}
Fix $\beta>0$, $c>0$ and $L\in\R^*$. Then for every $g\in L^2(\R\times\T)$ and every sequences $(g_n)_{n\in\N}$ in $L^2(\R\times\T)$, $(c_n)_{n\in\N}$ in $(0,+\infty)$ and~$(L_n)_{n\in\N}$ in $\R^*$ such that $\|g_n-g\|_{L^2(\R\times\T)}\to0$, $c_n\to c$ and $L_n\to L$ as $n\to+\infty$, there holds $M_{c_n,L_n}^{-1}(g_n)\to M_{c,L}^{-1}(g)$ in $H^1(\R\times\T)$ as $n\to+\infty$. Furthermore, the limit is uniform with respect to $g\in B_A$ for every $A>0$.
\end{lem}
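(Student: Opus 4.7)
My plan is to reduce Lemma~\ref{continuem2} to the uniform $H^1$--operator bound already given by Lemma~\ref{invertible}, by controlling the perturbation $M_{c,L}(v)-M_{c_n,L_n}(v)$ at the target function $v:=M_{c,L}^{-1}(g)$. The crucial difference with the homogenization limit (Lemma~\ref{continuem}) is that here $L_n$ stays bounded away from $0$, so the second-order operator $\tilde{\partial}_L(a\tilde{\partial}_L\,\cdot\,)$ is uniformly elliptic in both variables $(\xi,y)\in\R\times\T$ and the solution $v$ enjoys full $H^2$--regularity.

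The first step is to establish the a priori estimate
$$\|v\|_{H^2(\R\times\T)}\le C(c,L,\beta,a)\,\|g\|_{L^2(\R\times\T)}.$$
Starting from the $H^1$--bound of Lemma~\ref{invertible}, I would obtain this via Plancherel in $\xi$ combined with Fourier series in $y\in\T$, using that the symbol $a(y)(\eta+\zeta/L)^2+\beta$ of the principal part is uniformly coercive when $a>0$ and $L\ne 0$ are fixed, or equivalently from the pointwise expansion
$$\tilde{\partial}_L(a\tilde{\partial}_L v)=a\,\partial_{\xi\xi}v+\frac{2a}{L}\partial_{\xi y}v+\frac{a}{L^2}\partial_{yy}v+\frac{a'}{L}\partial_\xi v+\frac{a'}{L^2}\partial_y v\in L^2(\R\times\T),$$
which reads as a standard linear second-order elliptic PDE on the cylinder with $C^{0,\alpha}$ coefficients. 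Note that $a\in C^{1,\alpha}(\R)$ guarantees $a'\in L^{\infty}(\R)$, so every term above makes sense in $L^2$.

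Step two is the perturbation estimate. Substituting $L_n$ for $L$ in the expansion above and subtracting, $M_{c,L}(v)-M_{c_n,L_n}(v)$ becomes a linear combination of $\partial_\xi v$, $\partial_y v$, $\partial_{\xi y} v$ and $\partial_{yy} v$ (each in $L^2$) multiplied by bounded coefficients of the form $c-c_n$, $L^{-1}-L_n^{-1}$ and $L^{-2}-L_n^{-2}$, all of which tend to $0$. Combined with step one this gives
$$\|M_{c,L}(v)-M_{c_n,L_n}(v)\|_{L^2(\R\times\T)}\le \epsilon_n\,\|g\|_{L^2(\R\times\T)},$$
where the numerical sequence $\epsilon_n\to 0$ depends only on $(c_n,L_n,c,L,\beta,a)$, not on $g$.

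For the conclusion I write
$$v_n-v=M_{c_n,L_n}^{-1}\Bigl((g_n-g)+\bigl(M_{c,L}(v)-M_{c_n,L_n}(v)\bigr)\Bigr).$$
Since $c_n\to c>0$ and $L_n\to L\ne 0$, Lemma~\ref{invertible} applies for $n$ large with, say, $r_1=c/2$ and $r_2=2|L|$, yielding
$$\|v_n-v\|_{H^1(\R\times\T)}\le C\bigl(\|g_n-g\|_{L^2(\R\times\T)}+\epsilon_n\,\|g\|_{L^2(\R\times\T)}\bigr)\longrightarrow 0.$$
For $g\in B_A$ the right-hand side is at most $C(\|g_n-g\|_{L^2(\R\times\T)}+\epsilon_n A)$, which yields the stated uniformity. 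The main obstacle I anticipate is step one: obtaining a genuinely \emph{global} $H^2$--estimate on the cylinder $\R\times\T$ rather than merely interior regularity, since the textbook Schauder/$L^2$ estimates are local in nature. This should nevertheless follow from Plancherel in the $\xi$--variable together with the coercivity of the bilinear form associated with $M_{c,L}-\beta$, essentially by the same energy argument that underlies the proof of Lemma~\ref{invertible}.
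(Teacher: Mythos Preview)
Your reduction in step three is correct, and the identity $v_n-v=M_{c_n,L_n}^{-1}\bigl((g_n-g)+(M_{c,L}(v)-M_{c_n,L_n}(v))\bigr)$ is exactly how the paper organises the argument as well. The gap is in step one, and it is not the ``local versus global'' issue you flag at the end: the operator $M_{c,L}$ is \emph{not} elliptic in $(\xi,y)$. Its principal symbol is $a(y)(\eta+\zeta/L)^2$, which vanishes along the line $L\eta+\zeta=0$; the $\beta$ you include in the ``symbol'' is a zeroth--order term and does not contribute to coercivity of the second--order part. This is natural, since $M_{c,L}$ is the parabolic operator $\partial_t-\partial_x(a_L\partial_x\,\cdot\,)$ rewritten in the skewed coordinates $(\xi,y)$; in the original variables one controls $u_t$ and $u_{xx}$ in $L^2$ but not $u_{tt}$, and $\partial_{\xi\xi}v=c^{-2}u_{tt}$. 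So for a generic $g\in L^2(\R\times\T)$ there is no reason for $\partial_{yy}v$ or $\partial_{\xi y}v$ to lie in $L^2$, and your expression for $M_{c,L}(v)-M_{c_n,L_n}(v)$ as ``a linear combination of $\partial_{\xi y}v$ and $\partial_{yy}v$ with small coefficients'' cannot be bounded in $L^2$.

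The paper circumvents this by never isolating the individual second derivatives of $w:=M_{c,L}^{-1}(g)$. It writes (after first separating off the dependence on $c_n$ and $g_n$ exactly as you do)
$$M_{c,L_n}(w_n-w)=-\tilde{\partial}_{L_n}(a\tilde{\partial}_{L_n}w)+\tilde{\partial}_L(a\tilde{\partial}_Lw),$$
tests against $w_n-w$, and integrates the $\tilde{\partial}_{L_n}$ term by parts. The right--hand side then involves only $\tilde{\partial}_L(a\tilde{\partial}_Lw)\in L^2$ (this is precisely the definition of $w\in\mathcal{D}_L$), the first--order quantities $\partial_\xi w,\partial_y w\in L^2$, and the weakly convergent factors $w_n-w$ and $\tilde{\partial}_{L_n}w_n-\tilde{\partial}_{L_n}w$. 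This energy argument gives strong $L^2$--convergence of $w_n-w$ and of $\tilde{\partial}_{L_n}(w_n-w)$, and the remaining $\partial_\xi$-- and $\partial_y$--convergences are recovered from the difference--quotient identity~\eqref{quetients} and the triangle inequality, respectively. If you want to salvage your outline, replace step one by ``$w\in\mathcal{D}_L$, i.e.\ $\tilde{\partial}_L(a\tilde{\partial}_Lw)\in L^2$'' and handle the perturbation weakly via integration by parts rather than strongly via an $H^2$--bound.
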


In order not to lengthen the course of the proof of Theorem~\ref{thhomo}, the proofs of these auxiliary lemmas are postponed in the Appendix (Section~\ref{sec5}).

Coming back to the solution $(\phi_0,c_0)$ of the homogeneous equation~\eqref{homogenized}, it is well known that there are some positive constants~$A_1$ and~$A_2$ such that $\phi_0(\xi)\sim A_1\me^{-\lambda_1\xi}$ as $\xi\to+\infty$ and $1-\phi_0(\xi)\sim A_2\me^{\lambda_2\xi}$ as $\xi\to-\infty$, with $\lambda_1=(c_0+(c_0^2-4a_H\overline{f}'(0))^{1/2})/(2a_H)>0$ and~$\lambda_2=(-c_0+(c_0^2-4a_H\overline{f}'(1))^{1/2})/(2a_H)>0$. Now, in order to apply an implicit function theorem around the homogeneous front $(\phi_0,c_0)$ to get the existence of pulsating fronts for small $L$, we need to introduce a few more notations. Firstly, let $\chi\in C^2(\R)$ be a solution of the equation
\begin{equation}\label{dualcell}
(a(\chi'+1))'=0\hbox{ in }\R\ \hbox{ and }\ \chi(y+1)=\chi(y)\hbox{for all } y\in \R.
\end{equation}
The function $\chi$ is unique up to a constant, and it satisfies $a(y)(\chi'(y)+1)=a_H$ for all $y\in\R$. Secondly, for any $v\in H^1(\R\times\T)$, $c>0$ and $L\in\R^*$, define
\begin{equation*}
\begin{aligned}
K(v,c,L)(\xi,y)=&\big(a_H+(a\chi)'(y)\big)\phi_{0}''(\xi)+La(y)\chi(y)\phi_{0}^{(3)}(\xi)+c\big(\phi_{0}'(\xi)+L\chi(y)\phi_{0}''(\xi)\big)\vspace{3pt}\\
&+f\big(y,v(\xi,y)+\phi_{0}(\xi)+L\chi(y)\phi_{0}'(\xi)\big)+\beta v(\xi,y).
\end{aligned}
\end{equation*}
Since $\phi_0\in L^2(0,+\infty)$, $1-\phi_0\in L^2(-\infty,0)$, $\phi_0'\in H^2(\R)$ and $f(y,u)$ is globally Lipschitz-continuous in $u$ uniformly for $y\in\T$, it follows that $K(v,c,L)\in L^2(\R\times\T)$ for any $v\in H^1(\R\times\T)$ (and even for any $v\in L^2(\R\times\T)$). For $L=0$, define
$$K(v,c,0)(\xi,y)=\big(a_H+(a\chi)'(y)\big)\phi_{0}''(\xi)+c\phi_{0}'(\xi)+f\big(y,v(\xi,y)+\phi_{0}(\xi)\big)+\beta v(\xi,y).$$
Similarly, $K(v,c,0)\in L^2(\R\times\T)$ for any $v\in H^1(\R\times\T)$. Finally, for $v\in H^1(\R\times\T)$, $c>0$ and~$L\in\R$, let us set $G(v,c,L)=(G_1,G_2)(v,c,L)$ with
\be\label{defG}\left\{\baa{rcl}
G_1(v,c,L) & = & \left\{\baa{ll}
v+M_{c,L}^{-1}\big(K(v,c,L)\big) & \hbox{if }L\neq0,\vspace{3pt}\\
v+M_{c,0}^{-1}\big(\overline{K(v,c,0)}\big) & \hbox{if }L=0,\eaa\right.\vspace{3pt}\\
G_2(v,c,L) & = & \displaystyle\int_{\R^+\times \T}\big(\phi_{0}(\xi)+v(\xi,y)+L\chi(y)\phi_{0}'(\xi)\big)^2-\int_{\R^+}\phi_0^2,\eaa\right.
\ee
where $\overline{K(v,c,0)}\in L^2(\R)$ is defined as $\overline{g}$ in~\eqref{ovlineg}.  In view of Lemma~\ref{invertible}, the function $G$ maps $H^1(\R\times\T)\times(0,+\infty)\times\R$ into $H^1(\R\times\T)\times\R$. Note that $G(0,c_0,0)=(0,0)$ by the definition of $(\phi_0,c_0)$ and~$\overline{f}$. Moreover, it is also straightforward to  check, using in particular~\eqref{dualcell} and $a\,(\chi'+1)=a_H$, that a pair~$(\phi_L,c_L)\in\big(\phi_0+H^1(\R\times\T)\big)\times(0,+\infty)$ solves the first two lines of~\eqref{pulsolu} for $L\neq 0$ with
\begin{equation}\label{normizep}
\int_{\R^+\times \T} \phi_L^2=\int_{\R^+} \phi_0^2,
\end{equation}
if and only if $G(\phi_L-\phi_0-L\chi\phi_0',c_L,L)=(0,0)$.

The general strategy of the proof of Theorem~\ref{thhomo} is to use the implicit function theorem for the existence and uniqueness of a solution of~\eqref{pulsolu} and~\eqref{normizep} for small $L$. For this, we use some continuity and differentiability properties of $G$.

\begin{lem}\label{continueg}
The function $G:H^1(\R\times\T)\times(0,+\infty)\times\R\to H^1(\R\times\T)\times\R$ is continuous, and it is continuously Fr\'echet differentiable with respect to $(v,c)$. Furthermore, the operator $Q=\partial_{(v,c)}G(0,c_0,0):H^1(\R\times\T)\times \R\to H^1(\R\times\T)\times \R$ is invertible.
\end{lem}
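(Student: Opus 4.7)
\emph{Continuity of $G$.} I would first verify that $K:H^1(\R\times\T)\times(0,+\infty)\times\R\to L^2(\R\times\T)$ is continuous, using $\phi_0^{(k)}\in L^2(\R)$ for $k=1,2,3$ (from the exponential decay of the bistable front $\phi_0$ at $\pm\infty$), the boundedness of the cell corrector $\chi$, and the uniform Lipschitz continuity of $f(y,\cdot)$ in $u$. For sequences $L_n\to L\neq0$, continuity of $G_1$ then follows by composing with Lemma~\ref{continuem2}; for $L_n\to 0$ the matching continuity follows from Lemma~\ref{continuem}, which replaces $K$ by its $y$-average in the limit and thereby motivates the two-part formula~\eqref{defG}. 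Continuity of $G_2$ is immediate from a Hilbert-space expansion.

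\emph{Fr\'echet differentiability in $(v,c)$.} I would compute the partial derivatives explicitly: $\partial_v K\cdot w=\partial_u f(y,v+\phi_0+L\chi\phi_0')\,w+\beta w$, which depends continuously on $(v,c,L)$ as an operator-valued function because $f$ is $C^{1,1}$ in $u$ uniformly in $y$; and for the $c$-derivative, the identity $\partial_c M_{c,L}=\partial_\xi$ yields $\partial_c M_{c,L}^{-1}(g)=-M_{c,L}^{-1}\partial_\xi M_{c,L}^{-1}(g)$, while $\partial_c K(v,c,L)=\phi_0'+L\chi\phi_0''$. Composing with Lemmas~\ref{invertible}--\ref{continuem2} then gives the continuous Fr\'echet differentiability of $G$ in $(v,c)$.

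\emph{Invertibility of $Q$.} At $(v,c,L)=(0,c_0,0)$, the identity $\overline{K(0,c_0,0)}=a_H\phi_0''+c_0\phi_0'+\overline{f}(\phi_0)=0$ (the profile equation for the homogenized front) kills the contribution $\partial_c M_{c,0}^{-1}(\overline{K(0,c_0,0)})$ and leaves
$$Q(w,\gamma)=\Bigl(w+M_{c_0,0}^{-1}\bigl(\mathcal{B}w+\gamma\phi_0'\bigr),\ 2{\textstyle\int_{\R^+}}\phi_0\,\overline{w}\Bigr),$$
where $\mathcal{B}w(\xi):=\int_\T\partial_u f(y,\phi_0(\xi))\,w(\xi,y)\,dy+\beta\,\overline{w}(\xi)$ depends on $\xi$ only. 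To solve $Q(w,\gamma)=(h,\eta)$ I would decouple $y$-fluctuation from $y$-average: since the range of $M_{c_0,0}^{-1}$ consists of $y$-independent functions, the fluctuating part of the first equation forces $w-\overline{w}=h-\overline{h}$, and averaging in $y$ and applying $M_{c_0,0}$ reduces the remaining unknowns $(\overline{w},\gamma)$ to the single ODE
$$\mathcal{L}\overline{w}:=a_H\overline{w}''+c_0\overline{w}'+\overline{f}'(\phi_0)\,\overline{w}=M_{c_0,0}(\overline{h})-\Psi-\gamma\,\phi_0'$$
on $\R$, with $\Psi(\xi):=\int_\T\partial_u f(y,\phi_0(\xi))(h-\overline{h})(\xi,y)\,dy\in L^2(\R)$. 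The operator $\mathcal{L}$ is the linearization of~\eqref{homogenized} around $\phi_0$: by the classical theory of bistable fronts its kernel is $\R\phi_0'$, and a direct computation shows that $\psi(\xi):=\me^{c_0\xi/a_H}\phi_0'(\xi)$ lies in $L^2(\R)$ and spans the kernel of the formal adjoint $\mathcal{L}^\ast=a_H\partial_\xi^2-c_0\partial_\xi+\overline{f}'(\phi_0)$, the decay rates of $\phi_0'$ at $\pm\infty$ dominating $c_0/a_H$ precisely because $\overline{f}'(0)<0$ and $\overline{f}'(1)<0$. The Fredholm solvability condition $\langle\,\cdot\,,\psi\rangle_{L^2(\R)}=0$ becomes a scalar equation for $\gamma$ with nonzero coefficient $\langle\phi_0',\psi\rangle=\int_\R\me^{c_0\xi/a_H}(\phi_0')^2\,d\xi>0$, and once $\gamma$ is determined the remaining freedom in $\overline{w}$ (a multiple of $\phi_0'$) is uniquely pinned down by the normalization $2\int_{\R^+}\phi_0\,\overline{w}=\eta$ because $\int_{\R^+}\phi_0\phi_0'=-\phi_0(0)^2/2\neq 0$. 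Boundedness of $Q^{-1}$ then follows from the open mapping theorem. The main obstacle is precisely this last step: Fredholm solvability for $\mathcal{L}$ on the whole line, identification of the exponentially weighted cokernel, and the simultaneous non-degeneracy of both natural pairings.
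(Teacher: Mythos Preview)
Your treatment of continuity and $(v,c)$-differentiability matches the paper's proof essentially line for line: the paper also reduces everything to $K(v_n,c_n,L_n)\to K(v,c,L)$ in $L^2$ and then invokes Lemmas~\ref{invertible}--\ref{continuem2}, and it computes $\partial_c M_{c,L}^{-1}=-M_{c,L}^{-1}\partial_\xi M_{c,L}^{-1}$ exactly as you do.

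For the invertibility of $Q$ you take a genuinely different route. The paper argues abstractly in three steps: it shows that $R(Q)$ is closed (by passing to $v_n:=\tilde v_n-\tilde g_n\in H^2(\R)$ and using that $R(H)$ is closed and $\int w\phi_0'\neq0$ for $w\in\ker H^*$), that $\ker Q=\{0\}$, and that $\ker Q^*=\{0\}$. You instead solve $Q(w,\gamma)=(h,\eta)$ constructively by splitting off the $y$-fluctuation and reducing to a scalar Fredholm problem for $\mathcal L=H$ on $\R$, with the explicit adjoint kernel $\psi=\me^{c_0\xi/a_H}\phi_0'$. Your approach is more transparent and gives the same information; the paper's approach avoids naming $\psi$ explicitly but needs the separate $\ker Q^*$ computation. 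Both ultimately rest on the same input, namely that $H:H^2(\R)\to L^2(\R)$ is Fredholm of index zero with $\ker H=\R\phi_0'$, which the paper imports from~\cite{he2} and which you should also cite or justify (closedness of the range does not come for free from identifying the kernels).

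One technical slip to fix: your reduced equation $\mathcal L\overline w=M_{c_0,0}(\overline h)-\Psi-\gamma\phi_0'$ lives only in $H^{-1}(\R)$, since $\overline h\in H^1(\R)$ need not lie in $H^2(\R)$, so the $L^2$ Fredholm alternative does not apply as written. The remedy is the one the paper uses implicitly: set $v:=\overline w-\overline h$; then $v=-M_{c_0,0}^{-1}(\mathcal Bw+\gamma\phi_0')\in H^2(\R)$ and the equation becomes
\[
\mathcal L v=-(\overline f'(\phi_0)+\beta)\,\overline h-\Psi-\gamma\phi_0'\in L^2(\R),
\]
which is exactly~\eqref{vn}. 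After this correction your argument goes through: the solvability condition determines $\gamma$ via the nonzero pairing $\langle\phi_0',\psi\rangle>0$, $v$ is determined modulo $\R\phi_0'$, and the normalization $2\int_{\R^+}\phi_0\overline w=\eta$ fixes the remaining constant because $\int_{\R^+}\phi_0\phi_0'=-\tfrac12\phi_0(0)^2\neq0$.
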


The proof of Lemma~\ref{continueg} is quite lengthy and is therefore postponed in the Appendix (Section~\ref{sec5}). We just point out here that the proof of the invertibility of the operator $Q$ uses as key-points some properties of the linearization of~\eqref{homogenized} at $\phi_0$. Namely, denoting
\be\label{defH}
H(u)= a_Hu''+c_0u'+ \overline{f}'(\phi_{0})u=M_{c_0,0}(u)+\beta\,u+\overline{f}'(\phi_0)\,u\ \hbox{ for }u\in H^2(\R)
\ee
and the adjoint operator $H^*$ being given by $H^*(u)=a_Hu''-c_0u'+ \overline{f}'(\phi_{0})u$ for $u\in H^2(\R)$ in such a way that $(H^*(v),u)_{L^2(\R)}=(v,H(u))_{L^2(\R)}$ for all $u,v\in H^2(\R)$, it follows from Section 4 of~\cite{he2} that the operators $H$ and $H^*$ have algebraically simple eigenvalue 0 and that the range of $H$ is closed in~$L^2(\R)$. Furthermore, the kernel $\ker(H)$ of $H$ is equal to $\ker(H)=\R\phi_0'$. We will see in the proof of Theorem~\ref{thE1} that similar properties hold for the linearization of the equation~\eqref{pulsolu} around a pulsating front for a period $L_0>0$.

\begin{proof}[End of the proof of Theorem~\ref{thhomo}]
From Lemma~\ref{continueg}, one can apply the implicit function theorem for the function $G: H^1(\R\times\T) \times (0,+\infty)\times\R\to H^1(\R\times\T) \times\R$ (see, e.g., the remark of Theorem~15.1 in~\cite{d}). Then there exists $L_*>0$ such that for any $0<L<L_*$, there is a unique $(v_L,c_L)\in H^1(\R\times\T)\times (0,+\infty)$ such that $G(v_L,c_L,L)=(0,0)$ and $(v_L,c_L)\to (0,c_0)$ as $L\to 0$. Let $\phi_L(\xi,y)=\phi_0(\xi)+v_L(\xi,y)+L\chi(y)\phi_0'(\xi)$ for $(\xi,y)\in\R\times \T$. It follows in particular that $\phi_L-\phi_0\to0$ in $H^1(\R\times\T)$ as $L\to0^+$. According to the definition of~$G$, for every~$L\in(0,L_*)$,~$(\phi_L,c_L)$ is a weak solution and then, by parabolic regularity, a bounded classical solution of the equation~\eqref{pulsolu} which satisfies, in particular, the limiting conditions in~\eqref{pulsolu} since $\partial_{\xi}\phi_L$ and $\partial_y\phi_L$ actually belong to $L^{\infty}(\R\times\R)$. The strong maximum principle together with~$f(y,u)>0$ (resp. $f(y,u)<0$) for all $(y,u)\in\R\times(-\infty,0)$ (resp. $(y,u)\in\R\times(1,+\infty)$) implies that $\phi_L$ ranges in $(0,1)$.

Lastly, for any given $L\in(0,L_*)$, if $\tilde{u}_L(t,x)=\tilde{\phi}_L(x-\tilde{c}_Lt,x/L)$ is a pulsating front for~\eqref{eqL}, then $\tilde{c}_L=c_L>0$ by Theorem~\ref{thqual} (whose proof is independent of the present one), while $\tilde{v}_L(\xi,y):=\tilde{\phi}_L(\xi,y)-\phi_0(\xi)-L\chi(y)\phi'_0(\xi)\in H^1(\R\times\T)$ from the general exponential decay estimates of $\tilde{\phi}_L$  and $1-\tilde{\phi}_L$ as $\xi\to\pm\infty$ (see Lemma~\ref{exponential} below). By continuity, there is $\tilde{\xi}\in\R$ such that $\int_{\R^+\times\T}\tilde{\phi}_L^2(\xi+\tilde{\xi},y)=\int_{\R^+}\phi_0^2$, whence $G(\tilde{v}_L(\cdot+\tilde{\xi},\cdot),c_L,L)=(0,0)$ and $\tilde{\phi}_L(\xi+\tilde{\xi},y)=\phi_L(\xi,y)$ for all $(\xi,y)\in\R\times\T$ by uniqueness of $v_L$. The proof of Theorem~\ref{thhomo} is thereby complete.
\end{proof}


\subsection{The sign of the speeds of non-stationary pulsating fronts}\label{secsign}

The notations and tools used in the previous section enable us to prove the part of Theorem~\ref{thqual} which is concerned with the sign of the speed of non-stationary pulsating fronts. We state this result (see Lemma~\ref{sign} below) as an independent lemma, since it will be used later and is also of interest in its own. Before doing so, we first establish some exponential bounds for the pulsating fronts when they approach their limiting states, whether they be stationary or non-stationary. The proof is similar to that of the exponential decay of pulsating fronts for combustion nonlinearities in~\cite{x3}. We include it here because its strategy is useful.

\begin{lem}\label{exponential}
For a fixed $L>0$, if $u(t,x)=\phi_L(x-c_Lt,x/L)$ is a pulsating front of equation~\eqref{eqL} with speed $c_L\in\R$, then there exist $A_1$, $A_2\in\R$, $\mu_1>0$, $\mu_2>0$, $C_1>0$, $C_2>0$ such that
\begin{equation}\label{exponential1}
0<u(t,x)=\phi_L(x-c_Lt,x/L)\leq C_1\me^{-\mu_1(x-c_Lt)}\ \hbox{ if }x-c_Lt \geq A_1,
\end{equation}
\begin{equation}\label{exponential2}
0<1-u(t,x)=1-\phi_L(x-c_Lt,x/L)\leq C_2\me^{\mu_2(x-c_Lt)}\ \hbox{ if }x-c_Lt \leq A_2.
\end{equation}
\end{lem}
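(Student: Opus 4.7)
My plan is to establish \eqref{exponential1} in detail; the estimate \eqref{exponential2} will follow by applying the same argument to $1-\phi_L$, using the second line of \eqref{asspars} in place of the first. Passing to the moving-frame variables $(\xi,y)=(x-c_Lt,x/L)$, the function $\phi_L$ satisfies the elliptic equation of \eqref{pulsolu},
\begin{equation*}
\tilde{\partial}_L\bigl(a(y)\,\tilde{\partial}_L\phi_L\bigr)+c_L\,\partial_\xi\phi_L+f(y,\phi_L)=0\quad\text{on }\R\times\T,
\end{equation*}
and the strict positivity $0<\phi_L<1$ follows at once from the strong elliptic maximum principle applied to $\phi_L$ and to $1-\phi_L$, together with the limits at $\pm\infty$ in \eqref{depulf}. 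The uniform convergence $\phi_L(\xi,y)\to0$ as $\xi\to+\infty$ then yields an $A_1\in\R$ with $0<\phi_L(\xi,y)\leq\delta$ on $[A_1,+\infty)\times\T$. On this region, the first line of \eqref{asspars} gives $f(y,\phi_L)\leq-\gamma\,\phi_L$, so $\phi_L$ is a subsolution of the linear elliptic equation $\mathcal{M}[u]:=\tilde{\partial}_L(a\,\tilde{\partial}_L u)+c_L\,\partial_\xi u-\gamma\,u=0$.

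The heart of the proof is the construction of a supersolution of $\mathcal{M}[u]=0$ of the simple form $w(\xi,y)=C_1\,\me^{-\mu_1\xi}$, constant in $y$. A direct computation gives
\begin{equation*}
\mathcal{M}[w]=\Bigl[\mu_1^2\,a(y)-\frac{\mu_1}{L}\,a'(y)-c_L\,\mu_1-\gamma\Bigr]\,w.
\end{equation*}
Since $a$ is $1$-periodic and of class $C^{1,\alpha}(\R)$, the functions $a$ and $a'$ are uniformly bounded, so the bracketed expression converges to $-\gamma$ uniformly in $y$ as $\mu_1\to0^+$. Hence there exists $\mu_*>0$, depending only on $\gamma$, $L$, $\|a\|_{L^{\infty}(\R)}$, $\|a'\|_{L^{\infty}(\R)}$ and $|c_L|$, such that for every $\mu_1\in(0,\mu_*)$ the bracket is at most $-\gamma/2<0$ for all $y\in\R$, and thus $\mathcal{M}[w]\leq0$ on $\R\times\T$. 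Fixing one such $\mu_1$ and setting $C_1=\me^{\mu_1 A_1}$ (or any larger value) ensures $w(A_1,y)\geq1\geq\phi_L(A_1,y)$ for every $y$.

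It remains to compare $\phi_L$ with $w$ on the half-cylinder $[A_1,+\infty)\times\T$. The difference $v=\phi_L-w$ satisfies $\mathcal{M}[v]\geq0$ in the interior, is $1$-periodic in $y$, satisfies $v\leq0$ on $\{A_1\}\times\T$, and $v(\xi,y)\to0$ uniformly in $y$ as $\xi\to+\infty$. If $M:=\sup v>0$, one extracts a maximizing sequence $(\xi_n,y_n)$; by the $y$-periodicity and the uniform decay of $v$ at $+\infty$ together with $v\leq0$ on $\{\xi=A_1\}$, the sequence subconverges to an \emph{interior} point $(\xi_*,y_*)$ at which $v$ attains $M$, and there $\mathcal{M}[v](\xi_*,y_*)\leq-\gamma M<0$, contradicting $\mathcal{M}[v]\geq0$. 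Therefore $\phi_L\leq w=C_1\,\me^{-\mu_1\xi}$ on $[A_1,+\infty)\times\T$, which is exactly \eqref{exponential1}. The symmetric argument applied to $1-\phi_L$ on a region $\{\xi\leq A_2\}$, using $f(y,u)\geq\gamma(1-u)$ for $u\in[1-\delta,1]$, produces \eqref{exponential2}. The only subtle point is making sure that the first-order contribution $-(\mu_1/L)\,a'(y)$ does not spoil the supersolution property; this is precisely what forces $\mu_1$ to be chosen small in terms of $\|a'\|_{L^{\infty}(\R)}/L$, but presents no real obstacle since $a'$ is bounded.
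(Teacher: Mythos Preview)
Your argument is correct for $c_L\neq 0$ and is in fact more elementary than the paper's treatment of that case: the paper invokes Krein--Rutman theory to build a supersolution $e^{-\mu_1\xi}\psi_{\mu_1}(y)$ with a nontrivial periodic profile $\psi_{\mu_1}$ (yielding the sharp decay rate), whereas you get away with a $y$-constant supersolution $C_1e^{-\mu_1\xi}$ at the cost of taking $\mu_1$ small, hence non-sharp. Since the lemma only asks for \emph{some} exponential rate, this is adequate, and your pointwise argument at an interior maximum of $v=\phi_L-w$ goes through despite the degeneracy of $\mathcal M$ in $(\xi,y)$: at such a point all first derivatives vanish and $\tilde\partial_L^{\,2} v\le 0$, so $\mathcal M[v]\le -\gamma M<0$.

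There is, however, a gap: you treat all speeds at once by invoking \eqref{pulsolu}, but that equation is only equivalent to \eqref{eqL} when $c_L\neq 0$ (the map $(t,x)\mapsto(x-c_Lt,x/L)$ is not a bijection when $c_L=0$, and with the canonical choice $\phi_L(\xi,y)=\phi_0(\xi)$ the first line of \eqref{pulsolu} fails for general $y$). The paper therefore splits off the stationary case and argues directly on the scalar ODE $(a_L\phi_0')'+f_L(x,\phi_0)=0$ in the $x$ variable; your supersolution construction applies there verbatim, so the fix is easy, but it does need to be said. A related slip is your appeal to the ``strong elliptic maximum principle'' for $0<\phi_L<1$: the operator in \eqref{pulsolu} is degenerate elliptic (second-order only along $\tilde\partial_L$), so the strict inequalities are obtained instead via the strong \emph{parabolic} maximum principle applied to $u$ in the original $(t,x)$ variables, as the paper does.
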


\begin{proof}
From the strong parabolic maximum principle, we know that $0<u(t,x)<1$ for all $(t,x)\in\R^2$. According to the sign of $c_L$, two cases may occur.

{\it Case 1:} $c_L=0$. In this case, $u(t,x)=u(x)$ is a stationary solution of equation~\eqref{eqL} with limiting conditions $\lim_{x\to-\infty}u(x)=1$ and $\lim_{x\to+\infty}u(x)=0$. Then there exists $A_1\in\R$ such that~$u(x)\leq \delta$ for all $x\geq A_1$, where $\delta>0$ is as in~\eqref{asspars}. It follows from~\eqref{asspars} that $(a_Lu')'(x)-\gamma u(x) \geq 0$ for all~$x\geq A_1$. Choose~$\mu_1>0$ small enough so that $a_L(x)\mu_1^2-a_L'(x)\mu_1-\gamma\leq 0$ for all $x\in\R$, and define $\omega(x)=C_1\me^{-\mu_1x}$, where $C_1$ is a positive constant such that $C_1\me^{-\mu_1A_1}\geq u(A_1)$. But $(a_L\omega')'(x)-\gamma\omega(x)=C_1(a_L(x)\mu_1^2-a_L'(x)\mu_1-\gamma)\me^{-\mu_1x}\leq 0$ for all $x\in\R$. Then the inequality~\eqref{exponential1} follows from the elliptic weak maximum principle, and~\eqref{exponential2} can be obtained in the same way, using this time the second line of~\eqref{asspars}.

{\it Case 2:} $c_L\neq0$. In this case, consider the variables $(\xi,y)$ as in~\eqref{changev}. Upon substitution,~$\phi_L(\xi,y)$ satisfies equation~\eqref{pulsolu}. For any $\mu\in\R$, let $\mathcal{T}_{\mu}$  be the linear operator defined by
$$\mathcal{T}_{\mu}(\psi)=L^{-2}(a\psi')'-2L^{-1}\mu a\psi'+\big(-L^{-1}\mu a'-c_L\mu+a\mu^2-\gamma\big)\psi$$
for all $\psi\in C^2(\T)$. The Krein-Rutman theory provides the existence and uniqueness, for any $\mu\in\R$, of the principal eigenvalue $\lambda_1(\mu)$ of $\mathcal{T}_{\mu}$, associated with a (unique up to multiplication) positive eigenfunction $\psi_{\mu}$. By uniqueness, $\lambda_1(0)=-\gamma<0$. Moreover, there are $\alpha>0$ and $\beta>0$ such that $-\mu\,a'(y)/L-c_L\mu+a(y)\mu^2-\gamma\ge\alpha\mu^2-\beta$ for all $\mu\in\R$ and $y\in\R$, whence $\lambda_1(\mu)\ge\alpha\mu^2-\beta$ and $\lambda_1(\mu)>0$ for $|\mu|$ large enough. Since $\mu\mapsto\lambda_1(\mu)$ is continuous, there is $\mu_1>0$ such that $\lambda_1(\mu_1)=0$. A direct computation shows that
\be\label{eqpsimu1}
\tilde{\partial}_L\big(a(y)\tilde{\partial}_L(\me^{-\mu_1\xi}\psi_{\mu_1}(y))\big)+c_L\partial_{\xi}\big(\me^{-\mu_1\xi}\psi_{\mu_1}(y)\big)-\gamma\me^{-\mu_1\xi}\psi_{\mu_1}(y)=0\ \hbox{ for all }(\xi,y)\in\R\times\T,
\ee
that is, $\overline{u}_t-(a_L(x)\overline{u}_x)_x+\gamma\overline{u}=0$ with $\overline{u}(t,x)=\me^{-\mu_1(x-c_Lt)}\psi_{\mu_1}(x/L)$. On the other hand, since~$\lim_{\xi\to +\infty}\phi_L(\xi,y)=0$ uniformly for $y\in\T$, there is $A_1\in\R$ such that $0<\phi_L(\xi,y)\leq \delta$ for all~$\xi\geq A_1$ and $y\in \T$. It then follows from~\eqref{asspars} that
\be\label{phiL}
\tilde{\partial}_L(a(y)\tilde{\partial}_L\phi_L)+c_L\partial_{\xi}\phi_L-\gamma\phi_L\geq 0\ \hbox{ for all }\xi\geq A_1\hbox{ and }y\in\T.
\ee
One can normalize $\psi_{\mu_1}$ in such a way that $ \me^{-\mu_1A_1}\psi_{\mu_1}(y)>\phi_L(A_1,y)$ for all $y\in\T$. Define $\epsilon^*=\inf\big\{\epsilon\ge0\ |\ u(t,x)-\epsilon\le\overline{u}(t,x)$ for all $x-c_Lt\ge A_1\big\}$. The real number $\epsilon^*$ is well defined and $u(t,x)-\epsilon^*\le\overline{u}(t,x)$ for all $x-c_Lt\ge A_1$. Assume by contradiction that $\epsilon^*>0$. Since~$u(t+L/c_L,x+L)=u(t,x)$ in~$\R^2$ and $u(t,x)\to0$ as $x-c_Lt\to+\infty$, there is then~$(t^*,x^*)\!\in\!\R^2$ such that $x^*-c_Lt^*\ge A_1$ and $u(t^*,x^*)-\epsilon^*=\overline{u}(t^*,x^*)$, whence $x^*-c_Lt^*>A_1$ from the normalization of~$\psi_{\mu_1}$. Define $z=\overline{u}-u$. From~\eqref{eqpsimu1} and~\eqref{phiL}, one has $0\le z_t-(a_L(x)z_x)_x+\gamma z$ for all $x-c_Lt\ge A_1$. But $z$ has a minimum at the point $(t^*,x^*)$ with $x^*-c_Lt^*>A_1$ and~$z(t^*,x^*)=-\epsilon^*<0$. Hence $0\le-\gamma\,\epsilon^*$, which is a contradiction. Thus, $\epsilon^*=0$, that is, ~\eqref{exponential1} holds $C_1=\max_{\T}\psi_{\mu_1}>0$.

Similarly, there is $A_2\in\R$ such that $0<1-\phi_L(\xi,y)\leq \delta$ for all $\xi\leq A_2$ and $y\in \T$. As above, by working this time with the function $1-\phi_L(\xi,y)$ on the set~$(-\infty,A_2)\times \T$,~\eqref{exponential2} follows.
\end{proof}

\begin{lem}\label{sign}
If equation~\eqref{eqL} admits a pulsating front $u(t,x)=\phi_L(x-c_Lt,x/L)$ with $c_L\neq 0$, then~$c_L$ has the sign of $\int_0^1\overline{f}(s)ds$.
\end{lem}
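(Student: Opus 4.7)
The plan is to pass to the moving-frame coordinates $(\xi,y)=(x-c_L t,\,x/L)$ in which $\phi_L$ satisfies the elliptic/divergence-form equation
$$\tilde\partial_L\!\big(a(y)\tilde\partial_L\phi_L\big)+c_L\partial_\xi\phi_L+f(y,\phi_L)=0\quad\text{in }\R\times\T,$$
and to derive an energy identity by multiplying by $\partial_\xi\phi_L$ and integrating over $\R\times\T$. The exponential decay of $\phi_L$ and $1-\phi_L$ at $\xi=\pm\infty$ given by Lemma~\ref{exponential}, combined with standard Schauder estimates on unit balls far from the transition region (the derivatives of $\phi_L$ solve linear elliptic equations with bounded coefficients), upgrades to exponential decay of $\partial_\xi\phi_L$, $\partial_y\phi_L$ and the second derivatives; in particular every boundary term at $\xi=\pm\infty$ that appears below vanishes, and $\partial_\xi\phi_L,\partial_y\phi_L\in L^2(\R\times\T)$.

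Expanding
$$\tilde\partial_L(a\tilde\partial_L\phi_L)=a\,\partial_{\xi\xi}\phi_L+2L^{-1}a\,\partial_{\xi y}\phi_L+L^{-1}a'\partial_\xi\phi_L+L^{-2}a\,\partial_{yy}\phi_L+L^{-2}a'\,\partial_y\phi_L$$
and multiplying by $\partial_\xi\phi_L$, the term $a\,\partial_\xi\phi_L\,\partial_{\xi\xi}\phi_L=\tfrac12 a\,\partial_\xi((\partial_\xi\phi_L)^2)$ integrates to zero. The cross term $2L^{-1}a\,\partial_\xi\phi_L\,\partial_{\xi y}\phi_L=L^{-1}a\,\partial_y((\partial_\xi\phi_L)^2)$, integrated by parts in $y$ against the constant $1$ (no boundary contribution by $1$-periodicity in $y$), exactly cancels $L^{-1}a'(\partial_\xi\phi_L)^2$. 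Similarly, integrating $L^{-2}a\,\partial_\xi\phi_L\,\partial_{yy}\phi_L$ by parts in $y$ yields $-L^{-2}\int a'\partial_\xi\phi_L\,\partial_y\phi_L\,d\xi dy-L^{-2}\int a\,\partial_{\xi y}\phi_L\,\partial_y\phi_L\,d\xi dy$; the first piece cancels $L^{-2}a'\partial_\xi\phi_L\,\partial_y\phi_L$, and the second is $-\tfrac{L^{-2}}{2}\int a\,\partial_\xi((\partial_y\phi_L)^2)\,d\xi dy=0$ by decay at $\xi=\pm\infty$. Hence
$$\int_{\R\times\T}\tilde\partial_L(a\tilde\partial_L\phi_L)\,\partial_\xi\phi_L\,d\xi dy=0.$$

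For the reaction term, putting $F(y,u)=\int_0^u f(y,s)ds$ we get $f(y,\phi_L)\partial_\xi\phi_L=\partial_\xi F(y,\phi_L)$, so the limits $\phi_L(-\infty,y)=1$, $\phi_L(+\infty,y)=0$ give
$$\int_{\R\times\T}f(y,\phi_L)\partial_\xi\phi_L\,d\xi dy=\int_\T\big(F(y,0)-F(y,1)\big)dy=-\int_0^1\overline{f}(s)\,ds.$$
Combining these two computations with the energy identity yields
$$c_L\int_{\R\times\T}(\partial_\xi\phi_L)^2\,d\xi dy=\int_0^1\overline{f}(s)\,ds.$$
Since $\phi_L(-\infty,y)=1\neq 0=\phi_L(+\infty,y)$, the derivative $\partial_\xi\phi_L$ is not identically zero, and being in $L^2(\R\times\T)$ the integral on the left is strictly positive. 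Therefore $c_L$ has the same sign as $\int_0^1\overline{f}(s)\,ds$, which is the desired conclusion.

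The only nontrivial point is the justification of the decay of the derivatives of $\phi_L$ at $\xi=\pm\infty$ needed to drop boundary terms and to obtain $\partial_\xi\phi_L\in L^2$; I expect this to be the main (routine) technical obstacle, handled by a parabolic/elliptic bootstrap from Lemma~\ref{exponential}. Everything else is a clean integration-by-parts identity exploiting periodicity in~$y$.
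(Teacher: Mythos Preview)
Your proof is correct and follows essentially the same approach as the paper: multiply the equation by $\partial_\xi\phi_L$, integrate over $\R\times\T$, and use the exponential decay from Lemma~\ref{exponential} together with periodicity in $y$ to obtain $c_L\int(\partial_\xi\phi_L)^2=\int_0^1\overline{f}$. The only cosmetic difference is that the paper handles the diffusion term in one line via the identity $a(y)\tilde\partial_L\phi_L\,\tilde\partial_L(\partial_\xi\phi_L)=\tfrac12\,a(y)\,\partial_\xi\big((\tilde\partial_L\phi_L)^2\big)$ (using that $\partial_\xi$ commutes with $\tilde\partial_L$ and $a$ is independent of $\xi$), whereas you expand and cancel term by term; also note that the decay of derivatives is justified in the paper via \emph{parabolic} estimates in the $(t,x)$ variables (the $(\xi,y)$ equation is degenerate), which is what your ``parabolic/elliptic bootstrap'' remark should point to.
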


\begin{proof}
Use the variables $(\xi,y)$ as in~\eqref{changev} and notice that, since $c_L\neq0$, standard parabolic estimates applied to $u$ and $u_t$ imply that the function $\phi_L(\xi,y)$ is a classical solution of the equation~\eqref{pulsolu} and, by Lemma~\ref{exponential}, all functions $\partial_{\xi}\phi_L$, $\partial_y\phi_L$, $\partial_{\xi\xi}\phi_L$, $\partial_{\xi y}\phi_L$ and $\partial_{yy}\phi_L$ converge to~$0$ exponentially as $\xi\to \pm\infty$. Integrating by parts the first equation of~\eqref{pulsolu} against $\partial_{\xi}\phi_L$ yields
$$\int_{\R\times \T} a(y)\tilde{\partial}_L\phi_L\tilde{\partial}_L(\partial_{\xi}\phi_L)-c_L(\partial_{\xi}\phi_L)^2-f(y,\phi_L)\partial_{\xi}\phi_L=0.$$
Since $\int_{\R\times \T} a(y)\tilde{\partial}_L\phi_L\tilde{\partial}_L(\partial_{\xi}\phi_L)=(1/2)\int_{\R\times \T} a(y)\partial_{\xi}\big(\tilde{\partial}_L\phi_L\big)^2=0$, one infers that
$$c_L\int_{\R\times \T}(\partial_{\xi}\phi_L)^2=-\int_{\R\times\T}f(y,\phi_L)\partial_{\xi}\phi_L=-\int_{\R\times \T}  \partial_{\xi}\Big(\int_0^{\phi_L(\xi,y)} f(y,s)ds\Big)=\int_{0}^1\overline{f}(s)ds.$$
Hence, $c_L$ has the sign of $\int_{0}^1\overline{f}(s)ds$.
\end{proof}


\subsection{Small periods $L$: the instability of $L$-periodic steady states}\label{smalllhomobis}

In this section, we do the proof of Theorem~\ref{thhomobis}. To obtain the existence result, we employ the abstract theory in~\cite{fz} by checking that the semiflow generated by the equation~\eqref{inieqL} satisfies the general assumptions for bistable monotone semiflows in that paper. Namely, we first prove a series of lemmas to verify the general conditions in~\cite{fz}.

To do so, we first recall that for any~$L~\!\!>~\!\!0$ and any $L$-periodic steady state $\bar{u}:\R\to[0,1]$ of~\eqref{eqL},~$\lambda_1(L,\bar{u})$ denotes the principal eigenvalue of the problem~\eqref{prineigen} and that, by Definition~\ref{semistable},~$\bar{u}$ is unstable if $\lambda_1(L,\bar{u})>0$. By comparison, one has $\min_{x\in\R}\big(\partial_uf_L(x,\bar{u}(x))\big)\leq \lambda_1(L,\bar{u}) \leq \max_{x\in\R}\big(\partial_uf_L(x,\bar{u}(x))\big)$. In particular, the steady state~$\bar{u}~\!\!\equiv~\!\!0$ is stable since $\partial_uf(x,0)\le-\gamma$. Similarly, the steady state $\bar{u}\equiv1$ is stable. Then the Dancer-Hess connecting orbit theorem (see, e.g., \cite[Proposition 9.1]{he}) implies that there exists at least one~$L$-periodic steady state $\bar{u}$ such that $0 < \bar{u} < 1$ in $\R$. In addition, it turns out that for small $L>0$, all such intermediate $L$-periodic steady states are unstable under the assumption~\eqref{hypoverf}, as the following lemma shows.

\begin{lem}\label{unstable}
Under the assumption~\eqref{hypoverf}, there is $\tilde{L}_*>0$ such that $\lambda_1(L,\bar{u})>0$ for every $0<L<\tilde{L}_*$ and for every $L$-periodic steady state $\bar{u}$ of~\eqref{eqL} with $0 < \bar{u} < 1$.
\end{lem}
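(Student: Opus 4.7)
The plan is to argue by contradiction: suppose there exist sequences $L_n \to 0^+$, $L_n$-periodic steady states $\bar{u}_n$ with $0 < \bar{u}_n < 1$, and associated principal eigenfunctions $\varphi_n$ of \eqref{prineigen} normalized by $\max_{\R}\varphi_n = 1$, such that $\lambda_n := \lambda_1(L_n,\bar{u}_n) \le 0$. The intuition is that for small $L_n$ the rescaled states $\bar{u}_n$ must be nearly constant and close to a zero of $\overline{f}$, necessarily $\overline{\theta}$ because the constants $0$ and $1$ are strictly stable under \eqref{asspars}; accordingly $\lambda_n$ should converge to $\overline{f}'(\overline{\theta})$, contradicting $\lambda_n \le 0$ in view of \eqref{hypoverf}.

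To implement this, I would rescale to the unit cell by setting $U_n(y) = \bar{u}_n(L_n y)$ and $\Phi_n(y) = \varphi_n(L_n y)$; these are $1$-periodic in $y$ and satisfy
\begin{equation*}
\bigl(a(y)U_n'\bigr)' = -L_n^2\,f(y,U_n), \qquad \bigl(a(y)\Phi_n'\bigr)' = L_n^2\bigl(\lambda_n - \partial_u f(y,U_n)\bigr)\Phi_n \text{ on }\R.
\end{equation*}
By the comparison estimate recalled after Definition \ref{semistable}, $|\lambda_n| \le \|\partial_u f\|_{L^{\infty}(\R\times[0,1])}$, so both right-hand sides are $O(L_n^2)$ uniformly. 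Setting $w_n = aU_n'$, the first ODE yields $|w_n(y) - w_n(0)| \le CL_n^2$ on $[0,1]$; using the $1$-periodicity of $U_n$, the identity $\int_0^1 w_n/a\,dy = 0$ forces $w_n(0) = O(L_n^2)$, whence $U_n' = O(L_n^2)$ uniformly. The same argument applied to $\Phi_n$ gives $\Phi_n' = O(L_n^2)$ uniformly. Up to extraction, $U_n \to u_\infty$ and $\Phi_n \to \Phi_\infty$ uniformly on $\T$ with $u_\infty,\Phi_\infty$ constants, and the normalization of $\Phi_n$ forces $\Phi_\infty = 1$. Integrating the first equation over $[0,1]$ eliminates the divergence term and leaves $\int_0^1 f(y,U_n)\,dy = 0$, which at the limit reads $\overline{f}(u_\infty) = 0$; by \eqref{hypoverf} then $u_\infty \in \{0,\overline{\theta},1\}$.

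To exclude $u_\infty \in \{0,1\}$, observe that if $u_\infty = 0$ then $0 < \bar{u}_n < \delta$ on $\R$ for $n$ large, so \eqref{asspars} gives $(a_{L_n}\bar{u}_n')' = -f_{L_n}(\cdot,\bar{u}_n) \ge \gamma\bar{u}_n > 0$, and integrating over one period together with the $L_n$-periodicity of $a_{L_n}\bar{u}_n'$ yields $0 \ge \gamma\int_0^{L_n}\bar{u}_n > 0$, a contradiction; the case $u_\infty = 1$ is symmetric using the second line of \eqref{asspars}. Hence $u_\infty = \overline{\theta}$. Integrating the eigenfunction equation over $[0,1]$ finally yields
\begin{equation*}
\int_0^1 \partial_u f(y,U_n)\,\Phi_n\,dy = \lambda_n \int_0^1 \Phi_n\,dy,
\end{equation*}
and passing to the limit (with $U_n \to \overline{\theta}$, $\Phi_n \to 1$ uniformly, and continuity of $\partial_u f$) produces $\overline{f}'(\overline{\theta}) = \lim_n \lambda_n \le 0$, contradicting $\overline{f}'(\overline{\theta}) > 0$ from \eqref{hypoverf}. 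The main technical point is the uniform $O(L_n^2)$ flattening of $U_n$ and $\Phi_n$, which rests on the elementary ODE estimate above; the exclusion of the degenerate limits $0$ and $1$ via \eqref{asspars} is a short maximum-principle observation.
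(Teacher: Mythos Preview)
Your proof is correct and follows the same overall contradiction strategy as the paper: rescale to the unit cell, show the rescaled steady states converge to the constant $\overline{\theta}$, and derive a contradiction with the sign of $\overline{f}'(\overline{\theta})$. The exclusion of the limits $0$ and $1$ via~\eqref{asspars} is identical to the paper's.

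There are two technical differences worth noting. First, to obtain near-constancy of $U_n$, the paper invokes standard elliptic estimates to pass to a $C^2$ limit $v_\infty$ solving $(av_\infty')'=0$, whereas you give a direct ODE argument (integrate $(aU_n')'=O(L_n^2)$ and use periodicity to pin down the constant of integration); your route is more elementary and gives the quantitative $O(L_n^2)$ flattening. Second, and more interestingly, the paper avoids analyzing the eigenfunction altogether in the final step: it multiplies~\eqref{eqpsin} by $\psi_n^{-1}$ and integrates over a period, obtaining the one-sided inequality
\[
\lambda_1(L_n,\bar{u}_n)\ \ge\ \int_0^1 \partial_u f(y,v_n(y))\,dy,
\]
which immediately gives $\tilde{\lambda}_1\ge\overline{f}'(\overline{\theta})>0$ at the limit. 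Your approach instead establishes $\Phi_n\to 1$ uniformly and then integrates the eigenfunction equation directly to get an \emph{equality} $\lambda_n\int_0^1\Phi_n=\int_0^1\partial_u f(y,U_n)\Phi_n$, from which the same contradiction follows. The paper's trick is slicker in that it needs only positivity of $\psi_n$, not its convergence; your argument is entirely self-contained and avoids the division by $\psi_n$. Both are perfectly valid.
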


\begin{proof}
Assume by contradiction that there are some sequences $(L_n)_{n\in\N}$ in $(0,+\infty)$, $(\bar{u}_n)_{n\in\N}$ and~$(\psi_n)_{n\in\N}$ in $C^2(\R)$ such that $L_n\to 0^+$ as $n\to+\infty$ and, for each $n\in\N$,~$\bar{u}_n$ satisfies
\begin{equation}\label{equn}
\left\{\baa{l}
(a_{L_n}\bar{u}_n')'+f_{L_n}(x,\bar{u}_n)=0\ \hbox{ in }\R,\vspace{3pt}\\
\bar{u}_n\hbox{ is }L_n\hbox{-periodic, }\ 0<\bar{u}_n <1\hbox{ in }\R,\eaa\right.
\end{equation}
and $\psi_n$ satisfies
\begin{equation}\label{eqpsin}
\left\{\baa{l}
(a_{L_n}\psi_n')'+\partial_uf_{L_n}(x,\bar{u}_n)\psi_n=\lambda_1(L_n,\bar{u}_n)\psi_n\ \hbox{ in }\R,\vspace{3pt}\\
\psi_n\hbox{ is }L_n\hbox{-periodic},\ \ \psi_n>0\hbox{ in }\R,\eaa\right.
\end{equation}
with principal eigenvalue $\lambda_1(L_n,\bar{u}_n)\leq 0$. Since
\begin{equation}\label{prinbound}
\min_{x\in\R,\,u\in [0,1]}\big(\partial_uf_{L_n}(x,u)\big)\leq \lambda_1({L_n},\bar{u}_n) \leq \max_{x\in\R,\,u\in [0,1]}\big(\partial_uf_{L_n}(x,u)\big),
\end{equation}
the sequence $\big(\lambda_1(L_n,\bar{u}_n)\big)_{n\in\N}$ is then bounded. Up to extraction of some subsequence, there is a real number $\tilde{\lambda}_1\leq 0$ such that $\lambda_1(L_n,\bar{u}_n)\to\tilde{\lambda}_1$ as $n\to+\infty$. Now, denote $v_n(y)=\bar{u}_n(L_ny)$. Each function $v_n$ is $1$-periodic and obeys $a(y)v_n''(y)+a'(y)v_n'(y)+L_n^2f(y,v_n(y))=0$ for all $y\in\R$. It then follows from standard elliptic estimates that there is a $1$-periodic $C^2(\R)$ function $0\le v_{\infty}\le 1$ such that, up to extraction of some subsequence, $v_n\to v_{\infty}$ in $C^2(\R)$ as~$n\to+\infty$, and the function~$v_{\infty}$ solves $av_{\infty}''+a'v_{\infty}'=0$ in $\R$. Thus, $av_{\infty}'$ is a constant function, and then $v_{\infty}'$ has a sign. Since~$v_{\infty}$ is $1$-periodic, $v_{\infty}$ is then a constant function.

Next, one shows that $v_{\infty}=\overline{\theta}$. Integrating $(a_{L_n}\bar{u}_n')'+f_{L_n}(x,\bar{u}_n)=0$ over $[0,L_n]$ yields
$$0=\frac{1}{L_n}\int_0^{L_n}f_{L_n}(x,\bar{u}_n(x))dx=\int_0^{1}f(y,v_n(y))dy \to \overline{f}(v_\infty)\,\,\, \hbox{as}\,\,n\to+\infty.$$
Therefore, $\overline{f}(v_\infty)=0$. If $v_\infty=0$, then the assumption~\eqref{asspars} would imply that $0<v_n\le\delta$ in~$\R$ for~$n$ large enough, whence $f(y,v_n(y))\leq -\gamma v_n(y)< 0$ and $(av_n')'=-L_n^2f(y,v_n)>0$ in~$\R$ for~$n$ large enough, which contradicts the fact that $av_n'$ is $1$-periodic. Similarly, one obtains that~$v_{\infty}\neq 1$. It then follows from the assumption~\eqref{hypoverf} that~$v_{\infty}=\overline{\theta}$.

Finally, for any $n\in\N$, multiply the equation~\eqref{eqpsin} by~$\psi_n^{-1}$ and integrate by parts over $[0,L_n]$. It then follows that
$$\int_0^{L_n}\frac{a_{L_n}(x)(\psi_n'(x))^2}{(\psi_n(x))^2}dx+\int_0^{L_n} \partial_uf_{L_n}(x,\bar{u}_n(x))dx=L_n\lambda_1(L_n,\bar{u}_n).$$
As a consequence,
$$\lambda_1(L_n,\bar{u}_n)\geq \frac{1}{L_n}\int_0^{L_n} \partial_uf_{L_n}(x,\bar{u}_n(x))dx=\int_0^{1} \partial_uf(y,v_n(y))dy.$$
Taking the limit as $n\to+\infty$ yields that $\tilde{\lambda}_1\geq \overline{f}'(\overline{\theta})>0$, which contradicts the assumption that~$\tilde{\lambda}_1\leq0$. The proof of Lemma~\ref{unstable} is thereby complete.
\end{proof}

A consequence of Lemma~\ref{unstable} is the following non-existence result. Before doing so, we first introduce an important notation: in the sequel, for any $u_0\in C(\R,[0,1])$, $u(t,x;u_0)$ denotes the unique solution of equation~\eqref{eqL} with initial value $u(0,x;u_0)=u_0(x)$.

\begin{lem}\label{noexstate}
For every $0<L<\tilde{L}_*$ and for every $L$-periodic steady state $0<\bar{u}<1$ of~\eqref{eqL}, there is no steady state $v$ of~\eqref{eqL} such that $0<v<\bar{u}$ and there is no steady state~$w$ of~\eqref{eqL} such that $\bar{u}<w<1$.
\end{lem}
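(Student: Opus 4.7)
By the reflection $u\mapsto 1-u$, $x\mapsto -x$ (which preserves the bistable structure and swaps the two stable states), it suffices to prove the first assertion: there is no steady state $v$ with $0<v<\bar u$ in $\R$. Arguing by contradiction, assume such $v$ exists. Lemma~\ref{unstable} gives $\lambda_1(L,\bar u)>0$ with a positive $L$-periodic principal eigenfunction $\psi$ satisfying $(a_L\psi')'+\partial_uf_L(\cdot,\bar u)\,\psi=\lambda_1(L,\bar u)\,\psi$. A second-order Taylor expansion of $f_L(x,\cdot)$ (justified by the $C^{1,1}$ regularity of $f$ in $u$ uniformly in $x$), combined with the eigenvalue equation, yields for every sufficiently small $\epsilon>0$,
\[
\bigl(a_L(\bar u-\epsilon\psi)'\bigr)'+f_L(x,\bar u-\epsilon\psi)\le -\tfrac{\epsilon}{2}\,\lambda_1(L,\bar u)\,\psi(x)<0\ \ \text{for all }x\in\R,
\]
so $w_\epsilon:=\bar u-\epsilon\psi$ is a strict $L$-periodic elliptic supersolution. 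A symmetric computation at any $L$-periodic steady state $\bar z$ with $0<\bar z<1$ shows that $\bar z+\epsilon\psi_{\bar z}$ is a strict $L$-periodic elliptic subsolution for $\epsilon>0$ small, where $\psi_{\bar z}$ denotes the positive principal eigenfunction furnished by Lemma~\ref{unstable} at $\bar z$.

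Using $w_\epsilon$ as initial datum in~\eqref{inieqL}, the solution $u(t,\cdot;w_\epsilon)$ is $L$-periodic and, because $w_\epsilon$ is a strict supersolution, nonincreasing in $t$; parabolic estimates force convergence in $C^2_{\mathrm{loc}}$ as $t\to+\infty$ to an $L$-periodic steady state $W\le w_\epsilon<\bar u$. When $\inf_\R(\bar u-v)>0$ (automatic if $v$ is itself $L$-periodic, by continuity and strict inequality), $\epsilon$ can be chosen small enough that $w_\epsilon>v$ pointwise; parabolic comparison with the constant-in-$t$ solution $u(t,\cdot;v)\equiv v$ then yields $W\ge v>0$, producing an $L$-periodic steady state $v^*:=W$ with $0<v^*<\bar u$. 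When $\inf_\R(\bar u-v)=0$, a shift-and-compactness argument along a sequence $x_n$ with $\bar u(x_n)-v(x_n)\to 0$ yields $C^2_{\mathrm{loc}}$-limits $v_\infty\le\bar u_\infty$ with $v_\infty=\bar u_\infty$ at some point; the strong maximum principle for the linear equation satisfied by $\bar u_\infty-v_\infty\ge 0$ forces $v_\infty\equiv\bar u_\infty$, a translate of $\bar u$, so $v$ asymptotes to a shift of $\bar u$ at that infinity. The stability of $0$ (where $\lambda_1(L,0)\le-\gamma<0$) still rules out $W\equiv 0$, so in every case one extracts an $L$-periodic $v^*$ with $0<v^*<\bar u$.

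It thus remains to rule out $L$-periodic steady states $v^*$ with $0<v^*<\bar u$. Set $\mathcal{F}:=\{u:u\text{ is an }L\text{-periodic steady state},\ v^*\le u\le\bar u,\ u\not\equiv v^*\}$. Then $\bar u\in\mathcal{F}$ and $\mathcal{F}$ is closed in $C^2(\R/L\Z)$ by elliptic regularity. Moreover $\mathcal{F}$ cannot accumulate at $v^*$: if $u_n\in\mathcal{F}$ with $u_n\to v^*$ in $C^0$, the normalized differences $z_n:=(u_n-v^*)/\|u_n-v^*\|_{L^\infty}$ satisfy linear $L$-periodic elliptic equations whose coefficients converge uniformly to $\partial_uf_L(\cdot,v^*)$; elliptic estimates plus the Harnack inequality yield a subsequential limit $z\not\equiv 0$ that is $L$-periodic, nonnegative, and solves $(a_Lz')'+\partial_uf_L(\cdot,v^*)\,z=0$, which by uniqueness of the principal eigenfunction forces $\lambda_1(L,v^*)=0$ and contradicts Lemma~\ref{unstable}. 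Consequently decreasing chains in $(\mathcal{F},\le)$ admit infima inside $\mathcal{F}$, and Zorn's lemma provides a minimal element $u_*\in\mathcal{F}$; the strong maximum principle for $u_*-v^*\ge 0$ then gives $u_*>v^*$ in $\R$.

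To close the argument, apply Lemma~\ref{unstable} to $v^*$ and $u_*$ and let $\psi_{v^*},\psi_{u_*}$ be the corresponding positive $L$-periodic principal eigenfunctions. For all sufficiently small $\eta>0$, $v^*+\eta\psi_{v^*}$ is a strict $L$-periodic subsolution, $u_*-\eta\psi_{u_*}$ a strict $L$-periodic supersolution, and $v^*+\eta\psi_{v^*}<u_*-\eta\psi_{u_*}$ in $\R$. The associated Cauchy solutions are $L$-periodic, respectively nondecreasing and nonincreasing in $t$, trapped in $[v^*,u_*]$, and converge in $C^2_{\mathrm{loc}}$ to $L$-periodic steady states $\underline W\ge v^*+\eta\psi_{v^*}>v^*$ and $\overline W\le u_*-\eta\psi_{u_*}<u_*$, with $\underline W\le\overline W$ by parabolic comparison. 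Hence $\underline W\in\mathcal{F}$ lies strictly below $u_*$, contradicting its minimality. The most delicate step in this plan is the reduction paragraph: a general (non-$L$-periodic) steady state $v$ may be asymptotic to shifts of $\bar u$ at infinity, which requires care in extracting an $L$-periodic intermediate steady state $v^*$.
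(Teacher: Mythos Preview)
Your reduction paragraph has a genuine gap that you yourself flag as ``the most delicate step,'' and it does not close. In the case $\inf_{\R}(\bar u-v)=0$, your shift-and-compactness argument shows that $v$ asymptotes to (a translate of) $\bar u$ along some sequence, but this does \emph{not} produce an $L$-periodic steady state strictly between $0$ and $\bar u$. Worse, the sentence ``the stability of $0$ \ldots\ still rules out $W\equiv 0$'' is backwards: stability of $0$ makes $W\equiv 0$ \emph{more} plausible, not less, and in any event $W$ (the decreasing limit of the flow from $w_\epsilon=\bar u-\epsilon\psi$) was never shown to dominate $v$ in this case, so $W\equiv 0$ is perfectly consistent with the existence of $v$. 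Thus the extraction of an $L$-periodic $v^*$ with $0<v^*<\bar u$ fails, and the whole contradiction scheme collapses.

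The paper avoids this difficulty entirely by a different first step. Instead of using the \emph{periodic} principal eigenfunction, it uses the \emph{truncated} Dirichlet eigenfunction $\psi_R$ on an interval $(-R,R)$ with $R>L/2$ and $\lambda_{1,R}(L,\bar u)>0$, builds the supersolution $v_\epsilon=\bar u-\epsilon\psi_R$ on $(-R,R)$ (extended by $\bar u$ outside), and then \emph{translates} it by every $kL$, $k\in\Z$. Since $v<\bar u$ everywhere and each $v_\epsilon(\cdot-kL)$ is a supersolution touching $\bar u$ at the boundary, the strong maximum principle forces $v<v_\epsilon(\cdot-kL)$ on each $(kL-R,kL+R)$; as $R>L/2$ these intervals cover $\R$, yielding $\sup_{\R}(v-\bar u)<0$ directly, with no periodicity assumption on $v$ and no case split. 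After that, a single flow-down from $\bar u-\eta\varphi$ (now with the periodic eigenfunction, since $v$ is uniformly below $\bar u$) lands on an $L$-periodic steady state $u_\infty$ with $v\le u_\infty<\bar u$; if $u_\infty\not\equiv 0$, one subsolution $u_\infty+\kappa\psi$ below the initial datum gives an immediate contradiction. This replaces your Zorn's lemma argument by two lines. The key idea you are missing is precisely the use of the truncated eigenproblem plus integer shifts to handle non-periodic $v$.
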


\begin{proof} We only prove the first conclusion, since the proof of the second one is similar. Thus, let $0<L<\tilde{L}_*$, let $0<\bar{u}<1$ be an $L$-periodic steady state of~\eqref{eqL} and let $v$ be a steady state of~\eqref{eqL} such that $0\le v<\bar{u}$. Our goal is to show that $v\equiv0$ in $\R$.

{\it Step 1: $\sup_{x\in\R}\big(v(x)-\bar{u}(x)\big)<0$.} First, since $\lambda_{1}(L,\bar{u})>0$ by Lemma~\ref{unstable}, it follows from Definition~\ref{semistable} that there is~$R>L/2$ large enough such that~$\lambda_{1,R}(L,\bar{u})>0$, where~$\lambda_{1,R}(L,\bar{u})$ is the principal eigenvalue of~\eqref{truncted}. For any $\epsilon >0$, define
\be\label{defveps}v_{\epsilon}(x)=\left\{
\begin{array}{lll}
\bar{u}(x)-\epsilon\psi_R(x)\; & {\rm if}\,\,\,  |x|<R,\vspace{3pt}\\
\bar{u}(x)\; & {\rm if} \,\,\,|x|\geq R,
\end{array}\right.
\ee
where $\psi_R$ is a fixed positive eigenfunction of~\eqref{truncted} corresponding to $\lambda_{1,R}(L,\bar{u})$. Choose $\varepsilon_0>0$ small enough such that, for all $0<\epsilon\le\epsilon_0$, $0<v_{\epsilon}\le\bar{u}$ in $\R$ and
$$f_L(x,v_{\epsilon})-f_L(x,\bar{u})\leq -\partial_uf_L(x,\bar{u})\times\varepsilon\psi_R+\frac{\lambda_{1,R}(L,\bar{u})}{2}\times \varepsilon\psi_R\ \hbox{ in }(-R,R).$$
It then follows that
\begin{equation}\label{supersol}
\begin{split}
(a_L\,(v_{\epsilon})_x)_x+f_L(x,v_{\epsilon}) & =(a_L\bar{u}_x)_x-(a_L\,(\varepsilon\psi_R)_x)_x+f_L(x,v_{\epsilon})-f_L(x,\bar{u})+f_L(x,\bar{u})\vspace{3pt}\\
&\leq -\lambda_{1,R}(L,\bar{u})\times\varepsilon\psi_R+\frac{\lambda_{1,R}(L,\bar{u})}{2}\times\varepsilon\psi_R<0\ \hbox{ in }(-R,R)
\end{split}
\end{equation}
for all $0<\epsilon\le\epsilon_0$. This together with the fact that $\bar{u}$ is a stationary solution of equation~\eqref{eqL} implies that $v_{\epsilon}$ is a supersolution of equation~\eqref{eqL}.

Now, for any $k\in\Z$ and for any $0<\epsilon\le\epsilon_0$, the function $v_{\epsilon}(\cdot-kL)$ is also a supersolution of~\eqref{eqL}. Since $v<u$ in $\R$, it follows then from the strong elliptic maximum principle that, for every $0<\epsilon\le\epsilon_0$, there holds $v(x)<v_{\epsilon}(x-kL)$ for all $x\in(kL-R,kL+R)$ and for all~$k\in\Z$. Since $R>L/2$ and $\psi_R$ is continuous and positive in $(-R,R)$, one infers that $\sup_{x\in\R}\big(v(x)-\bar{u}(x)\big)<0$.

{\it Step 2: $v\equiv 0$ in $\R$.} Finally, let $\varphi$ be a principal eigenfunction of the periodic problem~\eqref{prineigen}, associated with the principal eigenvalue~$\lambda_1(L,\bar{u})$. With similar calculation as above, there is~$\eta_0>0$ such that for all~$0<\eta\le\eta_0$, the $L$-periodic function $\bar{u}-\eta\varphi$ satisfies $v<\bar{u}-\eta\varphi<\bar{u}$ and
\be\label{supersol2}
(a_L\,(\bar{u}-\eta\varphi)_x)_x+f_L(x,\bar{u}-\eta\varphi)<0\ \hbox{ in }\R,
\ee
that is $\bar{u}-\eta\varphi$ is a strict supersolution of~\eqref{eqL}. As a consequence, the solution $u(t,x;\bar{u}-\eta_0\varphi)$ of~\eqref{eqL} with initial condition $\bar{u}-\eta_0\varphi$ is decreasing in $t>0$ and, from standard parabolic estimates, it converges as $t\to+\infty$ to an~$L$-periodic steady state $u_{\infty}(x)$ of~\eqref{eqL} such that
$$0\le v\le u_{\infty}<\bar{u}-\eta_0\varphi<\bar{u}<1\hbox{ in }\R.$$
If $u_{\infty}\not\equiv0$ in $\R$, then $0<u_{\infty}<1$ from the strong maximum principle, whence $u_{\infty}$ is unstable from Lemma~\ref{unstable}, in the sense that $\lambda_1(L,u_{\infty})>0$. Therefore, as above, by calling $\psi$ a principal periodic eigenfunction of the periodic problem~\eqref{prineigen} associated with $u_{\infty}$, it follows that the functions~$u_{\infty}+\kappa\psi$ are subsolutions of~\eqref{eqL} for all $\kappa>0$ small enough. In particular, since $u_{\infty}<\bar{u}-\eta_0\varphi$ in~$\R$ and both functions are $L$-periodic and continuous, there is $\kappa_0>0$ such that $u_{\infty}+\kappa_0\psi$ is a subsolution of~\eqref{eqL} and $u_{\infty}+\kappa_0\psi<\bar{u}-\eta_0\varphi$ in $\R$, whence $u_{\infty}+\kappa_0\psi<u(t,\cdot;\bar{u}-\eta_0\varphi)$ in $\R$ for all $t>0$, from the maximum principle. Finally, passing to the limit as $t\to+\infty$ gives $u_{\infty}+\kappa_0\psi\le u_{\infty}$ in $\R$, which is impossible. Hence, $u_{\infty}\equiv 0$ and $v\equiv 0$ in $\R$, and the proof of Lemma~\ref{noexstate} is complete.
\end{proof}

As a consequence of Lemma~\ref{noexstate}, for every $0<L<\tilde{L}_*$ and for every $L$-periodic steady state $0<\bar{u}<1$, equation~\eqref{eqL} restricted to $E_1=\{u\in C(\R,[0,1])\,|\,0\leq u\leq \bar{u}\hbox{ in }\R\}$, and to~$E_2=\{u\in C(\R,[0,1])\,|\, \bar{u}\leq u\leq 1\hbox{ in }\R\}$ respectively, has a monostable structure. In order to prove the existence of pulsating fronts for~\eqref{eqL}, one will verify a counter-propagation condition on the spreading speeds of these subsystems, as defined in~\cite{fz}. To do so, denote
$$\mathcal{C}^-(0,\bar{u})=\big\{u\in C(\R,[0,1])\,\big|\, 0\le u\leq \bar{u},\, \limsup_{x\to+\infty}(u(x)-\bar{u}(x))<0\hbox{ and }u(x)=\bar{u}(x)\,\,\hbox{for}\,\, x \ll -1\big\},$$
$$\mathcal{C}^+(\bar{u},1)=\big\{u\in C(\R,[0,1])\,\big|\, \bar{u}\leq u \le1,\, \liminf_{x\to-\infty}(u(x)-\bar{u}(x))>0\hbox{ and }u(x)=\bar{u}(x)\,\,\hbox{for}\,\, x \gg 1\big\}.$$

\begin{lem}\label{spreadingspeed}
For every $0<L<\tilde{L}_*$ and for every $L$-periodic steady state $0<\bar{u}<1$ of~\eqref{eqL}, there are some real numbers $c^+>0$ and $c^->0$ such that
\begin{equation}\label{rightspeed}\left\{\baa{ll}
\displaystyle\mathop{\limsup}_{t\to+\infty,\, x\geq -c^-t} u(t,x;u_0)=0 & \hbox{for all }u_0\in\mathcal{C}^-(0,\bar{u}),\vspace{3pt}\\
\displaystyle\mathop{\liminf}_{t\to+\infty,\, x\leq c^+t} u(t,x;\tilde{u}_0)=1 & \hbox{for all }\tilde{u}_0\in\mathcal{C}^+(\bar{u},1).\eaa\right.
\end{equation}
\end{lem}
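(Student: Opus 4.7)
The plan is to recognize each statement in~\eqref{rightspeed} as a spreading-speed assertion for a monostable monotone semiflow in a periodic habitat, and then to derive strict positivity of these spreading speeds from the linear instability of $\bar u$. I focus on the second conclusion (invasion of $\bar u$ by $1$ on the order interval $E_2:=\{u\in C(\R,[0,1])\,|\,\bar u\le u\le 1\}$); the first conclusion is handled symmetrically on $E_1=\{u\in C(\R,[0,1])\,|\,0\le u\le \bar u\}$ via the order-reversing change $v=\bar u-u$, with $0$ then playing the role of the unstable state.

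On $E_2$, the semiflow generated by~\eqref{eqL} is order-preserving by the parabolic maximum principle, compact on bounded sets in the locally uniform topology by interior parabolic estimates, and, by Lemma~\ref{noexstate}, admits $\bar u$ and $1$ as its only steady states within $E_2$; moreover $1$ is stable by~\eqref{asspars} while $\bar u$ is unstable by Lemma~\ref{unstable}. This is the standard periodic-habitat monostable setting of Liang-Zhao~\cite{lz1} and Weinberger~\cite{w}, whose general theorems provide a rightward asymptotic spreading speed $c^+\in\R$ such that, for each $\tilde u_0\in\mathcal{C}^+(\bar u,1)$,
\[
\lim_{t\to+\infty}\sup_{x\le ct}|u(t,x;\tilde u_0)-1|=0\ \hbox{ whenever }\ c<c^+.
\]
Taking any $c^+_{\mathrm{lem}}\in(0,c^+)$ in place of the constant $c^+$ appearing in the lemma statement, the second assertion of~\eqref{rightspeed} reduces to showing $c^+>0$.

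To prove $c^+>0$, the plan is to lower-bound $c^+$ by a linear spreading speed built from the linearization of~\eqref{eqL} at $\bar u$. For $\mu\in\R$, let $\Lambda(\mu)$ denote the principal eigenvalue of the $L$-periodic eigenvalue problem
\[
(a_L\Phi')'-2\mu a_L\Phi'+\big(\mu^2 a_L-\mu a_L'+\partial_uf_L(\cdot,\bar u)\big)\Phi=\Lambda\Phi\ \hbox{ in }\R,\ \Phi>0\hbox{ and }L\hbox{-periodic},
\]
arising from seeking solutions of $v_t=(a_Lv_x)_x+\partial_uf_L(x,\bar u)v$ of the form $\me^{-\mu(x-ct)}\Phi(x)$. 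By Krein-Rutman theory $\Lambda$ is real-analytic and convex, with $\Lambda(0)=\lambda_1(L,\bar u)>0$ by Lemma~\ref{unstable} and $\Lambda(\mu)\to+\infty$ as $|\mu|\to+\infty$; hence $c^+_{\mathrm{lin}}:=\inf_{\mu>0}\Lambda(\mu)/\mu$ is attained at some $\mu_*>0$ and is strictly positive. A truncated exponential sub-solution argument in the spirit of~\cite{lz1,w} then yields $c^+\ge c^+_{\mathrm{lin}}>0$.

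The main obstacle I expect is the rigorous transfer from this linear computation to a genuine nonlinear sub-solution of~\eqref{eqL}: the exponential profile $\bar u+\delta\me^{-\mu(x-ct)}\Phi_\mu(x)$ is only a sub-solution close to $\bar u$, so it has to be truncated from above (say at a level slightly below $1$) to keep the comparison valid, and the resulting sub-solution must be initiated on a right half-line where $\tilde u_0=\bar u$. The hair-trigger property derived from Lemma~\ref{noexstate} (any solution with initial datum in $E_2$ not identically equal to $\bar u$ converges locally uniformly to $1$) furnishes the needed lift above $\bar u$ at some finite time, after which the truncated exponential can be placed below the solution and propagated. These steps are technical but standard in periodic monostable theory and rely exclusively on ingredients already used in the proofs of Lemmas~\ref{unstable} and~\ref{noexstate}.
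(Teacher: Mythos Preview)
Your approach is valid but takes a genuinely different route from the paper's. The paper gives a direct, elementary argument that avoids abstract spreading-speed machinery and eigenvalue computations entirely: it first shows (using the supersolution $v_\epsilon$ from~\eqref{defveps} and Lemma~\ref{noexstate}) that $u(t,\cdot;u_0)\to 0$ uniformly on every right half-line $[C,+\infty)$; then, fixing a reference datum $w_0\in\mathcal{C}^-(0,\bar u)$ with $w_0\equiv\sigma$ on $\R^+$, it finds $t_1>0$ with $u(t_1,\cdot;w_0)\le w_0(\cdot+L)$, iterates to $u(nt_1,\cdot;w_0)\le w_0(\cdot+nL)$, and finishes by a compactness/contradiction argument showing that any $c^-\in(0,L/t_1)$ works. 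Your route trades this hands-on iteration for the Weinberger--Liang--Zhao framework plus a linearized lower bound; this is conceptually clean and yields the explicit estimate $c^+\ge c^+_{\mathrm{lin}}$, at the cost of checking the abstract hypotheses and carrying out the truncated-subsolution construction you outline.

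One point in your argument needs care: convexity of $\Lambda$, $\Lambda(0)>0$, and $\Lambda(\mu)\to+\infty$ do \emph{not} by themselves force $\inf_{\mu>0}\Lambda(\mu)/\mu>0$ (take $\Lambda(\mu)=\mu^2-4\mu+1$). What saves you here is the divergence-form, drift-free structure of~\eqref{eqL}: integrating $L_\mu\Phi$ against a periodic test function shows that $L_{-\mu}$ is the formal adjoint of $L_\mu$, hence $\Lambda(-\mu)=\Lambda(\mu)$. Evenness plus convexity then gives $\Lambda(\mu)\ge\Lambda(0)>0$ for all $\mu$, and $c^+_{\mathrm{lin}}>0$ follows. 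You should make this evenness explicit; with it, your plan goes through.
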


\begin{proof}
We only give the proof of the first assertion~\eqref{rightspeed}, since the arguments for the other one are similar. First, one claims that for any $u_0\in\mathcal{C}^-(0,\bar{u})$ and any constant $C\in\R$, there holds
\begin{equation}\label{convhalf}
\lim_{t\to+\infty} u(t,x;u_0)=0\quad \hbox{uniformly for}\,\,x\in[C,+\infty).
\end{equation}
So, fix any $u_0\in\mathcal{C}^-(0,\bar{u})$, any real number $C$, any $k_0\in\N$ such that $C\ge-k_0L$, and let~$\eta>0$ be arbitrary. There are then $\epsilon>0$ small enough and $n_0\in\N$ large enough such that
$$u_0(\cdot+nL)\leq v_{\epsilon}\ \hbox{ for all }n\ge n_0,\ n\in\N,$$
where $v_{\epsilon}$ is defined in~\eqref{defveps} and is a strict supersolution of~\eqref{eqL}, in the sense of~\eqref{supersol}. It follows from the parabolic maximum principle that $u(t,x;v_{\epsilon})<v_{\epsilon}(x)$ and $u(t,x;v_{\epsilon})$ is decreasing in $t>0$. By standard parabolic estimates, $u(t,x;v_{\epsilon})$ converges as $t\to+\infty$ locally uniformly in~$x\in\R$ to a stationary solution $v_{\epsilon,\infty}$ of equation~\eqref{eqL} with $0\leq v_{\epsilon,\infty}<\bar{u}$. Lemma~\ref{noexstate} and the strong elliptic maximum principle imply that~$v_{\epsilon,\infty}\equiv 0$. Therefore, there is $T>0$ such that
$$0\le u(t,y;v_{\epsilon})\le\eta\ \hbox{ for all }t\ge T\hbox{ and }|y|\le(k_0+n_0+1)L.$$
For any $x\ge C\,(\ge-k_0L)$, there is $l_x\in\Z$ such that $l_x\ge-k_0$ and $l_xL\le x\le(l_x+1)L$. With $n_x=k_0+n_0+l_x\ge n_0$, one has $|x-n_xL|\le|x-l_xL|+(k_0+n_0)L\le(k_0+n_0+1)L$. Hence, from the maximum principle and the periodicity of~\eqref{eqL}, it follows that, for all $t\ge T$,
$$0\le u(t,x;u_0)=u(t,x-n_xL;u_0(\cdot+n_xL))\le u(t,x-n_xL;v_{\epsilon})\le\eta.$$
The claim~\eqref{convhalf} is thereby proved.

Next, we fix a real number $\sigma$ such that $0<\sigma<\min_{\R}\bar{u}$ and a function $w_0\in\mathcal{C}^-(0,\bar{u})$ such that $\sigma\le w_0\le\bar{u}$ in $\R$ and $w_0=\sigma$ in $\R^+$. From~\eqref{convhalf} applied to $w_0$, and since $0\le u(t,x;w_0)\le\bar{u}(x)$ for all $t\ge0$ and $x\in\R$, there is a time $t_1>0$ such that $0\le u(t_1,x;w_0)\le w_0(x+L)$ for all~$x\in\R$. From the maximum principle, it follows by immediate induction that
\be\label{nt1}
0\le u(nt_1,x;w_0)\le w_0(x+nL)\ \hbox{ for all }n\in\N\hbox{ and }x\in\R.
\ee

Finally, one shows that the first assertion in~\eqref{rightspeed} holds with any positive constant $c^-$ such that $0<c^-<L/t_1$. Fix any function $u_0\in\mathcal{C}^-(0,\bar{u})$. By~\eqref{convhalf} and $u(t,\cdot;u_0)\le\bar{u}$, there is $T>0$ such that $0\le u(T,\cdot;u_0)\le w_0$, whence
\be\label{nt1bis}
0\le u(T+nt_1,x;u_0)\le u(nt_1,x;w_0)\le w_0(x+nL)\ \hbox{ for all }n\in\N\hbox{ and }x\in\R
\ee
by~\eqref{nt1} and the maximum principle. Let us now argue by contradiction and assume that ${\limsup}_{t\to+\infty,\, x\ge-c^-t} u(t,x;u_0)>0$. Then there are some sequences $(\tau_k)_{k\in\N}$ in $(0,+\infty)$ and $(x_k)_{k\in\N}$ in $\R$ such that $x_k\ge-c^-\tau_k$ for all $k\in\N$, $\tau_k\to+\infty$ as $k\to+\infty$ and $\liminf_{k\to+\infty}u(\tau_k,x_k;u_0)>0$. For $k$ large enough, one can write $\tau_k=T+n_kt_1+\tilde{\tau}_k$ with $n_k\in\N$, $0\le\tilde{\tau}_k\le t_1$ and $n_k\to+\infty$ as $k\to+\infty$. Write also $x_k=x'_k+x''_k$ with $x'_k\in L\Z$ and $-L\le x''_k\le 0$. Up to extraction of a subsequence, one can assume that $\tilde{\tau}_k\to\tau\in\R$ and $x''_k\to y\in\R$ as $k\to+\infty$. For $k$ large enough, denote
$$u_k(t,x)=u(t+\tau_k,x+x'_k;u_0)\ \hbox{ for }t\ge-\tau_k,\ x\in\R.$$
From standard parabolic estimates, the functions $u_k$ converge locally uniformly in $\R^2$, up to extraction of a subsequence, to a solution $u_{\infty}(t,x)$ of~\eqref{eqL} defined for all $(t,x)\in\R^2$ and such that $0\le u_{\infty}(t,x)\le\bar{u}(x)$ for all $(t,x)\in\R^2$, while $u_{\infty}(0,y)>0$. Furthermore, for any given~$m\in\Z$ and $x\in\R$, one has, for all $k$ large enough,
\be\label{ukmt1}
0\le u_k(-mt_1-\tilde{\tau}_k,x)=u(T+n_kt_1-mt_1,x+x'_k;u_0)\le w_0(x+x'_k+(n_k-m)L)
\ee
by~\eqref{nt1bis}. But $x'_k=x_k-x''_k\ge x_k\ge-c^-\tau_k\ge-c^-(T+n_kt_1+t_1)$, whence $x'_k+n_kL\to+\infty$ as~$k\to+\infty$ since $c^-<L/t_1$ and $n_k\to+\infty$. As a consequence, it follows from~\eqref{ukmt1} and the definitions of~$u_{\infty}$ and~$w_0$ that $0\le u_{\infty}(-mt_1-\tau,x)\le\sigma\le w_0(x)$ for all $m\in\Z$ and $x\in\R$. One infers that $u_{\infty}\equiv 0$ in $\R^2$. Indeed, for any $(t,x)\in\R^2$, one has, for all $m\in\N$ large enough,
$$0\le u_{\infty}(t,x)=u(t+mt_1+\tau,x;u_{\infty}(-mt_1-\tau,\cdot))\le u(t+mt_1+\tau,x;w_0).$$
The property~\eqref{convhalf} applied with $w_0$ implies that $u(t+mt_1+\tau,x;w_0)\to0$ as $m\to+\infty$ whence~$u_{\infty}(t,x)=0$ for all $(t,x)\in\R^2$, which contradicts $u_{\infty}(0,y)>0$. Therefore, the first assertion of~\eqref{rightspeed} is shown and the proof of Lemma~\ref{spreadingspeed} is complete.
\end{proof}

Based on the above preparations, one is ready to prove Theorem~\ref{thhomobis}.

\begin{proof}[Proof of Theorem~\ref{thhomobis}] Fix a period $L$ such that $0<L<\tilde{L}_*$. For any $t\geq 0$, define $Q_t: C(\R,[0,1])\to C(\R,[0,1])$ by
\begin{equation}\label{semiflow}
Q_t[u_0]=u(t,\cdot;u_0).
\end{equation}
By classical parabolic theory, together with Lemmas~\ref{unstable} and~\ref{spreadingspeed}, the semiflow $(Q_t)_{t\geq 0}$ satisfies the following properties:
\begin{itemize}
\item[(A1)](Periodicity) $T_y\big[Q_t[\varphi]\big]=Q_t\big[T_y[\varphi]\big]$ for all $\varphi\in C(\R,[0,1])$, $t>0$ and $y\in L\Z$, where $T_y:C(\R,[0,1])\to C(\R,[0,1])$ is the translation operator defined by $T_y[\psi]=\psi(\cdot-y)$.
\item[(A2)](Continuity) For any $t>0$, $Q_t$ is continuous with respect to the compact open topology.
\item[(A3)](Monotonicity) For any $t>0$, $Q_t$ is order preserving in the sense that $Q_t[\varphi_1]\geq Q_t[\varphi_2]$ whenever $\varphi_1\ge\varphi_2$ in $C(\R,[0,1])$.
\item[(A4)](Compactness) For any $t>0$, $Q_t$ is compact with respect to the compact open topology.
\item[(A5)](Bistability) Let $\mathcal{C}_{per}$ be the set of $L$-periodic functions in $C(\R,[0,1])$. For any $t~\!>~\!0$,~$Q_t$ maps~$\mathcal{C}_{per}$ to itself and is strongly monotone on $\mathcal{C}_{per}$ in the sense that $\inf_{x\in\R}\big(Q_t[\varphi_1](x)-Q_t[\varphi_2](x)\big)>0$ whenever $\varphi_1\ge\varphi_2$ in $\mathcal{C}_{per}$ with $\varphi_1\not\equiv\varphi_2$. Furthermore, the constant functions~$0$ and~$1$ ($\in\mathcal{C}_{per}$) are stationary solutions of~\eqref{eqL} and they are strongly stable from above and below, respectively, in the sense of ~\cite{fz}, namely, for every~$t>0$ there is~$\epsilon_0>0$ such that~$\sup_{x\in\R}\big(Q_t[\epsilon](x)-\epsilon\big)<0$ and $\inf_{x\in\R}\big(Q_t[1-\epsilon](x)-(1-\epsilon)\big)>0$ for all~$0<\epsilon\le\epsilon_0$, which follows from the assumption ~\eqref{bistable}. Lastly, any stationary solution~$0<\bar{u}<1$ in~$\mathcal{C}_{per}$ is strongly unstable from above and below in the sense of~\cite{fz} since for every $t>0$, there is~$\epsilon_0>0$ such that $\inf_{x\in\R}\big(Q_t[\overline{u}+\epsilon\varphi](x)-(\overline{u}(x)+\epsilon\varphi(x))\big)>0$ and $\sup_{x\in\R}\big(Q_t[\overline{u}-\epsilon\varphi](x)-(\overline{u}(x)-\epsilon\varphi(x))\big)<0$ in $\R$ for all $0<\epsilon\le\epsilon_0$, where $\varphi$ denotes the periodic principal eigenfunction of~\eqref{prineigen} with~$\lambda=\lambda_1(L,\bar{u})>0$.  Indeed, the inequalities~$Q_t[\overline{u}+\epsilon\varphi]>\overline{u}+\epsilon\varphi$ and $Q_t[\overline{u}-\epsilon\varphi]<\overline{u}-\epsilon\varphi$ in~$\R$ for all $0<\epsilon\le\epsilon_0$ follow from the fact that the $L$-periodic functions $\overline{u}+\epsilon\varphi$ and $\overline{u}-\epsilon\varphi$ are respectively strict sub- and supersolutions of the elliptic equation associated with ~\eqref{eqL},  which can be verified by calculations similar to~\eqref{supersol2}.
\item[(A6)](Counter-propagation)  For each stationary solution $\bar{u}\in \mathcal{C}_{per}$ with $0<\bar{u}<1$, one has~$c^-_*(0,\bar{u})+c^+_*(\bar{u},1)>0$, where $c^-_*(0,\bar{u})$ and $c^+_*(0,\bar{u})$ are the spreading speeds defined by
$$\begin{aligned}
c^-_*(0,\bar{u}) & = \sup\big\{c\in\R\,\big|\, \limsup_{t\to+\infty,\,x\geq-ct} u(t,x;u_0)=0\hbox{ for all }u_0\in\mathcal{C}^-(0,\bar{u})\big\},\vspace{3pt}\\
c^+_*(\bar{u},1) & = \sup\big\{c\in\R\,\big|\, \liminf_{t\to+\infty,\,x\leq ct} u(t,x;\tilde{u}_0)=1\hbox{ for all }\tilde{u}_0\in\mathcal{C}^+(\bar{u},1)\big\}.
\end{aligned}$$
Indeed, Lemma~\ref{spreadingspeed} implies that $c^-_*(0,\bar{u})\ge c^->0$ and $c^+_*(\bar{u},1) \ge c^+>0$, with the notations of Lemma~\ref{spreadingspeed}. Following~\cite{fz}, $c^-_*(0,\bar{u})$ is called the leftward spreading speed of equation~\eqref{eqL} on $\mathcal{C}^-(0,\bar{u})$, and $c^+_*(\bar{u},1)$ the rightward spreading speed of equation~\eqref{eqL} on~$\mathcal{C}^+(\bar{u},1)$ (Lemma~\ref{spreadingspeed} is then stronger than the counter-propagation condition given in~\cite{fz}, which is just defined as the positivity of the sum of these spreading speeds).
\end{itemize}

Having in hand the properties (A1)-(A6),  we then see from \cite[Proposition 3.1 and Theorems~3.4 and~4.1]{fz} that for any $0<L<\tilde{L}_*$, equation~\eqref{eqL} admits a pulsating front~$u_L(t,x)=\phi_L(x-c_Lt,x/L)$ with speed $c_L\in\R$ such that~$\phi_L(\xi,x)$ is nonincreasing in~$\xi$.

Lastly, the assumption~\eqref{hypoverf} yields the existence (and uniqueness) of a front $(\phi_0,c_0)$ for the homogenized equation~\eqref{homogenized}. If $c_0\neq0$, then Theorem~\ref{thhomo} implies that the speeds $c_L$ of the pulsating fronts given in the previous paragraph, which exist for all $0<L<\tilde{L}_*$, are such that~$c_L\to c_0$ as $L\to0^+$. On the other hand, if $c_0=0$, then $\int_0^1\overline{f}(u)du=0$ and Lemma~\ref{sign} implies that $c_L=0$ for all $0<L<\tilde{L}_*$. Hence, the proof of Theorem~\ref{thhomobis} is complete.
\end{proof}


\subsection{The case of large periods $L$}\label{seclarge}

This section is devoted to the proof of Theorem~\ref{thlarge}. That is, under the assumption~\eqref{conlarge}, we show that the equation~\eqref{eqL}, for any period $L>0$ large enough, admits a pulsating front with positive speed. Firstly, as for the proof of Theorem~\ref{thhomobis}, we will show that for $L$ large enough, any $L$-periodic intermediate steady state $0<\bar{u}<1$ of~\eqref{eqL} is unstable and, applying the abstract results in ~\cite{fz}, we will then obtain the existence of a pulsating front with nonnegative speed. To complete the proof, we need to exclude the case of pulsating fronts with zero speed (stationary fronts), at least for $L$ large enough. This proof, as well as that of the instability of the intermediate steady states of equation~\eqref{eqL}, will use a passage to the limit as $L\to+\infty$ and the properties of the solutions to
\begin{equation}\label{eqfreezed}
a(y)(u^y)''(x)+f^y(u^y(x))=0\ \hbox{ and }\ 0<u^y(x)<1\ \hbox{ for all }x\in\R,
\end{equation}
where $y$ is any real number and $f^y(u)=f(y,u)$.

We first begin with the instability of all intermediate steady states of equation~\eqref{eqL} at large~$L$.

\begin{lem}\label{unstablelar}
Under the assumption~\eqref{conlarge}, there is $L^*>0$ such that $\lambda_1(L,\bar{u})>0$ for every $L>L^*$ and for every $L$-periodic steady state $\bar{u}$ of~\eqref{eqL} with $0<\bar{u}<1$, where $\lambda_1(L,\bar{u})$ is the principal eigenvalue defined in~\eqref{prineigen}.
\end{lem}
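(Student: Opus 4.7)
My plan is to argue by contradiction, in the spirit of the proof of Lemma~\ref{unstable}. Suppose the claim fails: there exist sequences $L_n\to+\infty$ and $L_n$-periodic steady states $0<\bar u_n<1$ of~\eqref{eqL} with $\lambda_1(L_n,\bar u_n)\le 0$. Rescaling $v_n(y)=\bar u_n(L_n y)$, the function $v_n$ is $1$-periodic, takes values in $(0,1)$, and solves $(a(y)v_n'(y))'+L_n^2 f(y,v_n(y))=0$ in $\R$. Reproducing verbatim the multiplication-by-$\psi_n^{-1}$ computation from the proof of Lemma~\ref{unstable} yields the lower bound $\lambda_1(L_n,\bar u_n)\ge\int_0^1\partial_u f(y,v_n(y))\,dy$, so the contradiction hypothesis forces
\[
\int_0^1\partial_u f(y,v_n(y))\,dy\le 0\quad\text{for every }n.
\]
The goal is then to show that $v_n\to\theta_\cdot$ (i.e., $v_n(y)\to\theta_y$) in a strong enough sense that dominated convergence and the second part of~\eqref{conlarge} produce $\int_0^1\partial_u f(y,\theta_y)\,dy>0$ in the limit, contradicting the displayed inequality.

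The limit is identified via a blow-up analysis. At any $y_0\in[0,1)$, standard elliptic estimates show that $V_n(\tau):=v_n(y_0+\tau/L_n)$ converges, up to a subsequence, in $C^2_{\mathrm{loc}}(\R)$ to a bounded $[0,1]$-valued solution $V_\infty$ of the autonomous frozen ODE $a(y_0)u''+f(y_0,u)=0$. The potential $F(y_0,u)=\int_0^u f(y_0,s)\,ds$ takes the pairwise distinct values $F(y_0,0)=0$, $F(y_0,\theta_{y_0})<0$ and $F(y_0,1)=\int_0^1 f(y_0,u)\,du>0$ at the equilibria, so energy conservation precludes any heteroclinic orbit of the frozen ODE connecting two of $\{0,\theta_{y_0},1\}$. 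The possible $V_\infty$ are thus reduced to one of the three equilibrium constants or to a nonconstant periodic or homoclinic orbit around $\theta_{y_0}$.

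Uniform convergence of $v_n$ to $0$ or to $1$ is ruled out exactly as at the end of the proof of Lemma~\ref{unstable}: it would force $(av_n')'$ to have a definite sign on a full $1$-period, contradicting the periodicity of $av_n'$. The remaining case, that some blow-up limit $V_\infty$ at some $y_0$ is a nonconstant periodic or homoclinic orbit, is excluded by a Morse-index argument: since $V_\infty'$ lies in the kernel of the linearized frozen operator $\Psi\mapsto a(y_0)\Psi''+\partial_u f(y_0,V_\infty)\Psi$ and changes sign, Sturm--Courant theory yields a strictly positive principal eigenvalue $\mu_{y_0}>0$ for the natural boundary conditions (periodic in the periodic-orbit case, decaying at $\pm\infty$ in the homoclinic case); transplanting the corresponding sign-constant eigenfunction to the $x$-variable and extending it to an $L_n$-periodic test function furnishes a Rayleigh quotient for $\lambda_1(L_n,\bar u_n)$ that is asymptotically $\mu_{y_0}>0$ as $L_n\to+\infty$, contradicting $\lambda_1(L_n,\bar u_n)\le 0$. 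Every blow-up limit must therefore be $\theta_{y_0}$, so $v_n\to\theta_\cdot$ uniformly and the contradiction is closed.

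The hardest part will be the Morse-index step, which requires a uniform-in-$y_0$ lower bound $\mu_{y_0}\ge c>0$ (given that $v_n$ may a priori exhibit many nontrivial features per unit $y$-interval with amplitudes ranging all the way up to the homoclinic boundary) together with a careful construction of the $L_n$-periodic test function that approximates the spatially localized or periodic eigenfunction while keeping the gradient contribution in the Rayleigh quotient negligible. The strict ordering $F(y_0,\theta_{y_0})<F(y_0,0)<F(y_0,1)$ provided by the first part of~\eqref{conlarge} is precisely the structural input that simultaneously excludes the heteroclinics and guarantees the Morse-index bound $\ge 1$ on every nontrivial bounded orbit of the frozen ODE.
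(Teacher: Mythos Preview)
Your argument has a gap at the step ``every blow-up limit must therefore be $\theta_{y_0}$''. After the Morse-index argument rules out periodic and homoclinic limits, all three constants $0,\theta_{y_0},1$ remain as possible blow-up limits at each $y_0$. The sign argument you invoke from Lemma~\ref{unstable} only forbids $v_n\to 0$ or $v_n\to 1$ \emph{uniformly on the whole period}; it says nothing about the blow-up limit at an individual point being $0$ or $1$. So the conclusion $v_n\to\theta_\cdot$ does not follow from what you have written, and the integral-inequality contradiction is not closed. (The hole can be patched by an extra intermediate-value step: if some blow-up limit along a subsequence is $0$, the sign argument still supplies $z_n$ with $v_n(z_n)>\delta$, hence a point $w_n$ with $v_n(w_n)=\delta/2\notin\{0,\theta_{w_0},1\}$; the blow-up at $w_n$ is then forced to be nonconstant and your Morse-index step fires. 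But this detour is absent from your proposal, and even with it the test-function construction you yourself flag as ``hard'' still has to be carried out.)

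The paper's proof avoids all of this by a more direct strategy. Rather than characterizing the limit of $v_n$ at every $y$, it selects a \emph{single} blow-up center $x_n\in[0,L_n]$ with $\bar u_n(x_n)=\theta_{x_n/L_n}$ (such a crossing exists precisely by the sign argument), so that the limiting profile $p_\infty$ satisfies $p_\infty(0)=\theta_{x_\infty}$ and the constants $0,1$ are excluded from the outset. Crucially, the paper also passes the normalized eigenfunctions $\psi_n(\cdot+x_n)$ to a \emph{positive} limit $\psi_\infty$ solving $a(x_\infty)\psi_\infty''+\partial_u f(x_\infty,p_\infty)\psi_\infty=\tilde\lambda_1\psi_\infty$ with $\tilde\lambda_1\le 0$, and then eliminates each of the three remaining cases for $p_\infty$ by an elementary ODE argument exploiting $\psi_\infty>0$ (strict concavity when $p_\infty\equiv\theta_{x_\infty}$, Sturm comparison with the sign-changing $p_\infty'$ in the periodic case, a Wronskian-type identity in the homoclinic case). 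This bypasses both your integral inequality and the Rayleigh-quotient construction entirely.
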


\begin{proof}
Assume by contradiction that there are some sequences $(L_n)_{n\in\N}$ in $(0,+\infty)$, $(\bar{u}_n)_{n\in\N}$ and~$(\psi_n)_{n\in\N}$ in $C^2(\R)$ such that $L_n\to+\infty$ as $n\to+\infty$ and, for each $n\in\N$,~$\bar{u}_n$ and $\psi_n$ satisfy~\eqref{equn} and~\eqref{eqpsin} with principal eigenvalue $\lambda_1(L_n,\bar{u}_n)\leq 0$. By~\eqref{prinbound}, one can assume that, up to extraction of some subsequence, $\lambda_1(L_n,\bar{u}_n)\to\tilde{\lambda}_1\in(-\infty,0]$ as $n\to+\infty$.

One first observes from the assumption~\eqref{bistable} that, for any $n\in\N$, there is $x_n\in [0,L_n]$ such that $\bar{u}_n(x_n)=\theta_{x_n/L_n}$. Otherwise, by continuity and $L_n$-periodicity of $\bar{u}_n$, one would have either~$\theta_{x/L_n}<\bar{u}_n(x)<1$ for all $x\in\R$ or $0<\bar{u}_n(x)<\theta_{x/L_n}$ for all $x\in\R$ (notice also that, by~\eqref{bistable} and~\eqref{asspars}, the function $x\mapsto\theta_x$ is continuous), whence the function
$(a_{L_n}\bar{u}_n')'$ would have a fixed strict sign; this last property would contradict the fact that $a_{L_n}\bar{u}_n'$ is an $L_n$-periodic function.

Define now $p_n(x)= \bar{u}_n(x+x_n)$ for $x\in\R$ and $n\in\N$. Each function $p_n$ is a solution to~$(a_{L_n}(x+x_n)p_n')'+f_{L_n}(x+x_n,p_n)=0$ in $\R$ with $p_n(0)=\theta_{x_n/L_n}$ and $0<p_n <1$ in $\R$. Up to extraction of some subsequence, one can assume that $x_n / L_n\to x_{\infty}\in[0,1]$ as $n\to+\infty$ and that, from standard elliptic estimates, there is a $C^2(\R)$ function $0\le p_{\infty}\le 1$ such that~$p_n\to p_{\infty}$ in~$C^2_{loc}(\R)$ as $n\to+\infty$. Moreover, $p_\infty$ solves
\begin{equation}\label{largelim}
\left\{\baa{l}
a(x_{\infty})\,p_{\infty}''+f(x_{\infty},p_{\infty})=0\ \hbox{ in }\R,\vspace{3pt}\\
p_{\infty}(0)=\theta_{x_\infty}\ \hbox{ and }\ 0< p_{\infty} < 1\hbox{ in }\R,\eaa\right.
\end{equation}
the strict inequalities following from the strong maximum principle. Similarly, by normalizing~$\psi_n$ in such a way that $\psi_n(x_n)=1$, there is a nonnegative $C^2(\R)$ function $\psi_{\infty}$ such that, up to extraction of some subsequence, $\psi_n(\cdot+x_n)\to\psi_{\infty}$ in $C^2_{loc}(\R)$ as $n\to+\infty$, and $\psi_{\infty}$ solves
\be\label{eqpsiinfty}\left\{\baa{l}
a(x_{\infty})\,\psi_{\infty}''+\partial_u f(x_{\infty},p_{\infty})\,\psi_{\infty}=\tilde{\lambda}_1\psi_{\infty}\hbox{ in }\R,\vspace{3pt}\\
\psi_{\infty}(0)=1\ \hbox{ and }\  \psi_{\infty} > 0\hbox{ in }\R\eaa\right.
\ee
(notice here that the function $\psi_{\infty}$ may not be bounded or periodic, since the convergence is only local as $L_n\to+\infty$).

By~\eqref{bistable} and~\eqref{conlarge}, according to phase diagrams of equation~\eqref{largelim}, the solution $p_{\infty}$ can only be of one of the following three types: either a constant function, or a non-constant periodic function, or a ground state solution such that $p_{\infty}(\pm\infty)=0$. In what follows, one will get a contradiction in each of these three cases.

{\it Case 1: $p_{\infty}$ is a constant solution, that is $p_{\infty}\equiv \theta_{x_{\infty}}$ in $\R$}. In this case, $\psi_{\infty}$ obeys the linear equation $\psi_{\infty}''+\beta\psi_{\infty}=0$ in $\R$ with $\beta=(\partial_uf(x_{\infty},\theta_{\infty})-\tilde{\lambda}_1)/a(x_{\infty})$. Since $\partial_uf(x_{\infty},\theta_{\infty})>0$ and~$\tilde{\lambda}_1\leq 0$, it follows that $\beta>0$ and that the positive function $\psi_{\infty}$ is strictly concave in $\R$, which is impossible. Hence, Case~1 is ruled out.

{\it Case 2: $p_{\infty}$ is a non-constant $\tilde{L}$-periodic solution with $\tilde{L}>0$}. In this case, $p_{\infty}'$ is a non-signed~$\tilde{L}$-periodic function satisfying
\be\label{eqpinfty'}
a(x_{\infty})(p_{\infty}')''+\partial_u f(x_{\infty},p_{\infty})p_{\infty}'=0\ \hbox{ in }\R,
\ee
whereas $\psi_{\infty}$ solves $a(x_{\infty})\psi_{\infty}''+\big(\partial_u f(x_{\infty},p_{\infty})-\tilde{\lambda}_1\big)\psi_{\infty} =0$ in $\R$. Since $\tilde{\lambda}_1\leq 0$, it follows from Sturm comparison theorem that $\psi_{\infty}$ must vanish somewhere, which is impossible since $\psi_{\infty}>0$ in $\R$. Hence, Case~2 is ruled out too.

{\it Case 3: $p_{\infty}$ is a non-periodic solution and $\lim_{x\to\pm\infty}p_{\infty}(x)=0$}. Denote $F(s)=\int_0^sf(x_{\infty},u)du$ for all $s\in[0,1]$. From the assumptions~\eqref{bistable} and $\int_0^1f(x_{\infty},u)\,du>0$, there is a real number $\bar{s}\in (\theta_{x_{\infty}},1)$ such that $F(0)=F(\bar{s})=0$, $F(s)<0$ for all $0<s<\bar{s}$ and $F(s)>0$ for all $\bar{s}<s\le 1$.  It then follows that there is $\bar{x}\in\R$ such that $p_{\infty}(\bar{x})=\bar{s}$, $p_{\infty}'(\bar{x})=0$, $p_{\infty}'>0$ in~$(-\infty,\bar{x})$ and~$p_{\infty}'<0$ in $[\bar{x},+\infty)$. Notice also by~\eqref{bistable} that
$$p_{\infty}''(\bar{x})=-\frac{f(x_{\infty},p_{\infty}(\bar{x}))}{a(x_{\infty})}=-\frac{f(x_{\infty},\bar{s})}{a(x_{\infty})}<0$$
and that there is $\underline{x}<\bar{x}$ such that $p_{\infty}''(x)=-f(x_{\infty},p_{\infty}(x))/a(x_{\infty})>0$ for all $x\le\underline{x}$. Furthermore, $\lim_{x\to-\infty}p_{\infty}''(x)=\lim_{x\to-\infty}p_{\infty}'(x)=0$. Denote
$$q(x)=\psi_{\infty}'(x)p_{\infty}'(x)-\psi_{\infty}(x)p_{\infty}''(x)\ \hbox{ for }x\in\R.$$
It follows from~\eqref{eqpsiinfty} and~\eqref{eqpinfty'} that $q'(x)=\tilde{\lambda}_1\psi_{\infty}(x)p_{\infty}'(x)/a(x_{\infty})\le0$ for all $x\le\bar{x}$, whence
\be\label{qx}
q(x)\ge q(\bar{x})=-\psi_{\infty}(\bar{x})p_{\infty}''(\bar{x})>0\ \hbox{ for all }x\le\bar{x}.
\ee
Therefore, $\psi_{\infty}'(x)p_{\infty}'(x)\ge\psi_{\infty}(x)p_{\infty}''(x)>0$ for all $x\le\underline{x}\,(<\bar{x})$. In particular, $\psi_{\infty}'(x)>0$ for all $x\le\underline{x}$ and, since $\psi_{\infty}$ is positive, the limit $\psi_{\infty}(-\infty)\in[0,+\infty)$ exists. By~\eqref{eqpsiinfty}, the function $\psi_{\infty}''$ has a finite limit as $x\to-\infty$ and it follows then from elementary arguments that $\psi_{\infty}'(x)\to0$ as $x\to-\infty$. Lastly, since $p'_{\infty}(-\infty)=p_{\infty}''(-\infty)=0$, one gets that $q(x)\to0$ as $x\to-\infty$, which contradicts~\eqref{qx}. As a consequence, Case~3 is ruled out too and the proof of Lemma~\ref{unstablelar} is complete.
\end{proof}

\begin{proof}[Proof of Theorem~\ref{thlarge}]
For any fixed $L>L^*$, consider the semiflow $(Q_t)_{t\geq 0}$ generated by~\eqref{semiflow} with the period $L$, that is, by the equation~\eqref{eqL} with $L$-periodic coefficients. The properties~(A1)-(A4) used in the proof of Theorem~\ref{thhomobis} are easily verified. Because of Lemma~\ref{unstablelar}, $(Q_t)_{t\geq 0}$ satisfies the bistability condition~(A5) and the same analysis as that in Lemma~\ref{spreadingspeed} implies that $(Q_t)_{t\geq 0}$ satisfies the counter-propagation property~(A6). Thus, it follows from~\cite{fz} that for any $L>L^*$, equation~\eqref{eqL} admits a pulsating front $u_L(t,x)=\phi_L(x-c_Lt,x/L)$ with speed $c_L$. Furthermore, assumption~\eqref{conlarge} and Lemma~\ref{sign} imply that $c_L\ge0$. To end the proof, even if it means redefining~$L^*$, one needs to show that $c_L>0$ for all $L>L^*$ (large enough).

Assume by contraction that there is a sequence $(L_n)_{n\in\N}$ in $(L^*,+\infty)$ converging to~$+\infty$ and such that $c_{L_n}=0$ for all $n\in\N$. Namely, for each $n\in\N$, there is a $C^2(\R)$ solution~$\phi_{n}$~of
\begin{equation}\label{stationary}
\left\{\baa{l}
(a_{L_n}\phi_n')'+f_{L_n}(x,\phi_n)=0\hbox{ in }\R,\vspace{3pt}\\
\phi_n(-\infty)=1,\ \ \phi_n(+\infty)=0\ \hbox{ and}\ 0<\phi_n<1\hbox{ in }\R.\eaa\right.
\end{equation}
Since $\int_0^1f(x,u)\,du>0$ for all $x\in\R$ and $f$ is bounded in $\R\times[0,1]$, there is $\tau\in\R$ such that
\begin{equation}\label{upbound}
1-\delta < \tau <1\quad \hbox{and}\quad \int_0^s f(x,u)\,du>0\hbox{ for all }x\in\R\hbox{ and }s\in [\tau,1],
\end{equation}
where $\delta>0$ is the constant in~\eqref{asspars}. For every $n\in\N$, there is $y_n\in\R$ such that $\phi_n(y_n)=\tau$. Write~$y_n=y_n'+\tilde{y}_n$, with $y_n'\in L_n\Z$ and $\tilde{y}_n\in [0,L_n]$, and set $v_n(x)=\phi_n(x+y_n)$ for $x\in\R$ and~$n\in\N$. Since both $a_{L_n}$ and $f_{L_n}$ are $L_n$-periodic in $x$, each function $v_n$ obeys
$$\left\{\baa{l}
(a_{L_n}(x+\tilde{y}_n)v_n')'+f_{L_n}(x+\tilde{y}_n,v_n)=0\hbox{ in }\R,\vspace{3pt}\\
v_n(0)=\tau,\ \ v_n(-\infty)=1,\ \ v_n(+\infty)=0\ \hbox{ and}\ 0<v_n<1\hbox{ in }\R.\eaa\right.$$
Up to extraction of some subsequence, one can assume that  $\tilde{y}_n/L_n\to y_{\infty}\in[0,1]$ as $n\to+\infty$ and that, from standard elliptic estimates, $v_n\to v_{\infty}$ as $n\to+\infty$ in $C^2_{loc}(\R)$, where the function $0\le v_{\infty}\le 1$ solves
\begin{equation}\label{limiteq}
a(y_{\infty})\,v_{\infty}''+f(y_{\infty},v_{\infty})=0\hbox{ in }\R
\end{equation}
and $v_{\infty}(0)=\tau$, whence $0< v_{\infty}<1$ in $\R$ from the strong elliptic maximum principle. As for equation~\eqref{largelim} used in Lemma~\ref{unstablelar}, it follows from~\eqref{conlarge} that there are three types of solutions to equation~\eqref{limiteq}: the constant solutions (equal to $\theta_{y_{\infty}}$), the non-constant periodic solutions and the non-periodic ground state solutions converging to $0$ at $\pm\infty$.  In all cases, by multiplying the equation~\eqref{limiteq} by $v_{\infty}'$ and integrating on suitable intervals, it follows easily that~$\int_{\underline{s}}^{\bar{s}}f(y_{\infty},u)du=0$, where $0\le\inf_{\R}v_{\infty}=\underline{s}\le\overline{s}=\max_{\R}v_{\infty}<1$. It follows then from~\eqref{bistable} and~\eqref{limiteq} that $\underline{s}\le\theta_{y_{\infty}}\le\bar{s}$ and that $\int_0^sf(y_{\infty},u)du\le0$ for all $0\le s\le\bar{s}$. In particular, since~$v_{\infty}(0)=\tau\in[0,\bar{s}]$, one gets $\int_0^{\tau}f(y_{\infty},u)du\le0$. One has then reached a contradiction with~\eqref{upbound} and the proof of Theorem~\ref{thlarge} is thereby complete.
\end{proof}


\SE{The set $E$ of periods $L$ for which~\eqref{eqL} admits pulsating fronts with nonzero speed}\label{sec3}

This section is devoted to the proof of  Theorems~\ref{thE1} and~\ref{thE2} on the set $E$ of periods $L$ for which pulsating fronts with nonzero speed exist. Theorem~\ref{thE1} is similar to Theorem~\ref{thhomo} in the sense that they are both concerned with the existence and convergence of pulsating fronts as $L$ converges to a fixed $L_0\ge0$, given that~\eqref{eqL} when $L_0>0$ (resp.~\eqref{homogenized} when $L_0=0$) admits a non-stationary pulsating front. Namely, to prove Theorem~\ref{thE1},  we apply the implicit function theorem for some suitable function space as in Theorem~\ref{thhomo}, and the arguments are actually simpler since no singularity occurs when $L$ converges to $L_0>0$. The proof of Theorem~\ref{thE2} relies on several passages to the limit and the analysis of the stability of the limiting solutions.


\subsection{Proof of Theorem~\ref{thE1}}\label{sec31}

Throughout the proof, we assume that equation~\eqref{eqL} with $L=L_0>0$ admits a pulsating front
$$U(t,x)=\phi_{L_0}(x-c_{L_0}t,x/L_0)$$
with a nonzero speed $c_{L_0}$. From parabolic regularity applied to the equations satisfied by $u$ and $u_t$, the function $u$ is of class $C^2$ in $\R^2$ and so is the function $\phi_{L_0}$. As in the proof of Theorem~\ref{thhomo}, one can assume without loss of generality that $(\phi_{L_0}(\xi,y),c_{L_0})$ solves~\eqref{pulsolu} with $c_{L_0}>0$.

We use here the same notations $\mathcal{D}_L$, $\T$, $L^2(\R\times\T)$ and $H^1(\R\times\T)$ as in Section~\ref{smalllhomo}. A positive real number $\beta>0$ is given. For any $c>0$ and $L>0$, the linear operator $M_{c,L}: \mathcal{D}_L\mapsto L^2(\R\times\T)$ defined in~\eqref{linhomo} is invertible by Lemma~\ref{invertible}. Now for~$v\in H^1(\R\times\T)$, $c>0$ and $L>0$, we define $K(v,c,L)=f(y,v+\phi_{L_0})+\beta v + \tilde{\partial}_{L}(a\tilde{\partial}_{L}\phi_{L_0})+c\partial_{\xi}\phi_{L_0}$, where $\tilde{\partial}_L=\partial_{\xi}+L^{-1}\partial_y$, and
$$\tilde{G}(v,c,L)=\Big(v+M_{c,L}^{-1}(K(v,c,L)),\int_{\R^+\times \T}\big(\phi_{L_0}(\xi,y)+v(\xi,y)\big)^2-\phi_{L_0}^2(\xi,y)\Big).$$
Note that~$\tilde{G}(0,c_{L_0},L_0)=(0,0)$. Moreover, as done in the proof of Theorem~\ref{thhomo} and using also parabolic regularity, a pair~$(\phi_L,c_L)\in\big(\phi_{L_0}+H^1(\R\times\T)\big)\times(0,+\infty)$ solves~\eqref{pulsolu} for $L\neq 0$ with the normalization $\int_{\R^+\times \T} \phi_L^2=\int_{\R^+\times\T}\phi_{L_0}^2$ if and only if $\tilde{G}(\phi_L-\phi_{L_0},c_L,L)=(0,0)$. On the other hand, using Lemma~\ref{continuem2} and similar arguments as in the proof of Lemma~\ref{continueg}, it follows that, in $H^1(\R\times\T)\times(0,+\infty)\times(0,+\infty)$, the function $\tilde{G}$ is continuous with respect to $(v,c,L)$ and continuously differentiable with respect to $(v,c)$ with derivative given by
\begin{equation*}
\begin{split}
&\partial_{(v,c)}\tilde{G}(v,c,L)(\tilde{v},\tilde{c})\vspace{3pt}\\
=&\Big(\tilde{v}+M_{c,L}^{-1}\big((\partial_uf(y,v+\phi_{L_0})+\beta)\tilde{v}\big)-\tilde{c}M_{c,L}^{-1}\big(\partial_{\xi}(M_{c,L}^{-1}(K(v,c,L))-\phi_{L_0})\big),2\displaystyle\int_{\R^+\times \T}(\phi_{L_0}+v)\tilde{v}\Big)
\end{split}
\end{equation*}
for all $(\tilde{v},\tilde{c})\in H^1(\R\times\T)\times\R$. In particular, $\tilde{Q}:=\partial_{(v,c)}\tilde{G}(0,c_{L_0},L_0)$ is given by
$$\tilde{Q}(\tilde{v},\tilde{c})=\Big(\tilde{v}+M_{c_{L_0},L_0}^{-1}\big((\partial_uf(y,\phi_{L_0})+\beta)\tilde{v}\big)+\tilde{c}M_{c_{L_0},L_0}^{-1}(\partial_{\xi}\phi_{L_0}),2\displaystyle\int_{\R^+\times \T}\phi_{L_0}\tilde{v}\Big).$$

Now, in order to apply the implicit function theorem for $\tilde{G}$ around the point $(0,c_{L_0},L_0)$, one needs to show that the operator $\tilde{Q}$ is invertible as a map from $H^1(\R\times\T)\times\R$ to itself. The method used in Lemma~\ref{continueg} can be adapted to prove this property under the condition that the linearization of equation~\eqref{pulsolu} at $(\phi_{L_0},c_{L_0})$ satisfies similar properties as the operators~$H$ and~$H^*$ given by~\eqref{defH}. More precisely, let
\be\label{defHL0}
H_{L_0}(u)= \tilde{\partial}_{L_0}(a\tilde{\partial}_{L_0}u)+c_{L_0}\partial_{\xi}u +\partial_uf(y,\phi_{L_0})u\ \hbox{ for }u\in\mathcal{D}_{L_0}
\ee
and let the adjoint operator $H_{L_0}^*$ be defined by $H_{L_0}^*(u)= \tilde{\partial}_{L_0}(a\tilde{\partial}_{L_0}u)-c_{L_0}\partial_{\xi}u +\partial_uf(y,\phi_{L_0})u$ for $u\in\mathcal{D}_{L_0}$. Let us now work with complex valued functions. Namely, for $u=v+iw$ with $v,w\in\mathcal{D}_{L_0}$ and $i^2=-1$, we set $H_{L_0}(u)=H_{L_0}(v)+iH_{L_0}(w)$ and similarly for $H_{L_0}^*(u)$. One has $\big<H_{L_0}^*(v),u\big>_{L^2(\R\times\T,\C)}=\big<v,H_{L_0}(u)\big>_{L^2(\R\times\T,\C)}$ for all $u,v\in \mathcal{D}_{L_0}+i\mathcal{D}_{L_0}$, with $\big<w,z\big>_{L^2(\R\times\T,\C)}=\int_{\R\times\T}w\overline{z}$ for $w,z\in L^2(\R\times\T,\C)$.

\begin{lem}\label{degenerate}
The operators $H_{L_0}$ and  $H_{L_0}^*$ have algebraically simple eigenvalue $0$ and the range of~$H_{L_0}$ is closed in $L^2(\R\times\T,\C)$. Furthermore, if $\lambda\in\C^*$ is an eigenvalue of $H_{L_0}$, then $Re(\lambda)<0$.
\end{lem}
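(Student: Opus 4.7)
\textbf{Proof plan for Lemma~\ref{degenerate}.} The plan is to transpose Section~4 of \cite{he2} to the present periodic setting, using as the engine of the proof the positive element $p:=-\partial_{\xi}\phi_{L_0}$ of $\ker H_{L_0}$ together with a parabolic reformulation that trivialises the degeneracy of the principal part $a(y)(\partial_{\xi}+L_0^{-1}\partial_y)^2$. First, differentiating \eqref{pulsolu} in $\xi$ gives $H_{L_0}(\partial_{\xi}\phi_{L_0})=0$; the monotonicity in $t$ of $U(t,x)=\phi_{L_0}(x-c_{L_0}t,x/L_0)$ stated in Theorem~\ref{thqual} (which, for pulsating fronts with nonzero speed, is available from \cite{bh3}), combined with the strong parabolic maximum principle applied to the equation satisfied by $U_t=-c_{L_0}\partial_{\xi}\phi_{L_0}$, yields $p>0$ on $\R\times\T$, while Lemma~\ref{exponential} and Schauder estimates show $p\in\mathcal{D}_{L_0}$ with exponential decay at $\xi=\pm\infty$. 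A parallel construction via the formal adjoint of the linearised parabolic flow around $U$ and Krein--Rutman applied to its Poincar\'e operator produces a strictly positive $p^{*}\in\mathcal{D}_{L_0}$ with $H_{L_0}^{*}(p^{*})=0$ and matching exponential decay.

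The next step is the Fredholm character. I would pass to the moving frame $w(t,x)=u(x-c_{L_0}t,x/L_0)$: under this change of variable the degenerate operator $H_{L_0}$ becomes a \emph{uniformly} parabolic operator on $\R\times\R$ whose coefficients are doubly periodic under the shift $(t,x)\mapsto (t+L_0/c_{L_0},x+L_0)$. The asymptotic operators as $\xi\to\pm\infty$ are the linearisations at the uniformly stable states $1$ and $0$, whose zero-order coefficients $\partial_u f(y,1)$ and $\partial_u f(y,0)$ are by \eqref{asspars} uniformly bounded above by $-\gamma<0$; consequently both asymptotic problems are invertible, and standard Fredholm theory for linear elliptic/parabolic operators on cylinders with asymptotically $\xi$-invariant coefficients (of the type used in \cite{he2}) gives that $H_{L_0}:\mathcal{D}_{L_0}+i\mathcal{D}_{L_0}\to L^2(\R\times\T,\C)$ is Fredholm of index zero with closed range.

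For the kernel's dimension and algebraic simplicity, let $u\in\ker H_{L_0}$. Asymptotic ODE analysis at $\xi=\pm\infty$ (one stable and one unstable Floquet exponent at each end, as in Lemma~\ref{exponential}) shows that $u$ decays at the same exponential rate as $p$, so $v:=u/p$ is bounded on $\R\times\T$ and satisfies a \emph{uniformly} parabolic equation in the $(t,x)$ frame with doubly periodic coefficients and no zero-order term. The strong parabolic maximum principle on the now-compact spacetime torus $(\R/(L_0/c_{L_0})\Z)\times(\R/L_0\Z)$ forces $v$ to be constant, so $\ker H_{L_0}=\C p$, and by the same argument $\ker H_{L_0}^{*}=\C p^{*}$. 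Algebraic simplicity follows from the Fredholm alternative: $p\in\mathrm{Range}(H_{L_0})$ would require $\langle p,p^{*}\rangle_{L^2(\R\times\T)}=0$, but strict positivity and exponential integrability of both $p$ and $p^{*}$ make this pairing strictly positive; the analogous argument handles $H_{L_0}^{*}$.

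Finally, for the spectral gap, I would use a Floquet-type ansatz: a generalised eigenfunction $H_{L_0}(u)=\lambda u$ corresponds to $W(t,x):=\me^{-\lambda t}u(x-c_{L_0}t,x/L_0)$, which solves the linearised parabolic equation about $U$ and satisfies $W(t+L_0/c_{L_0},x+L_0)=\me^{-\lambda L_0/c_{L_0}}W(t,x)$. The associated Poincar\'e/shift operator, acting on an exponentially weighted space that captures the decay at $\xi=\pm\infty$, is compact and strongly positive, commutes with the spatial shift by $L_0$, and admits $p$ as a strictly positive principal eigenfunction with Floquet multiplier $1$. The Krein--Rutman strict-spectral-gap theorem therefore forces every other Floquet multiplier to have modulus strictly less than $1$, i.e., $\mathrm{Re}(\lambda)<0$ for every $\lambda\in\C^{*}$ eigenvalue of $H_{L_0}$. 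The main obstacle is precisely this last step: setting up the weighted functional-analytic framework in which Krein--Rutman applies to the Poincar\'e operator on the unbounded $x$-line, which requires carefully tracking the asymptotic exponential behaviour of eigenfunctions via~\eqref{asspars} in order to secure compactness.
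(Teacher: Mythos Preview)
Your plan takes a genuinely different route from the paper. The paper proves each item by direct comparison/sliding arguments: for the kernel it slides $w_\mu=v-\mu\,\partial_\xi\phi_{L_0}$, using that $\partial_uf(y,\phi_{L_0})\le-\gamma/2$ for $|\xi|\ge N$ together with the strong maximum principle (Lemma~\ref{smp}); algebraic simplicity follows by iterating the same sliding; for the spectral gap it writes $u=\rho\,\me^{i\vartheta}$, compares $\rho$ with $\sigma U_t$ by the same mechanism, and forces $\lambda=0$; the closed range is obtained directly via the invertibility of $M_{c_{L_0},L_0}$ (Lemma~\ref{invertible}) and a cut-off argument exploiting~\eqref{asspars}. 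Your approach via Fredholm theory on cylinders and Krein--Rutman for a Poincar\'e operator is heavier but conceptually reasonable, and closer in spirit to what the paper does later in Section~\ref{sec42}.

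There is, however, a concrete error in your kernel argument. After passing to $(t,x)$ and dividing by $p$, the drift coefficient of the equation for $v=u/p$ involves $p_x/p$, which depends on $x-c_{L_0}t$; the coefficients are therefore \emph{not} doubly periodic in $(t,x)$---they are invariant only under the single diagonal shift $(t,x)\mapsto(t+L_0/c_{L_0},x+L_0)$. The quotient by this $\Z$-action is a noncompact cylinder (equivalently $\R_\xi\times\T_y$), not the ``compact spacetime torus $(\R/(L_0/c_{L_0})\Z)\times(\R/L_0\Z)$'' you invoke, so the strong maximum principle on a compact manifold is not available. What you actually need is a Liouville theorem for bounded eternal solutions of a uniformly parabolic equation with no zero-order term on this cylinder, together with control of $v$ as $\xi\to\pm\infty$ coming from the matching exponential decay of $u$ and $p$; this is feasible but is not the one-line step you present, and the ``asymptotic ODE analysis'' you gesture at (matching decay rates so that $u/p$ is bounded) is exactly where the work lies. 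The paper's sliding argument sidesteps this entirely, needs no Fredholm package, and handles algebraic simplicity and the spectral gap with the same elementary device.
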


Notice that $H_{L_0}$ and $H_{L_0}^*$ are not elliptic operators in the variables $(\xi,y)$, but by the change of variable defined in~\eqref{changev} they are equivalent to standard parabolic operators in the variables~$(t,x)$ and the parabolic theory helps overcome this degeneracy. As a matter of fact, the simplicity of the eigenvalue $0$ is highly dependent on the following maximum principle for $H_{L_0}$ (similar results can be obtained for $H_{L_0}^*$).

\begin{lem}\label{smp}
Let $\phi$ be a $C^2(\R\times\T,\R)$ solution of $H_{L_0}(\phi)\leq 0$ on $\R\times\T$ with $\phi\geq 0$ in $\R\times\T$. Then either $\phi\equiv 0$ in $\R\times\T$, or $\phi(\xi,y)>0$ for all $(\xi,y)\in\R\times\T$.
\end{lem}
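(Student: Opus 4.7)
The operator $H_{L_0}$ is degenerate on $\R\times\T$, but under the pulsating-front change of variables it becomes uniformly parabolic; this is the key observation. Extending $\phi$ to $\R\times\R$ by $1$-periodicity in $y$ and setting $u(t,x):=\phi(x-c_{L_0}t,x/L_0)$, the chain rule gives $u_t=-c_{L_0}\partial_\xi\phi$ and $u_x=\tilde{\partial}_{L_0}\phi$, so that $(a_{L_0}(x)u_x)_x=\tilde{\partial}_{L_0}(a\tilde{\partial}_{L_0}\phi)$ when both sides are evaluated at $(x-c_{L_0}t,x/L_0)$. A short computation then yields
$$u_t(t,x)-(a_{L_0}(x)u_x)_x(t,x)-q(t,x)\,u(t,x)=-H_{L_0}(\phi)(x-c_{L_0}t,x/L_0),$$
where $q(t,x):=\partial_u f_{L_0}(x,U(t,x))$. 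Since $\partial_u f$ is continuous on $\R\times[0,1]$, $\partial_u f$ is $1$-periodic in $x$ and $0\leq U\leq 1$, the function $q$ is bounded on $\R\times\R$. The hypothesis $H_{L_0}(\phi)\leq 0$ together with $\phi\geq 0$ therefore translates into the parabolic differential inequality $u_t-(a_{L_0}u_x)_x-q\,u\geq 0$ on $\R\times\R$, with $u\geq 0$ and $u\in C^2(\R\times\R)$.

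The next step is a direct application of the strong parabolic maximum principle: $a_{L_0}$ is $C^{1,\alpha}$ and bounded below by a positive constant, $q$ is bounded, and $u$ is a classical supersolution. If $u(t_0,x_0)=0$ at some point, then the strong maximum principle on $(-\infty,t_0]\times\R$ gives $u\equiv 0$ there, and then uniqueness of the forward linear parabolic Cauchy problem propagates this to all of $\R\times\R$. Hence either $u>0$ pointwise on $\R\times\R$, or $u\equiv 0$ on $\R\times\R$.

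Finally, I translate back to $\phi$. Because $c_{L_0}\neq 0$, the map $\Psi:(t,x)\mapsto(x-c_{L_0}t,\,x/L_0)$ (second coordinate taken modulo $1$) is surjective from $\R\times\R$ onto $\R\times\T$: given $(\xi,y)\in\R\times[0,1)$, take $x=L_0 y$ and $t=(L_0 y-\xi)/c_{L_0}$. Consequently $u\equiv 0$ on $\R\times\R$ is equivalent to $\phi\equiv 0$ on $\R\times\T$, while $u>0$ on $\R\times\R$ is equivalent to $\phi(\xi,y)>0$ for every $(\xi,y)\in\R\times\T$, yielding the announced dichotomy. The only real obstacle is bookkeeping, namely verifying the change-of-variables identity that converts the formally degenerate $H_{L_0}$ into a uniformly parabolic operator with a bounded zero-order coefficient; once this is in place, the strong parabolic maximum principle together with the surjectivity of $\Psi$ finishes the argument.
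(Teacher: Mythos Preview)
Your approach is the same as the paper's: change variables to $(t,x)$, recognize that $H_{L_0}$ becomes a uniformly parabolic operator acting on $u(t,x)=\phi(x-c_{L_0}t,x/L_0)$, and apply the strong parabolic maximum principle. There is, however, one small gap. After obtaining $u\equiv 0$ on $(-\infty,t_0]\times\R$ from the strong maximum principle, you invoke ``uniqueness of the forward linear parabolic Cauchy problem'' to extend this to all of $\R\times\R$. But $u$ is only a \emph{supersolution} of the linear equation, not a solution, so forward uniqueness does not apply: a nonnegative supersolution with zero initial data can be strictly positive at later times (for instance $u(t,x)=t$ satisfies $u_t-u_{xx}\ge 0$, $u\ge 0$, $u(0,\cdot)=0$).

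The fix is precisely the periodicity of $\phi$ in $y$ that the paper's one-line proof singles out. Since $\phi(\xi,y+1)=\phi(\xi,y)$, one has $u(t+L_0/c_{L_0},\,x+L_0)=u(t,x)$, so the vanishing of $u$ on $(-\infty,t_0]\times\R$ can be shifted forward in time indefinitely to cover all of $\R\times\R$. Equivalently, your surjectivity argument already works with the half-space $(-\infty,t_0]\times\R$ in place of $\R\times\R$: given $(\xi,y)\in\R\times[0,1)$, take $x=L_0(y+k)$ for an integer $k$ of suitable sign so that $t=(x-\xi)/c_{L_0}\le t_0$, and then $u(t,x)=0$ gives $\phi(\xi,y)=0$. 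With this correction your argument is complete and coincides with the paper's.
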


\begin{proof}
This conclusion follows from the strong parabolic maximum principle applied to the function $u(t,x)=\phi(x-c_{L_0}t,x/L_0)$ and from the periodicity of $\phi(\xi,y)$ in the $y$-variable (see also Proposition~3.1 of~\cite{x5}).
\end{proof}

Now one is ready to prove Lemma~\ref{degenerate}. Note that similar conclusions were obtained in~\cite{x3} for the linearized operator of an equation with combustion nonlinearity. Special weighted spaces (requiring functions to decay to zero at a certain exponential rate as $|x|\to\infty$) are introduced in that paper, whereas they are not needed here due to the bistable assumption~\eqref{asspars}. The strategy for the poof of Lemma~\ref{degenerate} is actually a little bit different from that used in Section 2 of~\cite{x3}.

\begin{proof}[Proof of Lemma~\ref{degenerate}]  We proceed with five steps.

{\it Step 1: $0$ is a geometrically simple eigenvalue of $H_{L_0}$ in $\mathcal{D}_{L_0}+i\mathcal{D}_{L_0}$.} First, by parabolic regularity, the time-derivative $U_t$ of the function $U(t,x)=\phi_{L_0}(x-c_{L_0}t,x/L_0)$ is of class~$C^{1,2}(\R^2)$. Thus, one can differentiate the equation~\eqref{pulsolu} (with $L=L_0$) satisfied by $\phi_{L_0}$ with respect to $\xi$. More precisely, the function $\partial_{\xi}\phi_{L_0}$ satisfies
$$\tilde{\partial}_{L_0}\big(a(y)\tilde{\partial}_{L_0}(\partial_{\xi}\phi_{L_0})\big)+c_{L_0}\partial_{\xi}(\partial_{\xi}\phi_{L_0})+\partial_uf(y,\phi_{L_0})\partial_{\xi}\phi_{L_0}=0\ \hbox{ for all }(\xi,y)\in\R\times\T,$$
where $\partial_{\xi}\phi_{L_0}(\xi,y)=-c_{L_0}^{-1}U_t((L_0y-\xi)/c_{L_0},L_0y)$, $\tilde{\partial}_{L_0}(\partial_{\xi}\phi_{L_0})(\xi,y)=-c_{L_0}^{-1}U_{tx}((L_0y-\xi)/c_{L_0},L_0y)$, $\partial_{\xi}(\partial_{\xi}\phi_{L_0})(\xi,y)=c_{L_0}^{-2}U_{tt}((L_0y-\xi)/c_{L_0},L_0y)$ and
$$\tilde{\partial}_{L_0}\big(a(y)\tilde{\partial}_{L_0}(\partial_{\xi}\phi_{L_0})\big)(\xi,y)=-\frac{a(y)}{c_{L_0}}U_{txx}\Big(\frac{L_0y-\xi}{c_{L_0}},L_0y\Big)-\frac{a'(y)}{L_0c_{L_0}}U_{tx}\Big(\frac{L_0y-\xi}{c_{L_0}},L_0y\Big).$$
Therefore, it follows from Lemma~\ref{exponential} that $\partial_{\xi}\phi_{L_0}\in \mathcal{D}_{L_0}$. On the other hand, Theorem~\ref{thqual} and the strong parabolic maximum principle imply that $U_t>0$ in $\R^2$, whence $\partial_{\xi}\phi_{L_0}$ is a negative eigenfunction of $H_{L_0}$ for the eigenvalue~$0$.

Next, suppose that $v\in\mathcal{D}_{L_0}+i\mathcal{D}_{L_0}$ satisfies~$H_{L_0}(v)=0$ and $v\not\equiv 0$. Without loss of generality, one can assume that $v$ is real valued. By rewriting the equation~$H_{L_0}(v)=0$ in its weak form in the variables $(t,x)$, it follows from parabolic regularity theory and bootstrap arguments that~$v$ is of class $C^2(\R^2)$ and is a bounded classical solution of~$H_{L_0}(v)=0$ such that $v(\pm\infty,\cdot)=0$ uniformly in $\T$. For any $\mu\in\R$ and $(\xi,y)\in \R\times\T$, let
$$w_{\mu}(\xi,y)=v(\xi,y)-\mu\,\partial_{\xi}\phi_{L_0}(\xi,y). $$
Each $w_{\mu}$ is a classical solution of $H_{L_0}(w_{\mu})=\tilde{\partial}_{L_0}\big(a(y)\tilde{\partial}_{L_0}w_{\mu}\big)+c_{L_0}\partial_{\xi}w_{\mu}+\partial_uf(y,\phi_{L_0})w_{\mu}=0$ in~$\R\times\T$ with $w_{\mu}(\pm\infty,\cdot)=0$ uniformly in $\T$. One sees from~\eqref{asspars} and the uniform continuity of~$\partial_uf$ in~$\R\times[0,1]$ that there is $N>0$ large enough such that
\be\label{defN}
\partial_uf(y,\phi_{L_0}(\xi,y))\le-\frac{\gamma}{2}<0\hbox{ for all }|\xi|\geq N\hbox{ and }y\in\T.
\ee
Since $\partial_{\xi}\phi_{L_0}$ is negative and continuous in $\R\times\T$, it follows that, for this chosen $N$, there exists $\mu_0>0$ such that $w_{\mu}>0$ in $[-N,N]\times\T$ for all $\mu\ge\mu_0$. We claim that, for such $\mu$, $w_{\mu}(\xi,y)>0$ for all $(\xi,y)\in \R\times\T$. Indeed, otherwise, $w_{\mu}$ achieves a nonpositive minimum at a point $(\xi_0,y_0)\in\R\times\T$ such that $|\xi_0|>N$. If $w_{\mu}(\xi_0,y_0)<0$, then evaluating all terms of $H_{L_0}(w_{\mu})$ at $(\xi_0,y_0)$ yields
$$\baa{l}
\tilde{\partial}_{L_0}(a\tilde{\partial}_{L_0}w_{\mu})(\xi_0,y_0)+c_{L_0}\partial_{\xi}w_{\mu}(\xi_0,y_0)+ \partial_uf(y,\phi_{L_0}(\xi_0,y_0)) w_{\mu}(\xi_0,y_0)\vspace{3pt}\\
\qquad\qquad\qquad\qquad\ge\partial_uf(y,\phi_{L_0}(\xi_0,y_0)) w_{\mu}(\xi_0,y_0)\ge-\displaystyle\frac{\gamma}{2} w_{\mu} (\xi_0,y_0) >0,\eaa$$
which contradicts the equation $H_{L_0}(w_{\mu})=0$ in $\R\times\T$ (notice here that $H_{L_0}(w_{\mu})\le0$ would have been sufficient to conclude, that is $H_{L_0}(v)\le0$ would have been sufficient). If  $w_{\mu}(\xi_0,y_0)=0$, then strong maximum principle in Lemma~\ref{smp} shows that $w_{\mu}\equiv 0$, which is also impossible. Thus, one gets that $w_{\mu}>0$ in $\R\times\T$.

Now define
$$\nu =\inf\big\{\mu\in\R\, \big|\, w_{\mu}=v-\mu\partial_{\xi}\phi_{L_0}\geq 0\hbox{ in }\R\times\T\big\}.$$
Obviously, $-\infty <\nu\leq \mu_0$ and $w_{\nu}=v-\nu\partial_{\xi}\phi_{L_0}\geq 0\hbox{ in }\R\times\T$. If $w_{\nu}>0$ in the compact set~$[-N,N]\times\T$, then $w_{\nu+\epsilon}>0$ in $[-N,N]\times\T$ for $\epsilon>0$ small enough. Hence, as in the previous paragraph, it follows that $w_{\nu+\epsilon}>0$ in $\R\times\T$ for all $\epsilon>0$ small enough, which contradicts the definition of $\nu$. Therefore, $w_{\nu}$ vanishes somewhere in $[-N,N]\times\R$ and Lemma~\ref{smp} again implies that $v-\nu\partial_{\xi}\phi_{L_0}=w_{\nu}=0$ in $\R\times\T$. That is, $v=\nu\partial_{\xi}\phi_{L_0}$ in $\R\times\T$ (with $\nu\neq 0$ since $v\not\equiv0$).

{\it Step 2: $0$ is an algebraically simple eigenvalue of $H_{L_0}$.} Suppose that $H_{L_0}^m(v)=0$ for some integer $m\ge2$ and $v\in\mathcal{D}_{L_0}+i\mathcal{D}_{L_0}$ such that $H_{L_0}(v),\ldots,H^{m-1}_{L_0}(v)\in\mathcal{D}_{L_0}+i\mathcal{D}_{L_0}$. Without loss of generality, one can assume that $v$ is real valued. Since $\ker(H_{L_0})=\C\partial_{\xi}\phi_{L_0}$ and $v$ is real valued, it follows that $H_{L_0}^{m-1}(v)=C_1\partial_{\xi}\phi_{L_0}$ with some constant $C_1\in\R$. Without loss of generality, even if it means changing $v$ into $-v$, one can assume that $C_1\geq 0$. On the other hand, parabolic regularity theory implies that $H_{L_0}^{m-2}(v)$ is a bounded $C^2(\R\times\T)$ solution of $H_{L_0}\big(H_{L_0}^{m-2}(v)\big)=C_1\partial_{\xi}\phi_{L_0}\le0$ in $\R\times\T$. By considering functions of the type $H_{L_0}^{m-2}(v)-\mu\partial_{\xi}\phi_{L_0}$ with $\mu\in\R$, it follows then as in Step~1 that $H_{L_0}^{m-2}(v)=C_2\partial_{\xi}\phi_{L_0}$ for some constant $C_2$, whence $H_{L_0}^{m-1}(v)=0$. By an immediate induction, one concludes that $v=C_3\partial_{\xi}\phi_{L_0}$ for some constant $C_3$.

{\it Step 3: if $\lambda\in\C^*$ is an eigenvalue of $H_{L_0}$, then $Re(\lambda)<0$.} Let $\lambda\in\C$ be an eigenvalue of~$H_{L_0}$, with an eigenfunction $\psi\in\mathcal{D}_{L_0}+i\mathcal{D}_{L_0}$, and assume that $Re(\lambda)\ge0$. By standard parabolic estimates applied to its real and imaginary parts, the function $u(t,x)=\psi(x-c_{L_0}t,x/L_0)$ is a classical solution of
\be\label{equlambda}
u_t-(a_{L_0}(x)u_x)_x-\partial_uf_{L_0}(x,U(t,x))u=-\lambda u\ \hbox{ in }\R^2.
\ee
Furthermore, $u\in W^{1,\infty}(\R^2,\C)$ and then $u(t,x)\to0$ as $|x-c_{L_0}t|\to+\infty$. Denote $\rho=|u|$ the modulus of $u$. In the open set $\Omega:=\big\{(t,x)\in\R^2\ |\ \rho(t,x)>0\big\}$, one can write $u(t,x)=\rho(t,x)\,\me^{i\vartheta(t,x)}$ where the real-valued functions $\rho$ and $\vartheta$ are of class $C^1$ with respect to $t$ and $C^2$ with respect to~$x$ in~$\Omega$. By putting $u=\rho\,\me^{i\vartheta}$ in~\eqref{equlambda} and taking the real part, one infers that
\be\label{eqrho}
\rho_t-(a_{L_0}(x)\rho_x)_x-\partial_uf_{L_0}(x,U(t,x))\rho=-(Re(\lambda)+a_{L_0}\vartheta_x^2)\,\rho\le0\ \hbox{ in }\Omega.
\ee
By~(\ref{bistable}-\ref{asspars}), there is $N>0$ such that $\partial_uf_{L_0}(x,U(t,x))\le-\gamma/2<0$ for all $|x-c_{L_0}t|\ge N$. Since~$U_t$ is positive and continuous in $\R^2$ and since $U_t(t+L_0/c_{L_0},x+L_0)=U_t(t,x)$ in $\R^2$, one has~$\inf_{|x-c_{L_0}t|\le N}U_t(t,x)>0$ and there is $\sigma>0$ such that
$$\rho\le\sigma U_t\ \hbox{ for all }(t,x)\in\R^2\hbox{ with }|x-c_{L_0}t|\le N.$$
It follows then as in the end of the proof of Lemma~\ref{exponential} that $\rho\le\sigma U_t$ for all $x-c_{L_0}t\ge N$ (otherwise, there would exist $\epsilon^*>0$ with $z:=\sigma U_t-\rho\ge-\epsilon^*$ in $\{x-c_{L_0}t\ge N\}$ and a point~$(t^*,x^*)$ such that $x^*-c_{L_0}t^*>N$ and $z(t^*,x^*)=-\epsilon^*$; since $\rho(t^*,x^*)=\sigma U_t(t^*,x^*)+\epsilon^*>0$, there holds~$z_t-(a_{L_0}(x)z_x)_x-\partial_uf_{L_0}(x,U(t,x))z\ge0$ in a neighborhood of $(t^*,x^*)$, and one gets a contradiction at $(t^*,x^*)$, since $-\partial_uf_{L_0}(x^*\!,U(t^*\!,x^*))\,z(t^*\!,x^*)\!=\!\epsilon^*\partial_uf_{L_0}(x^*\!,U(t^*\!,x^*))\!\le\!-\epsilon^*\gamma/2\!<\!0$). Similarly,~$\rho\le\sigma U_t$ for all $x-c_{L_0}t\le-N$, whence $\rho\le\sigma U_t$ in $\R^2$.

Set now $\sigma^*=\inf\big\{\varsigma\ge 0\ |\ \rho\le\varsigma U_t$ in $\R^2\big\}$. One has $\sigma^*>0$ since $\rho\not\equiv0$, and~$\rho\le\sigma^*U_t$ in~$\R^2$. If $\inf_{|x-c_{L_0}t|\le N}\big(\sigma^*U_t(t,x)-\rho(t,x)\big)>0$, then there would exist $\sigma_*\in(0,\sigma^*)$ such that~$\rho(t,x)\le\sigma_*U_t(t,x)$ for all $|x-c_{L_0}t|\le N$ (since $U_t$ is bounded) and it would follow as in the previous paragraph that $\rho\le\sigma_*U_t$ in $\R^2$, contradicting the minimality of $\sigma^*$. Consequently,~$\inf_{|x-c_{L_0}t|\le N}\big(\sigma^*U_t(t,x)-\rho(t,x)\big)=0$. By continuity and the properties
\be\label{Utrho}
U_t(t+L_0/c_{L_0},x+L_0)=U_t(t,x),\ \ \rho(t+L_0/c_{L_0},x+L_0)=\rho(t,x)\ \hbox{ for all }(t,x)\in\R^2,
\ee
there is $(t_0,x_0)\in\R^2$ such that $\rho(t_0,x_0)=\sigma^*U_t(t_0,x_0)$. Hence, $\rho>0$ in (at least) a neighborhood of $(t_0,x_0)$ and the strong parabolic maximum principle implies actually that $\rho=\sigma^*U_t>0$ in~$(-\infty,t_0]\times\R$ and then in $\R^2$ by~\eqref{Utrho}. Therefore, $Re(\lambda)=0$ and $\vartheta_x=0$ in $\R^2$, by~\eqref{eqrho}. On the other hand, by taking the imaginary part of~\eqref{equlambda}, one infers that $\vartheta_t=-Im(\lambda)$ in $\R^2$. Finally, since $\vartheta(t+L_0/c_{L_0},x+L_0)=\vartheta(t,x)$ in $\R^2$, one gets that $\vartheta$ is constant in $\R^2$ and that $\lambda=0$. As a conclusion, $Re(\lambda)<0$ if $\lambda\neq0$.

{\it Step 4: the range of $H_{L_0}$ is closed in $L^2(\R\times\T,\C)$.} Let $(v_n)_{n\in\N}$ in $\mathcal{D}_{L_0}+i\mathcal{D}_{L_0}$ and $(g_n)_{n\in\N}$ in $L^2(\R\times\T,\C)$ be some sequences such that $H_{L_0}(v_n)=g_n\to g$ in $L^2(\R\times\T,\C)$ as $n\to+\infty$. Without loss of generality, one can assume that all functions $v_n$, $g_n$ and $g$ are real valued and that $v_n$ is orthogonal to $\partial_{\xi}\phi_{L_0}$ in $L^2(\R\times\T,\R)$.

Let us now show that the sequence $(\|v_n\|_{L^2(\R\times\T)})_{n\in\N}$ is bounded. Suppose the contrary, let~$w_n=v_n/\|v_n\|_{L^2(\R\times\T)}$ with $\|w_n\|_{L^2(\R\times\T)}=1$ and observe that $H_{L_0}(w_n)=g_n/\|v_n\|_{L^2(\R\times\T)}\to 0$ as $n\to+\infty$. Notice that
$$M_{c_{L_0},L_0} (w_n)=H_{L_0}(w_n)-(\partial_uf(y,\phi_{L_0})+\beta)w_n\ \hbox{ for all }n\in\N.$$
By Lemma~\ref{invertible}, the sequence $(w_n)_{n\in\N}$ is then bounded in $H^1(\R\times\T)$ and then a subsequence converges in $H^1(\R\times\T)$ weakly and in $L^2_{loc}(\R\times\T)$ strongly to some $w_0\in H^1(\R\times\T)$. Furthermore,~$w_0$ is orthogonal to $\partial_{\xi}\phi_{L_0}$ in $L^2(\R\times\T)$ and $\int_{\R\times\T}\!a(y)(\tilde{\partial}_{L_0}w_0)(\tilde{\partial}_{L_0}\varphi)\!-\!c_{L_0}\varphi\partial_{\xi}w_0\!-\!\partial_uf(y,\phi_{L_0})w_0\varphi\!=\!0$ for all $\varphi\in H^1(\R\times\T)$, whence $w_0\in\mathcal{D}_{L_0}$ and $H_{L_0}(w_0)=0$. Since $\ker(H_{L_0})=\C(\partial_{\xi}\phi_{L_0})$, it follows that~$w_0=0$. Let $N>0$ be as in~\eqref{defN} and let $\rho:\R\to[0,1]$ be the piecewise affine function defined by $\rho(\xi)=0$ for all $\xi\le N$, $\rho(\xi)=\xi-N$ for all $\xi\in[N,N+1]$ and $\rho(\xi)=1$ for all~$\xi\ge N+1$. Then, by integrating the equation $H_{L_0}(w_n)=g_n/\|v_n\|_{L^2(\R\times\T)}$ against $w_n\rho$, one gets that
$$\baa{rcl}
\displaystyle-\int_{(N,+\infty)\times\T} a(y)\,\rho(\xi)\,(\tilde{\partial}_{L_0}w_n)^2-\int_{(N,N+1)\times\T}a(y)\,w_n\,\tilde{\partial}_{L_0}w_n & \!\!\!\! & \vspace{3pt}\\
\displaystyle-\int_{(N,N+1)\times\T}\frac{c_{L_0}}{2}w_n^2+\int_{(N,+\infty)\times\T}\rho(\xi)\,\partial_uf(y,\phi_{L_0})\,w_n^2 & \!\!=\!\! & \displaystyle\int_{(N,+\infty)\times\T}\frac{\rho(\xi)\,g_nw_n}{\|v_n\|_{L^2(\R\times\T)}}\ \mathop{\longrightarrow}_{n\to+\infty}\ 0.\eaa$$
Since the sequence $(w_n)_{n\in\N}$ is bounded in $H^1(\R\times\T)$, since $w_n\to0$ in $L^2_{loc}(\R\times\T)$ and since both terms $-\int_{(N,+\infty)\times\T}a(y)\,\rho(\xi)\,(\tilde{\partial}_{L_0}w_n)^2$ and $\int_{(N,+\infty)\times\T}\rho(\xi)\,\partial_uf(y,\phi_{L_0})\,w_n^2$ are nonpositive, it follows that they both converge to $0$ as $n\to+\infty$. In particular, by~\eqref{defN}, $\|w_n\|_{L^2((N+1,+\infty)\times\T)}\to0$ as~$n\to+\infty$. Using the same analysis over $(-\infty,-N)$ implies that $\|w_n\|_{L^2((-\infty,-N-1)\times\T)}\to0$ as~$n\to+\infty$. Finally the sequence~$(w_n)_{n\in\N}$ tends to 0 strongly in $L^2(\R\times\T)$ as $n\to+\infty$, which contradicts the fact that~$\|w_n\|_{L^2(\R\times\T)}=1$. Hence, the sequence $(v_n)_{n\in\N}$ is bounded in $L^2(\R\times\T)$.

Since $M_{c_{L_0},L_0}(v_n)=g_n-(\partial_uf(y,\phi_{L_0})+\beta)v_n$, Lemma~\ref{invertible} again implies that $(v_n)_{n\in\N}$ is bounded in $H^1(\R\times\T)$. Therefore, a subsequence converges weakly in $H^1(\R\times\T)$ to some $v\in\mathcal{D}_{L_0}$ such that $H_{L_0}(v)=g$.

{\it Step 5: $0$ is an algebraically simple eigenvalue of $H^*_{L_0}$.}  Choose a sufficient large real number~$\lambda_0$ such that $\lambda_0> \partial_uf(x,u)$  for all $(x,u)\in\R\times[0,1]$. Denote $\tilde{H}_{L_0}(v)=H_{L_0}(v)-\lambda_0v$ for $v\in\mathcal{D}_{L_0}+i\mathcal{D}_{L_0}$. The adjoint operator $\tilde{H}^*_{L_0}$ of $\tilde{H}_{L_0}$ is given by $\tilde{H}^*_{L_0}(v)=H_{L_0}^*(v)-\lambda_0v$ for $v\in \mathcal{D}_{L_0}+i\mathcal{D}_{L_0}$, in such a way that $\big<\tilde{H}^*_{L_0}(v),u\big>_{L^2(\R\times\T,\C)}=\big<v,\tilde{H}_{L_0}(u)\big>_{L^2(\R\times\T,\C)}$ for all $u,v\in \mathcal{D}_{L_0}+i\mathcal{D}_{L_0}$. As in the proof of Lemma~\ref{invertible}, it is easy to see that the kernels of $\tilde{H}_{L_0}$ and $\tilde{H}_{L_0}^*$ are reduced to $\{0\}$. In addition, similar arguments as in Step~4 above imply that the range of $\tilde{H}_{L_0}$ is closed in~$L^2(\R\times\T,\C)$. Thus, the operator $\tilde{H}_{L_0}:\mathcal{D}_{L_0}+i\mathcal{D}_{L_0}\to L^2(\R\times\T,\C)$ is invertible. Then the arguments in p.~220 of~\cite{x3} imply that there is a strictly positive function $v^*\in \mathcal{D}_{L_0}$ such that $H^*_{L_0}(v^*)=0$. That is,~$0$ is an eigenvalue of~$H^*_{L_0}$ with a positive eigenfunction $v^*$. Applying the above analysis in Step 1-2 to $H^*_{L_0}$ provides the algebraic simplicity of the eigenvalue~$0$. The proof of Lemma~\ref{degenerate} is thereby complete.
\end{proof}

\begin{proof}[Proof of Theorem~\ref{thE1}]
Given the above preparations, it follows as in Lemma~\ref{continueg} that the ope\-rator~$\tilde{Q}=\partial_{(v,c)}\tilde{G}(0,c_0,L_0):H^1(\R\times\T)\times \R\to H^1(\R\times\T)\times \R$ is invertible. The proof of Theorem~\ref{thE1} is then almost the same as that of Theorem~\ref{thhomo}, so we omit the details.
\end{proof}


\subsection{Proof of Theorem~\ref{thE2}}\label{sec32}

This section is concerned with the proof of Theorem~\ref{thE2}. We first show by contradiction in Lemma~\ref{lemspeeds} below that the speeds $c_{L_n}$ are bounded when $L_n\in E$ approaches $L\in\partial E\cap(0,+\infty)$. This property could actually be viewed as a consequence of the more general boundedness pro\-perty~\eqref{bounded}, which follows from an even more general boundedness result on the global mean speeds of transition fronts, see~\cite{dhz2}. Here, for the sake of completeness, Lemma~\ref{lemspeeds} is proved. Then, the strategy of the proof of Theorem~\ref{thE2} is the following: if for some sequence the speeds~$c_{L_n}$ converge to a nonzero real number as $L_n\to L$ with $L_n\in E$, then equation~\eqref{eqL} admits some pulsating fronts connecting $0$, resp. $1$, to some $L$-periodic steady states. On the other hand, if the speeds $c_{L_n}$ converge to $0$, then equation~\eqref{eqL} admits either a semistable stationary front connecting $0$ and $1$, or some semistable stationary fronts connecting~$0$, resp. $1$, to some semistable $L$-periodic steady states.

Before doing so, we first state an elementary lemma which will be used several times.

\begin{lem}\label{lemliouville}
Let $\delta\in(0,1/2)$ be as in~\eqref{asspars} and let $L>0$ be arbitrary. If $u$ is a classical stationary solution of~\eqref{eqL} such that $0\le u\le\delta$ in $\R$, then $u\equiv 0$ in $\R$. Similarly, if $u$ is a classical stationary solution of~\eqref{eqL} such that $1-\delta\le u\le1$ in $\R$, then $u\equiv 1$ in $\R$.
\end{lem}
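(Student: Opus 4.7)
It suffices to prove the first assertion, since the second one follows by applying the first to $v:=1-u$, which is a classical stationary solution of $(a_L(x)v_x)_x+\tilde f_L(x,v)=0$ with $\tilde f_L(x,v):=-f_L(x,1-v)$; the second line of~\eqref{asspars} yields $\tilde f_L(x,v)\le-\gamma v$ on $\R\times[0,\delta]$, so that $v\equiv 0$ and hence $u\equiv 1$. So let $u$ be a classical stationary solution of~\eqref{eqL} with $0\le u\le\delta$ on $\R$. From the first line of~\eqref{asspars}, $f_L(x,u(x))\le-\gamma u(x)$ in $\R$, which gives
\[
\big(a_L(x)u_x\big)_x-\gamma u\,=\,-f_L(x,u)-\gamma u\,\ge\, 0\quad\hbox{in }\R,
\]
i.e., $u$ is a classical subsolution of the linear divergence-form operator $\mathcal L v:=(a_L(x)v_x)_x-\gamma v$.

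The plan is to dominate $u$ by a two-sided exponential barrier and slide it down to $0$. Since $a\in C^{1,\alpha}(\R)$ is positive and $1$-periodic, the coefficients $a_L$ and $a_L'$ are bounded on $\R$, and one can fix $\mu>0$ small enough that $a_L(x)\mu^2+|a_L'(x)|\mu-\gamma\le 0$ for every $x\in\R$. For each $C\ge 0$, set $\omega_C(x):=C\cosh(\mu x)$. A direct computation gives
\[
\mathcal L\omega_C(x)=C\big[\big(a_L(x)\mu^2-\gamma\big)\cosh(\mu x)+a_L'(x)\mu\sinh(\mu x)\big]\le 0\quad\hbox{in }\R,
\]
so each $\omega_C$ is a classical supersolution of $\mathcal L v=0$. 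Because $u$ is bounded while $\omega_C(x)\to+\infty$ as $|x|\to+\infty$, the set $\big\{C\ge 0\,\big|\,u\le\omega_C\hbox{ on }\R\big\}$ is a nonempty closed subinterval $[C^*,+\infty)$ of $[0,+\infty)$, and by continuity $u\le\omega_{C^*}$ on $\R$.

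I then claim $C^*=0$, from which the lemma follows at once: indeed, $C^*=0$ gives $u\le\omega_0\equiv 0$ on $\R$, so that $u\equiv 0$. To prove the claim, I argue by contradiction: suppose $C^*>0$ and set $z:=\omega_{C^*}-u\ge 0$ on $\R$. Since $u$ is bounded and $\omega_{C^*}(x)\to+\infty$ as $|x|\to+\infty$, the infimum of $z$ on $\R$ is attained at some $x_0\in\R$, and by minimality of $C^*$ that infimum must equal $0$, hence $z(x_0)=\min_{\R}z=0$. But $\mathcal L z=\mathcal L\omega_{C^*}-\mathcal L u\le 0$ in $\R$, i.e., the nonnegative function $z$ is a classical supersolution of $-\mathcal L v=-(a_L v_x)_x+\gamma v=0$, an elliptic divergence-form operator with positive zeroth-order coefficient $\gamma$; the strong elliptic maximum principle then forces $z\equiv 0$ on $\R$, contradicting the unboundedness of~$\omega_{C^*}$ and the boundedness of~$u$. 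The only real obstacle, compared with the one-sided decay argument used in the proof of Lemma~\ref{exponential}, is that $u$ is not assumed to vanish at~$\pm\infty$, which is why the symmetric $\cosh$-barrier is needed; everything else reduces to the strong maximum principle.
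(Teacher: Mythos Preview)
Your proof is correct, but it takes a different and somewhat heavier route than the paper's.

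The paper's argument is entirely elementary and exploits the one-dimensionality: from~\eqref{asspars} one has $(a_Lu')'=-f_L(x,u)\ge\gamma u\ge 0$, so $a_Lu'$ is nondecreasing on $\R$. If $u$ were nonconstant, there would be $x_0$ with $u'(x_0)\neq 0$; say $u'(x_0)>0$, then $a_L(x)u'(x)\ge a_L(x_0)u'(x_0)>0$ for all $x\ge x_0$, giving $\inf_{x\ge x_0}u'(x)>0$ and contradicting the boundedness of $u$ (the case $u'(x_0)<0$ is symmetric). Hence $u$ is a constant in $[0,\delta]$, and then $f_L(x,u)=0$ together with~\eqref{asspars} forces $u\equiv 0$. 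No maximum principle is invoked.

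Your approach instead linearizes to $\mathcal Lu\ge 0$, constructs the two-sided barrier $\omega_C=C\cosh(\mu\,\cdot)$, and slides it down via the strong maximum principle. This works and is closer in spirit to the barrier arguments elsewhere in the paper (e.g., Lemma~\ref{exponential}), and it would adapt to settings where the simple monotonicity trick is unavailable. One very small point of rigor: the claim ``by minimality of $C^*$ that infimum must equal $0$'' is not quite the right justification, since $\omega_{C^*}-\omega_{C'}=(C^*-C')\cosh(\mu\,\cdot)$ is unbounded and so $\inf z>0$ alone does not directly allow lowering $C^*$. The correct reason is that $C^*=\sup_{x\in\R}u(x)/\cosh(\mu x)$, and this supremum is \emph{attained} because $u/\cosh(\mu\,\cdot)$ is continuous and tends to $0$ at $\pm\infty$; at the maximizer $x_0$ one has $z(x_0)=0$. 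With this tweak your argument is complete.
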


\begin{proof}
We only prove the first assertion, since the second one is similar. Let $u$ be a classical steady state of~\eqref{eqL} such that $0\le u\le\delta$. From~\eqref{asspars}, the function $a_Lu'$ is nondecreasing. Assume by contradiction that $u$ is not constant in $\R$. Then there exists~$x_0\in\R$ such that $u'(x_0)\neq0$. If~$u'(x_0)>0$, then $a_L(x_0)u'(x_0)\le a_L(x)u'(x)$ for all~$x_0\le x$ and $\inf_{x\ge x_0}u'(x)>0$, contradicting the boundedness of~$u$. Similarly, if $u'(x_0)<0$, then $\sup_{x\le x_0}u'(x)<0$, which is impossible too. Finally, $u$ is a constant, between $0$ and $\delta$, and assumption~\eqref{asspars} yields $u\equiv 0$ in~$\R$.
\end{proof}

\begin{lem}\label{lemspeeds}
If $(L_n)_{n\in\N}$ is a sequence in $E$ such that $0<\inf_{n\in\N}L_n\le\sup_{n\in\N}L_n<+\infty$, then the sequence $(c_n)_{n\in\N}$ of the front speeds associated with~\eqref{eqL} and the periods $L_n$ is bounded.
\end{lem}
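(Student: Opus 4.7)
The plan is to argue by contradiction: suppose that, along a subsequence, $|c_n|\to+\infty$. Using the change of unknown $(\phi(\xi,y),c)\mapsto(1-\phi(-\xi,-y),-c)$, which replaces $(a,f)$ by $(a(-\cdot),-f(-\cdot,1-\cdot))$ and swaps the two limiting end states without affecting the hypothesis $L_n\in E$, one reduces to $c_n\to+\infty$. Lemma~\ref{sign} then forces $I:=\int_0^1\overline{f}(u)\,du>0$, and the energy identity from its proof, $c_n\int_{\R\times\T}(\partial_\xi\phi_n)^2=I$, gives the $L^2$-smallness $\int_{\R\times\T}(\partial_\xi\phi_n)^2=I/c_n\to0$.

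The key quantitative step is to upgrade this smallness to a uniform pointwise one. Since $L_n$ stays in a compact set $[L_-,L_+]\subset(0,+\infty)$, the coefficients $a_{L_n}$ and $\partial_uf_{L_n}$ are uniformly bounded in $C^{0,\alpha}$, and interior parabolic Schauder estimates applied to $u_n(t,x)=\phi_n(x-c_nt,x/L_n)$ give a bound $\|\partial_tu_n\|_{L^\infty(\R^2)}\le M$ with $M$ independent of $n$ and of the base point (the coefficients have uniform structure in $(t,x)$). Writing $\partial_tu_n=-c_n\partial_\xi\phi_n$ translates this into $|\partial_\xi\phi_n|\le M/c_n$ uniformly on $\R\times\T$. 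After a shift in~$\xi$ (i.e.\ in time) I normalise $\int_\T\phi_n(0,y)\,dy=1/2$, extract a subsequence with $L_n\to L\in[L_-,L_+]$, and apply Arzelà--Ascoli to obtain $\phi_n\to\phi_\infty$ locally uniformly on $\R\times\T$ with $\partial_\xi\phi_\infty\equiv0$; hence $\phi_\infty(\xi,y)=\Psi(y)$ for some $\Psi\in C(\T,[0,1])$ with $\int_\T\Psi=1/2$. Passing to the limit in the PDE for $\phi_n$---the term $c_n\partial_\xi\phi_n=-\partial_tu_n$ converges, after using uniform Schauder bounds on $\partial_tu_n$, to $\partial_tu_\infty=0$ since $u_\infty(t,x)=\Psi(x/L)$ is $t$-independent---yields $L^{-2}(a(y)\Psi'(y))'+f(y,\Psi(y))=0$ on~$\T$, so that $\bar u(x):=\Psi(x/L)$ is an intermediate $L$-periodic steady state of~\eqref{eqL}.

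To close the argument, I would iterate the previous construction at every level. For each $\alpha\in(0,1)$, the strict monotonicity $\partial_\xi\phi_n<0$ given by Theorem~\ref{thqual} produces a unique $\xi_n^\alpha$ with $\int_\T\phi_n(\xi_n^\alpha,y)\,dy=\alpha$, and the shifted profiles $\phi_n(\cdot+\xi_n^\alpha,\cdot)$ converge in the same manner to an $L$-periodic steady state $\Psi^\alpha$ with $\int_\T\Psi^\alpha=\alpha$. The $\xi$-monotonicity of each $\phi_n$ passes to the limit and makes the family $\{\Psi^\alpha\}_{\alpha\in(0,1)}$ strictly ordered. Differentiation of the steady-state ODE in $\alpha$ makes $\partial_\alpha\Psi^\alpha$ a strictly positive function in the kernel of the linearised operator $v\mapsto L^{-2}(a(y)v'(y))'+\partial_uf(y,\Psi^\alpha(y))v(y)$, so that $\lambda_1(L,\Psi^\alpha)=0$ for every~$\alpha\in(0,1)$: an ordered continuum of semistable intermediate $L$-periodic steady states, incompatible with the Poincaré-map analysis of the planar non-autonomous system $u'=p/a(y)$, $p'=-L^2f(y,u)$ associated with the steady-state ODE, whose $1$-periodic orbits with $0<u<1$ cannot sweep a non-degenerate interval of averages under the bistable structure~\eqref{bistable}--\eqref{asspars}.

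The main obstacle is precisely this last point: ruling out a continuum of ordered semistable periodic orbits for the planar system is immediate for generic $(a,f)$ but must here be extracted from~\eqref{bistable}--\eqref{asspars}, in particular to handle the possibility that the Poincaré map degenerates to the identity on a non-trivial curve of fixed points. A more robust alternative, which I expect may be the route actually followed (and which is hinted at by the reference to \cite{dhz2} for the stronger bound~\eqref{bounded}), is to replace the continuum argument by a Cauchy-problem argument: fix $g\in L^\infty(\R,[0,1])$ satisfying~\eqref{initialv} with compact ``transition'' region, and contrast the a priori bound $u^{(n)}(t_0,x)\le g(x)+Mt_0\le 1-\delta$ at $x$ large and $t_0$ small (obtained from $|\partial_tu^{(n)}|\le M$ and~\eqref{asspars}) with the statement of Theorem~\ref{gstability}, which for $c_n\to+\infty$ forces $u^{(n)}(t_0,\cdot)$ to coincide with a translate of~$\phi_n$ whose ``$1$-region'' extends arbitrarily far to the right of the origin by time~$t_0$. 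The delicate aspect of this alternative is the uniform control of the constants $\tau_n$, $C_n$, $\mu_n$ in Theorem~\ref{gstability}, which one would have to either track through the proof of that theorem or circumvent by a direct sliding/super-solution construction exploiting only the already-obtained uniform flatness $|\partial_\xi\phi_n|\le M/c_n$.
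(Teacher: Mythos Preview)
Your approach diverges from the paper's at the contradiction step, and the gap you yourself flag is real and not easily closed. Under hypotheses~\eqref{bistable}--\eqref{asspars} alone there is no obvious obstruction to a continuum of ordered $L$-periodic steady states: for instance when $a$ is constant and $f=f(u)$, the planar system has a center at $(\theta,0)$ with a family of periodic orbits whose period function need not be monotone, so several (or, for carefully tuned $f$, many) orbits can share the same period $L$. Extracting a genuine impossibility from the Poincar\'e-map analysis would require structure you have not assumed. Your alternative via Theorem~\ref{gstability} is also problematic: the constants there depend on the specific front $\phi_{L_n}$ (through quantities like $\beta$ in~\eqref{resemwavein3}), and tracking their uniformity in $n$ is essentially as hard as the original problem.

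The paper's argument is much shorter and avoids any analysis of intermediate steady states. It stays in the original variables $(t,x)$ rather than $(\xi,y)$: after normalising by a time shift so that $\max_{[0,L_n]}u_n(t_n,\cdot)=\delta$, one passes to a limit $u(t,x)$ which is a classical solution of~\eqref{eqL} with period $L$, nondecreasing in $t$, and with $\max_{[0,L]}u(0,\cdot)=\delta$. The crucial observation is that the pulsating relation $u_n(t+L_n/c_n,x+L_n)=u_n(t,x)$, combined with $L_n/c_n\to0$, forces the limit $u$ to be $L$-periodic in $x$. Then $u^-(x):=\lim_{t\to-\infty}u(t,x)$ is an $L$-periodic steady state with $0\le u^-\le\delta$, hence $u^-\equiv0$ by Lemma~\ref{lemliouville}. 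So at some $t_0<0$ one has $u(t_0,\cdot)\le\delta/2$; but $\delta/2$ is a supersolution of~\eqref{eqL} by~\eqref{asspars}, which pins $u(t,\cdot)\le\delta/2$ for all $t\ge t_0$, contradicting $\max_{[0,L]}u(0,\cdot)=\delta$.

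The key idea you are missing is thus not a structural fact about steady states at all, but the choice of normalisation level: by anchoring at level $\delta$ (rather than $1/2$) and working in $(t,x)$, the limit lands in the region where~\eqref{asspars} makes the constant $\delta/2$ a barrier, and the contradiction is immediate. Your energy identity and the bound $|\partial_\xi\phi_n|\le M/c_n$ are correct and interesting, but unnecessary here.
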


\begin{proof}
Assume first by contradiction that, up to extraction of a subsequence, one has $0\!<\!c_n\!\to\!+\infty$ and $L_n\to L\in(0,+\infty)$ as $n\to+\infty$. For each $n\in\N$, let $u_n(t,x)=\phi_{L_n}(x-c_nt,x/L_n)$ be a pulsating front associated with~\eqref{eqL} and the period $L_n$. By Theorem~\ref{thqual}, each function $u_n$ is increasing in $t$. Since $u_n(t,\cdot)\to0$ as $t\to-\infty$ and $u_n(t,\cdot)\to1$ as $t\to+\infty$, locally uniformly in~$\R$, there is by continuity a unique $t_n\in\R$ such that
\be\label{normun}
\max_{[0,L_n]}u_n(t_n,\cdot)=\delta.
\ee
By standard parabolic estimates the functions $(t,x)\mapsto u_n(t+t_n,x)$ converge in~$C^{1,2}_{loc}(\R^2)$, up to extraction of a subsequence, to a classical solution $0\le u(t,x)\le 1$ of~\eqref{eqL} such that $\max_{[0,L]}u(0,\cdot)=\delta$ and $u$ is nondecreasing with respect to $t$. Furthermore, since
\be\label{unpuls}
u_n\Big(t+t_n+\frac{L_n}{c_n},x+L_n\Big)=u_n(t+t_n,x)\hbox{ for all }(t,x)\in\R^2\hbox{ and }n\in\N,
\ee
one infers that $u(t,x+L)=u(t,x)$ for all $(t,x)\in\R^2$. In other words, $u$ is $L$-periodic in~$x$. By monotonicity in $t$ and from standard parabolic estimates, one has $u(t,x)\to u^-(x)$ as $t~\!\!\to~\!\!-~\!\!\infty$ uniformly in $x\in\R$, where $0\le u^-\le1$ is an $L$-periodic steady state of~\eqref{eqL} such that~$\max_{[0,L]}u^-(\cdot)\le\max_{[0,L]}u(0,\cdot)=\delta$, whence $u^-\le\delta$ in $\R$ by $L$-periodicity. Lemma~\ref{lemliouville} implies that $u^-=0$ in~$\R$. As a consequence, there is $t_0<0$ such that $u(t_0,\cdot)\le\delta/2$ in $\R$ and, since $\delta/2$ is a supersolution of~\eqref{eqL}, it follows necessarily that $u(t,\cdot)\le\delta/2$ in $\R$ for all $t\ge t_0$, contradicting in particular~$\max_{[0,L]}u(0,\cdot)=\delta$.\par
Lastly, if there is a subsequence such that $0>c_n\to-\infty$ and $L_n\to L\in(0,+\infty)$, one reaches a similar contradiction by changing the normalization condition~\eqref{normun} into
\be\label{normun2}
\min_{[0,L_n]}u_n(\tau_n,\cdot)=1-\delta,
\ee
with $\tau_n\in\R$. The proof of Lemma~\ref{lemspeeds} is thereby complete.
\end{proof}

\begin{proof}[Proof of Theorem~\ref{thE2}]
Let $(L_n)_{n\in\N}$ be a sequence in $E$ such that $L_n\to L\in\partial  E\cap(0,+\infty)$ as~$n\to+\infty$. Thus, for each $n\in\N$, equation~\eqref{eqL} with the period $L_n$ admits
a pulsating front~$u_n(t,x)=\phi_{L_n}(x-c_nt,x/L_n)$ with speed $c_n\neq0$. It follows from Lemma~\ref{lemspeeds} that, up to extraction of a subsequence, there is $c\in\R$ such that $c_n\to c$ as $n\to+\infty$. According to the sign of $c$, four cases may occur.

{\it Case (i): $c>0$.} In that case, by Theorem~\ref{thqual}, there holds necessarily $c_n>0$ for each~$n\in\N$ and $\int_0^1\overline{f}(u)du>0$. Furthermore, each function $u_n$ is increasing in $t$. As in the proof of Lemma~\ref{lemspeeds}, for each $n\in\N$, there is a unique $t_n\in\R$ such that the normalization condition~\eqref{normun} holds. By standard parabolic estimates, up to extraction of a subsequence, the functions $(t,x)\mapsto u_n(t+t_n,x)$ converge in~$C^{1,2}_{loc}(\R^2)$ to a classical solution $0\le u(t,x)\le 1$ of~\eqref{eqL} such that $\max_{[0,L]}u(0,\cdot)=\delta$ and~$u$ is nondecreasing with respect to $t$. The strong maximum principle implies that~$0\!<\!u(t,x)\!<\!1$ for all~$(t,x)\in\R^2$. Furthermore, by passing to the limit as $n\to+\infty$ in~\eqref{unpuls}, one infers that
\be\label{upuls2}
u\Big(t+\frac{L}{c},x+L\Big)=u(t,x)\hbox{ for all }(t,x)\in\R^2,
\ee
that is, $u$ can be written as
$$u(t,x)=\phi\Big(x-ct,\frac{x}{L}\Big),$$
where $\phi(\xi,y)=u((Ly-\xi)/c,Ly)$ is~$1$-periodic in $y$. Furthermore, by monotonicity in $t$ and standard parabolic estimates, it follows by passing to the limit as $t\to\pm\infty$ in~\eqref{upuls2} that there exist two $L$-periodic steady states~$0\le\tilde{u}(x)\le \bar{u}(x)\le 1$ such that
$$u(t,x)\to\tilde{u}(x)\hbox{ as }t\to-\infty\ \hbox{ and }\ u(t,x)\to\bar{u}(x)\hbox{ as }t\to+\infty,\ \hbox{ locally uniformly in }x\in\R.$$
The steady state $\tilde{u}\ge0$ is $L$-periodic and satisfies~$\max_{[0,L]}\tilde{u}(\cdot)\le\max_{[0,L]}u(0,\cdot)=\delta$, whence~$\tilde{u}=0$ by Lemma~\ref{lemliouville}. In other words,~$\phi(+\infty,\cdot)=0$. Furthermore, the steady state~$\bar{u}(x)=\phi(-\infty,x/L)\ (\ge u(t,x)>0)$ cannot be equal to $1$, otherwise~$u$ would be a pulsating front (connecting~$0$ and~$1$) with the speed $c\neq0$ and~$L$ would then belong to~$E$, contradicting Theorem~\ref{thE1} and the assumption $L\in\partial E\cap(0,+\infty)$. Therefore, from the strong elliptic maximum principle, $0<\bar{u}<1$ is a non-trivial $L$-periodic steady state of~\eqref{eqL}. Notice also, from the strong parabolic maximum principle, that the nonnegative function~$u_t$ is positive in $\R^2$, whence $\phi(\xi,y)$ is decreasing in~$\xi$ and $0<\phi(\xi,y)<\bar{u}(Ly)$ for all $(\xi,y)\in\R^2$.

Let us now show that $\bar{u}$ is semistable. Suppose on the contrary that $\bar{u}$ is unstable, that is~$\lambda_1(L,\bar{u})>0$, where $\lambda_1(L,\bar{u})$ is the principal eigenvalue of~\eqref{prineigen} corresponding to the steady state~$\bar{u}$. By Definition~\ref{semistable}, one can choose $R$ large enough such that $\lambda_{1,R}(L,\bar{u})>0$, where~$\lambda_{1,R}(L,\bar{u})$ is the principal eigenvalue in~\eqref{truncted}, associated with a positive principal eigenfunction~$\psi_R$. For any $\epsilon >0$, define $v_{\epsilon}$ as in~\eqref{defveps}. Since $u(t,x)<\bar{u}(x)$ for all $(t,x)\in\R^2$ and both functions are continuous, one can choose $\epsilon >0$ so small that $u(0,x)<v_{\epsilon}(x)$ for all~$x\in\R$ and, as in~\eqref{supersol},~$v_{\epsilon}$ is a supersolution of the elliptic equation associated with~\eqref{eqL}. It then follows from the parabolic maximum principle that $u(t,x)<v_{\epsilon}(x)$ for all $t\ge0$ and $x\in\R$. Hence, $\bar{u}(x)\le v_{\epsilon}(x)$ for all $x\in\R$, which is clearly impossible. Finally, $0<\bar{u}<1$ is a semistable $L$-periodic steady state of~\eqref{eqL}.

Now, instead of the normalization~\eqref{normun}, one can choose $\tau_n\in\R$ such that~\eqref{normun2} holds. Since $\delta<1-\delta$ and each function $u_n$ is increasing in $t$, one infers that $t_n<\tau_n$. As above, up to extraction of a subsequence, the functions $(t,x)\mapsto u_n(t+\tau_n,x)\ (>u_n(t+t_n,x))$ converge in~$C^{1,2}_{loc}(\R^2)$ to a classical solution $0<v(t,x)<1$ of~\eqref{eqL} such that $\min_{[0,L]}v(0,\cdot)=1-\delta$,~$v$ is nondecreasing with respect to $t$, $v\ge u$ in $\R^2$, and $v$ satisfies~\eqref{upuls2} that is $v$ can be written as~$v(t,x)=\psi(x-ct,x/L)$, where $\psi(\xi,y)=v((Ly-\xi)/c,Ly)$ is~$1$-periodic in $y$. Furthermore, there are two $L$-periodic steady states~$0\le\bar{v}(x)\le \tilde{v}(x)\le 1$ such that $v(t,x)\to\bar{v}(x)$ as $t\to-\infty$ and $v(t,x)\to\tilde{v}(x)$ as $t\to+\infty$ locally uniformly in $x\in\R$. The steady state $\tilde{v}\le1$ is $L$-periodic and satisfies~$\min_{[0,L]}\tilde{v}(\cdot)\ge\min_{[0,L]}v(0,\cdot)=1-\delta$, whence $\tilde{v}=1$ by Lemma~\ref{lemliouville}. In other words,~$\psi(-\infty,\cdot)=1$. In particular, since $\bar{u}(x)=u(+\infty,x)<1=v(+\infty,x)$ and $u\le v$, it follows then from the strong maximum principle that
$$u(t,x)<v(t,x)\hbox{ for all }(t,x)\in\R^2.$$
Lastly, the steady state~$\bar{v}(x)=\psi(+\infty,x/L)\ (\le v(t,x)<1)$ cannot be equal to $0$, otherwise $v$ would be a pulsating front (connecting~$0$ and~$1$) with the speed $c\neq0$, contradicting Theorem~\ref{thE1} and the assumption $L\in\partial E\cap(0,+\infty)$. Therefore, $0<\bar{v}<1$ is a non-trivial $L$-periodic steady state of~\eqref{eqL}. Notice that, in this case, $\bar{v}$ may not be semistable in general, the maximum principle leading to no obvious contradiction if $\bar{v}$ were assumed to be unstable.

{\it Case (ii): $c<0$.} The analysis is similar to that done in case~(i), but now $\int_0^1\overline{f}(u)du<0$ and the functions $u_n$ and their limits $u$ and $v$ are nonincreasing in $t$. One has $\lim_{t\to+\infty}u(t,x)=0<\bar{u}(x)=\lim_{t\to-\infty}u(t,x)<1$ and $0<\bar{v}(x)=\lim_{t\to+\infty}v(t,x)<1=\lim_{t\to-\infty}v(t,x)$. Lastly, the functions $\bar{u}$ and $\bar{v}$ are $L$-periodic steady states of~\eqref{eqL} and $\bar{v}$ is semistable.

{\it Case (iii): $c=0$ and $\int_0^1\overline{f}(u)du>0$.} Hence, $c_n>0$ for all~$n\in\N$, by Theorem~\ref{thqual}. Let~$t_n\in\R$ be as in~\eqref{normun}. Up to extraction of a subsequence, the functions $(t,x)\mapsto u_n(t+t_n,x)$ converge in~$C^{1,2}_{loc}(\R^2)$ to a classical solution $0<u_{\infty}(t,x)<1$ of~\eqref{eqL} such that $\max_{[0,L]}u_{\infty}(0,\cdot)=\delta$ and~$u_{\infty}$ is nondecreasing with respect to $t$. It also follows that $u_{\infty}(t,x)\to u(x)$ as $t\to+\infty$ locally uniformly in $x\in\R$, where $0\le u\le 1$ is a steady state of~\eqref{eqL}. One has $\max_{[0,L]}u\ge\max_{[0,L]}u_{\infty}(0,\cdot)=\delta$, whence $u>0$ in $\R$ from the strong maximum principle. Furthermore, for every $x\in\R$ and $t\in\R$, since $u_n$ is increasing in time, there holds
$$u_n(t+t_n,x+L_n)=u_n\Big(t+t_n-\frac{L_n}{c_n},x\Big)<u_n(t+t_n,x),$$
whence $u_{\infty}(t,x+L)\le u_{\infty}(t,x)$ and
\be\label{Lmonotone}
u(x+L)\le u(x)\ \hbox{ for all }x\in\R.
\ee

Now, for any fixed $t\in\R$ and $x>L$, and for all $n$ large enough, one has $t-L_n/c_n\le 0$ and there is $k_n\in\N$ such that $k_n\ge1$ and $k_nL_n\le x\le(k_n+1)L_n$, whence $t-k_nL_n/c_n\le 0$ and
$$u_n(t+t_n,x)=u_n\Big(t+t_n-\frac{k_nL_n}{c_n},x-k_nL_n\Big)\le u_n(t_n,x-k_nL_n)\le\max_{[0,L_n]}u_n(t_n,\cdot)=\delta$$
by~\eqref{normun} and the monotonicity of $u_n$ with respect to $t$. Therefore, $u_{\infty}(t,x)\le\delta$ for all $t\in\R$ and~$x>L$ (and $x\ge L$ by continuity), whence $u(x)\le\delta$ for all $x\ge L$. In particular, the strong maximum principle yields $u<1$ in $\R$. Furthermore, by~\eqref{Lmonotone} and standard elliptic estimates, the functions $x\mapsto u(x+kL)$ (with parameter $k\in\N$) converge decreasingly as $k\to+\infty$ in $C^2_{loc}(\R)$ to an $L$-periodic steady state $\underline{u}(x)$ of~\eqref{eqL} such that $0\le\underline{u}(x)\le\delta$ for all $x\in\R$. Lemma~\ref{lemliouville} implies that $\underline{u}\equiv0$ in $\R$, whence $u(x)\to0$ as $x\to+\infty$. Since $u$ is (strictly) positive in $\R$, it then follows from~\eqref{Lmonotone} and the strong maximum principle that
\be\label{strict}
u(x+L)<u(x)\ \hbox{ for all }x\in\R.
\ee
Similarly, the functions $x\mapsto u(x-kL)$ (with parameter $k\in\N$) converge increasingly as $k\to+\infty$ in~$C^2_{loc}(\R)$ to an $L$-periodic steady state $\bar{u}(x)$ of~\eqref{eqL} such that $0<\bar{u}(x)\le1$ in $\R$. In particular,
\be\label{ubaru}
u(x)-\bar{u}(x)\to0\ \hbox{ as }x\to-\infty.
\ee
Notice that~\eqref{strict} and~\eqref{ubaru}, together with the $L$-periodicity of $\bar{u}$, imply that $u(x)<\bar{u}(x)$ for all $x\in\R$. We also point out that, if $\bar{u}$ is equal to $1$, then $u$ is a stationary front (connecting~$0$ and~$1$).

Let us now prove that, whether $\bar{u}$ be equal to $1$ or less than $1$, both steady states $0<u(x)<1$ and $0<\bar{u}(x)\le 1$ are semistable. We will actually first prove that $u$ is semistable, and $\bar{u}$ will then immediately be semistable too by~\eqref{ubaru}. So, assume first by contradiction that $u$ is unstable. By Definition~\ref{semistable}, there is $R>0$ large enough such that $R>L$ and $\lambda_{1,R}(L,u)>0$, where $\lambda_{1,R}(L,u)$ denotes the principal eigenvalue of~\eqref{truncted} in $[-R,R]$, associated with a principal eigenfunction $\psi$. As in~\eqref{supersol}, there is then $\epsilon_0>0$ such that, for all $\epsilon\in(0,\epsilon_0]$, the function
$$v_{\epsilon}(x)=\left\{\baa{ll}u(x)-\epsilon\psi(x) & \hbox{if }|x|<R,\vspace{3pt}\\
u(x) & \hbox{if }|x|\ge R,\eaa\right.$$
satisfies $0<v_{\epsilon}\le u$ in $\R$ and is a supersolution of the elliptic equation associated with~\eqref{eqL}. On the other hand, since $u_{\infty}(t,x)\le u(x)$ for all $(t,x)\in\R^2$, the strong parabolic maximum principle implies that either $u_{\infty}(t,x)=u(x)$ for all $(t,x)\in\R^2$, or $u_{\infty}(t,x)<u(x)$ for all $(t,x)\in\R^2$. In the latter case, by continuity, there would be $\epsilon\in(0,\epsilon_0]$ such that $u_{\infty}(0,x)\le v_{\epsilon}(x)$ for all~$x\in\R$, whence $u_{\infty}(t,x)\le v_{\epsilon}(x)$ for all $t\ge0$ and $x\in\R$ from the maximum principle. By passing to the limit as $t\to+\infty$, one would infer that $u\le v_{\epsilon}$ in $\R^2$, which is clearly impossible. Therefore,~$u_{\infty}(t,x)=u(x)$ for all $(t,x)\in\R^2$. Now, since $0<v_{\epsilon_0}<1$ is continuous and since, for each $n\in\N$, $u_n$ is continuous and increasing in $t$ with $u_n(-\infty,\cdot)=0$ and $u_n(+\infty,\cdot)=1$, there is a unique $t'_n\in\R$ such that $u_n(t'_n,\cdot)\le v_{\epsilon_0}$ in $[-R,R]$ with equality somewhere in $[-R,R]$, that is
$$\max_{[-R,R]}\big(u_n(t'_n,\cdot)-v_{\epsilon_0}\big)=0.$$
Since $R>L$ and $v_{\epsilon_0}-u=-\epsilon_0\psi$ is continuous and negative in $(-R,R)$, one has $L_n<R$ for $n$ large enough, and there is $\eta>0$ such that
$$\max_{[0,L_n]}u_n(t'_n,\cdot)\le\max_{[0,L_n]}v_{\epsilon_0}<\max_{[0,L_n]}u\,-\,\eta=\max_{[0,L_n]}u_{\infty}(0,\cdot)\,-\,\eta,$$
whence $\max_{[0,L_n]}u_n(t'_n,\cdot)<\max_{[0,L_n]}u_n(t_n,\cdot)$ for $n$ large enough. Therefore, $t'_n<t_n$ for $n$ large enough and the functions $(t,x)\mapsto u_n(t+t'_n,x)$ converge in~$C^{1,2}_{loc}(\R^2)$, up to extraction of a subsequence, to a classical solution $\tilde{u}_{\infty}$ of~\eqref{eqL} such that $0\le\tilde{u}_{\infty}(t,x)\le u_{\infty}(t,x)=u(x)<1$ for all $(t,x)\in\R^2$ and $\max_{[-R,R]}\big(\tilde{u}_{\infty}(0,\cdot)-v_{\epsilon_0}\big)=0$. In particular, $\tilde{u}_{\infty}(0,\cdot)\le v_{\epsilon_0}$ in $\R$ with equality somewhere in $[-R,R]$. The maximum principle yields $\tilde{u}_{\infty}(t,x)\le v_{\epsilon_0}(x)$ for all $t\ge0$ and $x\in\R$ and since $\tilde{u}_{\infty}$ is nondecreasing in $t$, the function $\tilde{u}(x)=\lim_{t\to+\infty}\tilde{u}_{\infty}(t,x)$ is a classical steady state of~\eqref{eqL} such that
$$\tilde{u}\le v_{\epsilon_0}\hbox{ in }\R$$
with equality at a point $x_0\in[-R,R]$. If $|x_0|=R$, one has $\tilde{u}\le u$ in $\R$ with equality at $x_0$, whence~$\tilde{u}\equiv u$ in $\R$ by the strong maximum principle which is clearly impossible since $\tilde{u}\le v_{\epsilon_0}<u$ in~$(-R,R)$. If $|x_0|<R$, then $\tilde{u}\equiv v_{\epsilon_0}$ in $(-R,R)$ from the strong maximum principle, whence $\tilde{u}(\pm R)=v_{\epsilon_0}(\pm R)=u(\pm R)$ by continuity, which is again impossible. Finally, one has reached a contradiction and one has shown that~$0<u<1$ is a semistable steady state of~\eqref{eqL}.

Let us now conclude that the $L$-periodic steady state $0<\bar{u}\le1$ is also semistable. Remember that $0<u<\bar{u}$ in $\R$. Actually, if $\bar{u}$ were unstable, then the arguments of Step~1 of the proof of Lemma~\ref{noexstate} would imply that $\sup_{x\in\R}\big(u(x)-\bar{u}(x)\big)<0$. This is clearly impossible by~\eqref{ubaru}. Therefore, $\bar{u}$ is semistable.

{\it Case (iv): $c=0$ and $\int_0^1\overline{f}(u)du<0$.} Here, $c_n<0$. For each $n\in\N$, let $\tau_n$ be the unique real number such that~\eqref{normun2} holds. As in case~(iii), up to extraction of a subsequence, the functions $(t,x)\mapsto u_n(t+\tau_n,x)$ converge in $C^{1,2}_{loc}(\R^2)$ to a classical solution $0<v_{\infty}(t,x)<1$ of~\eqref{eqL} such that $\min_{[0,L]}v_{\infty}(0,\cdot)=1-\delta$ and $v_{\infty}$ is nonincreasing with respect to $t$. Therefore, $v_{\infty}(t,x)\to v(x)$ as $t\to+\infty$ locally uniformly in $x\in\R$, where $0\le v<1$ is a steady state of~\eqref{eqL}. As in case~(iii), there holds $v_{\infty}(t,x+L)\le v_{\infty}(t,x)$ and $v(x+L)\le v(x)$ for all $(t,x)\in\R^2$, while $v_{\infty}(t,x)\ge1-\delta$ for all $t\in\R$ and $x\le0$. As a consequence, $v(x)\ge1-\delta$ for all $x\le0$ and $v(x)\to1$ as $x\to-\infty$ by using Lemma~\ref{lemliouville}. Hence $v(x+L)<v(x)$ for all $x\in\R$ by the strong maximum principle. Lastly, there is an $L$-periodic steady state $0\le\bar{v}(x)<1$ of~\eqref{eqL} such that $\bar{v}(x)<v(x)<1$ for all $x\in\R$ and $v(x)-\bar{v}(x)\to0$ as $x\to+\infty$. The semistability of~$v$ and~$\bar{v}$ can then be proved as it was done in case~(iii) for $u$ and $\bar{u}$. The proof of Theorem~\ref{thE2} is thereby complete.
\end{proof}


\SE{Exponential stability of pulsating fronts}\label{sec4}

In this section, we first prove Theorem~\ref{gstability} on the exponential stability of the non-stationary pulsating fronts of~\eqref{eqL}. The proof is divided into two parts. In the first part, in Section~\ref{sec41}, we present a dynamical systems approach to the global stability of the fronts. Namely, the solution of the Cauchy problem~\eqref{inieqL} with an initial value satisfying~\eqref{initialv} converges at large time to a translate of the pulsating front. The uniqueness results stated in Theorem~\ref{thqual} can then be viewed as consequences of this global stability. In the second part, in Section~\ref{sec42}, by using spectral analysis we show that this convergence admits an exponential rate that is independent of the initial values. Lastly, in Section~\ref{sec43}, initial conditions satisfying assumptions of the type~\eqref{initialv2} are considered and Theorem~\ref{gstability2} is proved. For the sake of simplicity, throughout this section, even if it means rescaling the variables and renormalizing the reaction, one assumes that $L=1$ and that equation~\eqref{eqL} admits a pulsating front $U(t,x)=\phi(x-ct,x)$ with a nonzero speed $c$. Without loss of generality, as explained in Sections~\ref{sec2} and~\ref{sec3}, one can assume that $c>0$.


\subsection{Global stability of pulsating fronts}\label{secgstability}\label{sec41}

Consider the moving coordinates
\begin{equation}\label{movcoor}
(\xi,t)=(x-ct,t),
\end{equation}
and write the solution of~\eqref{eqL} as $v(t,\xi)=u(t,x)$, so that $v(t,\xi)$ satisfies the following~$T$-periodic parabolic equation with $T=1/c$:
\begin{equation}\label{changeq}
v_t=(a(\xi+ct)v_{\xi})_{\xi}+cv_{\xi}+f(\xi+ct,v).
\end{equation}
Clearly,  the assumption \eqref{asspars} implies that $0$ and $1$ are two stable $T$-periodic solutions of~\eqref{changeq}. Note that for any $\tau\in\R$,
\begin{equation}\label{varmono}
V^{\tau}(t,\xi):=\phi(\xi+\tau,\xi+ct)
\end{equation}
is also a $T$-periodic solution of~\eqref{changeq}. Let $P$ be the Poincar\'{e} map of the $T$-periodic equation~\eqref{changeq}, that is,
\begin{equation}\label{poin}
P(g)=v(T,\cdot;g),
\end{equation}
where $v(t,\xi;g)$ is the unique solution of the Cauchy problem of ~\eqref{changeq} with initial condition $v(0,\cdot;g)=g\in C(\R,[0,1])$. Throughout this section, we denote $ \|\cdot\|=\|\cdot\|_{L^{\infty}(\R)}$. It easily follows that $0$, $1$ and $V^{\tau}(0,\cdot)$ (for any $\tau\in\R$) are fixed points of $P$ in $C(\R,[0,1])$. Since~$\phi(\xi,x)$ is decreasing in $\xi$ by Theorem~\ref{thqual}, there holds $V^{\tau_1}(0,\xi)>V^{\tau_2}(0,\xi)$ for all $\tau_1<\tau_2$ and~$\xi\in\R$. Hence, the set $\{V^{\tau}(0,\cdot)\,\big|\,\tau\in\R\}$ is totally ordered in $C(\R,[0,1])$. In order to prove the global stability of the pulsating front $U(t,x)=\phi(x-ct,x)$ with phase shift in time, we will apply the following convergence theorem to the Poincar\'{e} map $P$.

\begin{lem}{\sc (\cite[Theorem 2.2.4]{zh})}\label{convlem}
Let $C$ be a closed and ordered convex subset of an ordred Banach space $E$ and let $F:C\to C$ be a continuous and monotone map. Assume that there exists an increasing homeomorphism $h$ from $[0,1]$ onto a subset of $C$ such that
\begin{itemize}
\item[(1)] for each $s\in[0,1]$, $h(s)$ is a stable fixed point of $F$;
\item[(2)] for each $x\in[h(0),h(1)]_E=\big\{x\in E\ |\ h(0)\le_E x\le_E h(1)\big\}\subset C$, the forward orbit $\gamma^+(x)=\big\{F^n(x)\,|\,n\in\N\big\}$ is precompact;
\item[(3)] if $\omega(x)>_Eh(s_0)$ for some $x\in [h(0),h(1)]_E$ and $s_0\in[0,1)$, then there exists $s_1\in(s_0,1)$ such that $\omega(x)\geq_E h(s_1)$. Here $\omega(x)=\cap_{k\in\N}\overline{\gamma^+(F^k(x))}$ denotes the $\omega$-limit set of $\{x\}$
    for $F$.
\end{itemize}
Then  for any precompact forward orbit $\gamma^+(y)$ of $F$ in $C$ with $\omega(y)\cap [h(0),h(1)]_E \neq \emptyset$, there is $s^*\in[0,1]$ such that $\omega(y)=\{h(s^*)\}$.
\end{lem}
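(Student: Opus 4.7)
The plan is to identify a single point $h(s^*)$ on the arc $\{h(s)\}_{s \in [0,1]}$ to which the forward orbit of $y$ converges. First I would pick $z \in \omega(y) \cap [h(0),h(1)]_E$ (nonempty by hypothesis). Using the standard invariance $F(\omega(y)) \subseteq \omega(y)$ together with monotonicity of $F$ and the fact that $h(0)$ and $h(1)$ are fixed points sandwiching $z$, every iterate $F^n(z)$ lies in $\omega(y) \cap [h(0),h(1)]_E$. Hypothesis~(2) then provides precompactness of $\gamma^+(z)$, so $\omega(z)$ is nonempty, compact, and satisfies
\[
\omega(z) \subseteq \omega(y) \cap [h(0),h(1)]_E.
\]

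Next I would set
\[
s^* := \sup\bigl\{s \in [0,1] \,:\, w \geq_E h(s)\ \text{for every}\ w \in \omega(z)\bigr\}.
\]
The defining set contains $0$, is an initial segment of $[0,1]$ by the monotonicity of $h$, and is closed (the order cone of $E$ is closed, $h$ is continuous, and $\omega(z)$ is compact). Hence the sup is attained and $w \geq_E h(s^*)$ for every $w \in \omega(z)$. If $s^* = 1$, the two-sided bound $h(0) \leq_E \omega(z) \leq_E h(1)$ collapses to $\omega(z) = \{h(1)\}$. If $s^* < 1$, I would argue by contradiction: were every $w \in \omega(z)$ to satisfy $w >_E h(s^*)$, hypothesis~(3) applied with $x := z \in [h(0),h(1)]_E$ would furnish $s_1 \in (s^*, 1)$ with $\omega(z) \geq_E h(s_1)$, contradicting the definition of $s^*$. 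So some $w \in \omega(z)$ fails to strictly dominate $h(s^*)$; combined with $w \geq_E h(s^*)$ this forces $w = h(s^*)$. In either case $h(s^*) \in \omega(z) \subseteq \omega(y)$.

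To upgrade $h(s^*) \in \omega(y)$ to $\omega(y) = \{h(s^*)\}$, I would invoke the Lyapunov stability of $h(s^*)$ from hypothesis~(1): given any neighbourhood $V$ of $h(s^*)$ in $E$, there is a smaller neighbourhood $U$ such that every forward iterate of a point of $U$ stays in $V$. Since $h(s^*)$ is a cluster point of $\{F^n(y)\}$, some $F^{n_0}(y)$ falls into $U$, hence $F^n(y) \in V$ for all $n \geq n_0$, so every cluster point of the orbit lies in $\overline{V}$. Letting $V$ shrink yields $\omega(y) = \{h(s^*)\}$.

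The step I expect to be trickiest is the behaviour at $s^*$: making precise the set-ordering $\omega(z) \geq_E h(s)$ versus pointwise order in $E$, and verifying the closedness of the defining set of $s^*$, both rely on the closedness of the order cone of $E$ and on compactness of $\omega(z)$; once these are in hand, the dichotomy at $s^*$ via hypothesis~(3) and the Lyapunov wrap-up via hypothesis~(1) are straightforward.
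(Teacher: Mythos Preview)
The paper does not prove this lemma at all; it is quoted verbatim as an external result from \cite[Theorem~2.2.4]{zh} and used as a black box in the proof of Proposition~\ref{nonlocals}. Your argument is correct and is essentially the standard proof of this abstract convergence theorem: pass to $z\in\omega(y)\cap[h(0),h(1)]_E$, use invariance and monotonicity to confine $\omega(z)$ to the order interval, define $s^*$ as the maximal level dominated by $\omega(z)$, invoke hypothesis~(3) to force $h(s^*)\in\omega(z)\subseteq\omega(y)$, and conclude via the Lyapunov stability of $h(s^*)$. There is nothing to compare against in the present paper, but your sketch matches the approach in the cited reference and has no gaps.
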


This abstract convergence result and its continuous-time analog were used, respectively, to prove the global attractiveness and uniqueness of bistable traveling waves for two classes of time-periodic reaction-diffusion equations in \cite{zh,zz} and an autonomous reaction-diffusion system in~\cite{xzh}. Here we should point out that the arguments used there are dependent on the property that both planar and time-periodic traveling fronts are monotone in the spatial variable. For equation~\eqref{eqL}, a pulsating front $\phi(x-ct,x)$ is no longer monotone in $x$ in general, but it is in the first variable $\xi=x-ct$. Thus, to make use of this monotonicity, we introduced the new variable~$\tau$ in~\eqref{varmono}. In what follows, we provide a series of lemmas to verify that the strategy in~\cite{zh} works for the functions $V^{\tau}(0,\cdot)$.

\begin{lem}\label{vagres}
For any $g\in C(\R,[0,1])$ satisfying~\eqref{initialv} and any $\epsilon>0$, there exist some integers $\tilde{k}=\tilde{k}(g,\epsilon)$ and $\tilde{m}=\tilde{m}(g,\epsilon)$ such that $V^{\tilde{k}}(0,\xi)-\epsilon\leq v(\tilde{m}T,\xi;g)\leq V^{-\tilde{k}}(0,\xi)+\epsilon$ for all $\xi\in\R$.
\end{lem}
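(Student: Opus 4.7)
The plan is to carry out a Fife--McLeod type sub/supersolution construction in the moving-frame equation~\eqref{changeq}, exploiting that each $V^\tau(t,\cdot)$ is a $T$-periodic solution of~\eqref{changeq} and that the family $\{V^\tau(0,\cdot):\tau\in\R\}$ is strictly decreasing in $\tau$ (by Theorem~\ref{thqual}), with $V^\tau(0,\xi)\to 0$ (resp.~$1$) pointwise as $\tau\to+\infty$ (resp.~$\tau\to-\infty$).

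First I would unpack~\eqref{initialv}: pick $\delta'\in(0,\delta)$ and $R>0$ such that $g(\xi)\ge 1-\delta'$ for $\xi\le -R$ and $g(\xi)\le\delta'$ for $\xi\ge R$. Fixing $q_0\in(\delta',\delta)$ and using the uniform limits of $\phi$ at $\pm\infty$ from~\eqref{depulf}, I would choose an integer $k_0>0$ so large that $V^{k_0}(0,\xi)\le q_0$ for every $\xi\ge -R$ and $V^{-k_0}(0,\xi)\ge 1-q_0$ for every $\xi\le R$; combined with $0\le V^{\pm k_0}(0,\cdot)\le 1$, this yields
\[
V^{k_0}(0,\cdot)-q_0\;\le\;g\;\le\;V^{-k_0}(0,\cdot)+q_0\quad\text{on }\R.
\]

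Next, for constants $0<\beta<\gamma$ and $p>0$ to be chosen sufficiently large, I would verify that
\[
W^-(t,\xi)=V^{k_0+p(1-\me^{-\beta t})}(t,\xi)-q_0\me^{-\beta t},\quad W^+(t,\xi)=V^{-k_0-p(1-\me^{-\beta t})}(t,\xi)+q_0\me^{-\beta t}
\]
are respectively a subsolution and a supersolution of~\eqref{changeq}. Plugging $W^-$ into~\eqref{changeq} and using that each $V^\sigma$ with $\sigma$ frozen solves~\eqref{changeq}, the subsolution inequality reduces, after dividing by $\me^{-\beta t}>0$, to
\[
\beta p\,\partial_1\phi\bigl(\xi+\sigma(t),\,\xi+ct\bigr)+q_0\bigl(\beta+\partial_u f(\xi+ct,s)\bigr)\;\le\;0,
\]
where $\sigma(t)=k_0+p(1-\me^{-\beta t})$ and $s$ lies between $W^-$ and $V^\sigma$. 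When $V^\sigma$ is close to $0$ or $1$, assumption~\eqref{asspars} gives $\partial_u f(\cdot,s)\le-\gamma$, so any $\beta<\gamma$ makes the second bracket nonpositive; on the complementary bounded strip of $\xi+\sigma$, the strict monotonicity $\partial_1\phi<0$ from Theorem~\ref{thqual} is bounded away from zero and a sufficiently large $p$ forces the first term to dominate. The supersolution property of $W^+$ follows by a symmetric computation.

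The parabolic comparison principle applied to~\eqref{changeq} with the initial ordering $W^-(0,\cdot)\le g\le W^+(0,\cdot)$ then yields
\[
V^{k_0+p(1-\me^{-\beta t})}(t,\xi)-q_0\me^{-\beta t}\;\le\;v(t,\xi;g)\;\le\;V^{-k_0-p(1-\me^{-\beta t})}(t,\xi)+q_0\me^{-\beta t}
\]
for all $t\ge 0$ and $\xi\in\R$. Setting $t=\tilde mT$, using the $T$-periodicity $V^\tau(\tilde mT,\xi)=V^\tau(0,\xi)$, the uniform continuity of $\tau\mapsto V^\tau(0,\cdot)$ in the sup norm (a consequence of the uniform continuity of $\phi$ on $\R\times\T$), and the monotonicity of $V^\tau(0,\cdot)$ in $\tau$, I would take any integer $\tilde k\ge k_0+p$ and then $\tilde m\in\N$ so large that both $q_0\me^{-\beta\tilde mT}$ and the shift error $\|V^{k_0+p(1-\me^{-\beta\tilde mT})}(0,\cdot)-V^{k_0+p}(0,\cdot)\|$ are smaller than $\epsilon/2$, which yields the claimed double inequality. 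The main obstacle is the verification of the sub/supersolution inequality on the intermediate region of $\xi+\sigma(t)$, where the only source of negativity is the uniform strict monotonicity of the front profile; this ultimately hinges on Theorem~\ref{thqual}.
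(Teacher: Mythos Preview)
Your barrier construction $W^\pm=V^{\sigma(t)}\pm q(t)$ is exactly the Fife--McLeod scheme that the paper employs in the \emph{next} lemma (Lemma~\ref{resemwave}), and there it comes with the explicit size restriction $q(0)\le\epsilon_0$ for some $\epsilon_0>0$ depending only on $f$ and $\phi$. The paper's proof of Lemma~\ref{vagres} uses instead a barrier built from a $\tanh$ profile, $w^+(t,\xi)=w_1^+(t)\eta(\xi+c^+t)+w_2^+(t)(1-\eta(\xi+c^+t))$, which does not involve the pulsating front at all and carries no smallness restriction on the initial data beyond~\eqref{initialv}.

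The gap in your verification is the sentence ``assumption~\eqref{asspars} gives $\partial_u f(\cdot,s)\le-\gamma$''. Hypothesis~\eqref{asspars} bounds $f$, not $\partial_u f$; it yields $\partial_u f(x,0),\partial_u f(x,1)\le-\gamma$ and hence, by the $C^{1,1}$ regularity, $\partial_u f(x,s)\le-\gamma/2$ on some neighborhood $s\in[-\epsilon_0,\epsilon_0]\cup[1-\epsilon_0,1+\epsilon_0]$, but nothing more. Now look at your subsolution when $\phi=V^\sigma$ is close to $1$: the mean value $s$ sits in $[\phi-q(t),\phi]$, and at $t=0$ this interval reaches down to about $1-q_0$. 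Since $q_0>\delta'$ and $\delta'$ is dictated by $g$ through~\eqref{initialv}, it may be arbitrarily close to $\delta$, so $1-q_0$ need not lie in $[1-\epsilon_0,1]$. On the interval $[1-\delta,1-\epsilon_0]$ the assumptions~\eqref{bistable}--\eqref{asspars} do not forbid $\partial_u f(x,\cdot)>0$ (take for instance $f(x,u)=\gamma(1-u)\bigl(1+A(1-u)(u-1+\delta)\bigr)$ on $[1-\delta,1]$ with $A>\delta^{-2}$), so the bracket $\beta+\partial_u f(x,s)$ can be strictly positive while $\partial_1\phi\to0$, and neither term forces $\mathcal L W^-\le0$. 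The supersolution has the symmetric failure when $\phi$ is close to $0$ and $s\in[\phi,\phi+q_0]$. This is precisely why the paper first uses the $\tanh$ barrier to drive the perturbation below any prescribed $\epsilon$, and only afterwards invokes your construction in Lemma~\ref{resemwave} with $\epsilon\le\epsilon_0$.
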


\begin{proof}
Let us first set a few notations. Recall that $\delta\in(0,1/2)$ and $\gamma>0$ are given in~\eqref{asspars}. Denote $w_1^+(t)=1+(1-\delta)\me^{-\gamma t}$ and $w_2^+(t)=\delta\me^{-\gamma t}$ for $t\ge0$, and notice that $(w_1^+)' (t)=\gamma(1-w_1^+(t))$, $(w_2^+)'(t) = -\gamma w_2^+(t)$ and $0<w^+_2(t)<w^+_1(t)\le2-\delta$ for all $t\ge0$. Set $\eta(\xi)=(1+\tanh(-\xi/2))/2$ for $\xi\in\R$ and notice that $\eta'=-\eta(1-\eta)$ and $\eta''=\eta(1-\eta)(1-2\eta)$ in $\R$. Furthermore, since the function~$f(x,u)$ is of class $C^{1,1}$ in~$u$ uniformly for $x\in\R$, there exists $K>0$ such that
\begin{equation}\label{lipcons}
|f(x,s_1)-f(x,s_2)|+|\partial_uf(x,s_1)-\partial_uf(x,s_2)|\leq K|s_1-s_2|\ \hbox{ for all }x,\,s_1,\,s_2\in\R.
\end{equation}
Lastly, define $c^+=c-\| a\|-\|a'\|-2K$ and
$$w^+(t,\xi)=w_1^+(t)\,\eta(\xi+c^+t)+w_2^+(t)\,(1-\eta(\xi+c^+t))\ \hbox{ for }t\geq 0\hbox{ and }\xi\in\R.$$

Let now $g\in C(\R,[0,1])$ satisfy~\eqref{initialv}. There is $\xi_0\in\N$ such that $g(\xi+\xi_0)\leq \delta$ for all~$\xi\geq 0$. Without loss of generality, even if it means working with the shifted function $g(\cdot+\xi_0)$, one can assume that $\xi_0=0$. Next, one shows that $w^+$ is a supersolution of~\eqref{changeq}. To do so, observe first that $v(0,\xi;g)=g(\xi)\le w^+(0,\xi)$ for all $\xi\in\R$. On the other hand, for all $t>0$ and $\xi\in\R$,
\begin{equation*}
\begin{split}
\mathcal{L}(w^+):&=w^+_t-cw^+_{\xi}-(a(\xi+ct)w^+_{\xi})_{\xi}-f(\xi+ct,w^+)\vspace{3pt}\\
&=\gamma(1-w^+_1)\eta-\gamma w^+_2(1-\eta)-f(\xi+ct,w^+)+c^+(w^+_1\eta'-w^+_2\eta')\vspace{3pt}\\
&\quad - c(w^+_1\eta'-w^+_2\eta')-a'(\xi+ct)(w^+_1\eta'-w^+_2\eta')-a(\xi+ct)(w^+_1\eta''-w^+_2\eta''),
\end{split}
\end{equation*}
where $\eta$, $\eta'$ and $\eta''$ are taken at $\xi+ct$, while $w^+_1$ and $w^+_2$ are taken at $t$. Remember that $f(x,u)=\partial_uf(x,1)\,(u-1)$ for all $u\ge1$ and $x\in\R$, with $\partial_uf(x,1)\le-\gamma$. It follows from~\eqref{asspars} and~\eqref{lipcons} that
\begin{equation*}
\begin{split}
&\gamma(1-w^+_1)\eta-\gamma w^+_2(1-\eta)-f(\xi+ct,w^+)\vspace{3pt}\\
\geq &\ f(\xi+ct,w^+_1)\eta+ f(\xi+ct,w^+_2)(1-\eta)-f(\xi+ct,w^+)\eta-f(\xi+ct,w^+)(1-\eta)\vspace{3pt}\\
= & \ \big(\partial_uf(\xi+ct,\vartheta_1w^+_1+(1-\vartheta_1)w^+)-\partial_uf(\xi+ct,\vartheta_2w^+_2+(1-\vartheta_2)w^+)\big)(w^+_1-w^+_2)\eta(1-\eta)\vspace{3pt}\\
\geq &\ -K(w^+_1-w^+_2)^2\eta(1-\eta),
\end{split}
\end{equation*}
where $\vartheta_1=\vartheta_1(t,\xi),\,\vartheta_2=\vartheta_2(t,\xi)\in[0,1]$. Then, owing to the definitions of $\eta$ and $c^+$, one has
\begin{equation*}
\begin{split}
\mathcal{L}(w^+)&\,\geq -K(w^+_1-w^+_2)^2\eta(1-\eta)+(-c^++c+a'(\xi+ct))(w^+_1-w^+_2)\eta(1-\eta)\vspace{3pt}\\
&\quad\,-a(\xi+ct)(w^+_1-w^+_2)\eta(1-\eta)(1-2\eta)\vspace{3pt}\\
&\,= \big(2K-K(w^+_1-w^+_2)\big)(w^+_1-w^+_2)\eta(1-\eta)+\big(\|a'\|+a'(\xi+ct)\big)(w^+_1-w^+_2)\eta(1-\eta)\vspace{3pt}\\
&\quad\,+\big(\|a\|-a(\xi+ct)(1-2\eta)\big)(w^+_1-w^+_2)\eta(1-\eta)\vspace{3pt}\\
& \, \geq 0
\end{split}
\end{equation*}
in $(0,+\infty)\times \R$. The parabolic maximum principle implies that $v(t,\xi;g)\leq w^+(t,\xi)$ for all $(t,\xi)\in[0,+\infty)\times \R$.

Finally, let $\epsilon>0$ be any positive real number. There exist an integer $\tilde{m}=\tilde{m}(g,\epsilon)$ and a positive real number $C=C(g,\epsilon)$ such that $1<w^+_1(\tilde{m}T)\leq 1+\epsilon/2$, $0< w^+_2(\tilde{m}T)\leq \epsilon/2$ and~$(w^+_1(\tilde{m}T)-w^+_2(\tilde{m}T))\eta(\xi+c^+\tilde{m}T)\leq\epsilon/2$ for all $\xi\geq C$. Thus, it follows that, for all $\xi\ge C$,
$$v(\tilde{m}T,\xi;g)\leq w^+(\tilde{m}T,\xi)=(w^+_1(\tilde{m}T)-w^+_2(\tilde{m}T))\eta(\xi+c^+\tilde{m}T)+w^+_2(\tilde{m}T)\leq \epsilon \leq V^0(0,\xi)+\epsilon.$$
In the case where $\xi<C$, since $\lim_{s\to+\infty}V^0(0,\xi-s)=1$ uniformly for $\xi<C$, there exists an integer $\tilde{k}=\tilde{k}(g,\epsilon)$ such that $v(\tilde{m}T,\xi;g)\leq 1 \leq V^0(0,\xi-\tilde{k})+\epsilon=V^{-\tilde{k}}(0,\xi)+\epsilon$ for all $\xi<C$. Since $V^0(0,\xi)=\phi(\xi,\xi)\le\phi(\xi-\tilde{k},\xi)=V^{-\tilde{k}}(0,\xi)$ for all $\xi\in\R$ by Theorem~\ref{thqual}, one finally gets that $v(\tilde{m}T,\xi;g)\leq V^{-\tilde{k}}(0,\xi)+\epsilon$ for all $\xi\in\R$.

Similarly, even if it means increasing the integers $\tilde{m}$ and $\tilde{k}$, one can show that $V^{\tilde{k}}(0,\xi)-\epsilon\leq v(\tilde{m}T,\xi;g)$ for all $\xi\in\R$, by constructing an analogous subsolution of equation~\eqref{changeq}. More precisely, letting $w^-_1(t)=1-\delta\me^{-\gamma t}$, $w^-_2(t)=-(1+\delta)\me^{-\gamma t}$ and $c^-=c+\| a\|+\|a'\|+2K$, the function $w^-(t,\xi)=w^-_1(t)\eta(\xi+c^-t)+w^-_2(t)(1-\eta(\xi+c^-t))$ defined in $[0,+\infty)\times\R$ is a subsolution of~\eqref{changeq} and one can conclude as in the previous paragraph. Therefore, the proof of Lemma~\ref{vagres} is complete.
\end{proof}

The above lemma reveals that a ``vaguely resembling wave initial condition" (i.e. $g$ satisfying~\eqref{initialv}) evolves into a ``resembling wave front" (i.e. close to $0$ and $1$ as $\xi\to\pm\infty$) after a certain time. Next by constructing similar super- and subsolutions of~\eqref{changeq} as in~\cite{fm,x4}, one shows that a ``resembling wave front" preserves its structure uniformly at later times.

\begin{lem}\label{resemwave}
(i) There exist some positive real numbers $\epsilon_0$ and $k_0$ such that for any $g\in C(\R,[0,1])$ satisfying $g\le V^{\tau_0}(0,\cdot)+\epsilon$ $($resp. $g\ge V^{\tau_0}(0,\cdot)-\epsilon)$ in $\R$ for some $\epsilon\in(0,\epsilon_0]$ and $\tau_0\in\R$, then $v(t,\xi;g)\leq V^{\tau_0-k_0\epsilon}(t,\xi)+\epsilon \me^{-\gamma t/2}$ $($resp. $v(t,\xi;g)\geq V^{\tau_0+k_0\epsilon}(t,\xi)-\epsilon \me^{-\gamma t/2})$ for all $t\ge0$ and $\xi\in\R$.\hfill\break
(ii) There exists a positive real number $k_1$ such that if $\|g-V^{\tau_0}(0,\cdot)\|\leq \epsilon$ for some $\epsilon\in(0,\epsilon_0]$ and~$\tau_0\in\R$, then $\| v(t,\cdot;g)-V^{\tau_0}(t,\cdot)\|\leq k_1\epsilon$ for all $t\geq 0$.
\end{lem}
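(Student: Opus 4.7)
The plan is to run a Fife--McLeod style comparison argument in the moving frame by building an explicit supersolution that interpolates between the exponential stability at $\pm\infty$ and a time-shifted copy of the front. First I would exploit the assumption~\eqref{asspars} and the $C^{1,1}$-regularity of $f$ in $u$ to choose $\delta'\in(0,\delta]$ with $\partial_u f(x,u)\le-\gamma/2$ for all $x\in\R$ and $u\in(-\infty,\delta']\cup[1-\delta',+\infty)$. On the transition set $S:=\{(\eta,y):\delta'/2\le\phi(\eta,y)\le1-\delta'/2\}$, the uniform limits $\phi(\pm\infty,\cdot)=1,0$ confine $\eta$ to a compact range for each $y\in\T$, and by Theorem~\ref{thqual} (together with $c>0$) the strict inequality $\phi_1<0$ holds; hence by continuity there exists $m>0$ with $\phi_1\le-m$ on $S$.

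For the upper bound in (i), I would set, for $\epsilon\in(0,\epsilon_0]$ with $\epsilon_0\le\delta'/2$,
\begin{equation*}
w^+(t,\xi)=V^{\tau(t)}(t,\xi)+\alpha(t),\quad \tau(t)=\tau_0-k_0\epsilon\bigl(1-\me^{-\gamma t/2}\bigr),\quad \alpha(t)=\epsilon\,\me^{-\gamma t/2},
\end{equation*}
so that $\tau'(t)=-(k_0\gamma/2)\alpha(t)$ and $\alpha'(t)=-(\gamma/2)\alpha(t)$. Since each $V^{\tau}$ solves~\eqref{changeq}, denoting by $\mathcal{L}$ the parabolic operator in~\eqref{changeq} a direct computation gives
\begin{equation*}
\mathcal{L}(w^+)=\tau'(t)\,\partial_\tau V^{\tau(t)}+\alpha'(t)+f(\xi+ct,V^{\tau(t)})-f(\xi+ct,V^{\tau(t)}+\alpha(t)).
\end{equation*}
Splitting into three regions according to the value of $V^{\tau(t)}(t,\xi)$: on $\{V^{\tau(t)}\le\delta'/2\}$ and $\{V^{\tau(t)}\ge1-\delta'/2\}$, the choice of $\delta'$ together with a mean value identity yields $f(\cdot,V^{\tau(t)})-f(\cdot,V^{\tau(t)}+\alpha)\ge(\gamma/2)\alpha$, and combining this with $\tau'\partial_\tau V\ge 0$ (both factors are negative) gives $\mathcal{L}(w^+)\ge0$. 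On the middle region where $\partial_\tau V^{\tau(t)}\le-m$, the bound $|f(\cdot,V)-f(\cdot,V+\alpha)|\le K\alpha$ from~\eqref{lipcons} yields
\begin{equation*}
\mathcal{L}(w^+)\ge\Bigl(\frac{k_0\gamma m}{2}-\frac{\gamma}{2}-K\Bigr)\alpha(t),
\end{equation*}
which is nonnegative provided $k_0\ge(1+2K/\gamma)/m$.

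Since $w^+(0,\xi)=V^{\tau_0}(0,\xi)+\epsilon\ge g(\xi)$ by hypothesis, the parabolic comparison principle (valid for~\eqref{changeq} by the Lipschitz continuity of $f$ in $u$) yields $v(t,\xi;g)\le w^+(t,\xi)$; the monotonicity $V^{\tau_1}\ge V^{\tau_2}$ for $\tau_1\le\tau_2$ applied to $\tau(t)\ge\tau_0-k_0\epsilon$ then produces the claimed upper bound. The reverse inequality follows from the symmetric subsolution $w^-(t,\xi)=V^{\tau_0+k_0\epsilon(1-\me^{-\gamma t/2})}(t,\xi)-\epsilon\me^{-\gamma t/2}$. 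For (ii), applying both bounds to $g$ satisfying $\|g-V^{\tau_0}(0,\cdot)\|\le\epsilon$ and using $|V^{\tau_0\pm k_0\epsilon}(t,\xi)-V^{\tau_0}(t,\xi)|\le k_0\epsilon\,\|\phi_1\|_{L^\infty}$ (with $\|\phi_1\|_{L^\infty}$ finite since $\phi_1(x-ct,x)=-U_t(t,x)/c$ and $U_t$ is bounded by standard parabolic estimates on the pulsating front $U$) delivers $\|v(t,\cdot;g)-V^{\tau_0}(t,\cdot)\|\le k_1\epsilon$ with $k_1=k_0\|\phi_1\|_{L^\infty}+1$. The main obstacle is securing the uniform strict bound $\phi_1\le-m<0$ on the transition set, since it is this quantitative monotonicity that drives the restoring mechanism in the middle region and pins down the admissible value of $k_0$.
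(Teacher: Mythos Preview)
Your proposal is correct and follows essentially the same Fife--McLeod supersolution construction as the paper: the paper likewise sets $\overline{V}(t,\xi)=V^{\tau(t)}(t,\xi)+q(t)$ with $q(t)=\epsilon\,\me^{-\gamma t/2}$ and $\tau'(t)=-\frac{2K+\gamma}{2\beta}q(t)$ (your $m$ is the paper's $\beta$, and your $k_0=(1+2K/\gamma)/m$ coincides with the paper's $k_0=(2K+\gamma)/(\gamma\beta)$), splits according to the value of $\phi$ into the outer regions (where~\eqref{asspars} gives the $\gamma/2$ gain) and the transition region (where $\partial_1\phi\le-\beta$), and deduces (ii) from (i) via the Lipschitz bound on $\phi$ in its first variable. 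The only cosmetic difference is that the paper selects $\epsilon_0$ directly from the $C^{1,1}$ regularity rather than via an auxiliary $\delta'$.
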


\begin{proof}
(i) We only consider the case $g\le V^{\tau_0}(0,\cdot)+\epsilon$, since the case $g\ge V^{\tau_0}(0,\cdot)-\epsilon$ can be treated similarly. The general strategy can be described as follows. We set
$$ \overline{V}(t,\xi):=V^{\tau(t)}(t,\xi)+q(t)\ \hbox{ for }t\geq 0\hbox{ and }\xi\in\R,$$
where $\tau$ and $q$ are $C^1([0,+\infty))$ functions such that $\tau(0)=\tau_0$, $\tau'(t)<0$ for all $t\ge0$, $q(0)=\epsilon$ and~$0<q(t)\leq \epsilon$ for all $t\ge0$. Here $\epsilon>0$ will be chosen small enough and $\tau_0\in\R$ is arbitrary. The assumption $g\le V^{\tau_0}(0,\cdot)+\epsilon$ in $\R$ means that $g\le\overline{V}(0,\cdot)$ in $\R$. By choosing appropriate functions~$\tau(t)$ and $q(t)$, one will actually show that $\overline{V}(t,\xi)$ is a supersolution of equation~\eqref{changeq}.

To do so, let $\zeta(t,\xi)=\xi+\tau(t)$ and $x(t,\xi)=\xi+ct$, so that $\overline{V}(t,\xi)=\phi(\zeta(t,\xi),x(t,\xi))+q(t)$. To avoid any confusion, let us denote here $\partial_1\phi$ and~$\partial_2\phi$ the partial derivatives of $\phi$ with respect to its first and second arguments. A straightforward calculation gives, for all $t>0$ and $\xi\in\R$,
\begin{equation*}
\begin{split}
\mathcal{L}\overline{V}&\,=\overline{V}_t-c\overline{V}_{\xi}- (a(\xi+ct)\overline{V}_{\xi})_{\xi}-f(\xi+ct,\overline{V})\vspace{3pt}\\
&\,=\tau'(t)\partial_1\phi+c\partial_2\phi+q'(t)-c(\partial_1\phi\!+\!\partial_2\phi)-a'(x)(\partial_1\phi\!+\!\partial_2\phi)-a(x)(\partial_1\!+\!\partial_2)^2\phi-f(x,\phi\!+\!q(t))\vspace{3pt}\\
&\,=\tau'(t)\partial_1\phi+q'(t)-f(x,\phi+q(t))+f(x,\phi)\vspace{3pt}\\
& \ \ \ \ -\big(c\partial_1\phi+a'(x)(\partial_1\phi+\partial_2\phi)+a(x)(\partial_1+\partial_2)^2\phi+f(x,\phi)\big),
\end{split}
\end{equation*}
where $\phi$, $\partial_1\phi$ and $\partial_2\phi$ are taken at $(\zeta(t,\xi),x(t,\xi))$, while $x=x(t,\xi)$. Since $(\phi,c)$ is a pulsating front of equation~\eqref{eqL}, it follows that, for all $t>0$ and $\xi\in\R$,
\be\label{LbarV}
\mathcal{L}\overline{V}= \tau'(t)\partial_1\phi+q'(t)+f(x,\phi)-f(x,\phi+q(t)).\ee

Now, since $f(y,u)$ satisfies~\eqref{asspars} and is of class $C^{1,1}$ in $u$ uniformly in $y\in\R$, there exists a positive real number $\epsilon_0$ such that, if $(\epsilon,t,\xi)\in(0,\epsilon_0]\times[0,+\infty)\times\R$ and $\phi(\zeta(t,\xi),x(t,\xi))\in[0,\epsilon_0]\cup[1-\epsilon_0,1]$, then
\begin{equation}\label{resemwavein1}
f(x(t,\xi),\phi(\zeta(t,\xi),x(t,\xi)))-f(x(t,\xi),\phi(\zeta(t,\xi),x(t,\xi))\!+\!q(t))\geq\frac{\gamma q(t)}{2}.
\end{equation}
On the other hand, since the derivative $\partial_1\phi$ is continuous and negative in $\R\times\T$ from Theorem~\ref{thqual} and the strong parabolic maximum principle applied to the function~$U_t$, there is~$\beta>0$ such that, if $(\epsilon,t,\xi)\in(0,\epsilon_0]\times[0,+\infty)\times\R$ and $\phi(\zeta(t,\xi),x(t,\xi))\in[\epsilon_0,1-\epsilon_0]$, then
\begin{equation}\label{resemwavein3}
\partial_1\phi(\zeta(t,\xi),x(t,\xi))\leq-\beta .
\end{equation}

Given these positive parameters $\epsilon_0$ and $\beta$, let us now consider $\epsilon\in(0,\epsilon_0]$, $g\in C(\R,[0,1])$ and~$\tau_0\in\R$ such that $g\le V^{\tau_0}(0,\cdot)+\epsilon$ in $\R$, and let us then choose $q(t)$ and $\tau(t)$ so that
\begin{equation}\label{resemwavein4}
q(0)=\epsilon,\ q'(t)=-\frac{\gamma q(t)}{2}\hbox{ for all }t\ge0,\ \tau(0)=\tau_0\hbox{ and }\tau'(t)=-\frac{2K+\gamma}{2\beta}q(t)\hbox{ for all }t\ge0,
\end{equation}
that is $q(t)=\epsilon \me^{-\gamma t/2}$ and $\tau(t)=\tau_0-\epsilon(2K+\gamma)(1-\me^{-\gamma t/2})/(\gamma\beta)$ for all $t\ge0$. It is then easy to check from~(\ref{lipcons}-\ref{resemwavein4}) that $\mathcal{L}\overline{V}\geq 0$ for all $t>0$ and $\xi\in\R$, while $g\le\overline{V}(0,\cdot)$ in $\R$. That is, $\overline{V}$ is a supersolution of~\eqref{changeq}. As a consequence, by the comparison principle, one infers that
$$v(t,\xi;g)\le V^{\tau(t)}(t,\xi)+q(t)\ \hbox{ for all }t\geq 0\hbox{ and }\xi\in\R.  $$
Since $V^s(t,\xi)$ is decreasing in $s\in\R$, letting $k_0=(2K+\gamma)/(\gamma\beta)>0$, one has $\tau(t)\ge\tau_0-k_0\epsilon$ for all $t\ge0$, whence $v(t,\xi;g)\leq V^{\tau_0-k_0\epsilon}(t,\xi)+\epsilon \me^{-\gamma t/2}$ for all $t\ge 0$ and $\xi\in\R$. The proof of assertion~(i) is thereby complete.

(ii) Since $V^{\tau_0}(0,\xi)-\epsilon\leq g(\xi)\leq V^{\tau_0}(0,\xi)+\epsilon$ for all $\xi\in \R$, one sees from assertion~(i) that $V^{\tau_0+k_0\epsilon}(t,\xi)-\epsilon \me^{-\gamma t/2} \leq v(t,\xi;g)\leq V^{\tau_0-k_0\epsilon}(t,\xi)+\epsilon \me^{-\gamma t/2}$ for all $t\ge0$ and $\xi\in \R$. Therefore,
\begin{equation*}
\begin{split}
v(t,\xi;g)-V^{\tau_0}(t,\xi)&\leq V^{\tau_0-k_0\epsilon}(t,\xi)-V^{\tau_0}(t,\xi)+\epsilon \me^{-\gamma t/2}\vspace{3pt}\\
&=\phi(\xi+\tau_0-k_0\epsilon,\xi+ct)-\phi(\xi+\tau_0,\xi+ct)+\epsilon \me^{-\gamma t/2},
\end{split}
\end{equation*}
and, similarly, $v(t,\xi;g)-V^{\tau_0}(t,\xi)\geq \phi(\xi+\tau_0+k_0\epsilon,\xi+ct)-\phi(\xi+\tau_0,\xi+ct)-\epsilon \me^{-\gamma t/2}$ for all~$t\geq 0$ and~$\xi\in\R$. Since $\phi$ is globally Lipschitz-continuous, there exists a positive real number~$k_1$ depending only on~$\phi$ and~$k_0$ such that $\|v(t,\cdot;g)-V^{\tau_0}(t,\cdot)\|\le k_1\epsilon$ for all $t\ge0$. The proof of Lemma~\ref{resemwave} is thereby complete.
\end{proof}

Lemma~\ref{resemwave} implies in particular that for each $\tau\in\R$, $V^{\tau}(0,\cdot)$ is a Lyapunov stable fixed point of the Poincar\'e map $P$ defined in~\eqref{poin}. Now we are in a position to employ Lemma~\ref{convlem} to prove that the non-stationary pulsating fronts of~\eqref{eqL} are globally stable. Due to Lemmas~\ref{vagres}-\ref{resemwave} and the monotonicity of $V^{\tau}(t,\xi)$ in $\tau$, the arguments in \cite[Theorem 10.2.1]{zh} carry through with minor modification. For the sake of completeness, we include the details below.

\begin{pro}\label{nonlocals}
Let $\phi(x-ct,x)$ be a pulsating front of equation~\eqref{eqL} with $L=1$, $c\neq 0$ and let $u(t,x;g)$ be the solution of~\eqref{inieqL} with $u(0,\cdot;g)=g\in L^{\infty}(\R,[0,1])$ satisfying~\eqref{initialv}. Then there exists $\tau_g\in\R$ such that
$$\sup_{x\in\R}\big|u(t,x;g)-U(t+\tau_g,x)\big|=\sup_{x\in\R}\big|u(t,x;g)-\phi(x-ct-c\tau_g,x)\big|\to0\ \hbox{ as }t\to+\infty,$$
that is, $\|v(t,\cdot;g)-V^{-c\tau_g}(t,\cdot)\|\to0$ as $t\to+\infty$.
\end{pro}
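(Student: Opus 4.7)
The plan is to work in the moving-frame variable $\xi=x-ct$ and exploit the Poincar\'e map $P$ of the $T$-periodic equation~\eqref{changeq}. The ordered one-parameter family $\{V^\tau(0,\cdot)\}_{\tau\in\R}$ of fixed points of $P$ serves as the target set of the dynamics. I would first apply Zhao's abstract convergence theorem (Lemma~\ref{convlem}) to obtain that $v(nT,\cdot;g)$ converges in the compact-open topology to some $V^{\tau^*}(0,\cdot)$, and then upgrade this to uniform convergence on $\R$ using the Lyapunov stability estimate in Lemma~\ref{resemwave}(ii) together with the sandwich of $v$ between shifts of the pulsating front.

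\textbf{Trapping between translates.} Fix the constants $\epsilon_0,k_0>0$ from Lemma~\ref{resemwave}(i). Lemma~\ref{vagres} with $\epsilon=\epsilon_0$ produces integers $\tilde m,\tilde k$ with
\begin{equation*}
V^{\tilde k}(0,\xi)-\epsilon_0 \;\le\; v(\tilde mT,\xi;g) \;\le\; V^{-\tilde k}(0,\xi)+\epsilon_0, \qquad \xi\in\R.
\end{equation*}
By the semigroup property, the $T$-periodicity of~\eqref{changeq} (noting $cT=1$ and that $\phi(\cdot,y)$ is $1$-periodic in $y$, so $V^\tau(nT,\cdot)=V^\tau(0,\cdot)$ for every $n\in\Z$), Lemma~\ref{resemwave}(i) applied to the datum $v(\tilde mT,\cdot;g)$ propagates this to
\begin{equation*}
V^{\tau_-}(0,\xi)-\epsilon_0\me^{-\gamma(n-\tilde m)T/2} \;\le\; v(nT,\xi;g) \;\le\; V^{\tau_+}(0,\xi)+\epsilon_0\me^{-\gamma(n-\tilde m)T/2}
\end{equation*}
for all $n\ge\tilde m$ and $\xi\in\R$, where $\tau_-:=\tilde k+k_0\epsilon_0>\tau_+:=-\tilde k-k_0\epsilon_0$. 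Since $\tau\mapsto V^\tau(0,\cdot)$ is strictly decreasing, the $\omega$-limit set of $v(\tilde mT,\cdot;g)$ under $P$ is contained in the order interval $[V^{\tau_-}(0,\cdot),V^{\tau_+}(0,\cdot)]$ in the compact-open topology.

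\textbf{Applying the abstract convergence theorem.} I would define $h:[0,1]\to C(\R,[0,1])$ by $h(s)=V^{\tau_-+s(\tau_+-\tau_-)}(0,\cdot)$, an increasing homeomorphism of $[0,1]$ onto a totally ordered subset of $C:=[h(0),h(1)]$ with the natural order and the compact-open topology, and check the three hypotheses of Lemma~\ref{convlem} for $F=P$ and $C$. Condition (1) (stability of each $h(s)$) is precisely Lemma~\ref{resemwave}(ii). Condition (2) (precompactness of forward orbits of $P$) follows from standard interior parabolic $C^{2,\alpha}_{loc}$ estimates on~\eqref{changeq} for bounded initial data. Condition (3) (the order-shift property of $\omega$-limits) I would obtain by combining the strong monotonicity of the parabolic semiflow with the lower-bound half of Lemma~\ref{resemwave}(i): starting from a point in $\omega(g)$ that strictly dominates $h(s_0)=V^{\tau(s_0)}(0,\cdot)$, one localizes this to a uniform gap $\ge\mu$ on a compact interval, then uses the comparison estimate of Lemma~\ref{resemwave}(i) to shift the parameter from $\tau(s_0)$ to $\tau(s_0)+k_0\mu$. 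The previous step shows that $\omega(g)\cap C\neq\emptyset$, so Lemma~\ref{convlem} yields $s^*\in[0,1]$ with $v(nT,\cdot;g)\to h(s^*)=:V^{\tau^*}(0,\cdot)$ in the compact-open topology as $n\to+\infty$.

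\textbf{From compact-open to uniform; main difficulty.} Given $\eta>0$, the uniform convergence $\phi(\xi+\tau,\xi)\to 0$ as $\xi\to+\infty$ and $\to 1$ as $\xi\to-\infty$ for $\tau\in[\tau_+,\tau_-]$ combined with the sandwich of Step~2 furnishes $R=R(\eta)$ such that $|v(nT,\xi;g)-V^{\tau^*}(0,\xi)|<3\eta$ for all $|\xi|\ge R$ and $n$ large; together with the compact-open convergence on $[-R,R]$ this gives $\|v(nT,\cdot;g)-V^{\tau^*}(0,\cdot)\|_{L^\infty(\R)}\to0$. One application of Lemma~\ref{resemwave}(ii) per interval $[nT,(n+1)T]$ then extends uniform closeness from integer multiples of $T$ to all intermediate times, so $\|v(t,\cdot;g)-V^{-c\tau_g}(t,\cdot)\|_{L^\infty(\R)}\to 0$ with $\tau_g:=-\tau^*/c$. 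I expect the hardest step to be the verification of condition (3) of Lemma~\ref{convlem}: a strict inequality $\omega(g)>_E h(s_0)$ on the unbounded line does not a priori produce a uniform gap, so one must first localize using the exponential tail estimates of Lemma~\ref{exponential} for $V^\tau(0,\cdot)$ at $+\infty$ and for $1-V^\tau(0,\cdot)$ at $-\infty$ to reduce to a compact set, and then invoke Lemma~\ref{resemwave}(i) to convert the localized gap into the required shift in the $\tau$-parameter.
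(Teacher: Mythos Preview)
Your overall strategy---trap between translates via Lemmas~\ref{vagres} and~\ref{resemwave}, then invoke Lemma~\ref{convlem} for the Poincar\'e map, then pass from $nT$ to all $t$ via Lemma~\ref{resemwave}(ii)---is exactly the paper's. The one real discrepancy is your choice to run Lemma~\ref{convlem} in the compact-open topology and ``upgrade'' afterwards. As stated, Lemma~\ref{convlem} requires $E$ to be an ordered \emph{Banach} space, and $C(\R,\R)$ with the compact-open topology is not one; so your plan either needs an extension of Zhao's theorem to Fr\'echet spaces or some other justification you have not supplied. The paper sidesteps this by taking $E=C(\R,\R)$ with the sup norm from the outset: the sandwich of your Step~2, together with the $C^1(\R)$ bound on $P^n(g)$ from parabolic estimates \emph{and} the uniform limits $V^\tau(0,-\infty)=1$, $V^\tau(0,+\infty)=0$, already gives precompactness of $\gamma^+(g)$ in $(C(\R,[0,1]),\|\cdot\|_{L^\infty})$ via Arzel\`a--Ascoli plus tail control. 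Lemma~\ref{convlem} then yields $\|P^n(g)-V^{\tau^*}(0,\cdot)\|_{L^\infty}\to0$ directly, and your upgrade step disappears.

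Your sketch of condition~(3) is headed in the right direction but glosses over the crux: Lemma~\ref{resemwave}(i) needs a \emph{global} inequality $g\ge V^{\tau_0}(0,\cdot)-\epsilon$, not a gap on a compact set. The paper bridges this as follows. First the strong maximum principle upgrades $\omega(\varphi_0)>_E h(s_0)$ to the strict pointwise inequality $V^{p-2ps_0}(0,\cdot)<\varphi$ on all of $\R$ for every $\varphi\in\omega(\varphi_0)$. Then one picks $M$ so large that $\sup_{|\xi|\ge M}|V^{p-2ps}(0,\xi)-V^{p-2ps'}(0,\xi)|/|s-s'|$ is small (this is where the decay of $\partial_1\phi$ at $\xi\to\pm\infty$ enters), and uses the \emph{sup-norm compactness} of $\omega(\varphi_0)$ to extract a uniform gap on $[-M,M]$, hence an $s_1>s_0$ with $V^{p-p(3s_1-s_0)}(0,\cdot)<\varphi$ on $[-M,M]$. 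The small-tail estimate converts this, after allowing a controlled overshoot in $\tau$, into a global lower bound of the form $P^{n_k}(\varphi_0)\ge V^{p-p(3s_1-s_0)}(0,\cdot)-\epsilon$ to which Lemma~\ref{resemwave}(i) applies. Note that sup-norm compactness of $\omega(\varphi_0)$ is used here, which is a second reason to set up the whole argument in $\|\cdot\|_{L^\infty}$ rather than compact-open.
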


\begin{proof}
Recall that $P:C(\R,[0,1])\to C(\R,[0,1])$ is the Poincar\'e map defined in~\eqref{poin}, that is, $P(\varphi)=v(T,\cdot;\varphi)$ for all $\varphi\in C(\R,[0,1])$. We are going to apply Lemma~\ref{convlem} with $E=C(\R,\R)$ endowed with the norm $\|\ \|=\|\ \|_{L^{\infty}(\R)}$ and the standard order between real-valued functions, $C=C(\R,[0,1])$ and $F=P$. We first notice that $C$ is a closed and ordered convex subset of $E$ and that the map $P:C\to C$ is monotone and continuous, by the parabolic maximum principle.

Consider now any $g\in L^{\infty}(\R,[0,1])$ satisfying~\eqref{initialv}. Since $v(t,\cdot;g)\in C(\R,[0,1])$ for all $t>0$ and since $\limsup_{\xi\to-\infty}(1-v(t,\xi;g))\le\big(\limsup_{\xi\to-\infty}(1-g(\xi))\big)\me^{Kt}$ and $\limsup_{\xi\to+\infty}v(t,\xi;g)\le\big(\limsup_{\xi\to+\infty}g(\xi)\big)\me^{Kt}$ for all $t>0$ with $K$ as in~\eqref{lipcons}, one can assume without loss of generality that $g\in C(\R,[0,1])$ satisfies~\eqref{initialv}, even if it means replacing $g$ by $v(\rho,\cdot;g)$ for some small enough~$\rho>0$. Under the notations $\epsilon_0$ and $k_0$ of Lemma~\ref{resemwave}~(i), fix any $\epsilon\in(0,\epsilon_0]$. Lemmas~\ref{vagres} and~\ref{resemwave} provide the existence of some integers $\tilde{k}=\tilde{k}(g,\epsilon)$ and $\tilde{m}=\tilde{m}(g,\epsilon)$ such that
\begin{equation}\label{nonlocalseq1}
V^{\tilde{k}+k_0\epsilon}(t,\xi)-\epsilon\me^{-\gamma t/2}\leq v(\tilde{m}T+t,\xi;g)\leq V^{-\tilde{k}-k_0\epsilon}(t,\xi)+\epsilon\me^{-\gamma t/2}\ \hbox{ for all }t\geq 0\hbox{ and }\xi\in\R.
\end{equation}
Since the sequence $(P^n(g))_{n\ge1}=(v(nT,\cdot;g))_{n\ge1}$ is bounded in $C^1(\R,\R)$ by standard parabolic estimates and since $V^{\tau}(t,-\infty)=1$ and $V^{\tau}(t,+\infty)=0$ uniformly in $t\in\R$, it then follows from~\eqref{nonlocalseq1} that the forward orbit $\gamma^+(g)=\{P^n(g)\ |\ n\in\N\}$ is precompact in $C(\R,[0,1])$. Hence, the $\omega$-limit set $\omega(g)$ is nonempty, compact and invariant by $P$. Letting $p=\tilde{k}+k_0\epsilon>0$ and~$t=nT$ in~\eqref{nonlocalseq1}, one then concludes that
$$\omega(g)\subset I:=\big\{\varphi\in C(\R,[0,1])\ |\ V^p(0,\cdot)\leq \varphi \leq V^{-p}(0,\cdot)\hbox{ in }\R\big\}\ \subset\ C. $$
Now define $h(s)=V^{p-2ps}(0,\cdot)$ for $s\in[0,1]$. Thus, $I=[h(0),h(1)]_E$ and $h$ is an increasing homeomorphism from $[0,1]$ onto a subset of $C$. On the other hand, by Lemma~\ref{resemwave}~(ii), $h(s)$ is a stable fixed point for $P$ for each $s\in[0,1]$. As done for $g$, one can also observe that for each~$\varphi\in I=[h(0),h(1)]_E$, the forward orbit $\gamma^+(\varphi)$ is included in $I$ and precompact in~$C(\R,[0,1])$.

Let us finally check the last condition in Lemma~\ref{convlem}. To do so, assume that $h(s_0)<_E\omega(\varphi_0)$ for some $s_0\in[0,1)$ and $\varphi_0\in I$. In other words, one has $h(s_0)<_E\varphi$, that is, $V^{p-2ps_0}(0,\cdot)\le\varphi$ in $\R$ and $V^{p-2ps_0}(0,\cdot)\not\equiv\varphi$, for all $\varphi\in\omega(\varphi_0)$. By the strong maximum principle, there holds $V^{p-2ps_0}(t,\xi)<v(t,\xi;\varphi)$ for all $t>0$ and $\xi\in\R$, whence $V^{p-2ps_0}(T,\cdot)<P(\varphi)$ in $\R$. By the $T$-periodicity of $V^{p-2ps_0}$ and the invariance of $\omega(\varphi_0)$ for $P$, it then follows that
\be\label{varphi0}
V^{p-2ps_0}(0,\cdot)<\varphi\hbox{ in }\R\hbox{ for all }\varphi\in \omega(\varphi_0).
\ee
Furthermore, since $V^{\tau}(0,\xi)=\phi(\xi+\tau,\xi)$ and $\lim_{\xi\to\pm\infty}\partial_{\xi}\phi(\xi,x)=0$ uniformly in $x\in\R$, there is~$M>0$ such that
\be\label{defM}
0<\tilde{\delta}:=\sup_{s,s'\in[0,3/2],\,s\neq s',\,|\xi|\geq M}\!\!\frac{|V^{p-2ps}(0,\xi)-V^{p-2ps'}(0,\xi)|}{|s-s'|}\le\min\Big(\frac{2\epsilon_0}{5},\frac{2p}{5k_0}\Big).
\ee
On the other hand, since $\omega(\varphi_0)$ is compact, it follows then from~\eqref{varphi0} that there exists a real number $s_1\in (s_0,1)$ such that $V^{p-p(3s_1-s_0)}(0,\xi)<\varphi(\xi)$ for all $\xi\in[-M,M]$ and $\varphi\in\omega(\varphi_0)$.

 Let $\varphi\in\omega(\varphi_0)$ be given. Then there is a sequence $n_j\to+\infty$ such that $\|P^{n_j}(\varphi_0)-\varphi\|\to0$ as~$j\to+\infty$. Let $n_k\in\N$ be such that $\|P^{n_k}(\varphi_0)-\varphi \|\leq\tilde{\delta}\,(s_1-s_0).$ Since $\varphi(\xi)- V^{p-p(3s_1-s_0)}(0,\xi)>0$ for all $\xi\in [-M,M]$ and $\varphi(\xi)- V^{p-p(3s_1-s_0)}(0,\xi)>V^{p-2ps_0}(0,\xi)-V^{p-p(3s_1-s_0)}(\xi)$ for all $\xi\in\R$, one infers that
\begin{equation*}
\begin{split}
P^{n_k}(\varphi_0)(\xi)-V^{p-p(3s_1-s_0)}(0,\xi)& \geq -\|P^{n_k}(\varphi_0)-\varphi \| + \varphi(\xi)-V^{p-p(3s_1-s_0)}(0,\xi)\vspace{3pt}\\
& \geq -\tilde{\delta}\,(s_1-s_0)-\sup_{|\xi|\geq M}|V^{p-2ps_0}(0,\xi)- V^{p-p(3s_1-s_0)}(0,\xi)|\\
& \displaystyle\geq-\frac{5\tilde{\delta}\,(s_1-s_0)}{2}\,\ge-\,\epsilon_0
\end{split}
\end{equation*}
for all $\xi\in\R$, by~\eqref{defM}. Thus, by Lemma~\ref{resemwave}~(i), there holds
$$v(t,\xi;P^{n_k}(\varphi_0))\geq V^{p-p(3s_1-s_0)+5k_0\tilde{\delta}(s_1-s_0)/2}(t,\xi)-\frac{5\tilde{\delta}(s_1-s_0)}{2}\,\me^{-\gamma t/2}\hbox{ for all }t>0\hbox{ and }\xi\in\R.$$
Letting $t=(n_j-n_k)T$ and $j\to+\infty$ yields $\varphi\geq V^{p-p(3s_1-s_0)+5k_0\tilde{\delta}(s_1-s_0)/2}(0,\cdot)\geq V^{p-2ps_1}(0,\cdot)$ in $\R$ since $p-p(3s_1-s_0)+5k_0\tilde{\delta}(s_1-s_0)/2\le p-2ps_1$ by~\eqref{defM} and $V^{\tau}$ is decreasing with respect to~$\tau$. Hence, $\omega(\varphi_0)\geq_EV^{p-2ps_1}(0,\cdot)=h(s_1)$.

Finally, since $\emptyset\neq\omega(g)\subset I=[h(0),h(1)]_E$, it follows from Lemma~\ref{convlem} that there is $s_g\in[0,1]$ such that $\omega(g)=\{h(s_g)\}=\{V^{p-2ps_g}(0,\cdot)\}$. That is, $\lim_{n\to+\infty}\|P^{n}(g)-V^{p-2ps_g}(0,\cdot)\|=0$. It then follows from Lemma~\ref{resemwave}~(ii) that $\lim_{t\to+\infty} \|v(t,\cdot;g)-V^{p-2ps_g}(t,\cdot)  \|=0$. Since $v(t,x-ct;g)=u(t,x;g)$ and $V^{p-2ps_g}(t,x-ct)=\phi(x-ct+p-2ps_g,x)$ for all $t\ge0$ and $x\in\R$, one gets the conclusion of Proposition~\ref{nonlocals} with $\tau_g=(2ps_g-p)/c$.
\end{proof}

Once the global stability of pulsating fronts is established, the uniqueness results in Theorem~\ref{thqual} is an easy corollary.

\begin{proof}[Proof of the uniqueness result in Theorem~\ref{thqual}]
First of all, as in Proposition~\ref{nonlocals}, one can assume without loss of generality that $L=1$. One has to show that if $U(t,x)=\phi(x-ct,x)$ and $\tilde{U}(t,x)=\tilde{\phi}(x-\tilde{c}t,x)$ are two pulsating fronts of equation~\eqref{eqL} with $c\neq0$, then $\tilde{c}=c$, and $U$ and~$\tilde{U}$ are equal up to shift in time. Since $\tilde{U}(0,\cdot)\in C(\R,[0,1])$ and $\tilde{U}(0,-\infty)=1$, $\tilde{U}(0,+\infty)=0$, Proposition~\ref{nonlocals} yields the existence of $\tau\in\R$ such that $\sup_{x\in\R}|\tilde{U}(t,x)-\phi(x-ct-c\tau,x)|\to0$ as~$t\to+\infty$, whence
\begin{equation}\label{unin}
\sup_{\xi\in\R}\big|\phi(\xi-c\tau,\xi+ct)- \tilde{\phi}(\xi+(c-\tilde{c})t,\xi+ct)\big|\to0\hbox{ as }t\to+\infty.
\end{equation}
Remember that $\tilde{\phi}(+\infty,x)=\phi(+\infty,x)=0$, $\tilde{\phi}(-\infty,x)=\phi(-\infty,x)=1$ uniformly in~$x\in\R$, that~$\phi(\xi,x)$ is continuous in $\R^2$, $1$-periodic in $x$ and decreasing in $\xi$, whence $0<\min_{x\in\R}\phi(\xi,x)\le\max_{x\in\R}\phi(\xi,x)<1$ for all $\xi\in\R$. Therefore, if $c\neq\tilde{c}$, by fixing $\xi\in\R$ and letting $t\to+\infty$ in~\eqref{unin}, one derives a contradiction. Thus, $c=\tilde{c}$ and one infers from~\eqref{unin} that, for every~$(\xi,x)\in\R^2$ and~$k\in\N$,
$$\phi(\xi-c\tau,x)-\tilde{\phi}(\xi,x)=\phi\Big(\xi-c\tau,\xi+c\big(\frac{x-\xi}{c}+\frac{k}{|c|}\big)\Big)-\tilde{\phi}\Big(\xi,\xi+c\big(\frac{x-\xi}{c}+\frac{k}{|c|}\big)\Big)\mathop{\longrightarrow}_{k\to+\infty}0.$$
Thus, $\phi(\xi-c\tau,x)=\tilde{\phi}(\xi,x)$ for all $(\xi,x)\in\R^2$, that is, $\tilde{U}(t,x)=U(t+\tau,x)$ for all $(t,x)\in\R^2$. Hence, the proof of the uniqueness result in Theorem~\ref{thqual} is complete.
\end{proof}


\subsection{Exponential stability of pulsating fronts}\label{sec42}

Here, we prove Theorem~\ref{gstability} by using the general theory of exponential stability of invariant manifolds with asymptotic phase. This theory was first established by Henry~\cite{henry} in the context of reaction-diffusion equations, and then applied to bistable time-periodic equations in~\cite{abc}. Once again, one assumes that $L=1$ and that~\eqref{eqL} admits a pulsating front $U(t,x)=\phi(x-ct,x)$ with a speed $c>0$. Consider the moving coordinates $(\xi,t)$ defined in~\eqref{movcoor} and the resulting time-periodic parabolic equation~\eqref{changeq}. One uses the notations $T=1/c$ and $V^{\tau}$ given in~\eqref{varmono}, and $P:C(\R,[0,1])\to C(\R,[0,1])$ is the Poincar\'e map defined in~\eqref{poin}. Note that $\tilde{\mathcal{M}}:=\{V^{\tau}\,|\,\tau\in\R\}\subset C(\R^2,[0,1])$ is a one-dimensional manifold of special solutions of~\eqref{changeq}. By Proposition~\ref{nonlocals},~$\tilde{\mathcal{M}}$ attracts the solutions of the Cauchy problem
of ~\eqref{changeq} with initial values $g$ satisfying~\eqref{initialv}. In order to show that this convergence is also exponential in time, it is sufficient to prove the local exponential stability of the manifold
$$\mathcal{M}:=\{V^{\tau}(0,\cdot)\ |\ \tau\in\R\}\subset C(\R,[0,1]).$$
Notice that since each element in $\mathcal{M}$ is a fixed point of $P$, the manifold $\mathcal{M}$ is invariant
under ~$P$.

One is thus interested in the linearization of $P$ around the points of $\mathcal{M}$. Without loss of generality, consider the point $V_0:=V^0(0,\cdot)$. Let $\mathcal{X}= BUC(\R,\C)$ be the Banach space of bounded and uniformly continuous functions on $\R$, with the norm $\|\ \|=\|\ \|_{L^{\infty}(\R,\C)}$. It is easy to check that the derivative of $P$ at $V_0$ is given by $P'(V_0)(w)=W(T,\cdot;w)$ for $w\in\mathcal{X}$, where~$W(t,\xi;w)$ obeys
$$\left\{\baa{l}
W_t=(a(\xi+ct)W_{\xi})_{\xi}+cW_{\xi}+\partial_uf(\xi+ct,V^0(t,\xi))W,\quad t>0,\,\,\xi\in\R,\vspace{3pt}\\
W(0,\xi)=w(\xi),\quad\xi\in\R.
\eaa\right.
$$
Denote $W^0(t,\xi):=\partial_{\tau}V^{\tau}(t,\xi)_{|\tau=0}=\partial_1\phi(\xi,\xi+ct)$ and notice that $1$ is an eigenvalue of $P'(V_0)$ with eigenfunction $w_0:=W^0(0,\cdot)\in\mathcal{X}$. As in \cite{abc}, in order to prove the local exponential stability of~$\mathcal{M}$, we need to show that $1$ is algebraically simple, and that the rest of the spectrum is contained in a disk of radius strictly less than $1$. Namely, we prove the following two lemmas.

\begin{lem}\label{spectral}
The value $1$ is an algebraically simple eigenvalue of $P'(V_0)$ with eigenfunction $w_0$ and, if $\lambda\in\C$ is an eigenvalue of $P'(V_0)$ with eigenfunction $w\not\in\C w_0$, then $|\lambda |<1.$
\end{lem}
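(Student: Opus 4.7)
First, I would reformulate the problem in the original $(t,x)$ frame. Setting $u(t,x) := W(t, x-ct)$, the eigenvalue equation $P'(V_0)(w) = \lambda w$ becomes $u(T, \xi+1) = \lambda u(0,\xi)$ where $u$ solves
$$u_t = (a(x) u_x)_x + \partial_u f(x, U(t,x))\, u\ \hbox{ on } \R \times (0,T),$$
with $u(0,\cdot) = w$. Differentiating the front equation with respect to $t$ shows that $u_0(t,x) := c^{-1} U_t(t,x)$ solves this linear equation; by Theorem~\ref{thqual} and the strong parabolic maximum principle it is strictly positive, and the identity $U(t+T, x+1) = U(t,x)$ yields $u_0(T, x+1) = u_0(0, x)$. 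Consequently $u_0(0,\cdot) = -w_0$ is a strictly positive eigenfunction of $P'(V_0)$ for the eigenvalue~$1$, establishing half of the first claim.

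Next, I would prove $r_{\mathrm{ess}}(P'(V_0)) \leq \me^{-\gamma T/2} < 1$ via a Duhamel decomposition. By \eqref{asspars} and the uniform limits $\phi(\pm\infty,\cdot) \in \{0,1\}$, there exists $N>0$ such that $\mu(t,\xi) := \partial_u f(\xi+ct, V^0(t,\xi)) \leq -\gamma/2$ whenever $|\xi| \geq N$ and $t \in [0,T]$. Write $\mu = \mu_0 + \mu_1$ with $\mu_0 := \min(\mu, -\gamma/2) \leq -\gamma/2$ and $\mu_1 := \mu - \mu_0 \geq 0$ supported in $\{|\xi| \leq N\}$, and let $S_0(t,s)$ denote the evolution on $\mathcal{X}$ of $W_t = (a W_\xi)_\xi + c W_\xi + \mu_0 W$. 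Comparing $|S_0(t,s)w|$ with the constant-in-$\xi$ supersolution $\|w\|\me^{-\gamma(t-s)/2}$ yields $\|S_0(t,s)\|_{\mathcal{X}} \leq \me^{-\gamma(t-s)/2}$. The Duhamel identity
$$P'(V_0)(w) = S_0(T,0)w + \int_0^T S_0(T,s)\big(\mu_1(s,\cdot)\,W(s,\cdot;w)\big)\,ds$$
then decomposes $P'(V_0) = R + K$ with $\|R\|_{\mathcal{X}} \leq \me^{-\gamma T/2}$ and $K$ compact: the integrand $\mu_1 W(s;w)$ is uniformly compactly supported in $\{|\xi|\leq N\}$, and $S_0(T,s)$ combines parabolic smoothing with exponentially decaying tails (Aronson-type bounds absorbing the transport term $c\partial_\xi$), giving equicontinuity and uniform decay at infinity and hence precompactness via Arzel\`a-Ascoli.

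With this essential-spectrum bound, any $\lambda$ with $|\lambda|\geq 1$ is isolated of finite algebraic multiplicity. I would then mirror Step~3 of the proof of Lemma~\ref{degenerate}: for an eigenfunction $w$, let $u(t,x) = W(t, x-ct; w)$ and write $u = \rho\me^{i\vartheta}$ on $\{\rho>0\}$; taking real parts gives $\rho_t - (a\rho_x)_x - \partial_u f\cdot\rho \leq 0$, so $\rho$ is a subsolution of the equation satisfied by $u_0$. Set $\sigma^* := \inf\{\sigma>0 : \rho \leq \sigma u_0\hbox{ on }\R\times[0,T]\}$, finite by the exponential tail decay of both $\rho$ (obtained as in the proof of Lemma~\ref{exponential} via a Krein-Rutman supersolution) and $u_0$. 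The quasiperiodicities $\rho(T,x+1) = |\lambda|\rho(0,x)$ and $u_0(T,x+1) = u_0(0,x)$ together with the minimality of $\sigma^*$ and the comparison principle force $|\lambda| \leq 1$, hence $|\lambda| = 1$; the strong parabolic maximum principle applied to the nonnegative supersolution $\sigma^* u_0 - \rho$ then yields $\rho \equiv \sigma^* u_0$, so $\vartheta_x \equiv 0$ and $\vartheta_t \equiv 0$, giving $\lambda = 1$ and $w \in \C w_0$. Algebraic simplicity follows from the Fredholm alternative: $P'(V_0) - I$ is Fredholm of index zero by the second paragraph, so $1$ is algebraically simple iff $w_0 \notin \mathrm{Range}(P'(V_0) - I)$, equivalently iff $\langle \psi^*, w_0\rangle \neq 0$ for the strictly positive eigenfunction $\psi^* > 0$ of the adjoint $P'(V_0)^*$ with eigenvalue~$1$; existence of such a $\psi^*$ follows as in Step~5 of Lemma~\ref{degenerate} by applying Krein-Rutman to a truncated-and-inverted form of $P'(V_0)^*$, and the pairing is nonzero because $\psi^*$ and $-w_0$ are both strictly positive with compatible decay.

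The main obstacle will be the compactness argument in the second paragraph: although the spatial compactness of $\mu_1$ controls the bulk of $S_0(T,s)(\mu_1 W(s;w))$, ensuring uniform equicontinuity together with uniform tail decay for $w$ in the unit ball of $\mathcal{X}$ requires careful quantitative parabolic estimates (Aronson-type Gaussian bounds absorbing the drift $c\partial_\xi$), integrated in $s$ over $[0,T]$, and this is the technical heart of the proof.
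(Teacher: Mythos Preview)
Your strategy is sound and overlaps with the paper's in the core step: both adapt the $\rho=|u|$ comparison from Step~3 of Lemma~\ref{degenerate} to show that any eigenvalue with $|\lambda|\ge 1$ forces $w\in\C w_0$. The paper does this after renormalizing to the $T$-periodic $h(t,\xi)=e^{\mu t}W(t,\xi;w)$ with $e^{\mu T}=1/\lambda$; your direct use of the quasi-periodicity $\rho(T,x+1)=|\lambda|\,\rho(0,x)$ is an equivalent reformulation. The genuine differences are: (i)~you fold the essential-spectrum bound into this lemma via a Duhamel splitting $P'(V_0)=R+K$, whereas the paper proves it separately as Lemma~\ref{essspectral} by constructing an auxiliary operator $\mathcal{K}$ with $\|\mathcal{K}\|\le e^{-\gamma T/2}$ and showing $\mathcal{K}-P'(V_0)$ compact; (ii)~for algebraic simplicity the paper simply changes to the $(\xi,x)$ variables and invokes Lemma~\ref{degenerate} (its footnote bridges $BUC$ and $\mathcal{D}_1$), while you argue via Fredholm index zero and a positive adjoint eigenfunction. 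The paper's route is shorter because Lemma~\ref{degenerate} has already done the work; yours is more self-contained.

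Two places need more care than you give them. First, the finiteness of $\sigma^*$ requires $\rho/u_0$ bounded, hence---since $u_0$ decays exponentially in $|x-ct|$---that $\rho\to 0$ there as well. Your appeal to Lemma~\ref{exponential} is circular: that lemma's comparison with the Krein--Rutman supersolution already \emph{uses} the decay of the pulsating front at infinity. For a $BUC$ eigenfunction with $|\lambda|\ge 1$ you must first establish decay, e.g.\ by a shift-to-infinity compactness argument exploiting $\partial_uf\le-\gamma/2$ in the tails and the strict supersolution property of constants there; only then does the construction of Lemma~\ref{exponential} upgrade it to exponential decay (the paper's footnote glosses over the same point). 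Second, on $\mathcal{X}=BUC$ the adjoint eigenvector $\psi^*$ lives in the dual space of measures, so ``strictly positive $\psi^*$'' and the Krein--Rutman invocation from Step~5 of Lemma~\ref{degenerate} (which was in $L^2$) need rephrasing; since $-w_0>0$ is continuous and strictly positive, the pairing of $w_0$ against any nonzero positive measure is still strictly negative, and your conclusion survives.
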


\begin{proof}
Assume that $\lambda\in\C^*$ is an eigenvalue of $P'(V_0)$ with eigenfunction $w\in \mathcal{X}$. Let $\mu\in\C$ be such that $\me^{\mu T}=1/\lambda$ and set $h(t,\xi)=\me^{\mu t} W(t,\xi;w)$ for $t\ge0$ and $\xi\in\R$. The function $h$ satisfies
\begin{equation}\label{lineigen2}
\left\{\baa{l}
h_t-(a(\xi+ct)h_{\xi})_{\xi}-ch_{\xi}-\partial_uf(\xi+ct,V^0(t,\xi))h=\mu h\ \hbox{ for all }t>0\hbox{ and }\xi\in\R,\vspace{3pt}\\
h(0,\xi)=h(T,\xi)\ \hbox{ for all }\xi\in\R.
\eaa\right.
\end{equation}
Hence, the eigenvalue problem $P'(V_0)(w)=\lambda w$ can be recast as the spectral problem~\eqref{lineigen2} in the space of time periodic functions $\big\{h\in C(\R^2,\C)\ |\ h(t,\cdot)=h(t+T,\cdot)$ for all $t\in\R\big\}$. Clearly,~$W^0$ solves~\eqref{lineigen2} with $\mu=0$. In addition, following similar arguments as in Step~3 of the proof of Lemma~\ref{degenerate} or in \cite[Lemma A.2]{abc}, one infers that if $h\not\in\C W^0$ solves~\eqref{lineigen2} with some $\mu\in\C$, then $Re(\mu)>0$. In other words, if~$w\not\in\C w_0$, then $|\lambda |<1.$

On the other hand, since $t=(x-\xi)/c$, setting $\psi(\xi,x)=h(t,\xi)$ in~\eqref{lineigen2} yields
\begin{equation*}
\left\{\baa{l}
\tilde{\partial}(a(x)\tilde{\partial}\psi)+c\partial_{\xi}\psi +\partial_uf\big(x,\phi(\xi,x)\big)\psi=-\mu \psi\ \hbox{ for all }(\xi,x)\in\R^2,\vspace{3pt}\\
\psi(\xi,x+1)=\psi(\xi,x)\ \hbox{ for all }(\xi,x)\in\R^2,
\eaa\right.
\end{equation*}
where $\tilde{\partial}=\partial_{\xi}+\partial_x$. But, from Lemma~\ref{degenerate} and standard parabolic estimates, it follows that $0$ is an algebraically simple eigenvalue of the operator $H_1$ defined in~\eqref{defHL0} with $L_0=1$ in the space of spatially periodic functions $\mathcal{E}:=\big\{\psi\in BUC(\R^2,\C)\ |\ \psi(\xi,x)=\psi(\xi,x+1)$ for all $(\xi,x)\in\R^2\big\}$.
\footnote{We point out that for any $\psi\in\mathcal{E}$ such that $H_1(\psi)=0$ in the sense of distributions, one can assume without loss of generality that $\psi$ is real valued, and the function $\psi$ is a classical solution by parabolic estimates. Hence $\psi(\pm\infty,x)=0$ uniformly in $x\in\R$ because of~\eqref{asspars} and $\psi$ and its derivatives decay exponentially as $\xi\to\pm\infty$, as in Lemma~\ref{exponential}. Thus, $\psi\in\mathcal{D}_1$.} Thus,~$1$ is an algebraically simple eigenvalue of $P'(V_0)$ and the proof of Lemma~\ref{spectral} is complete.
\end{proof}

\begin{lem}\label{essspectral}
The essential spectrum of $P'(V_0)$ is contained in the disk $\big\{\lambda\in \mathbb{C}\ |\ |\lambda|\leq \me^{-\gamma T/2}\big\}$. Thus, if $\lambda$ is in the spectrum of $P'(V_0)$ and $|\lambda|>\me^{-\gamma T/2}$, then $\lambda$ is an eigenvalue, and for any $r>\me^{-\gamma T/2}$, there are only finitely many eigenvalues of $P'(V_0)$ in $\{\lambda\in \mathbb{C}\ |\ |\lambda|\geq r\}$.
\end{lem}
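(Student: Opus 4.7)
The plan is to decompose $P'(V_0) = A + K$ on $\mathcal{X} = BUC(\R,\C)$ so that $\|A\| \le \me^{-\gamma T/2}$ and $K$ is compact, and then apply Weyl's theorem on the invariance of the Browder essential spectrum under compact perturbations. The decomposition exploits the uniform stability of the coefficient $b(t,\xi) := \partial_u f(\xi+ct, V^0(t,\xi))$ far from the wave's transition zone. Specifically, since Lemma~\ref{exponential} yields $V^0(t,\xi)\to 0$ (resp.~$1$) as $\xi\to +\infty$ (resp.~$-\infty$) uniformly in $t\in\R$, and since $\partial_u f(\cdot,0),\,\partial_u f(\cdot,1)\le-\gamma$ by~\eqref{asspars} with $\partial_u f$ uniformly Lipschitz in $u$, I would first fix $R>0$ so that $b(t,\xi)\le-\gamma/2$ for all $t\in\R$ and $|\xi|\ge R$. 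Then I would split $b=\tilde{b}+r$ with $\tilde{b}(t,\xi):=\min(b(t,\xi),-\gamma/2)\le-\gamma/2$ everywhere and $r:=b-\tilde{b}\ge 0$ bounded and supported in $\{|\xi|\le R\}$ uniformly in $t$.

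I would then let $A:\mathcal{X}\to\mathcal{X}$ be the time-$T$ map of the Cauchy problem for $\tilde{W}_t=(a(\xi+ct)\tilde{W}_\xi)_\xi+c\tilde{W}_\xi+\tilde{b}\,\tilde{W}$, and set $K:=P'(V_0)-A$. The bound on $\|A\|$ is short: the function $Z(t,\xi):=\me^{\gamma t/2}\tilde{W}(t,\xi)$ satisfies the same divergence-form equation with zeroth-order coefficient $\tilde{b}+\gamma/2\le 0$, so the parabolic maximum principle gives $\|Z(T,\cdot)\|_\infty\le\|u\|_\infty$, whence $\|Au\|_\infty\le\me^{-\gamma T/2}\|u\|_\infty$ and thus $\sigma(A)\subset\{|\lambda|\le\me^{-\gamma T/2}\}$.

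The main obstacle is the compactness of $K$ on $\mathcal{X}$. The difference $D:=W-\tilde{W}$ satisfies
$$D_t=(a(\xi+ct)D_\xi)_\xi+cD_\xi+b(t,\xi)D+r(t,\xi)\tilde{W},\qquad D(0,\cdot)=0,$$
so Duhamel's formula represents $Ku=D(T,\cdot)$ as
$$(Ku)(\xi)=\int_0^T\!\!\int_{-R}^{R}G_b(T,\xi;s,\eta)\,r(s,\eta)\,\tilde{W}(s,\eta;u)\,d\eta\,ds,$$
where $G_b$ is the fundamental solution of $\partial_t-(a(\xi+ct)\partial_\xi)_\xi-c\partial_\xi-b$. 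Because $|\tilde{W}|\le\|u\|_\infty$ and $r$ is uniformly bounded and compactly supported in $\eta$, an Aronson-type Gaussian upper bound on $G_b$ (available since $a\in C^{1,\alpha}$ and $b$ is bounded) produces uniform decay of the form $|(Ku)(\xi)|\le C\|u\|_{\mathcal{X}}\,\me^{-\alpha\xi^2}$ as $|\xi|\to\infty$, uniformly for $u$ in the unit ball of $\mathcal{X}$. Combined with standard parabolic H\"older regularity, which yields a uniform $C^1$ bound on $Ku$ over any compact subset of $\R$, this delivers equicontinuity, uniform boundedness, and tightness at infinity of $\{Ku:\|u\|_{\mathcal{X}}\le 1\}$, hence precompactness in $\mathcal{X}$ by a $BUC$-version of the Arzel\`a--Ascoli theorem.

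With compactness of $K$ in hand, Weyl's theorem gives $\sigma_{\mathrm{ess}}(P'(V_0))=\sigma_{\mathrm{ess}}(A)\subset\sigma(A)\subset\{|\lambda|\le\me^{-\gamma T/2}\}$. The second assertion of the lemma then follows from the general characterization of the Browder essential spectrum: every point of $\sigma(P'(V_0))\setminus\sigma_{\mathrm{ess}}(P'(V_0))$ is an isolated eigenvalue of finite algebraic multiplicity, and such eigenvalues can accumulate only on $\sigma_{\mathrm{ess}}(P'(V_0))$, so any annulus $\{|\lambda|\ge r\}$ with $r>\me^{-\gamma T/2}$ contains only finitely many of them.
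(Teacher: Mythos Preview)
Your proof is correct and follows the same overall strategy as the paper: write $P'(V_0)$ as a bounded operator with norm at most $\me^{-\gamma T/2}$ plus a compact perturbation, then invoke Weyl's theorem. Your decomposition via $\tilde b=\min(b,-\gamma/2)$ is functionally equivalent to the paper's interpolated coefficient $\kappa$ (both agree with $b$ outside a compact $\xi$-interval and satisfy $\le-\gamma/2$ everywhere), and your norm bound on $A$ via the maximum principle is identical to the paper's.

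The one genuine difference is in the compactness argument for $K=P'(V_0)-A$. You use Duhamel's formula together with Aronson-type Gaussian bounds on the fundamental solution to obtain uniform spatial decay of $Ku$ as $|\xi|\to\infty$. The paper instead observes that, because the two zeroth-order coefficients coincide for $|\xi|\ge N$, the difference $\Phi_n=\hat W(\cdot\,;w_n)-W(\cdot\,;w_n)$ solves a \emph{homogeneous} linear equation in each outer half-line with zero initial data, and then dominates $|\Phi_n|$ there by an explicit exponentially decaying supersolution of the form $A\me^{\nu_1\xi}\varphi_{\nu_1}(\xi+ct)$, built from the periodic principal eigenfunction exactly as in Lemma~\ref{exponential}. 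Both routes are valid; yours is more ``textbook'' and yields faster (Gaussian) decay but relies on heat-kernel estimates for divergence-form operators with drift, while the paper's barrier argument is self-contained and reuses machinery already developed earlier in the paper.
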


\begin{proof}
Firstly,~\eqref{asspars} yields the existence of $N>0$ such that $\partial_uf(\xi+ct,V^0(t,\xi)) \leq -\gamma/2$ for all~$t\in\R$ and $|\xi|\geq N$. Let $\kappa_1(t,\xi)$ and $\kappa_2(t,\xi)$ be the continuous $T$-periodic functions defined by
$$\kappa_1(t,\xi)=\!\left\{\begin{array}{lll}
\!\!\partial_uf(\xi+ct,V^0(t,\xi)) & \!\!\hbox{if }\xi\leq -N,\vspace{3pt}\\
\!\!\partial_uf(-N+ct,V^0(t,-N)) & \!\!\hbox{if }\xi>-N,\end{array}\right.
\kappa_2(t,\xi)=\!\left\{\begin{array}{lll}
\!\!\partial_uf(\xi+ct,V^0(t,\xi)) & \!\!\hbox{if }\xi\geq N,\vspace{3pt}\\
\!\!\partial_uf(N+ct,V^0(t,N)) & \!\!\hbox{if }\xi<N.\end{array}\right.$$
Let $\varsigma\in C(\R,[0,1])$ be a function satisfying $\varsigma(\xi)=0$ for $\xi\leq -N$ and $\varsigma(\xi)=1$ for $\xi\geq N$, and define $\kappa(t,\xi)=(1-\varsigma(\xi))\kappa_1(t,\xi)+\varsigma(\xi)\kappa_2(t,\xi)$ for $(t,\xi)\in\R^2$. The function $\kappa$ is continuous in~$\R^2$, bounded, $T$-periodic with respect to $t$, and $\kappa\leq -\gamma/2$ in $\R^2$. Consider now the operator $\mathcal{K}$ defined on $\mathcal{X}$ by $\mathcal{K}(w)=\hat{W}(T,\xi;w)$ for $w\in\mathcal{X}$, where $\hat{W}(t,\xi;w)$ solves the equation
$$\left\{\baa{l}
\hat{W}_t=(a(\xi+ct)\hat{W}_{\xi})_{\xi}+c\hat{W}_{\xi} +\kappa(t,\xi)\hat{W},\quad t>0,\,\,\xi\in\R\vspace{3pt}\\
\hat{W}(0,\cdot;w)=w.
\eaa\right.$$
By the maximum principle, $\|\hat{W}(t,\cdot;w)\|\leq \me^{-\gamma t/2}\| w\|$ for all $t>0$, whence $\|\mathcal{K}(w)\|\leq \me^{-\gamma T/2}\| w\|$ for all $w\in\mathcal{X}$. Thus, the spectral radius of $\mathcal{K}$ is at most $\me^{-\gamma T/2}$.

Next, one shows that $\mathcal{K}-P'(V_0):\mathcal{X}\to\mathcal{X}$ is a compact operator. To do so, consider a bounded sequence $(w_n)_{n\in\N}$ in $\mathcal{X}$ and denote $\eta_n=(\mathcal{K}-P'(V_0))(w_n)$ for $n\in\N$. From standard parabolic estimates, up to extraction of some subsequence, the functions~$\eta_n$ converge to a function~$\eta_{\infty}$, locally uniformly as $n\to+\infty$, with $\eta_{\infty}\in\mathcal{X}$. To show the uniform convergence in $\R$, one needs to estimate the values of $\eta_n(\xi)$ for $|\xi|\geq N$ uniformly in $n\in\N$. Without loss of generality, one can assume that the functions $w_n$ (and $\eta_n$) are real-valued and one only considers the case~$\xi\le-N$ since the case~$\xi\ge N$ can be dealt with similarly. For each $n\in\N$, let $\Phi_n(t,\xi):=\hat{W}(t,\xi;w_n)-W(t,\xi;w_n)$ for~$t\ge0$ and $\xi\leq -N$. Owing to the definition of~$\kappa$, each function~$\Phi_n$ solves
\begin{equation}\label{eqlarxi}
\left\{\baa{l}
(\Phi_n)_t=(a(\xi+ct)(\Phi_n)_{\xi})_{\xi}+c(\Phi_n)_{\xi} +\kappa(t,\xi)\Phi_n,\ \ t>0,\ \xi\leq -N,\vspace{3pt}\\
\Phi_n(0,\xi)=0,\ \ \xi\leq -N.\eaa\right.
\end{equation}
As in Lemma~\ref{exponential}, for any $\nu\in\R$, the linear operator $\mathcal{T}_{\nu}$ defined on $\big\{\varphi\in C^2(\R)\ |\ \varphi=\varphi(\cdot+1)$ in~$\R\big\}$ by $\mathcal{T}_{\nu}[\varphi]:=(a\varphi')'+2\nu a\varphi'+(\nu a'+c\nu+a\nu^2-\gamma/2)\varphi$ admits a principal eigenvalue $\lambda_1(\nu)$, and there is $\nu_1>0$ such that $\lambda_1(\nu_1)=0$. Let $\varphi_{\nu_1}$ be a positive corresponding eigenfunction. Since~$\sup_{t\in[0,T],\,n\in\N}\|\Phi_n(t,\cdot)\|<+\infty$ by the parabolic maximum principle and the boundedness of the sequence $(w_n)_{n\in\N}$ in $\mathcal{X}$, there is a constant $A>0$ such that
$$A\me^{-\nu_1 N} \varphi_{\nu_1}(-N+ct)\geq|\Phi_n(t,-N)|\ \hbox{ for all }t\in[0,T]\hbox{ and }n\in\N. $$
An immediate computation shows that $A\me^{\nu_1 \xi}\varphi_{\nu_1}(\xi+ct)$ is a supersolution of~\eqref{eqlarxi} in~$[0,T]\times(-\infty,-N]$, whence $|\eta_n(\xi)|=|\Phi_n(T,\xi)|\leq A  \me^{\nu_1 \xi}\varphi_{\nu_1}(\xi+cT)$ for all $\xi\leq -N$ and $n\in\N$ by the maximum principle. Thus, $\eta_n(\xi)$ converges to $0$ as $\xi\to \pm\infty$ uniformly in $n\in\N$. Together with the local convergence of the functions $\eta_n$ to $\eta_{\infty}$, one concludes that $\|\eta_n-\eta_{\infty}\|\to0$ as $n\to+\infty$.

Finally, there are then finitely many eigenvalues of $P'(V_0)$ in $\{\lambda\in\C\ |\ |\lambda|\ge r\}$ for any $r>e^{-\gamma T/2}$ and the essential spectrum of $P'(V_0)$ is the same as that of $\mathcal{K}$, whence the radius of essential spectrum of $P'(V_0)$ is not larger than $\me^{-\gamma T/2}$.
\end{proof}

\begin{proof}[End of the proof of Theorem~\ref{gstability}]
Consider any $g\in L^{\infty}(\R,[0,1])$ satisfying~\eqref{initialv}. As in the proof of Proposition~\ref{nonlocals}, one can assume that $g\in C(\R,[0,1])$. By Proposition~\ref{nonlocals}, there is $\tau_g\in\R$ such that $\|u(t,\cdot;g)-U(t+\tau_g,\cdot)\|=\|v(t,\cdot;g)-V^{-c\tau_g}(t,\cdot)\|\to0$ as $t\to+\infty$. Without loss of generality, even if it means shifting $U$ in time, one can assume that $\tau_g=0$. From Lemmas~\ref{spectral} and~\ref{essspectral}, $1$ is an algebraically simple eigenvalue of $P'(V_0)$ and the rest of the spectrum of $P'(V_0)$ is contained in a disk with radius $\tilde{r}(P'(V_0))\in(0,1)$. Therefore, the manifold~$\mathcal{M}$ is locally exponentially stable with asymptotic phase for~$P$, see \cite[Chapter 9]{henry}. Since $P(V_0)\!=\!V_0\!=\!V^0(0,\cdot)$ and $\|P^n(g)-V_0\|=\|v(nT,\cdot;g)-V^0(nT,\cdot)\|\to0$ as~$n\to+\infty$, there are then some real numbers $\mu=-\ln((\tilde{r}(P'(V_0))+1)/2)>0$ independent of $g$, and~$C$ depending on $g$, such that
$$\|v(nT,\cdot;g)-V_0\|=\|P^n(g)-V_0\|\leq C\me^{-\mu Tn}\ \hbox{ for all }n\in\N.$$
Under the notations of Lemma~\ref{resemwave}, there is then $n_0\in\N$ such that $\|v(nT,\cdot;g)-V_0\|\leq C\me^{-\mu Tn}\le\epsilon_0$ for all $n\in\N$ with $n\ge n_0$, whence $\|v(nT+t,\cdot;g)-V^0(t,\cdot)\|\le k_1C\me^{-\mu Tn}$ for all $n\ge n_0$ and~$t\in[0,T]$. Since $V^0$ is $T$-periodic in $t$, one infers that
$$\|v(t,\cdot;g)-V^0(t,\cdot)\|\le k_1C\me^{\mu T}\me^{-\mu t}\ \hbox{ for all }t\ge n_0T.$$
Finally, since $0\le v(\cdot,\cdot;g),\,V^0\le 1$ in $[0,+\infty)\times\R$, there is a constant $C_g$ (depending on $g$ and on the parameters of~\eqref{eqL}) such that $\|v(t,\cdot;g)-V^0(t,\cdot)\|\le C_g\me^{-\mu t}$ for all $t\ge0$. Since $v(t,x-ct;g)=u(t,x;g)$ and $V^0(t,x-ct)=\phi(x-ct,x)=U(t,x)$, one concludes that $\|u(t,\cdot;g)-U(t,\cdot)\|\le C_g\me^{-\mu t}$ for all $t\ge0$ and the proof of Theorem~\ref{gstability} is thereby complete.
\end{proof}


\subsection{Proof of Theorem~\ref{gstability2}}\label{sec43}

This section is devoted to the proof of Theorem~\ref{gstability2}. That is, we show that any solution of~\eqref{inieqL} with initial value satisfying~\eqref{initialv2} converges to a translate (in time) of the pulsating front of equation~\eqref{eqL}, and this convergence is exponential in time. Thanks to the assumption that all $L$-periodic stationary states of equation~\eqref{eqL} are unstable, we will prove that such solution of~\eqref{inieqL} will evolve into a ``front-like'' wave (i.e., satisfying~\eqref{initialv}) after a certain time. Then the conclusion of Theorem~\ref{gstability2} will follow easily from Theorem~\ref{gstability}.

\begin{proof}[Proof of Theorem~\ref{gstability2}]
Since the initial value $g$ satisfies~\eqref{initialv2}, there is $\sigma>0$ such that
\begin{equation}\label{initialg}
\liminf_{x\to-\infty}\big(g(x)-\bar{u}_-(x)\big)\geq 2\sigma,\quad \bar{u}_-+2\sigma< 1\hbox{ in }\R,
\end{equation}
and $\limsup_{x\to+\infty}\big(g(x)-\bar{u}_+(x)\big)\leq -2\sigma$ with $\bar{u}_+-2\sigma> 0$ in $\R$. Let $u^{\pm}(t,x)$ denote the solutions of the Cauchy problems
$$\left\{\baa{ll}
u^{\pm}_t=(a_L(x)u^{\pm}_x)_x+f_L(x,u^{\pm}), & t>0,\ x\in\R,\vspace{3pt}\\
u^{\pm}(0,x)=\bar{u}_{\pm}\mp\sigma, & x\in\R.\eaa\right.$$
Since by assumption $\bar{u}_{\pm}$ are unstable, it follows as in Step~2 of Lemma~\ref{noexstate} that $u^-(t,x)\to1$ and~$u^+(t,x)\to0$ as $t\to+\infty$ uniformly in $x\in\R$.

Next, one shows that $\liminf_{x\to-\infty} \big(u(t,x)-u^-(t,x)\big)>0$ for all $t>0$. Assume by contradiction that there are $t_0>0$ and a sequence $(x_n)_{n\in\N}$ with $\lim_{n\to+\infty}x_n=-\infty$ and
\begin{equation}\label{leftcon2}
\lim_{n\to+\infty} \big(u(t_0,x_n)-u^-(t_0,x_n)\big)\leq 0.
\end{equation}
By~\eqref{initialg} and since $g$ ranges in $[0,1]$, there is a $C^{2,\alpha}(\R,[0,1])$ function $\underline{u}_0$ such that $\underline{u}_0\le g$ in $\R$, $\underline{u}_0(x)=0$ for $x\gg1$ and $\underline{u}_0(x)=\bar{u}_-(x)+3\sigma/2$ for $x\ll-1$. Let $\underline{u}$ be the solution of the Cauchy problem~\eqref{inieqL} with initial condition $\underline{u}(0,\cdot)=\underline{u}_0$. The maximum principle yields $0\le\underline{u}(t,x)\le u(t,x)\le1$ for all $t>0$ and $x\in\R$. Write now $x_n=x_n'+x_n''$ with $x_n'\!\in\!L\Z$ and~$x_n''\in [0,L]$, and set $\underline{u}_n(t,x)=\underline{u}(t,x+x_n')$. Since $a_L$ and $f_L$ are $L$-periodic in~$x$, the functions~$\underline{u}_n$ obey~\eqref{eqL} for $t\ge0$ and $x\in\R$. Up to extraction of some subsequence, one can assume that~$x_n''\to x_{\infty}\in[0,L]$ as $n\to+\infty$ and that, from standard parabolic estimates,~$\underline{u}_n(t,x)\to\underline{u}_{\infty}(t,x)$ as~$n\to+\infty$ locally uniformly in $[0,+\infty)\times\R$, where $\underline{u}_{\infty}$ solves~\eqref{eqL}. In particular, for any $x\in\R$, $\underline{u}_{\infty}(0,x)=\lim_{n\to+\infty}\underline{u}_n(0,x)=\bar{u}_-(x)+3\sigma/2>u^-(0,x)$, whence $\underline{u}_{\infty}(t,x)>u^-(t,x)$ for all $t\ge0$ and $x\in\R$ by the maximum principle. On the other hand, since $\underline{u}\le u$ and $u^-$ is $L$-periodic in~$x$,~\eqref{leftcon2} yields
$$\underline{u}_{\infty}(t_0,x_{\infty})\!=\!\lim_{n\to+\infty}\underline{u}_n(t_0,x_n'')\!=\!\lim_{n\to+\infty}\underline{u}(t_0,x_n)\!\le\!\limsup_{n\to+\infty}u(t_0,x_n)\!\le\!\limsup_{n\to+\infty} u^-(t_0,x_n)\!=\!u^-(t_0,x_{\infty}).$$
This contradicts $\underline{u}_{\infty}(t_0,x_{\infty})>u^-(t_0,x_{\infty})$. Hence $\liminf_{x\to-\infty} \big(u(t,x)-u^-(t,x)\big)>0$ for every $t>0$. Similarly, there holds $\lim_{t\to+\infty}u^+(t,x)=0$ uniformly in $x\in\R$ and $\limsup_{x\to+\infty} \big(u(t,x)-u^+(t,x)\big)<0$ for every $t>0$. Thus, there is $T>0$ such that $\liminf_{x\to-\infty}u(T,x)>1-\delta$ and $\limsup_{x\to+\infty} u(T,x)<\delta$, where $\delta$ is the constant given in~\eqref{asspars}. By Theorem~\ref{gstability}, as applied to initial value $u(T,\cdot)$, the conclusion of Theorem~\ref{gstability2} follows.
\end{proof}


\SE{Appendix}\label{sec5}

The appendix is devoted to the proof of some technical auxiliary lemmas which were stated in Section~\ref{sec2} and used in the
 proofs of Theorems~\ref{thhomo} and~\ref{thE1}. We first begin with the proofs of the properties of the operators $M_{c,L}$ and $M_{c,0}$ stated in Lemmas~\ref{invertible},~\ref{continuem} and~\ref{continuem2}.

\begin{proof}[Proof of Lemma~\ref{invertible}]
Consider only the case $L\neq0$, since the case $L=0$ follows the same lines, and is actually simpler. Let $\beta>0$, $c>0$ and $L>0$ be fixed, and let $v$ be in the kernel of~$M_{c,L}$. Integra\-ting~$M_{c,L}(v)=0$ against $v$ in $L^2(\R\times\T)$ gives $\int_{\R\times \T} \big(a(y)(\tilde{\partial}_Lv)^2 +\beta v^2\big)=0$, whence~$v=0$. The adjoint operator $M_{c,L}^*$ of $M_{c,L}$ is defined as $M_{c,L}^*(v)=\tilde{\partial}_L(a\tilde{\partial}_Lv)-c\partial_{\xi}v-\beta v$ with domain~$\mathcal{D}_L$, in such a way that $(M_{c,L}^*(v),u)_{L^2(\R\times \T)}=(v,M_{c,L}(u))_{L^2(\R\times \T)}$ for any $u,\,v\in \mathcal{D}_L$. The same arguments yield that the kernel of the operator $M_{c,L}^*$ is reduced to $\{0\}$. Then in order to get the invertibility of the linear operator $M_{c,L}$, it is sufficient to show that the range $R(M_{c,L})$ is closed in $L^2(\R\times \T)$. To do so, let $M_{c,L}(v_n)=g_n$ with $v_n\in \mathcal{D}_L$ and $g_n\to g$ in $L^2(\R\times \T)$ as~$n\to+\infty$. Integrating~$M_{c,L}(v_n)=g_n$ against $v_n$ gives $\int_{\R\times \T} \big((a(y)(\tilde{\partial}_Lv_n)^2 +\beta v_n^2)\big)=-\int_{\R\times \T} g_nv_n$, whence
\begin{equation}\label{hnorm}
\int_{\R\times \T}a(y)\,(\tilde{\partial}_Lv_n)^2 + \frac{\beta}{2} v_n^2\leq \frac{1}{2\beta}\int_{\R\times \T}g_n^2.
\end{equation}
For any $h>0$, let us define the symmetric difference quotient in $\xi$-direction as follows: $D_hv_n(\xi,y)=(v_n(\xi+h,y)-v_n(\xi-h,y))/(2h)$ for $(\xi,y)\in\R\times\T$ and notice that $D_hv_n\in H^1(\R\times\T)$. Taking it as a test function in $M_{c,L}(v_n)=g_n$, one gets that
$$\int_{\R\times \T}a(y)\tilde{\partial}_Lv_n\tilde{\partial}_LD_hv_n-c\tilde{\partial}_Lv_nD_hv_n+\beta v_nD_hv_n=-\int_{\R\times \T} g_nD_hv_n.$$
Observe that $\int_{\R\times \T} a(y)(\tilde{\partial}_LD_hv_n)\tilde{\partial}_Lv_n=\int_{\R\times \T}\beta (D_hv_n)v_n=0$, and that $D_hv_n\rightharpoonup\partial_{\xi}v_n$ in $L^2(\R\times\T)$ weakly as $h\to 0^+$. Hence, by letting $h\to 0$, one concludes that
\begin{equation}\label{quetients}
\int_{\R\times \T} c(\partial_{\xi}v_n)^2=\int_{\R\times \T} g_n\partial_{\xi}v_n,
\end{equation}
and then, by the Cauchy-Schwarz inequality,
\begin{equation}\label{partials}
\|\partial_{\xi}v_n\|_{L^2(\R\times \T)}\leq \frac{\|g_n\|_{L^2(\R\times \T)}}{c}.
\end{equation}
The uniform $L^2$-estimate for $\partial_y v_n$ can be obtained from~\eqref{hnorm} and~\eqref{partials} as follows:
\begin{equation}\label{partialy}
\begin{split}
\|\partial_y v_n\|_{L^2(\R\times \T)} = \|L(\tilde{\partial}_Lv_n-\partial_{\xi}v_n)\|_{L^2(\R\times \T)} &\leq|L|\big(\|\tilde{\partial}_Lv_n\|_{L^2(\R\times\T)}+\|\partial_{\xi}v_n\|_{L^2(\R\times \T)}\big)\vspace{3pt}\\
& \leq |L|\|g_n\|_{L^2(\R\times \T)}\Big(\frac{1}{c}+\frac{1}{\sqrt{2\beta a^-}}\Big)
\end{split}
\end{equation}
with $a^-:=\min_{y\in \R} a(y)>0$. Hence, there is a subsequence $(v_{n_i})_{i\in\N}$ such that $v_{n_i}\rightharpoonup v $ in $H^1(\R\times \T)$ weakly as~$i\to+\infty$. This implies that $\int_{\R\times \T}a(y)\tilde{\partial}_Lv\tilde{\partial}_L\varphi\!-\!c(\partial_{\xi}v)\varphi\!+\!\beta v\varphi=-\int_{\R\times \T} g\varphi$ for all $\varphi\!\in\!H^1(\R\times\T)$. It follows that $\tilde{\partial}_L(a\tilde{\partial}_Lv) \in L^2(\R\times \T)$. It follows that $v\in\mathcal{D}_L$, $M_{c,L}(v)=g$ and~$g\in R(M_{c,L})$. Since the kernel of $M_{c,L}$ is trivial, the limit $v$ is unique and the whole sequence~$(v_n)_{n\in\N}$ converges to $v$ weakly in~$H^1(\R\times\T)$ as $n\to+\infty$. Finally, the calculations in~\eqref{hnorm},~\eqref{partials} and~\eqref{partialy} imply that
\begin{equation}\label{inverineq}
\|v\|_{H^1(\R\times \T)}^2\leq \Big(\frac{1}{\beta^2}+\frac{1}{c^2}+|L|^2\big(\frac{1}{c}+\frac{1}{\sqrt{2\beta a^-}}\big)^2\Big)\|g\|_{L^2(\R\times \T)}^2
\end{equation}
and the proof of Lemma~\ref{invertible} is thereby complete.
\end{proof}

\begin{proof}[Proof of Lemma~\ref{continuem}]
We only give the proof of the first assertion in~\eqref{convinverse}, the second one being obtained similarly and being actually simpler. For the proof, we first begin with the special case where the sequences $(c_n)_{n\in\N}$ and $(g_n)_{n\in\N}$ are constant, and we then deal with the general case.

{\it Step 1: the case $c_n=c$ and $g_n=g$.} Consider any sequence $(L_n)_{n\in\N}$ in $\R^*$ converging to $0$ and let $v_n=M_{c,L_n}^{-1}(g)\in H^1(\R\times\T)$. By the estimate~\eqref{inverineq} and the Sobolev injections, the functions~$v_n$ converge, up to extraction of a subsequence, strongly in $L^2_{loc}(\R\times \T)$ and weakly in~$H^1(\R\times \T)$, to some $v_0\in H^1(\R\times\T)$ as $n\to+\infty$. Note that $\partial_yv_0=0$ because of the estimate~\eqref{partialy}. Then the function $v_0$ can be viewed as an $H^1(\R)$ function and we can set $v_0'=\partial_{\xi}v_0$. For any $\phi\in H^2(\R)$, taking $\psi(\xi,y)=\phi(\xi)+L_n\chi(y)\phi'(\xi)\in H^1(\R\times\T)$ (where $\chi\in C^2(\R)$ solves~\eqref{dualcell}) as a test function in~$M_{c,L_n}(v_n)=g$ gives
$$\int_{\R\times \T} a\,((\chi'+1)\phi'+L_n\chi\phi'')\,\tilde{\partial}_{L_n}v_n+\int_{\R\times \T}
(-c\partial_{\xi}v_n+\beta v_n)(\phi+L_n\chi\phi')=-\int_{\R\times \T} g(\phi+L_n\chi\phi'),$$
where $a$, $\chi$ and $\chi'$ are evaluated at $y$, while $\phi$, $\phi'$ and $\phi''$ are evaluated at $\xi$. The second term of the left hand side clearly converges to $\int_{\R}(-cv_{0}'+\beta v_0)\phi$ as $n\to+\infty$, and so does the right hand side to $-\int_{\R\times \T} g\phi=-\int_{\R} \overline{g}\phi$. The first term can be written as
\begin{equation*}
\begin{split}
\int_{\R\times \T} a\,((\chi'+1)\phi'+L_n\chi\phi'')\,\tilde{\partial}_{L_n}v_n
&=\int_{\R\times \T}-v_n\tilde{\partial}_{L_n}(a\,(\chi'+1)\,\phi') +L_na\,\chi\,\phi''\tilde{\partial}_{L_n}v_n\vspace{3pt}\\
&=\int_{\R\times \T}a\,(\chi'+1)\,\phi'\partial_{\xi}v_n+(\partial_yv_n+L_n\partial_{\xi}v_n)\,a\,\chi\,\phi'',
\end{split}
\end{equation*}
which converges to $\int_{\R\times \T} v_0'a\,(\chi'+1)\,\phi'=a_H\int_{\R}v_0'\phi'$ as $n\to+\infty$, since $\partial_yv_0=0$. Hence,
$$\int_{\R}a_Hv_0'\phi'-cv_{0}'\phi+\beta v_0\phi=-\int_{\R} \overline{g}\phi,$$
for all $\phi\in H^2(\R)$ and then for all $\phi\in H^1(\R)$ by density.  This implies that $v_0\in H^2(\R)$ and that~$M_{c,0}(v_0)=\overline{g}$.

Next we show that $v_n$ converges to $v_0$ in $L^2(\R\times\T)$ as $n\to+\infty$ (the convergence has only been known in $L^2_{loc}(\R\times\T)$ so far). Let $\zeta_n(\xi,y)=v_n(\xi,y)-v_0(\xi)-L_n\chi(y)v_0'(\xi)$. By Lemma~\ref{invertible}, the sequence $(v_n)_{n\in\N}$ is bounded in $H^1(\R\times\T)$ and so is the sequence $(\zeta_n)_{n\in\N}$. Taking $\zeta_n$ as a test function in $M_{c,L_n}(v_n)=g$ gives $\int_{\R\times \T}a\,\tilde{\partial}_{L_n}v_n\tilde{\partial}_{L_n}\zeta_n-c\zeta_n\partial_{\xi}v_n+\beta v_n\zeta_n=-\int_{\R\times \T}g\,\zeta_n$. Observe that
\begin{equation*}
\begin{split}
\int_{\R\times \T} a\,\tilde{\partial}_{L_n}v_n\tilde{\partial}_{L_n}\zeta_n & =\int_{\R\times \T}a\,(\tilde{\partial}_{L_n}\zeta_n)^2+av_0'\tilde{\partial}_{L_n}\zeta_n+a\chi'v_0'\tilde{\partial}_{L_n}\zeta_n +L_na\chi v_0''\tilde{\partial}_{L_n}\zeta_n\vspace{3pt}\\
&=\int_{\R\times \T}a(\tilde{\partial}_{L_n}\zeta_n)^2+a_Hv_0'\tilde{\partial}_{L_n}\zeta_n+L_na\chi v_0''\tilde{\partial}_{L_n}\zeta_n\vspace{3pt}\\
&=\int_{\R\times \T}a(\tilde{\partial}_{L_n}\zeta_n)^2-a_Hv_0''\zeta_n+L_na\chi v_0''\partial_{\xi}\zeta_n-(a\chi)'v_0''\zeta_n,\\
\end{split}
\end{equation*}
and that $\int_{\R\times \T}c\zeta_n\partial_{\xi}v_n=\int_{\R\times \T}cv_0'\zeta_n+L_nc\chi v_0''\zeta_n$ since $\int_{\R\times \T} \zeta_n\tilde{\partial}_{L_n}\zeta_n=0$, while $\int_{\R\times \T}\beta v_n\zeta_n=\int_{\R\times \T}\beta\zeta_n^2+\beta v_0\zeta_n+L_n\beta\chi v_0'\zeta_n$. Since $M_{c,0}(v_0)=\overline{g}$, one obtains that
$$\baa{rcl}
\displaystyle\int_{\R\times \T}a\,(\tilde{\partial}_{L_n}\zeta_n)^2+\beta\zeta_n^2 & = & \displaystyle\int_{\R\times \T}(\overline{g}-g)\zeta_n+(a\chi)'v_0''\zeta_n-L_n(a\chi v_0''\partial_{\xi}\zeta_n-c\chi v_0''\zeta_n+\beta\chi v_0'\zeta_n)\vspace{3pt}\\
& = & \displaystyle\int_{\R\times \T}(\overline{g}-g)(v_n-v_0)+(a\chi)'v_0''(v_n-v_0)\vspace{3pt}\\
& & \displaystyle-L_n\int_{\R\times \T}a\chi v_0''\partial_{\xi}\zeta_n-c\chi v_0''\zeta_n+\beta\chi v_0'\zeta_n+(\overline{g}-g)\chi v_0'+(a\chi)'v_0''\chi v_0'.\eaa$$
Since $v_n\rightharpoonup v_0$ in $L^2(\R\times \T)$ weakly as $n\to+\infty$, and since the sequence $(\zeta_n)_{n\in\N}$ is bounded in~$H^1(\R\times \T)$ and $v_0\in H^2(\R)$, it follows that $\zeta_n\to 0$ in $L^2(\R\times \T)$ and hence $v_n\to v_0$ in~$L^2(\R\times \T)$.

Finally, we prove that the sequence $(v_n)_{n\in\N}$ converges to $v_0$ in $H^1(\R\times\T)$. Now by using the symmetric difference quotient in $\xi$-direction for $v_n$ and $v_0$ as in~\eqref{quetients} and the weak convergence of $\partial_{\xi}v_n$ to $v_0'$ as $n\to+\infty$ in $L^2(\R\times \T)$, one concludes that
\begin{equation}\label{eqlims}
c\int_{\R\times \T} (\partial_{\xi} v_n )^2=\int_{\R\times \T} g\partial_{\xi} v_n\mathop{\longrightarrow}_{n\to+\infty} \int_{\R\times \T}gv_0'=\int_{\R} \overline{g}v_0'
=c\int_{\R}(v_0')^2.
\end{equation}
By the weak convergence of $\partial_{\xi}v_n$ again, this implies that $\partial_{\xi}v_n$ converges to $v_0'$ strongly in $L^2(\R\times\T)$ as $n\to+\infty$. From arguments similar to~\eqref{partialy}, one has $\partial_y v_n\to0$ in $L^2(\R\times \T)$ as $n\to+\infty$. As a conclusion, $v_n$ converges to $v_0$ in $H^1(\R\times\T)$ as $n\to+\infty$ and this holds for the whole sequence by uniqueness of the limit. Lastly, once one knows the convergence of $(v_n)_{n\in\N}$ to $v_0$ in the strong sense in $H^1(\R\times\T)$, it is easy to see that the above estimates show that the convergence is actually uniform with respect to $g$ in the ball $B_A=\big\{g\in L^2(\R\times\T)\,|\,\|g\|_{L^2(\R\times\T)}\le A\big\}$, for every $A>0$.

{\it Step 2: the general case}. We first claim that for any $r>0$ and any $\omega\in L^2(\R\times \T)$ such that~$\|\omega\|_{L^2(\R\times \T)}\leq r$, there holds
\begin{equation}\label{uniequa}
\|M_{c_n,L_n}^{-1}(\omega)- M_{c,L_n}^{-1}(\omega)\|_{H^1(\R\times \T)}\leq C|c_n-c|r\ \hbox{ for all }n\in\N,
\end{equation}
where $C$ is a constant independent of $n$ and $r$. In fact, let $p_n=M_{c_n,L_n}^{-1}(\omega)$ and $q_n=M_{c,L_n}^{-1}(\omega)$. Then $M_{c,L_n}(p_n-q_n)=(c-c_n)\partial_{\xi}p_n$. Testing it with the function $(p_n-q_n)\in H^1(\R\times \T)$ and using arguments similar to the ones used~\eqref{hnorm}, one obtains that
$$\int_{\R\times \T}a\,(\tilde{\partial}_{L_n}p_n-\tilde{\partial}_{L_n}q_n)^2 + \frac{\beta}{2} (p_n-q_n)^2\leq \frac{(c_n-c)^2}{2\beta}\int_{\R\times\T}(\partial_{\xi} p_n)^2.$$
Similar arguments to~\eqref{partials} and~\eqref{partialy} yield
$$\left\{\baa{rcl}
\|\partial_{\xi}(p_n-q_n)\|_{L^2(\R\times \T)} & \leq & |c_n-c|\,c^{-1}\|\partial_{\xi} p_n\|_{L^2(\R\times \T)},\vspace{3pt}\\
\|\partial_y(p_n-q_n)\|_{L^2(\R\times \T)} & \leq & |L_n||c_n-c|\big(c^{-1}+(2\beta a^-)^{-1/2}\big)\|\partial_{\xi} p_n\|_{L^2(\R\times\T)}.\eaa\right.$$
Note also that $\|\partial_{\xi} p_n\|_{L^2(\R\times \T)}\leq(1/c_n)\|\omega\|_{L^2(\R\times \T)}$ from the estimate~\eqref{partials}. Then the proof of our claim (\ref{uniequa}) is finished, since the sequences $(L_n)_{n\in\N}$ and $(1/c_n)_{n\in\N}$ are bounded.

Next, we observe that
\begin{equation*}
\begin{split}
\|M_{c_n,L_n}^{-1}(g_n)\!-\!M_{c,0}^{-1}(\overline{g})\|_{H^1(\R\times\T)}\leq\,\,&\|M_{c_n,L_n}^{-1}(g_n)\!-\!M_{c,L_n}^{-1}(g_n)\|_{H^1(\R\times\T)}\vspace{3pt}\\
&+\|M_{c,L_n}^{-1}(g_n)\!-\!M_{c,L_n}^{-1}(g)\|_{H^1(\R\times\T)}+\|M_{c,L_n}^{-1}(g)\!-\!M_{c,0}^{-1}(\overline{g})\|_{H^1(\R\times\T)}.
\end{split}
\end{equation*}
From~\eqref{uniequa}, the first term of the right hand side converges to $0$ as $n\to+\infty$, and this holds uniformly with respect to $g\in B_A$, for every $A>0$. By Lemma~\ref{invertible} and the boundedness of the sequences~$(L_n)_{n\in\N}$ and $(1/c_n)_{n\in\N}$, one has $\|M_{c,L_n}^{-1}(g_n-g)\|_{H^1(\R\times\T)}\to0$ as $n\to+\infty$, uniformly with respect to $g\in B_A$ for every $A>0$. Lastly, the convergence of the last term to 0 follows from Step~1. Hence, the proof of Lemma~\ref{continuem} is finished.
\end{proof}

\begin{proof}[Proof of Lemma~\ref{continuem2}]
By Lemma~\ref{invertible} and~\eqref{uniequa}, one only needs to prove that
\begin{equation}\label{converl}
M_{c,L_n}^{-1}(g)\to  M_{c,L}^{-1}(g) \quad \hbox{in}\,\, H^1(\R\times\T)\,\,\,\hbox{as}\,\,n\to+\infty,
\end{equation}
uniformly for $g\in B_A$, for every $A>0$, and for every sequence $(L_n)_{n\in\N}\in\R^*$ such that $L_n\to L$ as $n\to+\infty$. Given any $g\in L^2(\R\times\T)$, let $w_n=M_{c,L_n}^{-1}(g)$ and $w=M_{c,L}^{-1}(g)$. Then, by similar estimates in Lemma~\ref{invertible}, the functions $w_n$ converge to $w$ strongly in $L^2_{loc}(\R\times \T)$ and weakly in~$H^1(\R\times \T)$ as $n\to+\infty$. Note that
$$M_{c,L_n}(w_n-w)=\underbrace{M_{c,L_n}(w_n)}_{=g}-\underbrace{M_{c,L}(w)}_{=g}-\big(M_{c,L_n}(w)-M_{c,L}(w)\big)=-\tilde{\partial}_{L_n}(a\tilde{\partial}_{L_n}w)+\tilde{\partial}_L(a\tilde{\partial}_Lw).$$
Integrating the above equation against $w_n-w$ yields
$$\baa{rcl}
\displaystyle\int_{\R\times\T}a\,(\tilde{\partial}_{L_n}w_n-\tilde{\partial}_{L_n}w)^2+\beta(w_n-w)^2 &
= & \displaystyle\int_{\R\times\T}\big(\tilde{\partial}_L(a\tilde{\partial}_Lw)-\tilde{\partial}_{L_n}(a\tilde{\partial}_{L_n}w)\big)(w_n-w)\vspace{3pt}\\
& = & \displaystyle\int_{\R\times\T}(w_n-w)\,\tilde{\partial}_L(a\tilde{\partial}_Lw)+\frac{a\,\partial_yw\,(\tilde{\partial}_{L_n}w_n-\tilde{\partial}_{L_n}w)}{L_n}\vspace{3pt}\\
& & \displaystyle+\int_{\R\times\T}a\,\partial_{\xi}w\,(\tilde{\partial}_{L_n}w_n-\tilde{\partial}_{L_n}w),\eaa$$
which converges to $0$ as $n\to+\infty$, since $\tilde{\partial}_L(a\tilde{\partial}_Lw)\in L^2(\R\times\T)$ and $w\in H^1(\R\times\T)$, and since~$w_n\rightharpoonup w$ and $\tilde{\partial}_{L_n}w_n\rightharpoonup\tilde{\partial}_{L_n}w$ weakly in $L^2(\R\times\T)$. Thus, $\|w_n-w\|_{L^2(\R\times\T)}\to 0$ and $\|\tilde{\partial}_{L_n}w_n-\tilde{\partial}_{L_n}w\|_{L^2(\R\times\T)}\to 0$ as $n\to+\infty$. The convergence $\partial_{\xi}w_n\to\partial_{\xi}w$ in $L^2(\R\times\T)$ follows then from the weak convergence and the fact that $\|\partial_{\xi}w_n\|_{L^2(\R\times\T)}^2=(1/c_n)\int_{\R\times\T}g\partial_{\xi}w_n\to(1/c)\int_{\R\times\T}g\partial_{\xi}w=\|\partial_{\xi}w\|_{L^2(\R\times\T)}^2$, as in~\eqref{eqlims}. On the other hand, observe that
\begin{equation*}
\begin{split}
\|\partial_y(w_n-w)\|_{L^2(\R\times\T)}=&\,\|L_n(\tilde{\partial}_{L_n}w_n-\partial_{\xi}w_n)-L(\tilde{\partial}_Lw-\partial_{\xi}w)\|_{L^2(\R\times\T)}\vspace{3pt}\\
\leq&\,\|L_n\tilde{\partial}_{L_n}w_n-L\tilde{\partial}_{L_n}w\|_{L^2(\R\times\T)} + |L|\|\tilde{\partial}_{L_n}w-\tilde{\partial}_Lw\|_{L^2(\R\times\T)}\vspace{3pt}\\
&\quad+|L_n|\|\partial_{\xi}w_n-\partial_{\xi}w\|_{L^2(\R\times\T)} + |L_n-L|\|\partial_{\xi}w\|_{L^2(\R\times\T)},
\end{split}
\end{equation*}
which converges to $0$ as $n\to+\infty $. Hence, the proof of the convergence~\eqref{converl} is complete. Lastly, the above calculations together with Lemma~\ref{invertible} imply that~\eqref{converl} holds uniformly with respect to the functions $g$ such that $\|g\|_{L^2(\R\times\T)}\le A$, for every $A>0$.
\end{proof}

Next, we do the proof of the continuity and differentiability properties of the function $G$ defined in~\eqref{defG}, which were stated in Lemma~\ref{continueg}.

\begin{proof}[Proof of Lemma~\ref{continueg}]
{\it Step 1: the continuity of $G$.} The continuity of $G_2$ is obvious from Cauchy-Schwarz inequality. Next we consider the continuity of $G_1$ at $(v,c,0)$ with $(v,c)\in H^1(\R\times\T)\times(0,+\infty)$. To do so, we first prove that~$M_{c_n,L_n}^{-1}\big(K(v_n,c_n,L_n)\big) \rightarrow M_{c,0}^{-1}\big(\overline{K(v,c,0)}\big)$ in $H^1(\R\times\T)$ as $n\to+\infty$ for any sequences~$(v_n)_{n\in\N}$ in $L^2(\R\times\T)$, $(c_n)_{n\in\N}$ in $(0,+\infty)$ and $(L_n)_{n\in\N}$ in $\R^*$ such that $v_n\to v$ in $L^2(\R\times\T)$, $c_n\to c$ and~$L_n\to0$. By Lemma~\ref{continuem}, it is sufficient to show that $K(v_n,c_n,L_n)\to K(v,c,0)$ in $L^2(\R\times\T)$ as $n\to+\infty$. Since the function $f(y,u)$ is globally Lipschitz-continuous in $u$ uniformly for $y\in\T$, one has that
\begin{equation*}
\begin{aligned}
&\|K_{c_n,L_n}(v_n)- K(v,c,0)\|_{L^2(\R\times\T)}\vspace{3pt}\\
=\,\,& \big\|L_na\chi\phi_0^{(3)}+(c_n\!-\!c)\phi_0'+c_nL_n\chi\phi_0''+\beta(v_n\!-\!v)+f(y,v_n\!+\!\phi_0\!+\!L_n\chi\phi_0')-f(y,v\!+\!\phi_0)\big\|_{L^2(\R\times\T)}\vspace{3pt}\\
\leq\,\,& |L_n|\|a\chi\phi_0^{(3)}\|_{L^2(\R\times\T)}+ |c_n-c|\|\phi_0'\|_{L^2(\R)}+|c_nL_n|\|\chi\phi_0''\|_{L^2(\R\times\T)}+\beta\|v_n-v\|_{L^2(\R\times\T)}\vspace{3pt}\\
&+C\|v_n-v\|_{L^2(\R\times\T)}+C\,|L_n|\|\chi\phi_0'\|_{L^2(\R\times\T)},
\end{aligned}
\end{equation*}
where $C$ is a constant depending only on $f$. Thus, $K(v_n,c_n,L_n)\to K(v,c,0)$ in $L^2(\R\times\T)$ as~$n\to+\infty$. Similarly, the fact that $\overline{K(v_n,c_n,0)}\to\overline{K(v,c,0)}$ in $L^2(\R)$ as $n\to+\infty$ together with Lemma~\ref{continuem} implies that $M_{c_n,0}^{-1}\big(\overline{K(v_n,c_n,0)}\big)\rightarrow M_{c,0}^{-1}\big(\overline{K(v,c,0)}\big)$ in $H^1(\R)$ as $n\to+\infty$. Therefore, the arguments of this paragraph show that $G_1$ is continuous at $(v,c,0)$.

For the continuity of $G_1$ at $(v,c,L)\in H^1(\R\times\T)\times(0,+\infty)\times\R^*$, the arguments are similar to those for $L=0$, the only additional fact being the use of Lemma~\ref{continuem2} instead of Lemma~\ref{continuem}.

{\it Step 2: $G$ is continuously Fr\'echet differentiable with respect to $(v,c)$.} First, for $v\in H^1(\R\times\T)$, we set $g[v](\xi,y)=f\big(y,\phi_0(\xi)+v(\xi,y)\big)$ in $\R\times\T$ and we prove that the function $g$ is continuously Fr\'echet differentiable from $H^1(\R\times\T)$ to $L^2(\R\times\T)$. Since the function $f(y,u)$ is globally Lipschitz-continuous in $u$ uniformly for $y\in\T$, and since $\phi_0\in L^2(\R^+)$ and $1-\phi_0\in L^2(\R^-)$, there is constant $C_1>0$ independent of $v$ such that
\begin{equation*}
\begin{split}
\|g[v]\|_{L^2(\R\times\T)}^2=&\ \int_{\R^-\times\T}|f(y,\phi_0+v)-f(y,1)|^2+\int_{\R^+\times\T}|f(y,\phi_0+v)-f(y,0)|^2\vspace{3pt}\\
\leq&\ C_1\big(\|1-\phi_0\|_{L^2(\R^-)}^2+\|\phi_0\|_{L^2(\R^+)}^2+\|v\|_{L^2(\R\times\T)}^2\big).
\end{split}
\end{equation*}
Hence, $g:v\mapsto g[v]$ is a map from $H^1(\R\times\T)$ to $L^2(\R\times\T)$. Since $f(y,u)$ is of class $C^{1,1}$ in $u\in\R$ uniformly for $y\in\T$, it follows from Sobolev injections and Cauchy-Schwarz inequality that for any $v,\,h\in H^1(\R\times\T)$ and $t\in(0,1]$,
\begin{equation}\label{gateaux}
\begin{split}
&\,\,\Big\|\frac{g[v+th]-g[v]}{t}-\partial_uf(y,\phi_0+v)h\Big\|_{L^2(\R\times\T)}\vspace{3pt}\\
=&\,\,\|\partial_uf(y,\phi_0+v+t\eta h)h -\partial_uf(y,\phi_0+v)h\|_{L^2(\R\times\T)}\vspace{3pt}\\
\leq&\,\,\|\partial_uf(y,\phi_0+v+t\eta h)-\partial_uf(y,\phi_0+v)\|_{L^4(\R\times\T)}\|h\|_{L^4(\R\times\T)}\leq C_2t\|h\|^2_{H^1(\R\times\T)},
\end{split}
\end{equation}
where $\eta$ is a function from $\R\times\T$ to $[0,1]$, and $C_2>0$ is independent on $t$. Thus,
$$\frac{g[v+th]-g[v]}{t}\to\partial_uf(y,\phi_0+v)h\ \hbox{ in }L^2(\R\times\T)\hbox{ as }t\to0^+,$$
which implies that $g$ is G\^ateaux differentiable at any point $v\in H^1(\R\times\T)$ with its derivative given by $A(v)h=\partial_uf(y,\phi_0+v)h$ for any $h\in H^1(\R\times\T)$. Actually,~\eqref{gateaux} implies that $g$ is Fr\'echet differentiable and since the map $v\mapsto A(v)$ from $H^1(\R\times\T)$ to $\mathcal{L}(H^1(\R\times\T),L^2(\R\times\T))$ is continuous, the function $g$ is continuously Fr\'echet differentiable in $H^1(\R\times\T)$.

In order to show the Fr\'echet differentiability of $G$ with respect to $(v,c)$, pick now any point~$(v,c,L)\in H^1(\R\times\T)\times(0, +\infty)\times\R$. Consider first the case $L\neq0$. Since $g$ is Fr\'echet differentiable at~$v$,~$K(v,c,L)$ is also Fr\'echet differentiable with respect to $v$ with
$$\partial_v K(v,c,L)(\tilde{v})=\big(\partial_uf(y,v+\phi_0+L\chi\phi_0')+\beta\big)\tilde{v}\ \hbox{ for all }\tilde{v}\in H^1(\R\times\T).$$
Since the linear operator $M_{c,L}^{-1}: L^2(\R\times\T)\to \mathcal{D}_L\subset H^1(\R\times T)$ is bounded, one has~$\partial_w\big(M_{c,L}^{-1}(w)\big)=M_{c,L}^{-1}$. Then, by the chain rule,
$$\partial_v\big(M_{c,L}^{-1}(K(v,c,L))\big)(\tilde{v})=M_{c,L}^{-1}\big((\partial_uf(y,v+\phi_0+L\chi\phi_0')+\beta)\tilde{v}\big)\ \hbox{ for all }\tilde{v}\in H^1(\R\times\T).$$
On the other hand, $\partial_c K(v,c,L)(\tilde{c})=(\phi_0'+L\chi\phi_0'')\tilde{c}$ for all $\tilde{c}\in\R$ while, by Lemma~\ref{continuem2},
$$\partial_c M_{c,L}^{-1}(v)(\tilde{c})=\lim_{d\to c} M_{c,L}^{-1}\Big(\frac{M_{c,L}(M_{d,L}^{-1}(v))-v }{d-c}\Big)\tilde{c}=-M_{c,L}^{-1}\big(\partial_{\xi}M_{c,L}^{-1}(v)\big)\tilde{c}\ \hbox{ for all }\tilde{c}\in\R.$$
As a consequence,
\begin{equation*}
\begin{split}
\partial_c\big(M_{c,L}^{-1}(K(v,c,L))\big)(\tilde{c})=&-M_{c,L}^{-1}\big(\partial_{\xi}M_{c,L}^{-1}(K(v,c,L))\big)\tilde{c}+M_{c,L}^{-1}(\phi_0'+L\chi\phi_0'')\tilde{c}\vspace{3pt}\\
=&-\tilde{c}\,M_{c,L}^{-1}\Big(\partial_{\xi}\big(M_{c,L}^{-1}(K(v,c,L))-\phi_0-L\chi\phi_0'\big)\Big)\hbox{ for all }\tilde{c}\in\R.
\end{split}
\end{equation*}
Hence, the function $G(v,c,L)$ is Fr\'echet differentiable with respect to $(v,c)$ with derivative
\begin{equation}\label{partialc}
\begin{split}
&\partial_{(v,c)}G(v,c,L)(\tilde{v},\tilde{c})\vspace{3pt}\\
=&\left(\!\!\!\begin{array}{c}
\tilde{v}+M_{c,L}^{-1}\big((\partial_uf(y,v\!+\!\phi_0\!+\!L\chi\phi_0')\!+\!\beta)\tilde{v}\big)
-\tilde{c}M_{c,L}^{-1}\Big(\partial_{\xi}\big(M_{c,L}^{-1}(K(v,c,L))\!-\!\phi_0\!-\!L\chi\phi_0'\big)\Big)\vspace{3pt}\\
2\displaystyle\int_{\R^+\times \T}(\phi_0+v+L\chi\phi_0')\tilde{v} \end{array}\!\!\!\right)\!\!.
\end{split}
\end{equation}
Similarly, for $L=0$, $G$ is Fr\'echet differentiable with respect to $(v,c)$ and for every $(v,c)\in H^1(\R\times\T)\times(0,+\infty)$ and $(\tilde{v},\tilde{c})\in H^1(\R\times\T)\times\R$,
\begin{equation}\label{difflo}
\partial_{(v,c)}G(v,c,0)(\tilde{v},\tilde{c})\!=\!\!\left(\!\!\!\begin{array}{c}
\tilde{v}\!+\!M_{c,0}^{-1}\!\Big(\!\overline{(\partial_uf(y,v\!+\!\phi_0)\!+\!\beta)\tilde{v}}\Big)
\!-\!\tilde{c}M_{c,0}^{-1}\!\Big(\!\overline{\partial_{\xi}\big(M_{c,0}^{-1}\big(\overline{K(v,c,0)}\big)\!-\!\phi_0\big)\!}\Big)\vspace{3pt}\\
2\displaystyle\int_{\R^+\times \T}(\phi_0+v)\tilde{v} \end{array}\!\!\!\!\right)\!\!.
\end{equation}

Finally, we prove that $\partial_{(v,c)}G:H^1(\R\times\T)\times(0,+\infty)\times\R\to\mathcal{L}(H^1(\R\times\T)\times\R,H^1(\R\times\T)\times\R)$ is continuous. Since the continuity of $\partial_{(v,c)}G_2$ is obvious from the Cauchy-Schwarz inequality, we only need to show that $\partial_{(v,c)}G_1$ is continuous. Let $(v,c,L)$ be any point in $H^1(\R\times\T)\times(0,+\infty)\times\R$ and let $(v_n)_{n\in\N}$ in $H^1(\R\times\T)$, $(c_n)_{n\in\N}$ in $(0,+\infty)$ and $(L_n)_{n\in\N}$ in $\R$ be such that $\|v_n-v\|_{H^1(\R\times\T)}\to0$, $c_n\to c$ and $L_n\to L$ as $n\to+\infty$. In the case $L=0$, by Lemma~\ref{continuem} and by~\eqref{partialc} and~\eqref{difflo}, for the continuity of $\partial_cG_1$ at~$(v,c,0)$, it is sufficient to prove that, whether $L_n$ be $0$ or not,
$$\left\{\baa{rcll}
\partial_{\xi}\Big(M_{c_n,L_n}^{-1}\big(K(v_n,c_n,L_n)\big)-\phi_0-L_n\chi\phi_0'\Big) & \displaystyle\mathop{\longrightarrow}_{n\to+\infty} & \partial_{\xi}\Big(M_{c,0}^{-1}\big(\overline{K(v,c,0)}\big)-\phi_0\Big) & \hbox{in }L^2(\R\times\T),\vspace{3pt}\\
\overline{\partial_{\xi}\Big(M_{c_n,0}^{-1}\big(\overline{K(v_n,c_n,0)}\big)-\phi_0\Big)} & \displaystyle\mathop{\longrightarrow}_{n\to+\infty} & \overline{\partial_{\xi}\Big(M_{c,0}^{-1}\big(\overline{K(v,c,0)}\big)-\phi_0\Big)} & \hbox{in }L^2(\R).\eaa\right.$$
In fact, these limits follow from Lemma~\ref{continuem} and the fact that $K(v_n,c_n,L_n)\to K(v,c,0)$ in $L^2(\R\times\T)$ (whence $\overline{K(v_n,c_n,L_n)}\to\overline{K(v,c,0)}$ in $L^2(\R)$) as $n\to+\infty$. By Lemmas~\ref{invertible},~\ref{continuem} and by~\eqref{partialc} and~\eqref{difflo}, for the continuity of $\partial_vG_1$ at $(v,c,0)$, it is sufficient to show that
$$\big(\partial_uf(y,v_n+\phi_0+L_n\chi\phi_0')+\beta\big)\tilde{v}\mathop{\longrightarrow}_{n\to+\infty}\big(\partial_uf(y,v+\phi_0)+\beta\big)\tilde{v}\ \hbox{ in }L^2(\R\times\T)$$
(whence the same property for the averaged functions with respect to $y$), uniformly with respect to $\|\tilde{v} \|_{H^1(\R\times\T)}\leq1$. Since the function $f(y,u)$ is of class $C^{1,1}$ in $u\in\R$ uniformly for $y\in\T$, these convergences follow from similar arguments as~\eqref{gateaux}.

In the case $L\neq0$, as in the previous paragraph, one has
$$\left\{\baa{rcl}
\partial_{\xi}\big(M_{c_n,L_n}^{-1}(K(v_n,c_n,L_n))-\phi_0-L_n\chi\phi_0'\big) & \displaystyle\mathop{\longrightarrow}_{n\to+\infty} & \partial_{\xi}\big(M_{c,L}^{-1}(K(v,c,L))-\phi_0-L\chi\phi_0'\big)\vspace{3pt}\\
\big(\partial_uf(y,v_n+\phi_0+L_n\chi\phi_0')+\beta\big)\tilde{v} & \displaystyle\mathop{\longrightarrow}_{n\to+\infty} & \big(\partial_uf(y,v+\phi_0+L\chi\phi_0')+\beta\big)\tilde{v}\eaa\right.$$
in $L^2(\R\times\T)$, uniformly with respect to $\|\tilde{v} \|_{H^1(\R\times\T)}\leq1$. Thus, the continuity of $\partial_{(v,c)}G_1$ at~$(v,c,L)$ with $L\neq 0$ follows from Lemma~\ref{continuem2}.

{\it Step 3: the invertibility of $Q=\partial_{(v,c)}G(0,c_0,0)$.} We first observe that, from~\eqref{difflo} and~$\overline{K(0,c_0,0)}=0$, the operator $Q$ is given by
\be\label{difflo2}
Q(\tilde{v},\tilde{c})\!=\!\Big(\tilde{v}\!+\!M_{c_0,0}^{-1}\big(\overline{(\partial_uf(y,\phi_0)\!+\!\beta)\tilde{v}}\big)+\tilde{c}M_{c_0,0}^{-1}(\phi_0'),2\int_{\R^+\!\times\!\T}\!\!\!\!\!\!\phi_0\tilde{v}\Big)\!\hbox{ for all }(\tilde{v},\tilde{c})\!\in\!H^1(\R\!\times\!\T)\!\times\!\R.
\ee

Let us now show that $Q$ has a closed range $R(Q)$. Let $(\tilde{v}_n)_{n\in\N},\,(\tilde{g}_n)_{n\in\N}$ in $H^1(\R\times\T)$ and~$(\tilde{c}_n)_{n\in\N},\,(\tilde{d}_n)_{n\in\N}$ in $\R$ be such that $Q(\tilde{v}_n,\tilde{c}_n)=(\tilde{g}_n,\tilde{d}_n)\to (\tilde{g},\tilde{d})$ in $H^1(\R\times\T)\times\R$ as $n\to+\infty$. Define $v_n=\tilde{v}_n-\tilde{g}_n$. One can see from~\eqref{difflo2} that $v_n=-M_{c_0,0}^{-1}\big(\overline{(\partial_uf(y,\phi_0)+\beta)\tilde{v}_n}\big)-\tilde{c}_nM_{c_0,0}^{-1}(\phi_0')\in H^2(\R)$. Then, by definition of $H$ and since~$v_n$ is independent of $y$, it follows that
\begin{equation}\label{vn}
\begin{split}
H(v_n)=M_{c_0,0}v_n+\big(\overline{f}'(\phi_0)+\beta\big)v_n
&=-\overline{\big(\partial_uf(y,\phi_0)+\beta\big)\tilde{v}_n}-\tilde{c}_n\phi_0'
+\overline{\big(\partial_uf(y,\phi_0)+\beta\big)v_n}\vspace{3pt}\\
&=-\overline{\big(\partial_uf(y,\phi_0)+\beta\big)\tilde{g}_n} -\tilde{c}_n\phi_0'
\end{split}
\end{equation}
Testing it with $0\neq w \in \ker(H^*)$ implies that $\tilde{c}_n\int_{\R} w\phi_0'=-\int_{\R\times \T} \big(\partial_uf(y,\phi_0)+\beta\big)w\tilde{g}_n$. Notice now that $\int_{\R} w\phi_0'\neq 0$. Indeed, otherwise, $\phi_0'$ would be in the orthogonal of $\R w=\ker(H^*)$, that is in the closed range~$R(H)$. But the property $\phi'_0\in R(H)$ is impossible since $\ker(H)=\R \phi_0'$ and~$0$ is an algebraically simple eigenvalue of $H$. Then, since $\tilde{g}_n\to\tilde{g}$ in $H^1(\R\times \T)$ as $n\to+\infty$, it follows that~$\tilde{c}_n$ converges to some $\tilde{c}\in\R$ and then, by~\eqref{vn}, $H(v_n)$ converges to some $g$ in $L^2(\R)$ with
\begin{equation}\label{eqlinm}
-\overline{\big(\partial_uf(y,\phi_0)+\beta\big)\tilde{g}} -\tilde{c}\phi_0'=g.
\end{equation}
Since $R(H)$ is closed, there is $v\in H^2(\R)$ such that $H(v)=g$. Set $\tilde{v}=v+\eta\phi_0'+\tilde{g}\in H^1(\R\times\T)$, where $\eta\in\R$ is chosen as the unique real number satisfying $2\int_{\R^+\times \T}\phi_0\tilde{v}=\tilde{d}$. Since $\tilde{v}-\tilde{g}=v+\eta\phi'_0\in H^2(\R)$ and $H(\tilde{v}-\tilde{g})=H(v+\eta\phi_0')=g$, one has $M_{c_0,0}(\tilde{v}-\tilde{g})+\overline{\big(\partial_uf(y,\phi_0)+\beta\big)(\tilde{v}-\tilde{g})}=g$. Then, by composing by $M_{c_0,0}^{-1}$ and using~\eqref{eqlinm}, one gets that $\tilde{v}\!+\!M_{c_0,0}^{-1}\big(\overline{(\partial_uf(y,\phi_0)\!+\!\beta)\tilde{v}}\big)\!+\!\tilde{c}M_{c_0,0}^{-1}(\phi_0')\!=\!\tilde{g}$. Therefore, $(\tilde{g},\tilde{d})=Q(\tilde{v},\tilde{c})$, and $R(Q)$ is closed.

Next, we prove that $Q$ has a trivial kernel. Suppose that $Q(\tilde{v},\tilde{c})=(0,0)$ for some $(\tilde{v},\tilde{c})\in H^1(\R\times\T)\times\R$. Since $M_{c_0,0}^{-1}$ is a map from $L^2(\R)$ to $H^2(\R)$, then $\partial_y\tilde{v}=0$ and $\tilde{v}\in H^2(\R)$ by~\eqref{difflo}. Furthermore, $M_{c_0,0}(\tilde{v})+\overline{(\partial_uf(y,\phi_0)+\beta)\tilde{v}}=-\tilde{c}\phi_0'$, that is, $H(\tilde{v})=-\tilde{c}\phi_0'$. Since $0$ is an algebraically simple eigenvalue of $H$ with kernel $\R\phi'_0$, it follows that $\tilde{c}=0$ and $\tilde{v}=\sigma\phi_0'$ with~$\sigma\in\R$. Since~$0=2\sigma\int_{\R^+\times \T}\phi_0\phi_0'=-\sigma(\phi_0(0))^2$ and $\phi_0(0)\neq0$, one infers that $\sigma=0$. Thus, the kernel of $Q$ is reduced to $\{(0,0)\}$.

Finally, we prove that the kernel of the adjoint operator $Q^*$ is also reduced to $\{(0,0)\}$. Let~$(\tilde{v},\tilde{c})\in H^1(\R\times\T)\times \R$ such that $Q^*(\tilde{v},\tilde{c})=0$, that is,
\begin{equation}\label{diffadj}
\begin{split}
0&= \big<Q(\tilde{w},\tilde{d}),(\tilde{v},\tilde{c})\big>_{H^1(\R\times\T)\times\R}\vspace{3pt}\\
&= \Big( \tilde{w}+M_{c_0,0}^{-1}\big(\overline{\partial_uf(y,\phi_0)+\beta)\tilde{w}}\big)+\tilde{d}M_{c_0,0}^{-1}(\phi_0'),\tilde{v}\Big)_{H^1(\R\times\T)} + 2\tilde{c}\int_{\R^+\times \T}\phi_0\tilde{w}
\end{split}
\end{equation}
for all $(\tilde{w},\tilde{d})\in H^1(\R\times\T)\times \R$, where $\big(v_1,v_2\big)_{H^1(\R\times\T)}=\int_{\R\times \T}v_1v_2+\partial_{\xi}v_1\partial_{\xi}v_2+\partial_yv_1\partial_yv_2$ for all $v_1,\,v_2\in H^1(\R\times\T)$. We notice that
$$\phi_0'+M_{c_0,0}^{-1}\big(\overline{(\partial_uf(y,\phi_0)+\beta)\phi_0'}\big)=\phi_0'+M_{c_0,0}^{-1}\big((\overline{f}'(\phi_0)+\beta)\phi_0'\big)=M_{c_0,0}^{-1}(H(\phi_0'))=0.$$
Therefore, taking $(\tilde{w},\tilde{d})=(\phi_0',0)$ in~\eqref{diffadj} yields $0=2\tilde{c}\int_{\R^+\times \T}\phi_0\phi_0'=-\tilde{c}(\phi_0(0))^2$, whence $\tilde{c}=0$. Next, by choosing $(\tilde{w},\tilde{d})=(0,1)$ in~\eqref{diffadj}, it follows that
\begin{equation}\label{diffadj1}
0=\big(M_{c_0,0}^{-1}(\phi_0'),\tilde{v}\big)_{H^1(\R\times\T)}=\big(M_{c_0,0}^{-1}(\phi_0'),\overline{\tilde{v}}\big)_{H^1(\R)}
\end{equation}
since $M_{c_0,0}^{-1}(\phi'_0)\in H^1(\R)$. On the other hand, if $\tilde{w}\in H^2(\R)$ is independent of $y\in\T$ and $\tilde{d}=0$ in~\eqref{diffadj}, then
\begin{equation}\label{diffadj2}
\begin{split}
0&=\big( \tilde{w}+M_{c_0,0}^{-1}\big(\overline{(\partial_uf(y,\phi_0)+\beta)\tilde{w}}\big),\tilde{v}\big)_{H^1(\R\times\T)}\vspace{3pt}\\
&=\big(M_{c_0,0}^{-1}\big( M_{c_0,0}(\tilde{w})+(\overline{f}'(\phi_0)+\beta)\tilde{w}\big),\overline{\tilde{v}}\big)_{H^1(\R)}=\big(M_{c_0,0}^{-1}(H(\tilde{w})),\overline{\tilde{v}}\big)_{H^1(\R)}.
\end{split}
\end{equation}
Since the closed image $R(H)$ of $H$ is orthogonal to $\ker(H^*)$ and since $\phi_0'\not\in R(H)$ (as already emphasized), it follows from~\eqref{diffadj1} and~\eqref{diffadj2} that $\big(M_{c_0,0}^{-1}(P(\phi_0')),\overline{\tilde{v}}\big)_{H^1(\R)}\!=\!0$, where $P(\phi_0')~\!\!\neq~\!\!0$ is the orthogonal projection of $\phi_0'$ onto $\ker(H^*)$. Since $0$ is an algebraically simple whence geometrically simple eigenvalue of $H^*$, it follows that $\ker(H^*)=\R\,P(\phi'_0)$ and $\big(M_{c_0,0}^{-1}(w),\overline{\tilde{v}}\big)_{H^1(\R)}=0$ for all~$w\in \ker(H^*)$. This together with~\eqref{diffadj2} and the fact that $R(H)$ is orthogonal to $\ker(H^*)$ implies that $\big(M_{c_0,0}^{-1}(w),\overline{\tilde{v}}\big)_{H^1(\R)}=0$ for all $w\in L^2(\R)$. Then, back to equation~\eqref{diffadj}, one gets that
\begin{equation*}
\begin{split}
0&=\big(\tilde{w},\tilde{v}\big)_{H^1(\R\times\T)}+\big(M_{c_0,0}^{-1}\big(\overline{(\partial_uf(y,\phi_0)+\beta)\tilde{w}}\big),\tilde{v}\big)_{H^1(\R\times\T)}\vspace{3pt}\\
&= \big(\tilde{w},\tilde{v}\big)_{H^1(\R\times\T)} + \big(M_{c_0,0}^{-1}\big(\overline{(\partial_uf(y,\phi_0)+\beta)\tilde{w}}\big),\overline{\tilde{v}}\big)_{H^1(\R)}= \big(\tilde{w},\tilde{v}\big)_{H^1(\R\times\T)},
\end{split}
\end{equation*}
for all $\tilde{w}\in H^1(\R\times\T)$. Thus, $\tilde{v}=0$ and $Q^*$ has a trivial kernel. The proof of Lemma~\ref{continueg} is thereby complete.
\end{proof}


\end{document}